\DeclareMathAlphabet{\mathpzc}{OT1}{pzc}{m}{it}
\newtheorem{prop}{Proposition}[section]
\newtheorem{teo}[prop]{Theorem}
\newtheorem{lem}[prop]{Lemma}
\newtheorem{cor}[prop]{Corollary}
\theoremstyle{definition}
\newtheorem{defi}[prop]{Definition}
\newtheorem{example}[prop]{Example}
\newtheorem{rmk}[prop]{Remark}
\newtheorem{notation}[prop]{Notation}
\newcommand{\NN}{\mathbb{N}}
\newcommand{\bbS}{\mathbb{S}}
\newcommand{\ZZ}{\mathbb{Z}}
\newcommand{\Aa}{\mathcal{A}}
\newcommand{\Cc}{\mathcal{C}}
\newcommand{\Dd}{\mathcal{D}}
\newcommand{\Ee}{\mathcal{E}}
\newcommand{\Pp}{\mathcal{P}}
\newcommand{\Qq}{\mathcal{Q}}
\newcommand{\Ss}{\mathcal{S}}
\newcommand{\Ho}{\mathrm{Ho}}
\newcommand{\Tot}{\mathrm{Tot}}
\newcommand{\Hom}{\mathrm{Hom}}
\newcommand{\Ker}{\mathrm{Ker}}
\newcommand{\Img}{\mathrm{Im}}
\newcommand{\ccat}{\underline{\mathscr{C}}} 
\newcommand{\ddcat}{\underline{\mathscr{D}}} 
\newcommand{\vcat}{\mathscr{V}}
\newcommand{\cpx}{\mathrm{C}_R} 			
\newcommand{\bimod}{\mathrm{bgMod}_R} 		
\newcommand{\vbic}{\mathrm{vbC}_R} 		
\newcommand{\tc}{\mathrm{tC}_R}            		
\newcommand{\bimodinf}{\mathrm{bgMod}_R^\infty} 	
\newcommand{\fm}{\mathrm{fMod}_R} 			
\newcommand{\sfm}{\mathrm{sfMod}_R}			
\newcommand{\fc}{\mathrm{fC}_R} 				
\newcommand{\sfc}{\mathrm{sfC}_R} 				
\newcommand{\btc}{\mathrm{tC}_R^b} 			
\newcommand{\bvbic}{\mathrm{vbC}_R^b} 			
\newcommand{\bbimod}{\mathrm{bgMod}_R^b} 		
\newcommand{\bfm}{\mathrm{fMod}^b_R} 			
\newcommand{\bsfm}{\mathrm{sfMod}_R^b} 		
\newcommand{\bfc}{\mathrm{fC}_R^b}			
\newcommand{\bsfc}{\mathrm{sfC}_R^b} 			
\newcommand{\rtc}{r\text{-}\mathrm{tC}_R}            
\newcommand{\Ainf}[1]{\mathrm{A}_\infty{(#1)}}		
\newcommand{\dAinf}[1]{d\mathrm{A}_\infty{(#1)}}	
\newcommand{\Ainfintc}{A_\infty^{\mathrm{tC}}(R)} 		
\newcommand{\bAinfintc}{A_\infty^{\mathrm{tC}^b}} 	
\newcommand{\fAinf}{\mathrm{f}A_\infty} 			
\newcommand{\sfAinf}{\mathrm{sf}A_\infty} 		
\newcommand{\bsfAinf}{\mathrm{sf}A_\infty^{b}} 		
\newcommand{\bfAinf}{\mathrm{f}A_\infty^{b}} 		
\newcommand{\Tc}{\underline{\mathrm{tC}_R}} 		
\newcommand{\Vbic}{\underline{\mathrm{vbC}_R}} 		
\newcommand{\Cpx}{\underline{\mathrm{C}_R}} 		
\newcommand{\Bimod}{\underline{\mathrm{bgMod}_R}} 	
\newcommand{\wbimod}{\underline{\mathpzc{bgMod}_R}} 
\newcommand{\wtc}{\underline{\mathpzc{tC}_R}}      	
\newcommand{\wbtc}{\underline{\mathpzc{tC}_R^b}} 	
\newcommand{\wfc}{\underline{\mathpzc{fC}_R}} 		
\newcommand{\wsfc}{\underline{\mathpzc{sfC}_R}} 	
\newcommand{\wfm}{\underline{\mathpzc{fMod}_R}} 	
\newcommand{\wsfm}{\underline{\mathpzc{sfMod}_R}} 	
\newcommand{\wbsfm}{\underline{\mathpzc{sfMod}_R^b}} 
\newcommand{\wbsfc}{\underline{\mathpzc{sfC}_R^b}} 
\newcommand{\wtot}{\mathfrak{Tot}} 
\newcommand{\End}{\underline{\mathpzc{End}}} 
\newcommand{\As}{{\mathcal{A}s}}
\newcommand{\dAs}{\mathrm{d}\As}
\newcommand{\simr}[1]{\begin{array}{c}\vspace{-.3cm} \simeq\\ \vspace{-.4cm}\text{\tiny{$#1$}}\vspace{.36cm} \end{array}}
\newcommand{\simre}[1]{\begin{array}{c}\vspace{-.3cm} \sim\\ \vspace{-.4cm}\text{\tiny{$#1$}}\vspace{.36cm} \end{array}}
\newcommand{\antishriek}{\text{!`}}
\newcommand{\pb}{\ar@{}[dr]|{\mbox{\LARGE{$\lrcorner$}}}}
\newcommand{\lra}{\longrightarrow}
\newcommand{\xrightarrowdbl}[2][]{%
  \xrightarrow[#1]{#2}\mathrel{\mkern-14mu}\rightarrow
}
\title{Derived $A$-infinity algebras and their homotopies}
\author{Joana Cirici}
\address[J. Cirici]{
Fachbereich Mathematik und Informatik\\ 
Universit\"{a}t M\"{u}nster\\
Einsteinstra{\ss}e 62\\
48149 M\"{u}nster, Germany}
\email{cirici@uni-muenster.de}
\author{Daniela Egas Santander}
\address[D. Egas Santander]{
Fachbereich Mathematik und Informatik\\
Freie Universit\"{a}t Berlin\\  Arnimallee 3\\ 
14195 Berlin, Germany}
\email{daniela.egas@fu-berlin.de}
\author{Muriel Livernet}
\address[M. Livernet]{
Institut de math\'ematiques de Jussieu-Paris Rive Gauche,                                                                  
UMR 7586 du CNRS, 
Universit\'e Paris Diderot, 
75013 Paris, France }
\email{livernet@math.univ-paris-diderot.fr}
\author{Sarah Whitehouse}
\address[S. Whitehouse]{
School of Mathematics and Statistics\\ 
University of Sheffield\\ S3 7RH\\ England}
\email{s.whitehouse@sheffield.ac.uk }
\thanks{Cirici would like to acknowledge financial support from
the German Research Foundation (SPP-1786) and partial support from
the Spanish Ministry of Economy and Competitiveness (MTM2013-42178-P).
Egas Santander would like to thank the Berlin Mathematical School and CRC 647 for financial support. Livernet would like to acknowledge the project ``Focal" from idex Sorbonne Paris Cit\'e.
}
\subjclass[2010]{
16E45, 
18G55
(primary); 
16T15, 
18D20, 
18D50 
(secondary)}
\keywords{Derived $A$-infinity algebra, filtered $A$-infinity algebra, twisted complex, multicomplex}
\begin{document}

\begin{abstract}
The notion of a derived $A$-infinity algebra, considered by Sagave, is a 
generalization of the classical notion of  $A$-infinity algebra, relevant to the case where one works
over a commutative ring rather than a field.
We initiate a study of the homotopy theory of these algebras, by introducing a hierarchy
of notions of homotopy between the morphisms of such algebras. We define $r$-homotopy, for 
non-negative integers $r$,
in such a way that $r$-homotopy equivalences underlie $E_r$-quasi-isomorphisms, defined via an 
associated spectral sequence. We study the special case of twisted complexes (also known as multicomplexes) 
first since it is of independent interest and this simpler case clearly exemplifies the structure we study.
We also give two new interpretations of derived $A$-infinity algebras
as  $A$-infinity algebras in twisted complexes and as $A$-infinity algebras in split filtered cochain complexes. 
\end{abstract}

\maketitle
\tableofcontents

\newpage

\section{Introduction}
The homotopy invariant version of an associative algebra, known as an $A_{\infty}$-algebra,
has become an important idea in many areas of mathematics, including algebra, geometry and mathematical physics.
An introduction to these structures and a discussion of various applications can be found in Keller's survey~\cite{Keller}. 
Kadeishvili's work on minimal models used the existence of $A_{\infty}$-structures in order to classify differential graded algebras over a field up to quasi-isomorphism~\cite{Kadeishvili}. 

The homotopy theory of these algebras has been studied
by several authors, including Prout{\'e}, Grandis and Lef\`evre-Hasegawa~\cite{Proute},~\cite{Grandis},~\cite{LefHas}.
Grandis gave a notion of homotopy between morphisms of $A_\infty$-algebras
via a functorial path construction.  Working over a field, Lef\`evre-Hasegawa establishes the structure of a ``model category without limits'' 
on the category of $A_\infty$-algebras.

In order to formulate a generalization of Kadeishvili's work
over a general commutative ground ring, Sagave considered the notion of
derived $A_\infty$-algebra~\cite{Sag10}. This is related to the notion of 
$D^{(s)}_\infty$-differential $A_\infty$-algebra
considered by Lapin in \cite{Lapin2002}.
Sagave establishes the existence of minimal models for differential
graded algebras (dgas)  by showing that the structure of a derived $A_\infty$-algebra arises on some projective resolution of the
homology of a differential graded algebra. Furthermore, the dga can be recovered up to quasi-isomorphism from
this data.

In~\cite{LRW} an operadic description of derived $A_\infty$-algebras was developed, working with
 non-symmetric operads in the category $\vbic$ of bicomplexes with zero horizontal differential. 
There is an operad $\dAs$ in this category encoding bidgas, which are simply monoids in bicomplexes. 
Derived $A_\infty$-algebras are 
precisely algebras over the operad \[dA_\infty=(\dAs)_\infty= \Omega((\dAs)^{\antishriek}).\] 
Here $(\dAs)^{\antishriek}$ is the Koszul dual cooperad of the operad  $\dAs$, and $\Omega$ 
denotes the cobar construction. Further development of the operadic theory of these algebras was carried
out in~\cite{ALRWZ}. The recent PhD thesis of Maes~\cite{Maes} studies derived $P_\infty$-algebras, replacing the
associative operad $\As$ with a suitable operad $P$.

A derived $A_\infty$-algebra has an underlying \emph{twisted complex}, 
also known as a \textit{multicomplex} or \textit{$D_\infty$-module}. Twisted complexes arise as a
natural generalization of the notion of double complex
by considering a family of ``differentials'' indexed over the non-negative integers.
These objects were first considered by Wall~\cite{Wall} in his work on resolutions for extensions of 
groups and subsequently they have arisen in the work of many authors.
They were studied by
Gughenheim and May~\cite{GugMay} in their approach to differential homological algebra.
Meyer~\cite{Meyer} introduced homotopies between morphisms of twisted complexes and
 proved an acyclic models theorem for these objects.
More recently, twisted complexes have proven to be an important tool in homological perturbation theory
(see for example \cite{Lapin2001}, \cite{Huebschmann}).
Saneblidze~\cite{Saneblidze} introduced projective twisted complexes and showed that every 
(possibly unbounded) chain complex over an abelian category $\Aa$ is weakly equivalent to a projective multicomplex,
provided that $\Aa$ has enough projectives and countable coproducts. 
This result provides a good description of the derived category of $\Aa$.
The work of Sagave on minimal models for differential graded algebras can be thought of as a 
multiplicative enhancement
of Saneblidze's result~\cite{Saneblidze}.

\medskip

In this paper,
we initiate a study of the homotopy theory of derived $A_\infty$-algebras, by introducing a hierarchy
of notions of homotopy between the morphisms of such algebras. We define $r$-homotopy for 
$r\geq 0$ and consider a related notion of $E_r$-quasi-isomorphism. 
Denoting the set of $r$-homotopy equivalences by $\Ss_r$ and the
set of $E_r$-quasi-isomorphisms by $\Ee_r$,
we have the following inclusions.
\begin{center}
\begin{tabular}{lclcccclclc}
&$\Ss_0$ &$\subset$ &$\Ss_1$ &$\subset$ &$\dots$ &$\subset$ &$\Ss_r$ &$\subset$ &$\Ss_{r+1}$&$\subset \dots$\\ 
&$\cap$ &&$\cap$ &&&&$\cap$ &&$\cap$ \\
&$\Ee_0$ &$\subset$ &$\Ee_1$ &$\subset$ &$\dots$ &$\subset$ &$\Ee_r$ &$\subset$ &$\Ee_{r+1}$&$\subset \dots$\\ 
\end{tabular}
\end{center}

We treat the  special case of twisted complexes first, since it is of independent interest and the theory is 
simpler in this case. Every twisted complex has an associated spectral sequence, defined via the 
column filtration of its total complex. In fact, the totalization functor 
gives rise to an isomorphism of categories 
between the category of twisted complexes and the full subcategory of filtered complexes whose objects have split
filtrations (Theorem~\ref{equiv_twisted_split}).
The class of $E_r$-quasi-isomorphisms is given by those morphisms of twisted complexes 
inducing a quasi-isomorphism on the $r$-th stage of their associated spectral sequences. 
The notion of $r$-homotopy for twisted complexes that we consider corresponds to 
the notion of homotopy of order $r$ introduced in~\cite{CaEil} and further developed in~\cite{CG2} 
in the context of filtered complexes. We study the localized category of twisted complexes
with respect to $r$-homotopies (Theorem~\ref{QuotientcatTC}).  
We present several equivalent formulations of $r$-homotopy: via a functorial
path, via explicit formulas and an operadic approach (Theorem~\ref{teo:coderh}). 

A substantial part of the paper is devoted to developing new interpretations of derived $A_\infty$-algebras.
As well as being interesting in themselves, these new viewpoints are used in establishing properties of homotopy
and they provide a key idea
for the proof of the equivalence of the
various formulations of $r$-homotopy in the derived $A_\infty$ case. 
Firstly, we show that derived $A_\infty$-algebras can be interpreted as $A_\infty$-algebras in twisted complexes
(Theorem~\ref{dA_Atc}).
This formulation has the potential to be a very useful tool for the future development of 
different aspects of the theory of derived $A_\infty$-algebras.
Secondly, under suitable boundedness conditions, we show that derived $A_\infty$-algebras can be viewed as split filtered $A_\infty$-algebras
(Theorem~\ref{dA-filtered-bounded-tc}). 
This result allows one to transfer known constructions in the category of $A_\infty$-algebras
to the category of derived $A_\infty$-algebras, by checking compatibility with filtrations.

The context for these new interpretations is the 
theory of operadic algebras for monoidal categories over a base, as developed by Fresse~\cite{Fresse}. 
We endow the categories of twisted complexes and filtered complexes with a monoidal structure over the category of vertical bicomplexes and use this to enrich them over vertical bicomplexes.  
This allows us to formulate these algebra structures by means of an enriched endomorphism operad.  
The totalization functor extends to the enriched setting and gives an isomorphism between the vertical bicomplexes-enriched categories of twisted complexes and split filtered complexes (Theorem~\ref{prop:weirdtot}).  
We use this isomorphism to show that, under certain boundedness conditions, the different intepretations of derived $A_\infty$-algebras are equivalent.

We then turn to $r$-homotopy for derived $A_\infty$-algebras. Here,
the class of $E_r$-quasi-isomorphisms is defined by lifting
$E_r$-quasi-isomorphisms of the underlying twisted complexes.
The notion of $r$-homotopy that we consider arises as a combination of the notion of $r$-homotopy for twisted complexes 
and the classical notion of homotopy between morphisms of $A_\infty$-algebras.
We again present different approaches: via a functorial path construction, via explicit formulas and in 
operadic terms. For the operadic description we
formulate the general notion of a $(g,f)$-coderivation for morphisms $g,f$ of cofree coalgebras over
a (non-symmetric) operad. In Theorem~\ref{r-homotopy-agree}, we show that the different approaches are equivalent.

The results of this paper set up the foundations for a homotopy theory of derived $A_\infty$-algebras,
contextualizing the ad-hoc notion of homotopy introduced by Sagave in a very particular case
and generalizing the work of Grandis on functorial paths for $A_\infty$-algebras.
We expect that
both the new descriptions of derived $A_\infty$-algebras and 
the properties of homotopies developed here 
will allow us to endow the category of derived $A_\infty$-algebras  with 
 the structure of a model category without limits in the future,
 with weak equivalences being $E_r$-quasi-isomorphisms.

\medskip

The paper is organized as follows. Section~\ref{sec:prelim} covers background material introducing some of 
the categories we work with. The notion of $r$-homotopy for twisted complexes is covered in Section~\ref{sec:twcx}.
Our new interpretations of derived $A_\infty$-algebras are presented in Section~\ref{sec:newder}. Finally,
Section~\ref{sec:der} studies $r$-homotopy for derived $A_\infty$-algebras.

\subsection*{Acknowledgements}
The authors would like to thank the Banff International Research Station and the organizers of the Women in Topology workshop in April 2016
for bringing us together to work on this paper. 
We would also like to thank Gabriel Drummond-Cole, Fernando Muro, Emily Riehl, Agust\'{\i} Roig and Sinan Yalin
for helpful conversations. Further thanks go to the anonymous referee for his/her careful report and helpful suggestions.

\section{Preliminaries}
\label{sec:prelim}

We first set up some notation which we will use throughout this paper.
\begin{notation}Throughout this paper
$R$ will denote a commutative ring with unit. Unless stated otherwise, all tensor products will be taken over $R$.
Let $\Cc$ be a category and let $A,B$ be arbitrary objects in $\Cc$. 
We denote by $\Hom_{\Cc}(A,B)$ the set of morphisms from $A$ to $B$ in $\Cc$.  
If $(\Cc,\otimes,1)$ is symmetric monoidal closed, then we denote its internal hom-object by $[A,B]\in\Cc$ in which case we have by definition a bijection
\[\Hom_{\Cc}(A\otimes B,C)\cong \Hom_{\Cc}(A,[B,C])\]
which is natural in $A, B$ and $C$.  
\end{notation}

\subsection{Filtered modules and filtered cochain complexes}

We collect some algebraic preliminaries about filtered $R$-modules and filtered complexes of $R$-modules.
Our complexes are cochain complexes and our filtrations are increasing.

\begin{defi}\label{def:fm_obj}
A \textit{filtered $R$-module} $(A,F)$ is given by a family of $R$-modules $\{F_pA\}_{p\in\ZZ}$ 
indexed by the integers such that $F_{p-1}A\subseteq F_pA$ for all $p\in\ZZ$ and $A=\cup F_pA$.
A \textit{morphism of filtered modules} is a morphism $f:A\to B$ of $R$-modules
which is \textit{compatible with filtrations}:
$f(F_pA)\subset F_pB$ for all $p\in\ZZ$.
\end{defi}

\begin{defi}The tensor product of two filtered $R$-modules $(A,F)$ and $(B,F)$ is a filtered $R$-module, with
\[F_p(A\otimes B):=\sum_{i+j=p} \Img(F_iA\otimes F_jB\lra A\otimes B).\]
This makes the category of  filtered $R$-modules into a symmetric monoidal category,
where the unit is given by $R$ with the trivial filtration
$0=F_{-1}R\subset F_0R=R$. 
\end{defi}

Denote by $\cpx$ the category of cochain complexes of $R$-modules. 
The standard tensor product endows
this with a symmetric monoidal structure, with unit $R$ concentrated in degree zero.

\begin{defi}\label{def:fc_obj}A \textit{filtered complex} $(K,d,F)$ is a complex $(K,d)\in\cpx$ together with
a filtration $F$ of each $R$-module $K^n$ such that 
$d(F_pK^n)\subset F_pK^{n+1}$ for all $p,n\in\ZZ$.
\end{defi}
We denote by $\fc$ the category of filtered complexes of $R$-modules. 
Its morphisms are given by morphisms of complexes $f:K\to L$ compatible with filtrations: $f(F_pK)\subset F_pL$ for all $p\in\ZZ$.
It is a symmetric monoidal category, with the filtration on the tensor product defined as above. The symmetry isomorphisms are inherited from the standard ones on cochain complexes.
\medskip

We next recall the notion of homotopy of order $r$ due to Cartan and Eilenberg.
\begin{defi}~\cite[p321]{CaEil}
Let $f,g:(K,F)\lra (L,F)$ be two morphisms of filtered complexes and let $r\geq 0$ be an integer.
A \textit{homotopy of order $r$} from $f$ to $g$ is a map of graded $R$-modules $H:K^*\to L^{*-1}$
such that $dH+Hd=g-f$ and $H(F_pK^n)\subset F_{p+r}L^{n-1}$ for all $p,n\in\ZZ.$
\end{defi}

\begin{rmk}
Every filtered complex $(K,d,F)$ has an associated spectral sequence $\{E_r(K),\delta_r\}$
and this assignment is functorial. Given a homotopy of order $r$ between morphisms of filtered complexes
$f,g:(K,F)\to (L,F)$,
the induced morphisms at the $k>r$ stages of the spectral sequences coincide: 
$f_k^*=g_k^*:E_k(K)\to E_k(L)$ for all $k>r$ (see~\cite[Proposition XV.3.1]{CaEil}). 
This result indicates how the notion of homotopy of order $r$ 
is  suitable for studying the \textit{$r$-derived category} defined by inverting
those morphisms of filtered complexes which induce an isomorphism at
the $E_{r+1}$-stage of the associated spectral sequences (see~\cite{Paranjape},~\cite{CG2}).
\end{rmk}

\subsection{Bigraded modules, vertical bicomplexes, sign conventions}
\label{subsection:bgmods}
We consider $(\ZZ,\ZZ)$-bigraded $R$-modules $A=\{A_i^j\}$,
where elements of $A_i^j$ are said to have bidegree $(i,j)$. 
We sometimes
refer to $i$ as the \textit{horizontal degree} and $j$ the \textit{vertical degree}.
The \textit{total degree} of an element $a\in A_i^j$ is $|a|=j-i$.
A morphism of bidegree $(p,q)$ maps $A_i^j$ to $A_{i+p}^{j+q}$.
The \textit{tensor product} of two bigraded $R$-modules $A$ and $B$ is the bigraded $R$-module $A\otimes B$ 
given by 
	\[
		(A\otimes B)_i^j:=\bigoplus_{p,q} A_p^q\otimes B_{i-p}^{j-q}.
	\]

We denote by $\bimod$ the category whose objects are bigraded $R$-modules
and whose morphisms are morphisms of bigraded $R$-modules of bidegree $(0,0)$.
 It is symmetric monoidal with the above
tensor product.

We introduce the following scalar product notation for bidegrees: for $x,y$ of bidegree $(x_1,x_2)$, $(y_1,y_2)$ 
respectively, we let $\langle x,y \rangle=x_1y_1+x_2y_2$.

The symmetry isomorphism 
\[\tau_{A\otimes B}^{\bimod}: A\otimes B \to B\otimes A\] is
given by
\[a\otimes b \mapsto (-1)^{\langle a, b \rangle}b\otimes a.\]

We follow the \textit{Koszul sign rule}:
if $f:A\rightarrow B$ and $g:C\rightarrow D$ are bigraded morphisms,
then the morphism $f\otimes g: A\otimes C\rightarrow B\otimes D$ is defined by
	\[
	(f\otimes g)(a\otimes c):=(-1)^{\langle g, a\rangle} f(a)\otimes g(c).
	\]

\begin{defi}
\label{def:vbicx}
A \emph{vertical bicomplex} is a bigraded $R$-module $A$ equipped with a vertical differential
$d^A: A \longrightarrow A$
of bidegree $(0,1)$. A \emph{morphism of vertical bicomplexes} is a  morphism of bigraded modules 
of bidegree $(0,0)$ commuting with the vertical differential.
\end{defi}

We denote by $\vbic$ the category of vertical bicomplexes.
The tensor product of two vertical bicomplexes $A$ and $B$ is given by endowing the tensor product of
underlying bigraded modules with vertical differential
   $d^{A\otimes B}:=d^A\otimes 1+1\otimes d^B:(A\otimes B)_u^v\to (A\otimes B)_u^{v+1}$.
This makes $\vbic$ into a symmetric monoidal category.

The symmetric monoidal categories $(\cpx,\otimes,R)$, $(\bimod,\otimes,R)$ and $(\vbic,\otimes,R)$ are related by  embeddings 
$\cpx \hookrightarrow \vbic$ and $\bimod \hookrightarrow \vbic$
which are monoidal and full.

\begin{defi}
Let $A, B$ be bigraded modules.  We define $[A,B]_*^*$ to be the bigraded module of morphisms of bigraded modules $A\to B$. 
Furthermore, if $A, B$ are vertical bicomplexes, and $f\in [A,B]_u^v$, we define 
\[
	\delta (f):=d^Bf-(-1)^{v}f d^A.
	\]
\end{defi}

\begin{lem}
If $A, B$ are vertical bicomplexes, then $([A,B]_*^*,\delta)$ is a vertical bicomplex.
\end{lem}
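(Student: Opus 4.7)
The plan is to verify the two requirements for $([A,B]_*^*,\delta)$ to be a vertical bicomplex: (i) $\delta$ preserves the bigraded module structure with bidegree $(0,1)$, and (ii) $\delta \circ \delta = 0$.

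For (i), I would observe that if $f \in [A,B]_u^v$, then the composition $d^B f$ sends $A_i^j$ into $B_{i+u}^{j+v+1}$ since $d^B$ is of bidegree $(0,1)$; likewise $f d^A$ sends $A_i^j$ into $B_{i+u}^{j+v+1}$. Hence $\delta(f) = d^B f - (-1)^v f d^A$ is a bigraded morphism of bidegree $(u,v+1)$, so $\delta$ restricts to a map $[A,B]_u^v \to [A,B]_u^{v+1}$ as required.

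For (ii), I would compute directly. Given $f \in [A,B]_u^v$,
\[
\delta^2(f) = \delta\bigl(d^B f - (-1)^v f d^A\bigr) = d^B\bigl(d^B f - (-1)^v f d^A\bigr) - (-1)^{v+1}\bigl(d^B f - (-1)^v f d^A\bigr)d^A.
\]
Expanding and using $(d^B)^2 = 0 = (d^A)^2$, the four surviving terms give
\[
-(-1)^v d^B f d^A - (-1)^{v+1} d^B f d^A = \bigl(-(-1)^v + (-1)^v\bigr) d^B f d^A = 0,
\]
so $\delta^2 = 0$.

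This is a short routine verification; there is no substantial obstacle. The only subtlety is keeping track of the Koszul sign in the definition of $\delta$, which is precisely what makes the two middle terms cancel and renders $(d^B)^2 f$ and $\pm f (d^A)^2$ the obviously vanishing contributions. No further facts beyond the definitions above and the hypothesis that $d^A, d^B$ are square-zero of bidegree $(0,1)$ are needed.
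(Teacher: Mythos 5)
Your proof is correct and matches the paper's approach: the paper simply states that a direct computation gives $\delta^2=0$, which is exactly the verification you carry out (with the sign bookkeeping done correctly, since $-(-1)^{v+1} = (-1)^v$ makes the two cross terms cancel). Checking that $\delta$ has bidegree $(0,1)$ is a worthwhile addition that the paper leaves implicit.
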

\begin{proof}
A direct computation gives that $\delta^2=0$.
\end{proof}

This gives an internal hom on $\vbic$, making it symmetric monoidal closed. 
It restricts to give the standard internal hom on the categories $\bimod$ and $\cpx$.

We denote by $\Vbic$, $\Cpx$, and $\Bimod$, the categories of vertical bicomplexes, complexes, and bigraded modules respectively, enriched over themselves via their symmetric monoidal closed structure.
\smallskip

We will use a standard (vertical) shift $S$ of bigraded modules, following Sagave's conventions,
as in the first part of~\cite[Section 4]{Sag10}. So $S$ is the shift of bidegree $(0,1)$;
it is an endofunctor on the category of bigraded $R$-modules with morphisms of arbitrary bidegree, where $S(A)_i^j=A_i^{j+1}$ and on morphisms
$Sf=(-1)^v f$, if $f$ has bidegree $(u,v)$.

We write $\sigma$ for the corresponding natural transformation from $S$ to the identity;
this means that
\[f\sigma_A = \sigma_B S(f)=(-1)^v \sigma_B f.\]
For every bigraded $R$-module $A$, $\sigma_A$ is an isomorphism of bidegree $(0,1)$.
Then
$\Psi_k$ is the induced isomorphism from morphisms on a $k$-fold tensor power:
	\[
	\Psi_k:\Hom(A^{\otimes k}, B)\to \Hom((SA)^{\otimes k}, SB),
	\]
 where $\sigma_{B} \Psi_k(f)=(-1)^{\langle  \Psi_k(f), \sigma \rangle} f\sigma_{A}^{\otimes k}$.
If the bidegree of $f$ is $(u,v)$, then the bidegree of $\Psi_k(f)$ is $(u,v+k-1)$, and then since that of $\sigma$ is $(0,1)$, we have ${\langle  \Psi_k(f), \sigma \rangle}=v+k-1$.

\section{Twisted complexes and $r$-homotopy}
\label{sec:twcx}
In this section, we recall some key properties of the category of twisted complexes, also called multicomplexes in the literature.
We define the totalization 
functor from twisted complexes to filtered cochain complexes and show that 
it induces an isomorphism of categories onto its image. We then introduce $r$-homotopies
between morphisms of twisted complexes, consider
their interplay with spectral sequences and study the localized category $\tc[\Ss_r^{-1}]$.

\subsection{The category of twisted complexes (or multicomplexes)}

\begin{defi}\label{def:tc_objects}
A \textit{twisted complex $(A,d_m)$} is a bigraded $R$-module $A=\{A_i^j\}$
together with a family of morphisms $\{d_m:A\lra A\}_{m\geq 0}$ of bidegree $(-m,-m+1)$
such that for all $m\geq 0$,
\begin{equation}\label{condition_twisted_complex}
\sum_{i+j=m} (-1)^id_id_j=0. \tag{$A_{m1}$}
\end{equation}
\end{defi}

\begin{defi}\label{def:tc_mor}
 A \textit{morphism of twisted complexes} $f:(A,d_m^A)\to (B,d_m^B)$ is given by a family of morphisms
of $R$-modules
 $\{f_m:A\lra B\}_{m\geq 0}$ of bidegree $(-m,-m)$
such that for all $m\geq 0$,
\begin{equation}
 \sum_{i+j=m}d_i^Bf_j=\sum_{i+j=m}(-1)^if_id_j^A. \tag{$B_{m1}$}
\end{equation}
\end{defi}
The composition of morphisms is given by $(g\circ f)_m:=\sum_{i+j=m} g_i f_j$.
A morphism $f=\{f_m\}_{m\geq 0}$ is said to be \textit{strict} if $f_i=0$ for all $i>0$.
The identity morphism $1_A:A\to A$ is the strict morphism given by ${(1_{A})}_0(x)=x$.
A morphism $f=\{f_i\}$ is an isomorphism if and only if $f_0$ is an isomorphism of bigraded $R$-modules.
Indeed, an inverse of $f$ is obtained from an inverse of $f_0$ by solving a triangular system.

Denote by $\tc$ the category of twisted complexes.
Also, denote by  $\bimodinf$ the full subcategory of $\tc$ whose objects are twisted complexes with trivial structure i.e., $d_m=0$ for all $m\geq 0$.  

The following construction endows $\tc$ with a symmetric monoidal structure.

\begin{lem}
The category $(\tc,\otimes,R)$ is symmetric monoidal, where the monoidal structure is given by the bifunctor 
\[\otimes:\tc \times \tc \to \tc\]
which on objects is given by
$((A,d_m^A),(B,d_m^B))\mapsto (A\otimes B, d^A_m\otimes 1 + 1\otimes d^B_m)$
and on morphisms is given by
$(f, g)\mapsto f\otimes g$, where 
$(f\otimes g)_m:=\sum_{i+j=m} f_i\otimes g_{j}$.  In particular, by the Koszul sign rule we have that  $(f_i\otimes g_j)(a\otimes b)=(-1)^{<g_j,a>} f_i(a)\otimes g_j(b)$. 
The symmetry isomorphism is given by the strict morphism of twisted complexes
\[\tau_{A\otimes B}^{\tc}: A\otimes B \to B\otimes A\]
defined by
\[a\otimes b \mapsto (-1)^{\langle a, b \rangle}b\otimes a.\]

This functor describes a symmetric monoidal structure on $\bimodinf$ by restriction.
\end{lem}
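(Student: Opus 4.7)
The plan is to verify separately the three ingredients of a symmetric monoidal structure: that the proposed tensor product of objects is again a twisted complex, that the tensor product of morphisms is a morphism of twisted complexes and respects composition and identities, and that the associator, unitor and symmetry (inherited from $\bimod$) are morphisms of twisted complexes satisfying the standard coherence axioms.

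For the object-level part, I would fix $A,B\in\tc$, set $d_m^{A\otimes B}:=d_m^A\otimes 1+1\otimes d_m^B$, and compute
\[
\sum_{i+j=m}(-1)^i d_i^{A\otimes B}\,d_j^{A\otimes B} = \sum_{i+j=m}(-1)^i(d_i^A\otimes 1+1\otimes d_i^B)(d_j^A\otimes 1+1\otimes d_j^B).
\]
The ``pure'' sums $\sum_{i+j=m}(-1)^i d_i^A d_j^A\otimes 1$ and $\sum_{i+j=m}(-1)^i 1\otimes d_i^B d_j^B$ vanish by condition $(A_{m1})$ applied to $A$ and $B$ respectively. It remains to show that the cross terms cancel. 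Using the Koszul rule, $(d_i^A\otimes 1)(1\otimes d_j^B)$ acts on $a\otimes b$ as $(-1)^{\langle d_j^B,a\rangle}d_i^A(a)\otimes d_j^B(b)$, while $(1\otimes d_i^B)(d_j^A\otimes 1)$ acts as $(-1)^{\langle d_i^B,d_j^A(a)\rangle}d_j^A(a)\otimes d_i^B(b)$. Re-indexing $(i,j)\leftrightarrow(j,i)$ in one of the two sums and comparing the Koszul signs (using the bidegrees of $d_i^A,d_j^B$) shows that the two cross sums are negatives of each other. This is the crux of the computation.

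For the morphism-level part, a completely analogous expansion shows that $f\otimes g$ with $(f\otimes g)_m=\sum_{i+j=m} f_i\otimes g_j$ satisfies $(B_{m1})$: when one expands $\sum_{i+j=m}d_i^{A'\otimes B'}(f\otimes g)_j$ and $\sum_{i+j=m}(-1)^i(f\otimes g)_i d_j^{A\otimes B}$, the relations $(B_{m1})$ for $f$ and $g$ separately account for the ``pure'' sums, and the remaining cross terms again pair up with matching Koszul signs. Functoriality, i.e.\ $(f'\otimes g')\circ(f\otimes g)=(f'\circ f)\otimes(g'\circ g)$, follows from the Cauchy-product identity
\[
\sum_{p+q=m}\!\Bigl(\sum_{i+j=p}f'_i\otimes g'_j\Bigr)\!\Bigl(\sum_{k+\ell=q}f_k\otimes g_\ell\Bigr) = \sum_{u+v=m}\Bigl(\sum_{i+k=u}f'_i f_k\Bigr)\otimes\Bigl(\sum_{j+\ell=v}g'_j g_\ell\Bigr),
\]
together with one Koszul sign check coming from interchanging $g'_j$ past $f_k$; preservation of identities is immediate.

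For the coherence data, $R$ with trivial differentials is a strict unit, and the associator and unitors inherited from $\bimod$ are strict morphisms of twisted complexes because the differential on an iterated tensor product is simply the sum of the $d_m$'s acting on the separate factors, which is strictly preserved by re-bracketing. The symmetry $\tau_{A\otimes B}^{\tc}$, being a strict morphism, only needs to be checked against the $m=0$ component of condition $(B_{m1})$, which reduces to the identity $\tau\circ d_m^{A\otimes B}=d_m^{B\otimes A}\circ\tau$; this is again a standard Koszul sign verification. The pentagon and hexagon axioms then hold because they already hold in $\bimod$ and all the structure maps are strict. Finally, if $d_m^A=d_m^B=0$ for all $m\geq 0$, then $d_m^{A\otimes B}=0$ for all $m\geq 0$, so the construction restricts to a symmetric monoidal structure on $\bimodinf$. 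The only real obstacle throughout is the careful bookkeeping of Koszul signs; there are no conceptual subtleties.
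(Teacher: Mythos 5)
Your proposal is correct and follows essentially the same route as the paper: a direct expansion of $\sum_{i+j=m}(-1)^i\partial_i\partial_j$ in which the pure terms vanish by $(A_{m1})$ and the cross terms cancel after reindexing with a Koszul sign check, the analogous verification of $(B_{m1})$ for $f\otimes g$, and the Cauchy-product computation for functoriality. The paper leaves the coherence axioms implicit, so your extra paragraph on the unit, associator and symmetry only adds detail the authors chose to omit.
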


\begin{proof}
We check that $(A\otimes B, \partial_m=d^A_m\otimes 1 + 1\otimes d^B_m)$ is a twisted complex:
 for all $m\geq 0$ we have
	\[
		\sum_{i+j=m} (-1)^i\partial_i\partial_j=\sum_{i+j=m}(-1)^i 
			(d^A_id ^A_j\otimes 1+1\otimes d^B_i d^B_j+d^A_i\otimes d^B_j+(-1)^{ij+(1-i)(1-j)} 
			d^A_j\otimes d^B_i)=0.
	\]
Similarly, one checks that $f\otimes g$ is a morphism of twisted complexes. 
It only remains to see that this construction is functorial.  A direct computation shows that 
	\[		((f\otimes g) \circ (f'\otimes g'))_m = (f\circ f'\otimes g \circ g')_m.\qedhere \]
\end{proof}

We extend the internal hom on bigraded modules to twisted complexes.  

\begin{lem}
Let $A, B$ be twisted complexes.
For $f\in [A,B]_u^v$, setting
	\[
	(d_if):=(-1)^{i(u+v)}d_i^Bf-(-1)^{v}f d_i^A,
	\]
for $i\geq 0$, endows $[A,B]_*^*$ with the structure of a twisted  complex.
\end{lem}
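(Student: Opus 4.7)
The plan is to verify the two defining properties of a twisted complex structure on $[A,B]_*^*$: (a) each $d_i$ is a morphism of bigraded modules of bidegree $(-i,-i+1)$, and (b) the family $\{d_i\}_{i\geq 0}$ satisfies the relations $\sum_{p+q=m}(-1)^p d_p d_q=0$ for every $m\geq 0$. Property (a) is immediate: for $f\in[A,B]_u^v$, both $d_i^B\circ f$ and $f\circ d_i^A$ lie in $[A,B]_{u-i}^{v-i+1}$ since $d_i^A$ and $d_i^B$ have bidegree $(-i,-i+1)$, and the scalar signs in the definition do not affect bidegrees.

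For property (b) I would proceed by direct expansion. Fix $m\geq 0$ and $f\in[A,B]_u^v$. By definition, $d_q f$ is the signed combination $(-1)^{q(u+v)}d_q^B f - (-1)^v f d_q^A$, and I would then apply $d_p$ to this expression, keeping track of the fact that $d_q f$ has bidegree $(u-q,v-q+1)$. This produces four families of terms indexed by the partitions $(p,q)$ of $m$: pure left compositions of type $d_p^B d_q^B f$; pure right compositions of type $f d_q^A d_p^A$; and mixed terms of type $d_p^B f d_q^A$ and $d_q^B f d_p^A$. The sum of the pure left terms collects, after gathering signs, as a constant multiple of $\sum_{p+q=m}(-1)^p d_p^B d_q^B f$ and therefore vanishes by relation $(A_{m1})$ for $B$; analogously the pure right terms vanish by $(A_{m1})$ for $A$ after relabelling $q\leftrightarrow p$. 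For the cross terms, one shows that for each pair with $p+q=m$ the contribution to a fixed operator $d_p^B f d_q^A$ coming from $d_p(d_q f)$ is cancelled by the contribution from $d_q(d_p f)$, so the cross terms cancel in pairs.

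The one real step of work, and what I expect to be the main (though routine) obstacle, is the sign bookkeeping. The key combinatorial identity is
\[
p(u-q+v-q+1)\ \equiv\ p(u+v+1)\pmod 2,
\]
which records how the shift of bidegree of $d_q f$ modifies the exponent $i(u+v)$ appearing in the definition of $d_i$; together with the simpler identity $(-1)^{2p}=1$, it makes the exponents in each of the three families collapse to forms that match exactly the cancellations described above. Once these signs have been unpacked, the three cancellations follow directly from the twisted complex relations for $A$ and $B$, and the lemma is proved.
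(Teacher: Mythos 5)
Your strategy---expand $\sum_{p+q=m}(-1)^p d_p(d_q f)$ into the four families $d_p^Bd_q^Bf$, $d_p^Bfd_q^A$, $d_q^Bfd_p^A$ and $fd_q^Ad_p^A$, kill the pure families by $(A_{m1})$ for $B$ and for $A$, and cancel the cross terms in pairs---is exactly the ``matter of calculation'' that the paper's one-line proof alludes to. But the step you defer as routine is where the argument fails, and your own key identity shows why. Since $d_qf$ has bidegree $(u-q,v-q+1)$, the prefactor on $d_p^B(d_qf)$ is $(-1)^{p(u+v+1)}=(-1)^{p(u+v)+p}$; combined with the outer $(-1)^p$ and the $(-1)^{q(u+v)}$ carried inside $d_qf$, the exponent on $d_p^Bd_q^Bf$ is $p+p(u+v)+p+q(u+v)\equiv m(u+v)\pmod 2$. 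The alternating sign has been absorbed, so the pure left family collects to $(-1)^{m(u+v)}\sum_{p+q=m}d_p^Bd_q^Bf$, \emph{without} the $(-1)^p$ that relation $(A_{m1})$ supplies; it is therefore not a consequence of $(A_{m1})$ and does not vanish in general. The same bookkeeping shows that the two cross families cancel each other only when $m$ is even.

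This is not merely a presentational gap: with the signs exactly as displayed the asserted relation fails. Take $A=R$ (so all $d_i^A=0$) and let $B$ have $R$-basis $e_{00}\in B_0^0$, $e_{01}\in B_0^1$, $e_{10}\in B_{-1}^0$, $e_{11}\in B_{-1}^1$, with $d_0e_{00}=e_{01}$, $d_0e_{10}=e_{11}$, $d_1e_{00}=e_{10}$, $d_1e_{01}=e_{11}$ and all other values zero; the relations $(A_{m1})$ hold for $B$. For $f\in[R,B]_0^0$ corresponding to $e_{00}$ one finds $d_0(d_1f)=d_0^Bd_1^Bf$ but $d_1(d_0f)=-d_1^Bd_0^Bf$ (the prefactor $(-1)^{1\cdot(0+1)}$ appears because $d_0f$ has bidegree $(0,1)$), so $\sum_{i+j=1}(-1)^id_i(d_jf)=2e_{11}\neq 0$ over $\ZZ$. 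A modified convention such as $d_if:=d_i^Bf-(-1)^{v+i(u+v)}fd_i^A$ does make all three cancellations go through in exactly the pattern you describe, so the shape of your argument is right; but the assertion that the exponents ``collapse to forms that match exactly the cancellations described'' is the entire content of the lemma, and it is not true of the displayed formula. A complete proof must write out the exponents explicitly, at which point the discrepancy becomes visible.
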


\begin{proof}
It is a matter of calculation that $\sum_i (-1)^i (d_i d_{m-i}f)=0$, for all $m\geq 0$.
Thus the maps $d_i: [A,B]_u^v\to [A,B]_{u-i}^{v-i+1}$ make $[A,B]_*^*$ into a twisted complex.
\end{proof}

This construction gives an internal hom and
we denote by $\Tc$ the category of twisted complexes enriched over 
itself via this symmetric  monoidal closed structure.

\begin{defi}Let $r\geq 0$ be a non-negative integer.
An \textit{$r$-bigraded complex} is a twisted complex $(A,d_m)$ such that $d_m=0$ for all $m\neq r$.
\end{defi}
Note that in this case, we have $d_r d_r=0$.
For $r=0$, this coincides with the notion of vertical bicomplex.
Denote by $\rtc$ the full subcategory of $\tc$ whose objects are $r$-bigraded complexes.
The prototypical example of such an object is given by the $r$-th term of the spectral sequence associated with a 
twisted complex, as we will see in Section \ref{section:SS_of_tc}.

\subsection{Total cochain complex of a twisted complex}
\hspace{0.5cm}
\begin{defi}
The \textit{total graded $R$-module $\Tot(A)$} of a bigraded $R$-module $A=\{A_i^j\}$ is 
given by
\[
\Tot(A)^n:=\prod_{i\leq 0} A_i^{n+i}\oplus\bigoplus_{i>0} A_i^{n+i}.\]
The \textit{column filtration of $\Tot(A)$} is the filtration given by
$F_p\Tot(A)^n:=\prod_{i\leq p} A_i^{n+i}$ for all $p,n\in\ZZ$.
\end{defi}

We will show that, via the totalization functor, the category of twisted complexes
is isomorphic to a full subcategory of that of filtered complexes.
We use the following.

\begin{defi}\label{def:sfc}
A filtered complex $(K,d,F)$ is said to be \textit{split} if $K=\Tot(A)$ 
is the total graded module of a bigraded $R$-module $A=\{A_i^j\}$ and $F$ is
the column filtration of $\Tot(A)$.  We denote by $\sfc$ the full subcategory of $\fc$ whose objects are split filtered complexes.

Given a twisted complex $(A,d_m)$, define a map $d:\Tot(A)\to \Tot(A)$ of degree $1$ by letting
\[d(a)_j:=\sum_{m\geq 0} (-1)^{mn}d_m(a_{j+m}),\text{ for }a=(a_i)_{i\in\ZZ}\in \Tot(A)^n,\]
where $a_i\in A_i^{n+i}$ denotes the $i$-th component of $a$, and $d(a)_j$ denotes the $j$-th component of $d(a)$.
Note that, for a given $j\in \ZZ$ there is a sufficiently large $m\geq 0$ 
such that $a_{j+m'}=0$ for all $m'\geq m$. Hence $d(a)_j$ is given by a finite sum.
Also, for $j$ sufficiently large, 
one has $a_{j+m}=0$ for all $m\geq 0$, which implies $d(a)_j=0$.

Given a morphism $f:(A,d_m)\to (B,d_m)$ of twisted complexes,
let $\Tot(f):\Tot(A)\to \Tot(B)$ be the map of degree $0$ defined by
\[(\Tot(f)(a))_j:=\sum_{m\geq 0} (-1)^{mn}f_m(a_{j+m}),\text{ for }a=(a_i)_{i\in\ZZ}\in \Tot(A)^n.\]\end{defi}

\begin{teo}\label{equiv_twisted_split}
The assignments $(A,d_m)\mapsto (\Tot(A),d,F)$, where $F$ is the column filtration of $\Tot(A)$,
and $f\mapsto \Tot(f)$
define a functor $\Tot:\tc\lra \fc$
which is an isomorphism of categories when restricted to its image $\sfc$.
\end{teo}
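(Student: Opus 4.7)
The plan is to verify directly that $\Tot$ is a well-defined functor landing in $\sfc$, and then construct an explicit inverse $\Phi : \sfc \to \tc$, thereby establishing the isomorphism. The proof divides into: well-definedness on objects, well-definedness on morphisms, functoriality, and construction of the inverse functor.

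First I would check that for $(A, d_m) \in \tc$ the differential $d$ on $\Tot(A)$ is well-defined: the sum defining $d(a)_j$ is finite (for fixed $j$, $a_{j+m}$ vanishes for $m$ sufficiently large), it has degree one by the bidegree $(-m,-m+1)$ of each $d_m$, and it preserves the column filtration (if $a_i = 0$ for $i > p$, then $a_{j+m} = 0$ for $j > p$ and all $m \geq 0$, so $d(a)_j = 0$). The identity $d^2 = 0$ unwinds, component-by-component, to
\[d^2(x) = \sum_{p \geq 0} (-1)^{p|x|}\Bigl(\sum_{i+j=p} (-1)^i d_i d_j(x)\Bigr)\]
on $x \in A_i^j$; the extra signs arise because applying $d$ to $d_{m}(x)$, which has total degree $|x|+1$, produces a factor $(-1)^{m'(|x|+1)}$ on the $m'$-th term. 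Each inner sum vanishes by $(A_{m1})$. A parallel calculation shows that $\Tot(f)$ has degree zero, preserves the column filtration, and satisfies $d^B \Tot(f) = \Tot(f) d^A$ via $(B_{m1})$. Functoriality $\Tot(g\circ f) = \Tot(g)\Tot(f)$ and $\Tot(1_A) = 1_{\Tot(A)}$ then follow from the defining formulas. By construction, the image of $\Tot$ is contained in $\sfc$.

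Next I would construct the inverse $\Phi$. Given $(\Tot(A), d, F) \in \sfc$, filtration preservation of $d$ implies that for $x \in A_i^j$ the column components $d(x)_\ell \in A_\ell^{|x|+1+\ell}$ vanish for $\ell > i$. Set
\[d_m(x) := (-1)^{m|x|}\, d(x)_{i-m} \in A_{i-m}^{j-m+1}, \quad m \geq 0,\]
which is manifestly of bidegree $(-m,-m+1)$. Reading the expansion of $d^2(x)$ above in reverse, the identity $d\circ d = 0$ becomes exactly the family of relations $(A_{m1})$, so $(A,d_m)$ is a twisted complex; substituting these $d_m$ back into the formula of Definition~\ref{def:sfc} recovers the original $d$. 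For a filtered chain map $g : \Tot(A) \to \Tot(B)$, the analogous formula $f_m(x) := (-1)^{m|x|}\, g(x)_{i-m}$ produces a morphism of twisted complexes, the relations $(B_{m1})$ being the column decomposition of $d^B g = g d^A$. These assignments are manifestly functorial and, by inspection of the formulas, mutually inverse to the action of $\Tot$ on objects and on morphisms. In particular, the image of $\Tot$ is all of $\sfc$.

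The main technical obstacle is the sign bookkeeping: one must confirm that the factor $(-1)^{m|x|}$ chosen in $\Phi$ matches the $(-1)^{mn}$ appearing in the formula for $\Tot$, and that the quadratic identity $d^2 = 0$ on $\Tot(A)$ decodes column-by-column into the full family $(A_{m1})$, with the analogous bookkeeping for the morphism relations $(B_{m1})$. Once this sign alignment is in place, the degree, filtration, functoriality, and inverse identities $\Phi \circ \Tot = \mathrm{Id}_{\tc}$ and $\Tot \circ \Phi = \mathrm{Id}_{\sfc}$ reduce to direct verification from the formulas.
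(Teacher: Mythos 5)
Your proposal is correct and follows essentially the same route as the paper: direct verification that $\Tot$ lands in $\sfc$, with the identity $d^2=0$ decoded column-by-column into the relations $(A_{m1})$ (and likewise for morphisms), followed by the explicit inverse given by $d_m(x)=(-1)^{m|x|}d(x)_{i-m}$, which matches the paper's sign convention $(-1)^{nm}$ since $|x|=n$. The only cosmetic issue is the reuse of $i,j$ both for the bidegree of $x$ and as summation indices in your displayed formula, but the argument is the same.
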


\begin{proof}
Let $(A,d_m)$ be a twisted complex and let $a=(a_i)_{i\in\ZZ}\in\Tot(A)^n$.
To see that  $(\Tot(A),d)$ is a complex it suffices to note that:
\[
(dd(a))_j=\sum_{p\geq 0}\sum_{m\geq 0}(-1)^{p(n+1)+mn} d_p(d_m(a_{j+m+p}))=\sum_{l\geq 0}(-1)^{ln}\sum_{\substack{m,p\geq 0,\\ m+p=l}}(-1)^{p}d_pd_m(a_{j+l})=0.\]
One easily verifies that $F_{p-1}\Tot(A)^n\subset F_p\Tot(A)^n$ and that $d(F_{p}\Tot(A)^n)\subset F_p\Tot(A)^{n+1}$.

Let $f:(A,d_m^A)\to (B,d_m^B)$ be a morphism of twisted complexes. If $a=(a_i)\in \Tot(A)^n$ then
\begin{multline*}
(\Tot(f)\circ d (a))_j=\sum_{m\geq 0}\sum_{p+q=m}(-1)^{p(n+1)}(-1)^{qn} f_pd_q(a_{j+m})=\\
\sum_{m\geq 0}(-1)^{mn}\sum_{p+q=m}d_qf_p(a_{j+m})=\sum_{m\geq 0}\sum_{p+q=m}(-1)^{(q+p)n}d_qf_p(a_{j+m})=(d\circ \Tot(f)(a))_j.
\end{multline*}
Note that $\Tot(f)$ is compatible with the filtration $F$ and that $\Tot(fg)=\Tot(f)\Tot(g)$.
This proves that $\Tot$ is a functor with values in the category of split filtered complexes.

We next define a functor $\Tot^{-1}:\sfc\rightarrow \tc$ inverse to the restriction of $\Tot$ onto its image. Let $(\Tot(A),d,F)$ be a split filtered complex,
where $A=\{A_i^j\}$ is a bigraded $R$-module.
For all $m\geq 0$, let $d_m:A\to A$ be the morphism of bidegree $(-m,-m+1)$ defined by
$d_m(a)=(-1)^{nm}d(a)_{i-m}$, where $a\in A_i^{n+i}$ and $d(a)_k$ denotes the $k$-th component of $d(a)$, which lies in $A_k^{n+1+k}$.
Since $d$ is compatible with the filtration $F$, we have $d_i=0$ for $i<0$.
Then $(A,d_m)$ is a twisted complex and its filtered total complex is $(\Tot(A),d,F)$.
Lastly, let $f:(\Tot(A),d,F)\to (\Tot(B),d,F)$ be a morphism of split filtered complexes. For all $m\geq 0$,
let $f_m:A\to B$ be the morphism of bidegree $(-m,-m)$ defined by
$f_m(a)=(-1)^{nm}f(a)_{i-m}$, where $a\in A_i^{n+i}$ and $f(a)_{k}$ denotes the $k$-th component of $f(a)$, which lies in $B_k^{n+k}$.
Since $f$ is compatible with the filtration $F$, we have that $f_i=0$ for $i<0$.
Then the family $\{f_m\}_{m\geq 0}$ is a morphism of twisted complexes whose total morphism is $f$.
It is straightforward to see that the above constructions define an inverse functor to the restriction of $\Tot$.
\end{proof}

\begin{rmk}
Strict morphisms of twisted complexes correspond, via the above isomorphism of categories,
to strict morphisms of split filtered complexes, that is, morphisms preserving the splittings.
\end{rmk}

We will also consider the following bounded versions of our categories, since the totalization functor has better properties when restricted to these.

\begin{defi}\label{bddcats}
We let 
$\btc$, $\bvbic$, $\bbimod$ be the full subcategories of $(\NN,\ZZ)$-graded twisted complexes, vertical bicomplexes and bigraded modules respectively. We let $\bfm$, $\bsfm$, $\bfc$, $\bsfc$ be the full subcategories of (split) non-negatively filtered modules, respectively complexes,
i.e. the full subcategories of objects $(K, F)$ such that $F_p K^n=0$ for all $p<0$.  We refer to all of these as the \textit{bounded subcategories} of $\tc$, $\vbic$, $\bimod$, $\fm$, $\sfm$,  $\fc$ and $\sfc$
respectively.
\end{defi}

In the following proposition, we show that the monoidal structures of twisted complexes and filtered complexes are compatible under the totalization functor.   

\begin{prop}\label{prop:lax}
The functors 
$\Tot:\bimod\to \fm$ and $\Tot:\tc\to \fc$
are lax symmetric monoidal, with structure maps
	\[
		\epsilon: R\rightarrow \Tot(R)
			\quad \quad \text{and} \quad \quad		
		\mu_{A,B}: \Tot(A)\otimes \Tot(B)\rightarrow \Tot(A\otimes B),
	\]
given by $\epsilon=1_R$ and for $a=(a_i)_i  \in\Tot(A)^{n_1}$ and $b=(b_j)_j\in\Tot(B)^{n_2}$,
	\[
		(\mu_{A,B}(a\otimes b))_k:=\sum_{k_1+k_2=k}(-1)^{k_1n_2} a_{k_1}\otimes b_{k_2} .
	\]
When restricted to the bounded case,
the functors 
$\Tot:\bbimod\to \bfm$ and
$\Tot:\btc\to \bfc$
are strong symmetric monoidal functors.
\end{prop}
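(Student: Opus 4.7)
The plan is to verify three points in order: (a) that $\epsilon$ and $\mu_{A,B}$ are well-defined morphisms in the target category; (b) that they satisfy the associativity, unit, and symmetry coherence axioms of a lax symmetric monoidal functor; and (c) that under the bounded hypothesis $\mu_{A,B}$ is in fact an isomorphism. The bigraded module statement will be obtained from (a)--(c) at the module level; the twisted complex statement will additionally require verifying that $\mu_{A,B}$ is compatible with the total differentials, and this last verification is the computational heart of the proof.

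For well-definedness of $\mu_{A,B}$, I would fix $k$ and degrees $n_1,n_2$, and check that the sum $\sum_{k_1+k_2=k}(-1)^{k_1 n_2} a_{k_1}\otimes b_{k_2}$ has only finitely many nonzero summands, so that it represents an element of the direct sum $(A\otimes B)_k^{n_1+n_2+k}$. This holds because $a_{k_1}=0$ for $k_1$ sufficiently large positive (by the direct-sum part of the definition of $\Tot(A)^{n_1}$), and similarly for $b_{k_2}$: if $k_1\to+\infty$ the first factor vanishes eventually, and if $k_1\to-\infty$ then $k_2=k-k_1\to+\infty$ and the second vanishes eventually. Compatibility with the column filtration is then immediate: if $a_{k_1}=0$ for $k_1>p$ and $b_{k_2}=0$ for $k_2>q$, every term with $k_1+k_2>p+q$ vanishes, so $\mu_{A,B}(F_p\Tot(A)\otimes F_q\Tot(B))\subseteq F_{p+q}\Tot(A\otimes B)$. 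The unit map $\epsilon=1_R$ is trivially filtered since $\Tot(R)=R$ sits in filtration degree $0$. This settles the bigraded module statement.

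In the twisted complex case I would then verify $d_{\Tot(A\otimes B)}\circ\mu_{A,B}=\mu_{A,B}\circ d_{\Tot(A)\otimes\Tot(B)}$ by direct expansion. Both sides become sums indexed by an auxiliary $m\geq 0$ (from the definition of $d_{\Tot}$) and by $k_1,k_2$, with summands of the form $\pm d^A_p(a_*)\otimes b_*$ and $\pm a_*\otimes d^B_p(b_*)$. The signs to reconcile are: the Leibniz sign $(-1)^{n_1}$ on the $a\otimes d(b)$ term in the source tensor differential, the sign $(-1)^{mn}$ built into $d_{\Tot}$, the twist sign $(-1)^{k_1 n_2}$ in $\mu$, and the Koszul sign produced when $1\otimes d^B_p$ is pushed past $a_{k_1}$. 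A careful term-by-term comparison will show these combine consistently on the two sides. This sign bookkeeping is, in my view, the main obstacle of the whole argument; everything else is combinatorial regrouping.

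The coherence axioms would then follow in the same spirit. Associativity reduces to showing that both $\mu_{A\otimes B,C}\circ(\mu_{A,B}\otimes 1)$ and $\mu_{A,B\otimes C}\circ(1\otimes\mu_{B,C})$ produce the same triple sum $\sum_{k_1+k_2+k_3=k}\pm a_{k_1}\otimes b_{k_2}\otimes c_{k_3}$. The unit triangles collapse trivially because $\Tot(R)=R$ in degree zero with $\mu_{R,A}$ and $\mu_{A,R}$ reducing to the standard unit isomorphisms. For symmetry, I would compare $\mu_{B,A}\circ\tau^{\fc}_{\Tot(A),\Tot(B)}$ with $\Tot(\tau^{\tc}_{A,B})\circ\mu_{A,B}$; both produce a signed swap, and a matching of the Koszul sign $(-1)^{n_1 n_2}$ in $\fc$ against the bidegree sign $(-1)^{\langle a_{k_1},b_{k_2}\rangle}$ in $\tc$ combined with the twist signs in $\mu$ will give equality. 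Finally, in the bounded case all products over non-positive indices disappear, and $\Tot$ becomes simply $A\mapsto\bigoplus_{i\geq 0}A_i^{*+i}$; then $\Tot(A)\otimes\Tot(B)$ and $\Tot(A\otimes B)$ both equal $\bigoplus_{k_1,k_2\geq 0}A_{k_1}^{*}\otimes B_{k_2}^{*}$ after regrouping by $k=k_1+k_2$, so $\mu_{A,B}$ is the canonical regrouping isomorphism, with inverse given by projecting onto the individual $(k_1,k_2)$-summands.
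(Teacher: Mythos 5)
Your proposal follows the same route as the paper's proof: direct verification that $\epsilon$ and $\mu_{A,B}$ are morphisms of filtered modules (resp.\ filtered complexes), followed by the coherence checks, with the bounded case handled by distributivity of $\otimes$ over $\oplus$. Your treatment of well-definedness (finiteness of the sum defining $\mu_{A,B}(a\otimes b)_k$) and of the filtration estimate is in fact more explicit than the paper's, and your sign analyses for symmetry, associativity, the unit triangles and the compatibility with the total differential are consistent with the computations the paper records or defers to the reader.

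The one concrete omission is naturality of $\mu$. A lax monoidal structure requires $\mu$ to be a natural transformation $\Tot(-)\otimes\Tot(-)\Rightarrow\Tot(-\otimes-)$, i.e.\ $\Tot(f\otimes g)\circ\mu_{A,B}=\mu_{A',B'}\circ(\Tot(f)\otimes\Tot(g))$ for all morphisms $f:A\to A'$, $g:B\to B'$. This is not on your checklist, and in the $\tc\to\fc$ case it is not automatic: morphisms of twisted complexes have higher components $f_m$, the totalization $\Tot(f)$ carries the signs $(-1)^{mn}$, and $(f\otimes g)_m=\sum_{i+j=m} f_i\otimes g_j$ carries Koszul signs, so the two composites must be matched term by term --- the paper devotes one of its two displayed computations to precisely this verification. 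The check does succeed, so nothing in your argument fails, but the step needs to be added for the proof to establish that $(\Tot,\epsilon,\mu)$ is lax monoidal; for $\bimod\to\fm$ it is much easier since morphisms there are strict of bidegree $(0,0)$.
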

\begin{proof}
Clearly $\epsilon$ is a map of filtered complexes and a direct computation shows that the same is true for $\mu_{A,B}$.  We now show that $\mu_{A,B}$ respects the symmetric structure i.e., that Diagram (1) commutes.
\[\begin{array}{cc}
\begin{tikzpicture}[scale=0.01]
  \node (a) {$\Tot(A)\otimes \Tot(B)$};
  \node[right=1cm of a] (b) {$\Tot(A\otimes B)$};
  \node[below= 1.5cm of a] (c) {$\Tot(B)\otimes \Tot(A)$};
  \node[below= 1.5cm of b] (d) {$\Tot(B\otimes A)$};
  \draw[->]
   (a) edge node[above] {$\mu_{A,B}$}  (b)
   (a) edge node[left] {$\tau^{\fc}_{A,B}$} (c)
   (c) edge node[below] {$\mu_{B,A}$}  (d)
   (b) edge node[right] {$\Tot(\tau^{\tc}_{A,B})$} (d);
\end{tikzpicture} 
& 
\begin{tikzpicture}[scale=0.01]
  \node (a) {$\Tot(A)\otimes \Tot(B)$};
  \node[right=1cm of a] (b) {$\Tot(A\otimes B)$};
  \node[below= 1.5cm of a] (c) {$\Tot(A')\otimes \Tot(B')$};
  \node[below= 1.5cm of b] (d) {$\Tot(A'\otimes B')$};
  \draw[->]
   (a) edge node[above] {$\mu_{A,B}$}  (b)
   (a) edge node[left] {$\Tot(f)\otimes\Tot(g)$} (c)
   (c) edge node[below] {$\mu_{A',B'}$}  (d)
   (b) edge node[right] {$\Tot(f\otimes g)$} (d);
\end{tikzpicture}\\
(1) & (2)
\end{array}\]
Let $a\otimes b \in \Tot(A)^{n_1}\otimes \Tot(B)^{n_2}$, with $n_1+n_2=n$.  Then
\begin{align*}
(
\Tot(\tau_{A,B}^{\tc})\mu_{A,B}(a\otimes b))_k
&=\sum_{k_1+k_2=k} (-1)^{k_1n_2+k_1k_2+(k_1+n_1)(k_2+n_2)} b_{k_2}\otimes a_{k_1}\\
&= \sum_{k_1+k_2=k} (-1)^{n_1n_2+k_2n_1} b_{k_2}\otimes a_{k_1}\\
&=(\mu_{B,A}\tau_{A,B}^{\fc}(a\otimes b))_k.
\end{align*}

The commutativity of Diagram (2) is obtained from the following computation.
Let $a\otimes b \in \Tot(A)^{n_1}\otimes \Tot(B)^{n_2}$, with $n_1+n_2=n$.  Calculating one composite we get
\begin{align*}
	\left(\Tot(f \otimes g)\circ\mu_{A,B})(a\otimes b)\right)_{j}
		&=\sum_{m\geq 0}(-1)^{mn} (f\otimes g)_m((\mu_{A,B}(a\otimes b))_{j+m}) \\
		&=\sum_{\substack{m_1,m_2\geq 0\\ k_1+k_2=j}}(-1)^{mn+(k_1+m_1)n_2+m_2n_1}  
			f_{m_1}(a_{k_1+m_1})\otimes g_{m_2}(b_{k_2+m_2}),
\end{align*}
where $m=m_1+m_2$.
On the other hand, evaluating the other composite we get
\begin{align*}
	(\mu_{A',B'}\circ\Tot(f)\otimes \Tot(g))(a\otimes b))_{j}
		&=\sum_{k_1+k_2=j}(-1)^{k_1n_2} \Tot(f)(a)_{k_1{}}\otimes\Tot(g)(b)_{k_2 {}}\\
		&=\sum_{\substack{m_1,m_2\geq 0\\ k_1+k_2=j}}(-1)^{k_1n_2+m_1n_1+m_2n_2}  
			f_{m_1}(a_{k_1+m_1})\otimes g_{m_2}(b_{k_2+m_2}),
\end{align*}
showing that the equality holds.

The coherence axioms are left to the reader.  In the bounded case $\Tot$ is strong symmetric monoidal since $\otimes$ distributes over $\oplus$, therefore the natural transformation $\mu$ is a natural isomorphism.
\end{proof}

We summarize the categories we study and their relations in the following commutative diagram. 
\medskip

 \begin{center}
 \begin{tikzpicture}[scale=0.03]
   \node (a) {$\cpx$};
   \node[right=2cm of a] (b) {$\vbic$};
   \node[above=2cm of b] (x) {$\bimod$};
   \node[right= 2cm of b] (c) {$\tc$};
   \node[above=2cm of c] (y) {$\bimodinf$};
   \node[right= 2cm of c] (d) {$\bimodinf$};
   \node[below=2cm of c] (e) {$\fc$};
   \node[left=2cm of e] (g) {$\sfc$};
   \node[below=1.97cm of d] (f) {$\fm$};
    \node[right=2cm of f] (h) {$\sfm$};
   \draw[->]
     (a) edge [right hook->] node[above] {$*$}  (b)
     (b) edge [right hook->]  (c)
     (c) edge node[above]{$U$} (d)
     (c) edge [right hook->] node [left]{$\Tot$} node [right]{$\ast$} (e)
     (d) edge [left hook->] node [right]{$\Tot$} node [left]{$\ast$} (f)
     (e) edge node [below] {$U$}(f)
     (x) edge [right hook->] node[right] {$*$} (b)
     (y) edge [right hook->]  node[right] {$*$} (c)
     (y) edge node[right] {$1_{\bimodinf}$} (d)
     (x) edge [right hook->]  (y)     
     (g) edge [right hook->] node[above] {$*$}  (e)
     (h) edge [left hook->] node[above] {$*$}  (f);
   \draw[dashed,->]
     (c) edge node[above] {$\cong$}  (g)
     (d) edge node[above] {$\cong$}  (h);
 \end{tikzpicture}
\end{center}
All hooked arrows are embeddings; arrows with a $\ast$ are full embeddings.  We embed bigraded modules in vertical bicomplexes and twisted chain complexes by assigning them trivial differentials.  The forgetful functors $U$ forget the differential structure.  All functors are strong symmetric monoidal, except for $\Tot$.  This is strong symmetric monoidal when restricted to the full subcategories of bounded objects; it is only lax symmetric monoidal otherwise.

\subsection{Spectral sequence associated to a twisted complex}\label{section:SS_of_tc}
Every twisted complex $(A,d_m)$ has an associated spectral sequence
\[E_r^{*,*}(A,d_m):=E_r^{*,*}(\Tot(A,d_m)),\]
which is functorial for morphisms of twisted complexes.
Denote by $\delta_r$ the differential of the $r$-th term.
We choose the bigrading in such a way that for all $r\geq 0$, the pair $(E_r(A,d_m),\delta_r)$ is an $r$-bigraded complex,
so we have a functor
$E_r:\tc\lra \rtc.$
With this choice we have
$E_0^{p,q}(A,d_m)=A_p^{q}$ and $\delta_0=d_0$. For $r\geq 1$, we have 
$E_{r}^{p,q}(A,d_m)= H^*(E_{r-1}^{p,q}(A,d_m),\delta_{r-1})$
and the map $\delta_{r}$ depends on the maps $d_m$ for $m\leq r$. 

The map $\delta_r$ is induced by $d_r$ only on those classes
that have a representative $a\in A_i^j$ for which $d_k(a)=0$ for all $k<r$.
In particular, for $r=1$ we have
\[E_1^{p,q}(A,d_m)= H^{q}(A_p^{*},d_0)={\frac{A_p^{q}\cap \Ker(d_0)}{d_0(A_p^{q-1})}} \text{ and }\delta_1=H_{d_0}(d_1).\]

The morphism
of spectral sequences 
\[E_r(f):=E_r(\Tot(f)):E_r^{*,*}(A,d_m^A)\to E_r^{*,*}(B,d_m^B)\]
associated with a morphism of twisted complexes $f:(A,d_m^A)\to (B,d_m^B)$
is given by
$E_0(f)=f_0$ and $E_{r}(f)= H(E_{r-1}(f),\delta_{r-1})$ for $r\geq 1$.
In particular, for $r=1$ we have $E_1(f)= H_{d_0}(f_0)$.
We refer to \cite{Boardman} and \cite{Hurtubise} for further properties
of the spectral sequence associated to a twisted complex.

For the rest of this section, let $r\geq 0$ be an integer. 
We shall consider the following notion of weak equivalence in the category of twisted complexes.

\begin{defi}A morphism of twisted complexes $f:A\to B$ is called an \textit{$E_r$-quasi-isomorphism} if
the morphism $E_r^{*,*}(f):E_r^{*,*}(A)\to E_r^{*,*}(B)$ at the $r$-stage of the associated spectral 
sequence is a quasi-isomorphism of $r$-bigraded complexes (that is, $E_{r+1}^{*,*}(f)$ is an isomorphism).
\end{defi}

Denote by $\Ee_r$ the class of $E_r$-quasi-isomorphisms of $\tc$. This class is closed under composition and 
contains all isomorphisms of $\tc$. Denote by
\[\Ho_r(\tc):=\tc[\Ee_r^{-1}]\] the \textit{$r$-homotopy category of twisted complexes}
defined by inverting $E_r$-quasi-isomorphisms.
Since $\Ee_{r}\subset \Ee_{r+1}$ for all $r\geq 0$, we have a chain of functors
\[\Ho_0(\tc)\lra \Ho_1(\tc)\lra \cdots \lra \Ho_r(\tc)\lra \cdots\]

\begin{rmk}
The class $\Ee_1$ of $E_1$-quasi-isomorphisms corresponds to the class of
\textit{weak multiequivalences} defined by Huebschmann \cite{Huebschmann}
and the class of \textit{$E_2$-equivalences} considered by Sagave \cite{Sag10}.
\end{rmk}

\subsection{$r$-homotopies and $r$-homotopy equivalences}\label{rhomot_twisted}
We next define a collection of functorial paths indexed by an integer $r\geq 0$ on the category of twisted complexes,
giving rise to the corresponding notions of $r$-homotopy. 

\begin{defi}
\label{def:rpathtc}
The \textit{$r$-path of a twisted complex $(A,d_m)$} is the twisted complex given by
\[P_r(A)_{i}^j:=A_i^j\oplus A_{i+r}^{j+r-1}\oplus A_i^j,\]
with the maps $D_m:P_r(A)\to P_r(A)$ of bidegree $(-m,-m+1)$ given by
\[D_r:=\left(
\begin{matrix}
 d_r&0&0\\
 -1&-d_r&1\\
 0&0&d_r
\end{matrix}
\right)\text{ and }
D_m:=\left(
\begin{matrix}
 d_m&0&0\\
 0&(-1)^{m+r+1}d_m&0\\
 0&0&d_m
\end{matrix}
\right)\text{ for }m\neq r.\]
\end{defi}

For all $m\geq 0$ we have
$\sum_{i+j=m}(-1)^iD_iD_j=0$.
Hence $(P_r(A),D_m)$ is indeed a twisted complex.

We have strict morphisms of twisted complexes
\[\xymatrix{A\ar[r]^-{\iota_A}&P_r(A) \ar@<1ex>[r]^{\partial^+_A} \ar@<-1ex>[r]_{\partial^-_A}&A}\,\,\,;\,\,\, \partial^\pm_A\circ \iota_A=1_A,\]
given by $\partial^-_A(x,y,z)=x$, $\partial^+_A(x,y,z)=z$ and $\iota_A(x)=(x,0,x)$.
We will denote by $\partial_A^0:P_r(A)\to A$ the map of bidegree $(r,r-1)$ given by $(x,y,z)\mapsto y$. 
We will often omit the subscripts of these maps when there is no danger of confusion.
These maps make the $r$-path of a twisted complex into a path object in the standard
sense of homotopical algebra (see Lemma $\ref{iotahtp}$ below).

\begin{defi}
The \textit{$r$-path of a morphism} $f:(A,d_m^A)\to (B,d_m^B)$ of twisted complexes
is the morphism of twisted complexes $P_r(f):(P_r(A),D_m^A)\to (P_r(B),D_m^B)$ given by
\[P_r(f)_m:=(f_m, (-1)^{m}f_m,f_m).\]
\end{defi}
The above definitions give rise to a functorial path $P_r:\tc\to \tc$ in the category of twisted complexes.
This gives a natural notion of homotopy.

\begin{defi}
\label{def:rhtwcx}
Let $f,g:A\to B$ be two morphisms of twisted complexes.
An \textit{$r$-homotopy from $f$ to $g$} is given by a morphism 
of twisted complexes $h:A\to P_r(B)$ such that $\partial^-_B\circ h=f$ and $\partial^+_B\circ h=g$.
We use the notation $h:f\simr{r} g$.
\end{defi}

\begin{rmk}\label{comparison_paths_twisted}
Let $\Lambda_r$ be the $r$-bigraded complex generated by $e_-$, $e_+$ in bidegree $(0,0)$ and $u$ in bidegree
$(-r,1-r)$, with the differential $\delta_r(e_-)=-u$, $\delta_r(e_+)=u$ and $\delta_i(e_\pm)=0$ for all $i\not=r$.
Then the assignment
  $(x,y,z)\mapsto e_-\otimes x+u\otimes y+e_+\otimes z$ defines a strict isomorphism of twisted complexes
from the $r$-path $(P_r(A),D_m)$ of a twisted complex $(A,d_m)$ to the twisted complex $(\Lambda_r\otimes A,\partial_m)$
where $\partial_m=\delta_m\otimes 1+1\otimes d_m$. 
\end{rmk}

\begin{prop}\label{equivalent_notion_htp}Let $f,g:(A,d_m^A)\to (B,d_m^B)$ be two morphisms of twisted complexes.
Giving an $r$-homotopy $h:f\simr{r} g$ is equivalent to giving a collection of morphisms $\widehat h_m:A\to B$ of
bidegree $(-m+r,-m+r-1)$ such that for all $m\geq 0$,
\begin{equation}
\sum_{i+j=m} (-1)^{i+r}d_i^B\widehat h_j+(-1)^i\widehat h_id_j^A=
\left\{ 
\begin{array}{ll}
0&\text{ if }m<r,\\
g_{m-r}-f_{m-r}&\text{ if }m\geq r.
\end{array}\right.\tag{$H_{m1}$}
\end{equation}
\end{prop}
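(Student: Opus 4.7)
The plan is to expand the definition of $r$-homotopy as a morphism into $P_r(B)$ and match it against the data $\{\widehat h_m\}$.

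First I would observe that, since $P_r(B)_i^j = B_i^j \oplus B_{i+r}^{j+r-1} \oplus B_i^j$, any family $\{h_m : A \to P_r(B)\}_{m\geq 0}$ of morphisms of bidegree $(-m,-m)$ decomposes uniquely as $h_m = (h_m^-, \widehat h_m, h_m^+)$, with $h_m^\pm : A \to B$ of bidegree $(-m,-m)$ and $\widehat h_m : A \to B$ of bidegree $(-m+r, -m+r-1)$ (the required bidegree, dictated by the shift built into the middle summand of $P_r(B)$). The boundary conditions $\partial_B^- \circ h = f$ and $\partial_B^+ \circ h = g$ pin down $h_m^- = f_m$ and $h_m^+ = g_m$, so the residual data is precisely $\{\widehat h_m\}$.

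Next I would compute $(B_{m1})$ for $h$ componentwise, using the matrix description of $D_i^{P_r(B)}$ from Definition~\ref{def:rpathtc}. Applied to $h_j(a) = (f_j(a), \widehat h_j(a), g_j(a))$, the first and third components of $D_i^{P_r(B)} h_j$ are $d_i^B f_j$ and $d_i^B g_j$, so the first and third components of $(B_{m1})$ for $h$ reduce to $(B_{m1})$ for $f$ and $g$, both automatic. The middle component of $D_i^{P_r(B)} h_j$ is $(-1)^{i+r+1} d_i^B \widehat h_j$ when $i \neq r$, and when $i = r$ it is $-f_j - d_r^B \widehat h_j + g_j$; the latter agrees with $(-1)^{i+r+1} d_r^B \widehat h_j$ on the diagonal plus the extra inhomogeneity $g_j - f_j$ coming from the off-diagonal $\pm 1$ entries of $D_r$.

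Summing over $i+j=m$, the middle component of $(B_{m1})$ for $h$ becomes
\[\sum_{i+j=m}(-1)^{i+r+1} d_i^B \widehat h_j + \epsilon_m = \sum_{i+j=m}(-1)^i \widehat h_i d_j^A,\]
where $\epsilon_m = g_{m-r} - f_{m-r}$ for $m \geq r$ and $\epsilon_m = 0$ for $m < r$. Using $(-1)^{i+r+1} = -(-1)^{i+r}$ and rearranging yields $(H_{m1})$. The converse direction reverses the same calculation: given $\{\widehat h_m\}$ satisfying $(H_{m1})$, set $h_m := (f_m, \widehat h_m, g_m)$ and $(B_{m1})$ for $h$ follows, while $\partial_B^\pm \circ h = f, g$ are immediate. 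I do not expect any serious obstacle beyond sign bookkeeping; the one delicate point is isolating the contribution of the off-diagonal $\pm 1$ entries of $D_r$ (which produce the inhomogeneous term $g_{m-r} - f_{m-r}$) from that of the diagonal entry, whose sign is already absorbed into the uniform factor $(-1)^{i+r+1}$.
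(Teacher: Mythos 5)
Your proposal is correct and follows the same route as the paper: decompose $h_m$ as $(f_m,\widehat h_m,g_m)$ using the boundary conditions, then read off the middle component of $(B_{m1})$ for $h$ against the matrix form of $D_i^{P_r(B)}$. The paper leaves this computation as "a matter of verification"; your sign bookkeeping (in particular isolating the off-diagonal contribution of $D_r$ as the inhomogeneous term $g_{m-r}-f_{m-r}$ and converting $(-1)^{i+r+1}$ to $-(-1)^{i+r}$) checks out.
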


\begin{proof}
Let $h:f\simr{r} g$. For every $m\geq 0$ we may write $h_m(x)=(f_m(x),\widehat h_m(x),g_m(x))$,
where $\widehat h_m=\partial^0_Bh_m$.
It is a matter of verification to see that
the family $\{\widehat h_m\}_{m\geq 0}$ satisfies $(H_{m1})$ for all $m\geq 0$.
Conversely, one may check that given a family $\{\widehat h_m\}_{m\geq 0}$ satisfying $(H_{m1})$, then the family
$h_m(x):=(f_m(x),\widehat h_m(x),g_m(x))$ satisfies
\[\sum_{i+j=m}(-1)^ih_id_j^A=\sum_{i+j=m}D_i^Bh_j.\qedhere\] 
\end{proof}

\begin{rmk}For $r=1$ we recover the notion of homotopy 
between morphisms of twisted complexes first introduced by Meyer \cite{Meyer}, also considered by Saneblidze \cite{Saneblidze}
and Huebschmann \cite{Huebschmann}.
Up to signs and forgetting bigradings, our notion of $r$-homotopy is also related to the notion of 
$(r)$-homotopy between morphisms of $D_\infty^{(r)}$-modules
introduced by Lapin in \cite{Lapin2001}.
\end{rmk}

\begin{lem}\label{homotopies_tot}
Let $f,g:(A,d_m^A)\to (B,d_m^B)$ be morphisms of twisted complexes. Giving an $r$-homotopy $h:f\simr{r}g$ is equivalent to giving a
homotopy of order $r$,
$\widehat H:\Tot(A)^*\to \Tot(B)^{*-1}$, from $\Tot(f)$ to $\Tot(g)$, that is,
$\widehat H(F_p\Tot(A))\subset F_{p+r}(\Tot(B))$, where $F$ is the column filtration.
\end{lem}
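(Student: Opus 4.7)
The plan is to combine Proposition~\ref{equivalent_notion_htp}, which converts $r$-homotopies into families of bigraded maps $\{\widehat h_m\}_{m \geq 0}$ of bidegree $(-m+r,-m+r-1)$ satisfying the explicit relations $(H_{m1})$, with the construction of $\Tot$ from Theorem~\ref{equiv_twisted_split}, mirroring it in order to totalize the family $\{\widehat h_m\}$ into the desired order-$r$ homotopy $\widehat H$.

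First I would define $\widehat H: \Tot(A)^n \to \Tot(B)^{n-1}$ by the formula
\[
(\widehat H(a))_j := \sum_{m \geq 0} \varepsilon_{m,n,r}\, \widehat h_m(a_{j+m-r}),
\]
for a suitable sign $\varepsilon_{m,n,r}$. The bidegree of $\widehat h_m$ ensures that each summand lies in $B_j^{n-1+j}$, and finiteness/well-definedness of $\widehat H(a)$ follow from the finite support of $a$ at positive indices, exactly as in the definition of $\Tot(f)$. The filtration shift $\widehat H(F_p\Tot(A)) \subseteq F_{p+r}\Tot(B)$ is immediate: if $a_i = 0$ for $i > p$, then the $j$-th component of $\widehat H(a)$ vanishes as soon as $j + m - r > p$ for every $m \geq 0$, i.e.\ whenever $j > p+r$.

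The core computation is the Cartan--Eilenberg homotopy relation $d\widehat H + \widehat H d = \Tot(g) - \Tot(f)$. Expanding both composites componentwise and gathering terms with the same shift $k = p+m$, each component of $a$ at index $j+k-r$ acquires a coefficient of the form
\[
\sum_{p+m=k} \bigl[(\pm)\, d_p^B \widehat h_m + (\pm)\, \widehat h_m d_p^A\bigr],
\]
which by $(H_{k1})$ vanishes for $k < r$ and equals $\pm(g_{k-r} - f_{k-r})$ for $k \geq r$. Matching the resulting overall scalar against the factor $(-1)^{(k-r)n}$ on the corresponding component of $\Tot(g) - \Tot(f)$ pins down the sign uniquely; a short calculation gives $\varepsilon_{m,n,r} = (-1)^{mn + r(n+1)}$.

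For the converse, given $\widehat H$ of degree $-1$ satisfying the homotopy relation and the filtration shift, I would reconstruct $\widehat h_m$ by projecting onto bihomogeneous components, following the recipe in Theorem~\ref{equiv_twisted_split}: for $a \in A_i^{n+i}$ set $\widehat h_m(a) := \varepsilon_{m,n,r}^{-1}\,\widehat H(a)_{i-m+r}$. The filtration shift forces $\widehat h_m = 0$ for $m < 0$, the bidegrees match mechanically, and the homotopy relation translates componentwise back into the relations $(H_{m1})$. The main obstacle is disciplined sign bookkeeping; the signs are essentially forced by the computation above, and the rest is a straightforward componentwise translation, entirely parallel to the one already used in Theorem~\ref{equiv_twisted_split} to identify $\tc$ with $\sfc$ via $\Tot$.
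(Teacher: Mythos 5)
Your proposal is correct, but it takes a more computational route than the paper. The paper's proof never writes down a formula for $\widehat H$ at all: it applies the functor $\Tot$ to the $r$-homotopy $h\colon A\to P_r(B)$ itself, observes that
\[
F_p\bigl(\Tot(P_r(B))\bigr)^n = F_p\Tot(B)^n\oplus F_{p+r}\Tot(B)^{n-1}\oplus F_p\Tot(B)^n,
\]
and reads off $\widehat H$ as the middle component of $\Tot(h)(a)=(\Tot(f)(a),\widehat H(a),\Tot(g)(a))$; the Cartan--Eilenberg relation $d\widehat H+\widehat H d=\Tot(g)-\Tot(f)$ then falls out of the fact that $\Tot(h)$ commutes with the differentials, given the definition of $D_r$ on the path, and the converse is handled by packaging $(\Tot(f),\widehat H,\Tot(g))$ into a filtered map $\Tot(A)\to\Tot(P_r(B))$ and invoking the inverse functor from Theorem~\ref{equiv_twisted_split}. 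You instead route through Proposition~\ref{equivalent_notion_htp}, totalize the family $\{\widehat h_m\}$ by hand, and verify $(H_{m1})\Leftrightarrow$ the order-$r$ homotopy relation componentwise. Your sign $\varepsilon_{m,n,r}=(-1)^{mn+r(n+1)}$ does check out: gathering the coefficient of $a_{j+k-r}$ in $(d\widehat H+\widehat Hd)(a)_j$ yields $(-1)^{(k+r)n}$ times the left-hand side of $(H_{k1})$, which matches the coefficient $(-1)^{(k-r)n}$ of $a_{j+k-r}$ in $(\Tot(g)-\Tot(f))(a)_j$. What the paper's argument buys is the complete avoidance of this sign bookkeeping (all signs are absorbed into the already-proven functoriality of $\Tot$ and the definition of $D_m$ on $P_r(B)$); what yours buys is an explicit closed formula for $\widehat H$ in terms of the $\widehat h_m$, which the paper's proof leaves implicit. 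Both directions of your argument are sound, with the converse correctly reduced to the componentwise reconstruction already used in the proof of Theorem~\ref{equiv_twisted_split}.
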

\begin{proof}
Given an $r$-homotopy $h:A\to P_r(B)$ from $f$ to $g$, we obtain a morphism of filtered complexes
$\Tot(h):\Tot(A)\lra \Tot(P_r(B))$. Since
$F_p(\Tot(P_r(B)))^n =F_p\Tot(B)^n\oplus F_{p+r}\Tot(B)^{n-1}\oplus F_p\Tot(B)^n$,
we may write
$\Tot(h)(a)=(\Tot(f)(a),\widehat{H}(a),\Tot(g)(a))$, where $\widehat H:\Tot(A)^*\to \Tot(B)^{*-1}$ satisfies the desired conditions.

Conversely, given $\widehat H:\Tot(A)^*\to \Tot(B)^{*-1}$
such that $d\widehat H+\widehat H d=\Tot(g)-\Tot(f)$
and $\widehat{H}(F_pA)\subset F_{p+r}B$
we define a 
morphism of filtered complexes $H:\Tot(A)\to \Tot(P_r(B))$
by letting $H(a):=(\Tot(f)(a),\widehat{H}(a),\Tot(g)(a))$.
By Theorem $\ref{equiv_twisted_split}$, there is a morphism $h:A\to P_r(B)$ of twisted complexes such that $\Tot(h)=H$. By construction,
$h$ is an $r$-homotopy from $f$ to $g$.
\end{proof}

\begin{prop}\label{equivrelTC}
The notion of $r$-homotopy defines an equivalence relation on the set of morphisms between two given twisted complexes,
which is compatible with the composition.
\end{prop}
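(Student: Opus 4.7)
My plan is to reduce both assertions (equivalence relation and compatibility with composition) to a purely linear statement about the data carrying an $r$-homotopy, by exploiting the two reformulations already established in Proposition~\ref{equivalent_notion_htp} and Lemma~\ref{homotopies_tot}. The equations $(H_{m1})$ are $R$-linear in the family $\{\widehat h_m\}$, with right-hand side a signed difference $g_{m-r}-f_{m-r}$, so all three equivalence relation axioms follow from the corresponding linear operations on the $\widehat h_m$'s. Compatibility with composition is cleanest to check on totalizations.

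First, working with the description of $r$-homotopies as families $\{\widehat h_m\}_{m\geq 0}$ satisfying $(H_{m1})$, reflexivity will follow by taking $\widehat h_m = 0$ for all $m$: this gives an $r$-homotopy from $f$ to itself because both sides of $(H_{m1})$ vanish. For symmetry, I will verify directly that if $\{\widehat h_m\}$ witnesses $f\simr{r} g$, then $\{-\widehat h_m\}$ witnesses $g\simr{r} f$, since negating multiplies the right-hand side by $-1$, turning $g_{m-r}-f_{m-r}$ into $f_{m-r}-g_{m-r}$. For transitivity, given $\{\widehat h^1_m\}$ realizing $f\simr{r} g$ and $\{\widehat h^2_m\}$ realizing $g\simr{r} h$, the sum $\{\widehat h^1_m+\widehat h^2_m\}$ satisfies $(H_{m1})$ with right-hand side $(g_{m-r}-f_{m-r})+(h_{m-r}-g_{m-r})=h_{m-r}-f_{m-r}$, so it realizes $f\simr{r} h$.

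For compatibility with composition, I will pass through Lemma~\ref{homotopies_tot}. Given $h:f\simr{r} g$ with $f,g\colon A\to B$, and morphisms $\phi\colon C\to A$, $\psi\colon B\to D$ of twisted complexes, I will consider the map
\[
\widetilde{H} := \Tot(\psi)\circ \widehat H\circ \Tot(\phi)\colon \Tot(C)^*\to \Tot(D)^{*-1},
\]
where $\widehat H$ is the homotopy of order $r$ associated with $h$ by Lemma~\ref{homotopies_tot}. Since $\Tot(\psi)$ and $\Tot(\phi)$ are chain maps compatible with column filtrations, a direct computation gives
\[
d\widetilde H + \widetilde H d = \Tot(\psi)\bigl(d\widehat H + \widehat H d\bigr)\Tot(\phi) = \Tot(\psi g\phi)-\Tot(\psi f\phi),
\]
and $\widetilde H(F_p\Tot(C))\subset F_{p+r}\Tot(D)$ by tracking the three filtration shifts. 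Thus $\widetilde H$ is a homotopy of order $r$, which by Lemma~\ref{homotopies_tot} comes from an $r$-homotopy $\psi f\phi\simr{r}\psi g\phi$, as required.

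The only delicate point I anticipate is sign bookkeeping: the symmetry and transitivity steps rest on the linearity of $(H_{m1})$, but one should double-check that the signs $(-1)^{i+r}$ and $(-1)^i$ do not interact with negation or addition in any unexpected way (they do not, since they sit outside the $\widehat h_j$ being combined). Similarly, for composition compatibility the only substantive check is that the bidegree of $\widetilde H$ is $(r,r-1)$ after applying $\Tot$, which is automatic since $\Tot(\phi)$ and $\Tot(\psi)$ have bidegree $(0,0)$ as filtered chain maps. Beyond that, everything is formal.
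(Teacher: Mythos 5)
Your proof is correct. The heart of the matter — transitivity — is handled exactly as in the paper: Proposition~\ref{equivalent_notion_htp} linearizes the problem and one simply adds the families $\widehat h_m$. Where you diverge is in the remaining, easier parts. The paper disposes of reflexivity and compatibility with composition in one stroke by citing the general fact that any relation defined by a functorial path has these properties (Kamps--Porter), and declares symmetry ``clear''; you instead verify reflexivity and symmetry directly from the linearity of $(H_{m1})$ (taking $\widehat h_m=0$, resp.\ negating), and prove compatibility with composition by passing through Lemma~\ref{homotopies_tot} and conjugating the order-$r$ homotopy $\widehat H$ by the filtered chain maps $\Tot(\psi)$ and $\Tot(\phi)$. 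Both routes are sound; the paper's is shorter because it outsources the formal part to general homotopical algebra, while yours is self-contained and makes the sign bookkeeping visible (and, as you note, the signs in $(H_{m1})$ sit outside the $\widehat h_j$ being combined, so nothing goes wrong). One could also run your composition argument entirely inside $\tc$ by composing the family $\widehat h_m$ with the components of $\phi$ and $\psi$ via the twisted-complex composition rule, avoiding the detour through $\Tot$, but the totalization route is cleaner and is exactly what Lemma~\ref{homotopies_tot} was set up for.
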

\begin{proof}
The homotopy relation defined by a functorial path is reflexive and compatible with the composition 
(see for example~\cite[Lemma I.2.3]{KampsPorter}).
Symmetry is clear.
We prove transitivity.
Let $h:f\simr{r}f'$ and $h':f'\simr{r}f''$.
Using the equivalent notion of $r$-homotopy of Proposition $\ref{equivalent_notion_htp}$
we get an $r$-homotopy $h''$ by letting $\widehat h''=\widehat h+\widehat h'$.
\end{proof}

\begin{defi}
A morphism of twisted complexes $f:A\to B$ is called an \textit{$r$-homotopy equivalence} if 
there exists a morphism $g:B\to A$ satisfying $f\circ g\simr{r} 1_B$ and $g\circ f\simr{r} 1_A$.
\end{defi}
Denote by $\Ss_r$ the class of $r$-homotopy equivalences of $\tc$. 
This class is closed under composition and contains all isomorphisms.
 
\begin{prop}
For all $r\geq 0$, we have $\Ss_r\subset \Ss_{r+1}$.
\end{prop}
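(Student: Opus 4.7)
The plan is to show that every $r$-homotopy between morphisms of twisted complexes can be promoted to an $(r+1)$-homotopy between the same pair of morphisms. Granting this, the conclusion is formal: given $f \in \Ss_r$ with homotopy inverse $g$ and $r$-homotopies $f\circ g \simr{r} 1_B$ and $g\circ f \simr{r} 1_A$, the promoted data furnish $(r+1)$-homotopies witnessing $f \in \Ss_{r+1}$.

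To perform the promotion, I will use Proposition $\ref{equivalent_notion_htp}$, which translates an $r$-homotopy $h : f \simr{r} g$ into a family $\{\widehat h_m\}_{m \geq 0}$ of maps $A \to B$, with $\widehat h_m$ of bidegree $(-m+r, -m+r-1)$, satisfying the relations $(H_{m1})$. My candidate for the corresponding $(r+1)$-homotopy family is
\[
\widehat k_m := -\widehat h_{m-1} \quad \text{for } m \geq 1, \qquad \widehat k_0 := 0.
\]
The bidegrees match: $\widehat k_m$ acquires bidegree $(-m+(r+1), -m+(r+1)-1)$, as required for an $(r+1)$-homotopy. It then remains to verify the equations $(H_{m1})$ for $\{\widehat k_m\}$ at shift $r+1$. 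After reindexing both summations via $j \mapsto j-1$ and $i \mapsto i-1$, the identity to be checked at level $m$ reduces exactly to the original relation $(H_{(m-1)1})$ for $\widehat h$: the clause ``vanishes for $m < r+1$'' matches ``vanishes for $m-1 < r$'', and the right-hand side $g_{m-r-1} - f_{m-r-1}$ for $m \geq r+1$ is precisely $g_{(m-1)-r} - f_{(m-1)-r}$.

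The only subtlety — which I expect to be the main obstacle — is the sign in the definition of $\widehat k_m$. The $(r+1)$-version of $(H_{m1})$ differs from the $r$-version by the extra factor $(-1)^{i+r+1}$ in place of $(-1)^{i+r}$, and this discrepancy is exactly compensated by the minus sign chosen in $\widehat k_m := -\widehat h_{m-1}$; with the opposite convention the right-hand side would appear with a global sign mismatch. Apart from this sign bookkeeping, the verification is a routine reindexing of finite sums, and yields the claimed inclusion $\Ss_r \subset \Ss_{r+1}$.
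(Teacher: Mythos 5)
Your proof is correct and follows essentially the same route as the paper: both promote the $r$-homotopy family $\{\widehat h_m\}$ of Proposition~\ref{equivalent_notion_htp} to an $(r+1)$-homotopy family by shifting the index by one and setting the zeroth component to zero. The only difference is your extra minus sign, which makes the shifted family a homotopy from $f$ to $g$ on the nose; the paper's unsigned formula $\widehat h'_m=\widehat h_{m-1}$ produces the relation with right-hand side $f_{m-r-1}-g_{m-r-1}$, i.e.\ a homotopy in the opposite direction, which also suffices since the $r$-homotopy relation is symmetric.
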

\begin{proof}
Using the equivalent notion of homotopy of Proposition $\ref{equivalent_notion_htp}$, it is straightforward to see that
given an $r$-homotopy $h$ from $f$ to $g$, we 
obtain an $(r+1)$-homotopy $h'$ from $f$ to $g$ by letting $\widehat h'_0=0$ and $\widehat h'_m=\widehat h_{m-1}$ for $m>0$.
\end{proof}

\begin{prop}
For all $r\geq 0$, we have $\Ss_r\subset \Ee_r$.
\end{prop}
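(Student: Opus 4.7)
The plan is to reduce the statement to the classical Cartan--Eilenberg comparison theorem for order-$r$ homotopies on filtered complexes, using the dictionary between $r$-homotopies and order-$r$ homotopies established in Lemma~\ref{homotopies_tot}.

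First, take $f \colon A \to B$ in $\Ss_r$ with quasi-inverse $g \colon B \to A$, so that $f\circ g \simr{r} 1_B$ and $g\circ f \simr{r} 1_A$. Apply the totalization functor $\Tot$ and invoke Lemma~\ref{homotopies_tot} to transport these $r$-homotopies into order-$r$ homotopies in $\fc$: one from $\Tot(f)\circ\Tot(g)$ to $1_{\Tot(B)}$ and one from $\Tot(g)\circ\Tot(f)$ to $1_{\Tot(A)}$. (Here I use functoriality of $\Tot$, namely $\Tot(f\circ g) = \Tot(f)\circ \Tot(g)$ and $\Tot(1_A) = 1_{\Tot(A)}$, from Theorem~\ref{equiv_twisted_split}.)

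Next, apply \cite[Proposition XV.3.1]{CaEil} (as recalled in the remark following the definition of homotopy of order $r$): two morphisms of filtered complexes related by an order-$r$ homotopy induce equal maps on $E_k$ for every $k > r$. Specializing to $k = r+1$ gives
\[
E_{r+1}(\Tot(f))\circ E_{r+1}(\Tot(g)) = 1_{E_{r+1}(\Tot(B))}, \qquad E_{r+1}(\Tot(g))\circ E_{r+1}(\Tot(f)) = 1_{E_{r+1}(\Tot(A))}.
\]
By definition, the spectral sequence $E_r^{*,*}(A,d_m)$ of a twisted complex is the one associated to $\Tot(A,d_m)$, and $E_r(f) := E_r(\Tot(f))$. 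Hence $E_{r+1}(f)$ is an isomorphism of $(r+1)$-bigraded complexes, which is exactly the condition that $E_r(f)$ be a quasi-isomorphism of $r$-bigraded complexes, i.e., $f \in \Ee_r$.

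I do not foresee any real obstacle: all the work has already been done in setting up the correspondence between $r$-homotopies on twisted complexes and order-$r$ homotopies on split filtered complexes, and in defining the spectral sequence of a twisted complex through $\Tot$. The proof is essentially a two-line application of Lemma~\ref{homotopies_tot} followed by the classical Cartan--Eilenberg comparison.
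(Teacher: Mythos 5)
Your proof is correct and follows exactly the paper's route: the paper likewise combines Lemma~\ref{homotopies_tot} with \cite[Proposition XV.3.1]{CaEil} to conclude that $r$-homotopic morphisms induce equal maps on $E_{r+1}$, and then applies this to $f\circ g\simr{r}1_B$ and $g\circ f\simr{r}1_A$. You have merely spelled out the final functoriality step that the paper leaves implicit.
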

\begin{proof}
By Lemma $\ref{homotopies_tot}$, an $r$-homotopy from $f$ to $g$ in $\tc$ gives a chain
homotopy $H$ from $\Tot(f)$ to $\Tot(g)$ satisfying $H(F_p)\subset F_{p+r}$.
By~\cite[Proposition XV.3.1]{CaEil}, we have $E_{r+1}(f)=E_{r+1}(g)$.
\end{proof}

\begin{lem}\label{iotahtp}
Let $(A,d_m)$ be a twisted complex. The strict morphism $\iota_A:(A,d_m)\lra (P_r(A),D_m)$
given by $\iota_A(x)=(x,0,x)$ is an $r$-homotopy equivalence.
\end{lem}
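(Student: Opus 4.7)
I would prove this by exhibiting a concrete homotopy inverse together with an explicit $r$-homotopy. The candidate inverse is $g := \partial^-_A : P_r(A) \to A$. Both $\iota_A$ and $\partial^-_A$ are strict morphisms of twisted complexes, and from the definitions one immediately has $\partial^-_A \circ \iota_A = 1_A$ strictly, so the zero homotopy realises $\partial^-_A \circ \iota_A \simr{r} 1_A$ and half of the equivalence is automatic. (The symmetric choice $g = \partial^+_A$ would work just as well.)

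For the other half, $\iota_A \circ \partial^-_A \simr{r} 1_{P_r(A)}$, I would apply Proposition $\ref{equivalent_notion_htp}$ and search for maps $\widehat h_m : P_r(A) \to P_r(A)$ of bidegree $(-m+r, -m+r-1)$ satisfying the equations $(H_{m1})$. The key simplification is that both endpoints are strict morphisms, so the right-hand side of $(H_{m1})$ vanishes for every $m \neq r$, while at $m = r$ it equals $1_{P_r(A)} - \iota_A \circ \partial^-_A$, which sends $(x, y, z)$ to $(0, y, z - x)$. This essentially forces the ansatz $\widehat h_0(x, y, z) := (0, 0, y)$ together with $\widehat h_m := 0$ for $m > 0$; the bidegree is correct because the middle factor of an element of $P_r(A)_i^j$ lies in $A_{i+r}^{j+r-1}$, which is precisely the third summand of $P_r(A)_{i+r}^{j+r-1}$.

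The remaining verification of $(H_{m1})$ is a direct unraveling using the matrices of Definition $\ref{def:rpathtc}$. For $m = r$ one computes $D_r \widehat h_0(x,y,z) = (0, y, d_r y)$ and $\widehat h_0 D_r(x,y,z) = (0, 0, z - x - d_r y)$, whose sum is $(0, y, z - x)$, as required. For $m \neq r$, the diagonal form of $D_m$ (with entries $d_m, (-1)^{m+r+1}d_m, d_m$) causes the two contributions $(-1)^{m+r} D_m \widehat h_0$ and $\widehat h_0 D_m$ to cancel in the only surviving third coordinate, giving the required zero.

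A conceptually cleaner alternative uses Remark $\ref{comparison_paths_twisted}$ to identify $P_r(A) \cong \Lambda_r \otimes A$; then it suffices to contract $\Lambda_r$ via the $r$-homotopy defined on generators by $e_{\pm} \mapsto 0$ and $u \mapsto e_+$, and to tensor with $1_A$, which recovers the same $\widehat h_0$ above. In either approach the only real difficulty is sign bookkeeping; there is no conceptual obstacle.
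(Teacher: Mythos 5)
Your proof is correct and follows essentially the same route as the paper: the same inverse $\partial^-_A$, the same homotopy $\widehat h_0(x,y,z)=(0,0,y)$ with $\widehat h_m=0$ for $m>0$, and the same verification of $(H_{r1})$ via the matrix for $D_r$. You additionally spell out the cancellation for $m\neq r$, which the paper leaves implicit, and your $\Lambda_r\otimes A$ remark is a valid alternative packaging of the same homotopy.
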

\begin{proof}
Since $\partial_A^-\iota_A=1_{A}$, it suffices to define an $r$-homotopy from $1_{P_r(A)}$ to
$\iota_A\partial_A^-$.
Consider the morphisms $\widehat h_m:P_r(A)\to P_r(A)$ of bidegree $(-m+r,-m+r-1)$ defined by
$\widehat h_0(x,y,z)=(0,0,y)$ and $\widehat h_i=0$ for all $i>0$. 
It only remains to verify condition $(H_{r1})$ of Proposition $\ref{equivalent_notion_htp}$. We have
\begin{align*}
(D_r\widehat h_0+\widehat h_0D_r)(x,y,z)&=D_r(0,0,y)+\widehat h_0(d_rx,-x-d_ry+z,d_rz)\\
&=(0,y,d_ry)+(0,0,-x-d_ry+z)=(0,y,-x+z)\\
&=
(1_{P_r(A)}-\iota_A\partial_A^-)(x,y,z).\qedhere
\end{align*}
\end{proof}

\begin{teo}\label{QuotientcatTC}
The localized category $\tc[\Ss_r^{-1}]$
is canonically isomorphic to the quotient category  $\pi_r(\tc):=\tc/\simr{r}$.
\end{teo}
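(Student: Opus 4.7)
The plan is to construct functors in both directions using the respective universal properties and show they are mutually inverse identities on objects.

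First I would observe that both $\tc[\Ss_r^{-1}]$ and $\pi_r(\tc)$ have the same objects as $\tc$, so it suffices to understand the morphisms. Proposition~\ref{equivrelTC} already guarantees that the quotient $\pi_r(\tc) = \tc/\simr{r}$ is a well-defined category, and there is a canonical functor $Q:\tc \to \pi_r(\tc)$. If $f:A\to B$ lies in $\Ss_r$ with inverse $g:B\to A$ up to $r$-homotopy, then by definition $[f]\circ[g] = [1_B]$ and $[g]\circ[f] = [1_A]$ in $\pi_r(\tc)$, so $Q$ inverts all of $\Ss_r$. By the universal property of localization, $Q$ factors uniquely through a functor
\[
F:\tc[\Ss_r^{-1}] \lra \pi_r(\tc)
\]
which is the identity on objects.

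For the functor in the opposite direction, I would use the localization functor $L:\tc \to \tc[\Ss_r^{-1}]$ and show that it identifies $r$-homotopic maps. Here is where Lemma~\ref{iotahtp} is the crucial ingredient: the strict morphism $\iota_B:B \to P_r(B)$ is an $r$-homotopy equivalence, hence $L(\iota_B)$ is an isomorphism in $\tc[\Ss_r^{-1}]$. Since $\partial_B^{\pm} \circ \iota_B = 1_B$, both $L(\partial_B^+)$ and $L(\partial_B^-)$ are equal to $L(\iota_B)^{-1}$, and therefore coincide in the localization. Given any $r$-homotopy $h:f\simr{r}g$, we have $\partial_B^- \circ h = f$ and $\partial_B^+ \circ h = g$, so applying $L$ yields $L(f) = L(g)$. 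Thus $L$ factors uniquely through the quotient, giving a functor
\[
G:\pi_r(\tc) \lra \tc[\Ss_r^{-1}],
\]
again the identity on objects.

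Finally, I would verify that $F$ and $G$ are mutually inverse. Both composites are identities on objects. On morphisms of the form $[f]$ coming from $\tc$, both $FG[f] = [f]$ and $GF(L(f)) = L(f)$ are immediate from the constructions. Since such morphisms generate both $\tc[\Ss_r^{-1}]$ and $\pi_r(\tc)$ (in the localization, all morphisms are zig-zags of these together with formal inverses of $\Ss_r$-morphisms, whose images under $F$ are their $\pi_r$-inverses), the two functors are inverse to each other. The only step that requires any care is checking that $F$ and $G$ sit together with the canonical functors $L$ and $Q$ to give a canonical, rather than merely an abstract, isomorphism; this follows because both constructions are fixed by the requirement that they commute with $L$ and $Q$. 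I do not anticipate any real obstacle: the main conceptual input, namely the fact that the two endpoints of the functorial path become equal after localizing at $\Ss_r$, has already been isolated as Lemma~\ref{iotahtp}.
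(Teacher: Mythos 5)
Your proof is correct and follows essentially the same route as the paper: the key input in both is Lemma~\ref{iotahtp}, used to show that $\partial_B^+$ and $\partial_B^-$ become equal after localization (so $r$-homotopic maps are identified), with the remaining step being the standard categorical fact that this forces the localization to coincide with the quotient, which the paper outsources to a cited reference and you verify directly by constructing the two mutually inverse functors.
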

\begin{proof}
Denote by $\gamma_r:\tc\to \tc[\Ss_r^{-1}]$ the localization functor.
It suffices to show that 
if $h:f\simr{r} g$ then $\gamma_r(f)=\gamma_r(g)$
(see~\cite[ Proposition 1.3.3]{GNPR}).
Consider the following diagram of morphisms of twisted complexes.
\[
\xymatrix{
&B\\
A\ar[ur]^f\ar[r]^h\ar[dr]_g&P_r(B)\ar[u]_{\partial^-_B}\ar[d]^{\partial^+_B}&B\ar[l]_{\iota_B}\ar@{=}[ul]\ar@{=}[dl]\\
&B&
}\]
By Lemma $\ref{iotahtp}$ the morphism $\iota_B$ is an $r$-homotopy equivalence. Hence
the above diagram is a hammock between the $\Ss_r$-zigzags $f$ and $g$ in the sense of \cite{DHKS}. 
This gives $f=g$ in $\tc[\Ss_r^{-1}]$.
\end{proof}

\subsection{The $r$-translation and the $r$-cone}
The $r$-path construction is related to a translation functor depending on $r$ 
(see \cite{CG2} and \cite{CG1} for similar constructions in the categories of filtered complexes and filtered commutative dgas respectively).
Furthermore, the cone obtained via this translation allows one to detect $E_r$-quasi-isomorphisms, as we shall see next.

\begin{defi}The \textit{$r$-translation} of a twisted complex $(A,d_m)$ is the twisted complex $(T_r(A),T_r(d_m))$ given by
$T_r(A)_i^j:=A_{i-r}^{j-r+1}$ and $T_r(d_m):=(-1)^{m+r+1}d_m$.
\end{defi}

\begin{defi}The \textit{$r$-cone} of a morphism $f:(A,d_m^A)\to (B,d_m^B)$ of twisted complexes is the
twisted complex $(C_r(f),D_m)$ given by
$C_r(f)_i^j:=A_{i-r}^{j-r+1}\oplus B_i^j$
with the maps $D_m:C_r(f)\to C_r(f)$ of bidegree $(-m,-m+1)$ given by
\[D_m(a,b):=((-1)^{m+r+1}d_m(a),(-1)^{m+r+1}f_{m-r}(a)+d_m(b)),\]
where we adopt the convention that $f_{<0}=0$.
\end{defi}

We have strict morphisms $(B,d_m^B)\lra (C_r(f),D_m)$
and
$(C_r(f),D_m)\lra (T_r(A),T_r(d_m^A))$
given by $b\mapsto (0,b)$ and $(a,b)\mapsto a$ respectively. These fit into a short exact sequence
\[0\lra (B,d_m^B)\lra (C_r(f),D_m)\lra (T_r(A), T_r(d_m^A))\lra 0.\]

The following is a matter of verification.
\begin{lem}\label{conehomotopy}
Let $w:A\to B$ be a morphism of twisted complexes and $X$ a twisted complex. Giving a morphism
$\tau:C_r(w)\to X$ of twisted complexes is equivalent to giving
a pair $(f,h)$ where $f:B\to X$ is a morphism of twisted complexes and $h:0\simr{r} fw$
is an $r$-homotopy from 0 to $fw$.
\end{lem}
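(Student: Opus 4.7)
The plan is to exhibit an explicit bijection between morphisms $\tau\colon C_r(w)\to X$ of twisted complexes and pairs $(f,h)$ as in the statement, then verify that the defining equations match term by term. As a bigraded $R$-module, $C_r(w)_i^j=A_{i-r}^{j-r+1}\oplus B_i^j$, so any family $\{\tau_m\}$ of bidegree $(-m,-m)$ maps decomposes (possibly up to an overall sign convention) as $\tau_m=(\widehat h_m, f_m)$, where $f_m\colon B\to X$ has bidegree $(-m,-m)$ and $\widehat h_m\colon A\to X$ has bidegree $(-m+r,-m+r-1)$. This produces the underlying data of a candidate morphism $f=\{f_m\}\colon B\to X$ together with a family $\{\widehat h_m\}$ which, by Proposition~\ref{equivalent_notion_htp}, corresponds to a morphism $h\colon A\to P_r(X)$.

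The main calculation is to show that the twisted-complex morphism condition $\sum_{i+j=m}d_i^X\tau_j=\sum_{i+j=m}(-1)^i\tau_iD_j$ splits into two independent families of equations on the two summands of $C_r(w)$. Evaluating on $(0,b)$ isolates the $B$-component of $D_j$, which is simply $d_j^B$, so the resulting identity is precisely $(B_{m1})$ for $f\colon B\to X$. Evaluating on $(a,0)$ and using $D_j(a,0)=((-1)^{j+r+1}d_j^A(a),(-1)^{j+r+1}w_{j-r}(a))$ produces a mixed identity involving both $\widehat h_i d_j^A$ and $f_i w_{j-r}$; after collecting signs and reindexing, this is exactly $(H_{m1})$ expressing that $\widehat h$ is an $r$-homotopy from $0$ to $fw$. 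Here the convention $w_{j-r}=0$ for $j<r$ matches the case distinction in $(H_{m1})$, namely that the right hand side vanishes for $m<r$.

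The inverse construction is immediate: given $(f,h)$, extract $\{\widehat h_m\}$ from $h$ via Proposition~\ref{equivalent_notion_htp} and set $\tau_m:=(\widehat h_m,f_m)$; the same computation shows $\tau$ is a morphism of twisted complexes, and invertibility follows from the direct-sum decomposition of the underlying bigraded modules. The main obstacle is sign bookkeeping: the factors $(-1)^{j+r+1}$ from the $r$-cone differential, the $(-1)^i$ in the twisted-complex morphism condition, and the signs $(-1)^{i+r}$ in $(H_{m1})$ must all be tracked carefully. This may require decorating the identification $\widehat h_m\leftrightarrow \tau_m|_A$ with a degree-dependent sign (such as $(-1)^m$) in order to convert the cone identity into $(H_{m1})$ on the nose, after which the correspondence becomes entirely mechanical.
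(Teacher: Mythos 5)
Your proposal is correct and follows essentially the same route as the paper: decompose $\tau_m$ into its restrictions to the two summands of $C_r(w)$, set $f_m(b)=\tau_m(0,b)$ and $\widehat h_m(a)=(-1)^m\tau_m(a,0)$ (the degree-dependent sign you anticipated is exactly the one needed), and observe that the morphism condition for $\tau$ splits into $(B_{m1})$ for $f$ and $(H_{m1})$ for $\widehat h$ via Proposition~\ref{equivalent_notion_htp}, with the evident inverse $\tau_m(a,b)=(-1)^m\widehat h_m(a)+f_m(b)$.
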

\begin{proof}
Let $\tau:C_r(w)\to X$ be a morphism of twisted complexes. Define
a morphism of twisted complexes
 $f:B\to X$ by letting $f_m(b):=\tau_m(0,b)$.
Let $\widehat h_m:A\to X$ be defined by
$\widehat h_m(a):=(-1)^m\tau_m(a,0)$. 
By Proposition $\ref{equivalent_notion_htp}$ this gives an $r$-homotopy
$h$ from $0$ to $fw$.
Conversely, given $(f,h)$, we let $\tau_m(a,b):=(-1)^m\widehat{h}_m(a)+f_m(b)$.
\end{proof}

\begin{prop}\label{detect}
 Let $r\geq 0$ and let $f:(A,d_m^A)\to (B, d_m^B)$ be a morphism of twisted complexes. We have a long exact sequence
\[\cdots \lra E^{p,q}_{r+1}(A) \lra E^{p,q}_{r+1}(B) \lra E^{p,q}_{r+1}(C_r(f)) \lra E^{p-r,q-r+1}_{r+1}(A) \lra\cdots.\]
In particular, the morphism $f$ is an $E_r$-quasi-isomorphism if and only if
 the $r$-cone of $f$ is $\Ee_r$-acyclic, that is, $E^{*,*}_{r+1}(C_r(f))=0$.
\end{prop}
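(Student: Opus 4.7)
The plan is to derive the long exact sequence from the short exact sequence of twisted complexes
\[0\to (B,d_m^B)\to (C_r(f),D_m)\to (T_r(A),T_r(d_m^A))\to 0\]
constructed just above the proposition statement.  As bigraded modules $C_r(f)=T_r(A)\oplus B$, and the column filtration is additive, so after totalization we obtain a short exact sequence of filtered cochain complexes in which $F_p\Tot(C_r(f))=F_p\Tot(B)\oplus F_p\Tot(T_r(A))$.  In particular the totalized sequence is strictly filtered, and the induced short exact sequence at the $E_0$ page is split.

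Next I will argue that this splitting persists as a splitting of $k$-bigraded complexes through every page $E_k$ with $k\leq r-1$.  Inspecting the formula for $D_m$, the off-diagonal component $(-1)^{m+r+1}f_{m-r}$ vanishes whenever $m<r$ (under the convention $f_{<0}=0$).  Since the differential $\delta_k$ depends only on $D_0,\dots,D_k$, it is diagonal with respect to the decomposition for every $k<r$, so the splitting survives to $E_r$.  At page $E_r$ the underlying graded-module decomposition $E_r(C_r(f))=E_r(T_r(A))\oplus E_r(B)$ still holds, but $\delta_r^{C_r(f)}$ now acquires an off-diagonal $f_0$ term coming from $D_r$.  Since $E_r$ is functorial on strict morphisms of twisted complexes, the induced maps $E_r(B)\hookrightarrow E_r(C_r(f))\twoheadrightarrow E_r(T_r(A))$ commute with $\delta_r$, so we have a short exact sequence of $r$-bigraded complexes
\[0\to (E_r(B),\delta_r^B)\to (E_r(C_r(f)),\delta_r^{C_r(f)})\to (E_r(T_r(A)),\delta_r^{T_r(A)})\to 0.\]
Taking cohomology yields a long exact sequence at $E_{r+1}$ whose connecting morphism has bidegree $(-r,-r+1)$.

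The final step is to reindex using the shift identification $E_{r+1}^{p,q}(T_r(A))=E_{r+1}^{p-r,q-r+1}(A)$, which is immediate from the definitions of $T_r$ and of the spectral sequence, and to identify the connecting morphism with $E_{r+1}(f)$ up to sign.  For a class in $E_{r+1}^{p-r,q-r+1}(A)$ represented by $a$ chosen so that $d_0a=\dots=d_{r-1}a=0$, the lift $(a,0)\in C_r(f)$ satisfies $D_r(a,0)=(-d_r^Aa,-f_0(a))$, and since $f_0(a)$ then represents $E_{r+1}(f)[a]$, the $B$-component produces $-E_{r+1}(f)[a]$; the sign is immaterial for exactness.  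This yields the long exact sequence in the stated form.  The second assertion is then immediate: $f$ is an $E_r$-quasi-isomorphism iff $E_{r+1}(f)$ is an isomorphism, which by the long exact sequence holds iff $E_{r+1}(C_r(f))=0$.

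The main obstacle will be the careful bookkeeping needed to verify that the splittings pass cleanly from one page to the next through $E_{r-1}$ — this relies both on the diagonal form of $D_0,\dots,D_{r-1}$ and on the precise dependence of each $\delta_k$ on these differentials — together with the sign-tracking required to identify the connecting morphism with $E_{r+1}(f)$, for which one must verify that $f_0(a)$ is indeed a valid representative of $E_{r+1}(f)[a]$ when $a$ is chosen as above.
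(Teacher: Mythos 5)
Your proposal is correct and follows essentially the same route as the paper's proof: both exploit that for $m<r$ the differential $D_m$ on $C_r(f)$ has no $f$-contribution, deduce the direct sum decomposition $E_r^{p,q}(C_r(f))\cong E_r^{p-r,q-r+1}(A)\oplus E_r^{p,q}(B)$, and then obtain the long exact sequence from the resulting short exact sequence of $r$-bigraded complexes at page $r$ (with the $r=0$ case handled by the same short-exact-sequence-of-column-complexes argument). Your write-up simply supplies more detail than the paper does on the persistence of the splitting and on identifying the connecting morphism with $\pm E_{r+1}(f)$.
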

\begin{proof}
For every $p$ we have a short exact sequence of complexes
\[0\to (E_0^{p,*}(B), d_0^B)\to E_0^{p,*}(C_0(f),D_0)\to (E_0^{p,*}(A),d_0^A)[1]\to 0,\]
which induces a long exact sequence in cohomology. This proves the result for $r=0$.
Assume that $r>0$. For $m<r$ we have $D_m(a,b)=((-1)^{m+r+1}d_m^A(a),d_m^B(b))$,
so the contribution of $f$ to the differential vanishes.
This gives a direct sum decomposition 
$E^{p,q}_r(C_r(f))\cong E_r^{p-r,q-r+1}(A)\oplus E_r^{p,q}(B)$ 
inducing a long exact sequence in cohomology.
\end{proof}

\subsection{Operadic approach}

In this section we recall how to view twisted complexes as algebras over the operad $\Dd_\infty$. We then
study $r$-homotopy from this point of view.

Let $\Dd$ be the operad of dual numbers in vertical bicomplexes.
Here $\Dd=R[\epsilon]/(\epsilon^2)$,  where the bidegree of $\epsilon$ is $(-1,0)$.
This has trivial vertical differential and contains only arity one
operations, so it can be thought of as simply a bigraded $R$-algebra.

The category of twisted complexes $\tc$ is isomorphic to the category of $\Dd_\infty$-algebras in vertical bicomplexes
(see~\cite[Section 3.1]{LRW} or~\cite[10.3.17]{LV} for the singly-graded analogue). Using the so-called
Rosetta Stone~\cite{LV}, this means that twisted complexes can be studied via structure on conilpotent cofree coalgebras
over the Koszul dual cooperad  $\Dd^{\antishriek}$. 

We first recall some details from~\cite[3.4]{ALRWZ} about $\Dd^{\antishriek}$-coalgebras.
We then make explicit how twisted complexes and their morphisms may be encoded via conilpotent cofree coalgebras, before
putting $r$-homotopies into this context.

The Koszul dual $\Dd^{\antishriek}$ of $\Dd$ is again concentrated in
arity one and can be thought of as just an $R$-coalgebra. We have
 $\Dd^{\antishriek}=R[x]$, where $x=S^{-1}\epsilon$, $x$ has bidegree
$(-1,-1)$ and the comultiplication is determined by $\Delta (x^n)=\sum_{i+j=n} x^i\otimes x^j$.

A $\Dd^{\antishriek}$-coalgebra is a (left)-comodule $C$ over this coalgebra and this 
turns out to just be a pair $(C,f)$, where $C$ is an $R$-module
and $f$ is a linear map $f:C\to C$ of bidegree $(1,1)$. 
(Given a coaction $\rho:C\to \Dd^{\antishriek}\otimes C=R[x]\otimes C$, 
write $f_i$ for the projection onto $R x^i \otimes C$; then coassociativity gives
$f_{m+n}=f_mf_n$, so the coaction is determined by $f_1$.) 
A coderivation is a linear map $d:C\to C$ of bidegree $(s,t)$
such that $df=(-1)^{\langle d, f \rangle}fd$, that is  $df=(-1)^{s+t}fd$. In particular, if $d$ has bidegree
$(0,1)$ then it anti-commutes with $f$.

\begin{rmk}
\label{rmk:dx}
As an example, the conilpotent cofree  $\Dd^{\antishriek}$-coalgebra generated by a bigraded module $A$ is given by $\Dd^{\antishriek}(A)=R[x]\otimes A$
with linear map $d_x^A:R[x]\otimes A\rightarrow R[x]\otimes A$ determined by $d_x^A(x^i\otimes a)=x^{i-1}\otimes a$. A map of $\Dd^{\antishriek}$-coalgebras $h:(C,f_C)\rightarrow (D,f_D)$ of bidegree $(u,v)$ is a map of bigraded modules satisfying $f_Dh=(-1)^{u+v}hf_C$.
\end{rmk}

It will be useful to introduce the following basic object.

\begin{defi}
\label{composition_defi}
Let $A, B, C$ be bigraded modules. We denote by $\wbimod(A,B)$ the bigraded module given by 
\[\wbimod(A,B)_u^v:=\prod_{j\geq 0}[A,B]_{u-j}^{v-j}\]
where $[A,B]$ is the inner hom-object of bigraded modules. 
More precisely, $g\in\wbimod(A,B)_u^v$ is given by $g:=(g_0, g_1, g_2, \dots)$, where $g_j:A\to B$ is a map of bigraded modules of bidegree $(u-j, v-j)$.  Moreover, we define a \emph{composition morphism}
\[c:\wbimod(B,C)\otimes \wbimod(A,B)\to \wbimod(A,C)\]
by  
	\[ c(f,g)_m:=\sum_{i+j=m} (-1)^{i|g|}f_ig_j.
	\]
\end{defi}

In the next section, we will develop this much further, in particular defining the enriched category
$\wbimod$.

We explain how structure in the world of $\Dd^{\antishriek}$-coalgebras corresponds to 
the explicit twisted complex notions. We write $U$ for the forgetful functor from $\Dd^{\antishriek}$-coalgebras
to bigraded modules, left adjoint to the cofree coalgebra functor $\Dd^{\antishriek}$.

\begin{prop}\label{prop:corr}Let $A$, $B$ and $C$ be bigraded modules.
\begin{enumerate}
\item We have bijections of underlying sets

 \begin{tabular}{ccccc}
$\Hom_{\Dd^{\antishriek}-{\mathrm{coalg}}}( \Dd^{\antishriek}(A),  \Dd^{\antishriek}(B))_u^v$
&$\longleftrightarrow$ &$[U\Dd^{\antishriek}(A),B]_u^v$
&$\longleftrightarrow$ &$\wbimod(A,B)_u^v$\\
$\widetilde{F}$&$\longleftrightarrow$ &$F$&&\\
&&$F$&$\longleftrightarrow$ &$f=(f_n)$ 
\end{tabular}

Here a map of bidegree $(u,v)$  of bigraded modules $F:U\Dd^{\antishriek}(A)\rightarrow B$ uniquely lifts as a morphism of 
$\Dd^{\antishriek}$-coalgebras $\widetilde  F:\Dd^{\antishriek}(A)\rightarrow \Dd^{\antishriek}(B)$, of bidegree $(u,v)$
with formula
\[\widetilde  F(x^n\otimes a)=\sum_{i\geq 0} (-1)^{i(u+v)} x^i\otimes F(x^{n-i}\otimes a).\]
Furthermore if $B=A$ then $\widetilde  F$ is also a coderivation of $\Dd^{\antishriek}$-coalgebras.
We associate to such a map $F$ the collection of maps $f_n:A\rightarrow B$ given by $f_n(a)=F(x^n\otimes a)$.
\item
If $\widetilde{d^A} :\Dd^{\antishriek}(A)\rightarrow \Dd^{\antishriek}(A) $ is a square-zero 
coderivation of $\Dd^{\antishriek}$-coalgebras of bidegree $(0,1)$, then the corresponding collection
of maps $d_n^A$ makes $A$ into a twisted complex.
\item
If $\widetilde{d^A}$ and 
$\widetilde{d^B}$ are square-zero coderivations of bidegree $(0,1)$ on 
$\Dd^{\antishriek}(A)$ and $\Dd^{\antishriek}(B)$ respectively,
and $\widetilde{F}:\Dd^{\antishriek}(A)\rightarrow \Dd^{\antishriek}(B)$ is a morphism
of  $\Dd^{\antishriek}$-coalgebras of bidegree $(0,0)$
with $\widetilde{d^B}\widetilde{F}= \widetilde{F} \widetilde{d^A}$ then $f=(f_n)$
is a morphism of twisted complexes from $(A, (d_n^A))$ to $(B, (d_n^B))$.

\item
Composition of coalgebra morphisms  of bidegree $(0,0)$, $\widetilde  G:\Dd^{\antishriek}(A)\rightarrow \Dd^{\antishriek}(B)$ and $\widetilde  F:\Dd^{\antishriek}(B)\rightarrow \Dd^{\antishriek}(C)$,
 corresponds to composition of morphisms of
twisted complexes. 
\end{enumerate}
\end{prop}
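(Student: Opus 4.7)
\smallskip

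\noindent\textbf{Proof plan.} The key tool throughout is the universal property of the cofree $\Dd^{\antishriek}$-coalgebra functor $\Dd^{\antishriek}:\bimod\to \Dd^{\antishriek}\text{-coalg}$, which is right adjoint to the forgetful functor $U$. Thus any coalgebra map $\widetilde F:\Dd^{\antishriek}(A)\to \Dd^{\antishriek}(B)$ is uniquely determined by its composition $F:U\Dd^{\antishriek}(A)\to B$ with the projection onto the $x^0\otimes B$ summand, giving the first bijection in (1). The second bijection with $\wbimod(A,B)$ comes from the canonical decomposition $U\Dd^{\antishriek}(A)=\bigoplus_{n\geq 0} x^n\otimes A$, setting $f_n(a):=F(x^n\otimes a)$. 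To recover $\widetilde F$ from $F$, I would use compatibility with the coaction (determined by $\Delta(x^n)=\sum_{i+j=n}x^i\otimes x^j$) together with the Koszul sign rule applied to $F$ of bidegree $(u,v)$; unwinding these constraints yields exactly the stated formula $\widetilde F(x^n\otimes a)=\sum_{i\geq 0}(-1)^{i(u+v)}x^i\otimes F(x^{n-i}\otimes a)$. When $B=A$, the same formula together with the fact that $F$ factors through the cofree coalgebra on $A$ automatically makes $\widetilde F$ a coderivation; this is a general property of cofree coalgebras and requires no extra computation.

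For (2), I would compute $\widetilde{d^A}\circ \widetilde{d^A}$ on a generator $x^n\otimes a$ using the formula from (1) with $(u,v)=(0,1)$. A direct expansion gives
\[
(\widetilde{d^A})^2(x^n\otimes a)=\sum_{k\leq i\leq n}(-1)^{i+k}\,x^k\otimes d^A_{i-k}d^A_{n-i}(a).
\]
Setting $m=n-k$ and $p=i-k$ and extracting the $x^k\otimes (-)$ component, this vanishes for all $n$ if and only if $\sum_{p+q=m}(-1)^p d^A_p d^A_q=0$ for every $m\geq 0$, which is exactly $(A_{m1})$. For (3), the same explicit formula, applied with $\widetilde F$ of bidegree $(0,0)$ and $\widetilde{d^B}$ of bidegree $(0,1)$, gives after expansion and reindexing
\[
\widetilde{d^B}\widetilde F(x^n\otimes a)=\sum_j x^j\otimes (-1)^j\!\!\!\sum_{p+q=n-j}d^B_p f_q(a),\qquad \widetilde F\widetilde{d^A}(x^n\otimes a)=\sum_j x^j\otimes (-1)^j\!\!\!\sum_{p+q=n-j}(-1)^p f_p d^A_q(a),
\]
so the identity $\widetilde{d^B}\widetilde F=\widetilde F\widetilde{d^A}$ is equivalent to $(B_{m1})$ holding in every arity.

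Finally, for (4), I would apply the formula of (1) to each of the bidegree-$(0,0)$ coalgebra morphisms $\widetilde G,\widetilde F$ and expand $\widetilde F\widetilde G(x^n\otimes a)$. Straightforward reindexing collects the result into $\sum_j x^j\otimes (f\circ g)_{n-j}(a)$ with $(f\circ g)_m=\sum_{i+j=m}f_i g_j$, matching the composition in $\tc$ (here the sign $(-1)^{i|g|}$ in Definition~\ref{composition_defi} drops out because $|g|=0$).

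The main obstacle is the sign bookkeeping: the interplay between the Koszul sign rule on $\bimod$, the bidegree of $x$, and the sign $(-1)^{i(u+v)}$ appearing in the lift $\widetilde F$ is delicate, and different sign conventions in earlier sources give superficially different formulas. I would therefore verify the small cases $n\leq 2$ of the formula in (1) by hand before running the general manipulations of (2), (3), and (4), as those parts are then essentially forced by the formula once signs are pinned down.
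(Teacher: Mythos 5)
Your proposal is correct and follows essentially the same route as the paper: unwind the cofreeness/coaction condition to get the explicit formula for $\widetilde F$, then read off the twisted-complex relations, the morphism relations and the composition formula componentwise, exactly as the paper does. One small caveat: the claim that a coalgebra morphism $\Dd^{\antishriek}(A)\to\Dd^{\antishriek}(A)$ is "automatically a coderivation as a general property of cofree coalgebras" is misattributed --- this is false for general cooperads (e.g.\ the tensor coalgebra); here it holds because $\Dd^{\antishriek}$ has only unary operations, so the morphism condition $f_D h=(-1)^{u+v}hf_C$ and the coderivation condition literally coincide, as the paper notes in the remark following the proposition.
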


\begin{proof} 
\begin{enumerate}
\item As above, let $\Delta:\Dd^{\antishriek}\rightarrow \Dd^{\antishriek}\circ \Dd^{\antishriek}$ 
be the co-composition in the cooperad $\Dd^{\antishriek}$ (which sends $x^n$ to $\sum x^i\otimes x^{n-i}$). 
Then one  checks easily that
$\Delta\widetilde{F}=\Dd^{\antishriek}(\widetilde {F})\Delta$, for $\widetilde{F}$ to be a morphism or a coderivation. 

One obtains the map $f_m:A\to B$ by setting $f_m(a)=\pi_{B}\widetilde{F}(x^m\otimes a)$ 
and similarly for  any map from $\Dd^{\antishriek}(A)\rightarrow \Dd^{\antishriek}(B)$.

\item Considering $(\widetilde{d^A})^2(x^m\otimes a)=0$, we read off $\sum_{i+j=m} (-1)^i d_i^A d_j^A=0$.

\item Considering $\widetilde{d^B}\widetilde{F}(x^m\otimes a)=\widetilde{F}\widetilde{d^A}(x^m\otimes a)$, we read off 
	\[
	\sum_{i+j=m}  d_i^Bf_j=\sum_{i+j=m} (-1)^{i} f_id_j^A.
	\]

\item One checks the statement about composition similarly.\qedhere
\end{enumerate}
\end{proof}

\begin{rmk}
The fact that the condition is the same to be a morphism of coalgebras as to be a coderivation arises 
because the cooperad $\Dd^{\antishriek}$ has only unary operations.
\end{rmk}

In order to formulate $r$-homotopy in this context we use a certain kind of shift operation on morphisms. 

\begin{defi}
\label{def:shift}
Let $\bbS:\wbimod(A,B)_u^v\to \wbimod(A,B)_{u+1}^{v+1}$ be the following map of $R$-modules.
For $f=(f_0, f_1, f_2, \dots)\in\wbimod(A,B)_u^v$,  we define $\bbS f\in \wbimod(A,B)_{u+1}^{v+1}$ 
by  $(\bbS f)_n:=f_{n-1}$, for $n\geq 1$ and $(\bbS f)_0:=0$.
That is, $\bbS(f_0, f_1, f_2, \dots):= (0, f_0, f_1, f_2, \dots)$. 

We 
write $\bbS^r$ for the $r$-th iterate of this operation.
\end{defi}

\begin{prop}
\label{prop: SF}
If $f\in \wbimod(A,B)_u^v$  corresponds to $\widetilde{F}\in \Hom_{\Dd^{\antishriek}-{\mathrm{coalg}}}( \Dd^{\antishriek}(A),  \Dd^{\antishriek}(B))_u^v$ under the bijection of Proposition~\ref{prop:corr},
then $\bbS f$ corresponds to $\widetilde{F}d_x^A$.
\end{prop}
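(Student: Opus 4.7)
The plan is to compute both sides directly using the explicit formula for $\widetilde{F}$ given in Proposition~\ref{prop:corr}(1), and then compare with the formula for the lift of the coalgebra map associated to $\bbS f$.

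First, I would note that the map $d_x^A$ has bidegree $(1,1)$ (since $x$ has bidegree $(-1,-1)$ and $d_x^A$ lowers the $x$-power by one), so that $\widetilde F d_x^A$ has bidegree $(u+1,v+1)$, which matches the bidegree of $\bbS f$. Moreover, $(u+1)+(v+1)\equiv u+v\pmod 2$, so the signs $(-1)^{i(u+v)}$ appearing in the formula for the lift of the morphism corresponding to $\bbS f$ coincide with those in the formula for $\widetilde F$.

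Next, I would carry out the calculation on a generator $x^n\otimes a$. Using $d_x^A(x^n\otimes a)=x^{n-1}\otimes a$ for $n\geq 1$ and $d_x^A(1\otimes a)=0$, together with the explicit formula
\[\widetilde{F}(x^n\otimes a)=\sum_{i\geq 0}(-1)^{i(u+v)}x^i\otimes f_{n-i}(a)\]
from Proposition~\ref{prop:corr}(1), I obtain, for $n\geq 1$,
\[\widetilde{F}\,d_x^A(x^n\otimes a)=\widetilde{F}(x^{n-1}\otimes a)=\sum_{i\geq 0}(-1)^{i(u+v)}x^i\otimes f_{n-1-i}(a),\]
with the convention that $f_{<0}=0$, so the sum is actually supported on $0\leq i\leq n-1$. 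For $n=0$ we get $0$.

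On the other hand, writing $G:U\Dd^{\antishriek}(A)\to B$ for the map corresponding to $\bbS f$, we have $G(x^n\otimes a)=(\bbS f)_n(a)$, which equals $f_{n-1}(a)$ for $n\geq 1$ and $0$ for $n=0$. Applying the same lifting formula to $G$ yields
\[\widetilde{G}(x^n\otimes a)=\sum_{i\geq 0}(-1)^{i(u+v)}x^i\otimes (\bbS f)_{n-i}(a)=\sum_{i=0}^{n-1}(-1)^{i(u+v)}x^i\otimes f_{n-1-i}(a),\]
with $\widetilde G(1\otimes a)=0$. Comparison with the previous display gives $\widetilde{F}\,d_x^A=\widetilde{G}$, which is the desired identity.

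The only delicate point is the boundary behaviour at $n=0$, which must be handled by hand; once this is observed, the computation is a direct unpacking of the formulas, and no sign issue appears thanks to the parity remark made at the outset.
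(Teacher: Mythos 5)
Your proof is correct and follows essentially the same route as the paper's: both compute $\widetilde{G}(x^n\otimes a)$ for the lift of $\bbS f$ via the explicit formula of Proposition~\ref{prop:corr}(1), reindex to identify it with $\widetilde{F}(x^{n-1}\otimes a)=\widetilde{F}d_x^A(x^n\otimes a)$. Your explicit remarks on the sign parity (the lift of $\bbS f$ uses $(-1)^{i((u+1)+(v+1))}=(-1)^{i(u+v)}$) and the $n=0$ case are points the paper leaves implicit, but the argument is the same.
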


\begin{proof}
Let $\widetilde{G}$ be the map  corresponding to $\bbS f$. Then,
for all $n\geq 0$ and all $a\in A$,
	\begin{align*}
	\widetilde{G}(x^n\otimes a)&=\sum_{i=0}^n (-1)^{i|f|} x^i\otimes (\bbS f)_{n-i}(a)
			=\sum_{i=0}^{n-1} (-1)^{i|f|} x^i\otimes f_{n-1-i}(a)\\
			&=\widetilde{F}(x^{n-1}\otimes a)=\widetilde{F}d_x^A(x^n\otimes a).\qedhere
	\end{align*}
\end{proof}

Now we see what $r$-homotopy looks like in this context.

\begin{teo}
\label{teo:coderh}
Let $A, B\in \tc$, with  $\widetilde{d}^A, \widetilde{d}^B$ the square-zero coderivations of bidegree $(0,1)$ on 
$\Dd^{\antishriek}(A)$ and $\Dd^{\antishriek}(B)$  respectively
encoding the twisted complex structures of $A$ and $B$. 
Let $f,g\in \Hom_{\tc}(A, B)$, with corresponding $\Dd^{\antishriek}$-coalgebra maps
$\widetilde{F},\widetilde{G}: \Dd^{\antishriek}(A)\to \Dd^{\antishriek}(B)$ of bidegree $(0,0)$. Then having an $r$-homotopy $h$ between
$f$ and $g$ is equivalent to having a $\Dd^{\antishriek}$-coalgebra map  $\widetilde{H}: \Dd^{\antishriek}(A)\to \Dd^{\antishriek}(B)$
of bidegree $(r,r-1)$
such that 
	\[
	(-1)^r\widetilde{d}^B\widetilde{H}+\widetilde{H}\widetilde{d}^A=\bbS^r \widetilde{G}-\bbS^r \widetilde{F}.
	\]
\end{teo}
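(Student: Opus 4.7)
The plan is to translate the coalgebra-level identity in the statement into a condition on families of maps via Propositions~\ref{prop:corr} and~\ref{prop: SF}, and then verify that the resulting conditions are precisely the equations $(H_{m1})$ appearing in Proposition~\ref{equivalent_notion_htp}.

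First, by Proposition~\ref{prop:corr}(1), a $\Dd^{\antishriek}$-coalgebra map $\widetilde{H}\colon \Dd^{\antishriek}(A)\to\Dd^{\antishriek}(B)$ of bidegree $(r,r-1)$ corresponds bijectively to a family $(h_n)_{n\ge 0}$ of maps $h_n\colon A\to B$ of bidegree $(r-n,r-1-n)$, which is exactly the bidegree of the maps $\widehat h_n$ from Proposition~\ref{equivalent_notion_htp}. Since $r+(r-1)$ is odd, the explicit formula of Proposition~\ref{prop:corr} reduces to
\[
\widetilde{H}(x^m\otimes a)=\sum_{i+j=m}(-1)^{i}\,x^i\otimes h_j(a),
\]
and analogous formulas with sign $(-1)^i$ hold for $\widetilde{d}^A$ and $\widetilde{d}^B$ of bidegree $(0,1)$. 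Using Proposition~\ref{prop: SF} iterated $r$ times, $\bbS^r\widetilde{F}$ corresponds to $\widetilde{F}(d_x^A)^r$, so that $\bbS^r\widetilde{F}(x^m\otimes a)=\widetilde{F}(x^{m-r}\otimes a)$ when $m\ge r$ and vanishes otherwise; likewise for $\widetilde G$.

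Next, I would directly expand both sides of the identity $(-1)^r\widetilde{d}^B\widetilde{H}+\widetilde{H}\widetilde{d}^A=\bbS^r\widetilde{G}-\bbS^r\widetilde{F}$ evaluated at $x^m\otimes a$. A straightforward reindexing shows that the coefficient of $x^j$ in this expansion equals the coefficient of $x^0$ with $m$ replaced by $m-j$ on both sides, so it suffices to compare coefficients of $x^0$. Setting $h_n=\widehat h_n$, the coefficient-of-$x^0$ equation becomes
\[
\sum_{i+j=m}\bigl((-1)^{i+r}d_i^B\widehat h_j+(-1)^i\widehat h_i d_j^A\bigr)=
\begin{cases}0&\text{if }m<r,\\ g_{m-r}-f_{m-r}&\text{if }m\ge r,\end{cases}
\]
which is exactly condition $(H_{m1})$. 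Combining this bijection with Proposition~\ref{equivalent_notion_htp} then gives the claimed equivalence in both directions.

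The main obstacle is the sign bookkeeping: the Koszul signs produced by the coderivation formulas for maps of bidegrees $(0,1)$ and $(r,r-1)$, together with the overall factor $(-1)^r$ on the left-hand side and the odd total bidegree of $\widetilde H$, must reassemble precisely into the signs $(-1)^{i+r}$ and $(-1)^i$ of $(H_{m1})$. Once these signs are confirmed, the rest is a routine unpacking via the dictionary supplied by Propositions~\ref{prop:corr} and~\ref{prop: SF}.
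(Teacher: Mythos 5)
Your proposal is correct and follows essentially the same route as the paper: the paper's proof also evaluates both sides of the identity at $x^m\otimes a$, reads off the componentwise relation, and identifies it with condition $(H_{m1})$ of Proposition~\ref{equivalent_notion_htp}. Your sign analysis (the factor $(-1)^i$ coming from the odd total bidegrees of $\widetilde H$, $\widetilde d^A$, $\widetilde d^B$, and the identification of $\bbS^r\widetilde F$ with $\widetilde F(d_x^A)^r$ via Proposition~\ref{prop: SF}) is exactly the bookkeeping the paper leaves implicit.
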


\begin{proof}
Considering $((-1)^r\widetilde{d}^B\widetilde{H}+\widetilde{H}\widetilde{d}^A)(x^n\otimes a)=(\bbS^r \widetilde{G}-\bbS^r \widetilde{F})(x^n\otimes a)$, we read off
	\begin{equation}
	\sum_{i+j=m}(-1)^{r+i} d^B_ih_j(a)+(-1)^i h_i d^A_j(a)=
		\begin{cases} 
			g_{m-r}(a)-f_{m-r}(a), &\text{if $m\geq r$}\\ 
			0, & \text{if $m<r$}
		\end{cases}
	\tag{$H_m$}\end{equation}
which is equivalent to the $r$-homotopy condition, by Proposition~\ref{equivalent_notion_htp}.
\end{proof}

\section{New interpretations of derived $A_\infty$-algebras}
\label{sec:newder}
In this section we reinterpret derived $A_\infty$-algebras both as $A_\infty$-algebras 
in twisted chain complexes and as split filtered $A_\infty$-algebras.  First we recall the basic notions 
regarding derived $A_\infty$-algebras.  Next we endow the categories of twisted complexes and filtered
complexes with a monoidal structure over a base, in the sense of Fresse~\cite{Fresse},  and explicitly 
describe the enrichments that these structures induce.  Then we show that the totalization functor and
its properties extend to this enriched setting and use this to prove our main results.

\subsection{The category of derived $A_\infty$-algebras}
We begin by recalling the basic definitions for derived $A_\infty$-algebras, also known as $dA_\infty$-algebras.

\begin{defi}\label{def:dA_obj}
A (non-unital) \textit{$dA_\infty$-algebra} $(A,m_{ij})$ is a $(\ZZ,\ZZ)$-bigraded $R$-module $A=\{A_i^j\}$ equipped with morphisms 
$\{m_{ij}:A^{\otimes j}\lra A\}_{i\geq 0, j\geq 1}$ of bidegree $(-i,2-i-j)$ such that for all $u\geq 0$ and all $v\geq 1$,
\begin{equation}\label{R:dAs}
\sum\limits_{\substack{ u=i+p, v=j+q-1 \\ j=1+r+t}} 
(-1)^{rq+t+pj}m_{ij}({1}^{\otimes r}\otimes m_{pq}\otimes {1}^{\otimes t})=0. \tag{$A_{uv}$}
\end{equation}

\end{defi}

\begin{defi}\label{def:dA_mor}
 A \textit{morphism} $f:(A,m_{ij}^A)\to (B,m_{ij}^B)$ of $dA_\infty$-algebras is given by a family of morphisms
 $\{f_{ij}: A^{\otimes j}\rightarrow B\}_{i\geq 0, j\geq 1}$ of bidegree $(-i,1-i-j)$ such that for all
 $u\geq 0$ and all $v\geq 1$,
\begin{equation}\label{dmapequation}
\sum\limits_{\substack{ u=i+p, v=j+q-1 \\ j=1+r+t}} (-1)^{rq+t+pj}
f_{ij} (1^{\otimes r} \otimes m^A_{pq} \otimes 1^{\otimes t}) = \\
\sum\limits_{\substack{ u=i+p_1+\cdots+p_j,\\ v=q_1+\cdots+q_j }} (-1)^{\sigma} m^{B}_{ij}(f_{p_1q_1}\otimes\cdots\otimes f_{p_jq_j}),
\tag{$B_{uv}$}\end{equation}
where $\sigma=u+\sum\limits_{t=1}^{j}(p_t+q_t)(j+t)+q_t\sum\limits_{w=t+1}^j (p_w+q_w)$.
\end{defi}

\begin{prop}\label{prop:composition}
Let $g=(g_{ij}):A\to B$ and $f=(f_{ij}):B\to C$ be two morphisms of $dA_\infty$-algebras. Then the composite morphism 
$fg:A\to C$ of
$dA_\infty$-algebras has components
\[
(fg)_{uk}=\sum_{i+p=u} \sum_r\sum_{\substack{p_1+\cdots +p_r=p\\q_1+\cdots +q_r=k}}
							(-1)^{\sigma} f_{ir}\left(
			 g_{p_1q_1}\otimes \cdots \otimes g_{p_rq_r}\right),
\]
where $\sigma=\sum\limits_{t=1}^{r} (p_t+q_t)(r+t)+q_t\sum\limits_{w=t+1}^r (p_w+q_w)$. 

\end{prop}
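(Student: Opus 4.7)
The plan is to derive the composition formula from the coalgebra interpretation of $dA_\infty$-algebras. Algebras over the operad $dA_\infty = \Omega((d\As)^{\antishriek})$ are, by the Rosetta stone, equivalent to square-zero codifferentials of bidegree $(0,1)$ on the cofree conilpotent coalgebra over $(d\As)^{\antishriek}$ generated by $SA$. A morphism $f = (f_{ij}): A \to B$ of $dA_\infty$-algebras then corresponds to a morphism $\widetilde{F}: (d\As)^{\antishriek}(SA) \to (d\As)^{\antishriek}(SB)$ of cofree coalgebras of bidegree $(0,0)$ that commutes with the codifferentials encoding the $m^A_{ij}$ and $m^B_{ij}$. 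Since the target is cofree, $\widetilde{F}$ is uniquely determined by its projection onto cogenerators, and this projection, restricted to the $x$-weight $i$ and arity $j$ summand, recovers the shifted component $\Psi_j(f_{ij})$.

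With this set-up in place, the composition $fg$ is defined as the morphism corresponding to $\widetilde{F}\widetilde{G}$, from which associativity, unitality, and compatibility with the codifferentials are automatic. To derive the explicit formula, I would compute the projection of $\widetilde{F}\widetilde{G}$ onto cogenerators evaluated on an element $x^u \otimes (Sa_1 \otimes \cdots \otimes Sa_k)$. Expanding $\widetilde{G}$ first via the cofree-coalgebra formula, the $x$-weight splits as $u = i + p_1 + \cdots + p_r$, the tensor factors split into $r$ consecutive blocks of sizes $q_1 + \cdots + q_r = k$, and (the lift of) $\Psi_{q_t}(g_{p_tq_t})$ is applied to the $t$-th block. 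The results are then fed into $\Psi_r(f_{ir})$. Indexing this sum recovers the combinatorial shape of the stated formula.

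The sign $\sigma$ collects three contributions. The first is the Koszul sign produced by moving each $\Psi_{q_t}(g_{p_tq_t})$ past the preceding $Sa_i$ via the bidegree pairing $\langle -, -\rangle = x_1 y_1 + x_2 y_2$, which yields the double sum $\sum_t q_t \sum_{w > t}(p_w + q_w)$. The remaining two contributions come from converting $\Psi_r(f_{ir})$ and each $\Psi_{q_t}(g_{p_tq_t})$ back to $f_{ir}$ and $g_{p_tq_t}$ using the defining relation $\sigma_X \Psi_k(h) = (-1)^{\langle \Psi_k(h), \sigma\rangle}\, h\, \sigma_Y^{\otimes k}$ from Section~\ref{subsection:bgmods}; substituting the bidegrees of $f_{ir}$ and $g_{p_tq_t}$ from the definitions above and collecting terms modulo $2$ gives $\sum_t (p_t+q_t)(r+t)$. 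Summing the three contributions yields $\sigma$.

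The main obstacle will be the sign bookkeeping, since two independent gradings are in play and each of the shifts $S$ and $\Psi_k$ introduces its own bidegree-dependent sign correction. To control it, I would first verify the formula for small values of $r$, where it reduces to familiar $A_\infty$-type signs and can be checked by hand to fix conventions, and then run the general case by organizing the three sign contributions in a fixed order and exploiting bilinearity of $\langle -, -\rangle$ to simplify systematically.
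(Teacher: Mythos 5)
Your proposal is correct in approach, but it is worth noting that the paper does not actually prove Proposition~\ref{prop:composition} at all: its ``proof'' is a one-line citation to Equations (4) and (5) of \cite{LRW}, where composition of $dA_\infty$-morphisms is defined precisely via composition of the corresponding morphisms of cofree $(\dAs)^{\antishriek}$-coalgebras. So what you are proposing is essentially to carry out the content of the cited reference. This is the right thing to do --- indeed, since composition of $\infty$-morphisms only makes sense through the coalgebra interpretation, some version of your argument is forced --- and it also parallels machinery the paper itself builds later: part (2) of Proposition~\ref{prop:wcoder} identifies $dA_\infty$-morphisms with coalgebra-families in $\wbimod$ whose composition is the composition morphism $c$, and the proof of Theorem~\ref{dA_Atc} performs exactly the sign conversion you describe (the relations $(\widetilde{C}_{uv})$ there, with $\widetilde\sigma_r'=\sum_{k}(p_k+q_k)(j+k)+\sum_k q_k\sum_{s>k}(p_s+q_s)$, reproduce the $\sigma$ of the statement after setting $f_{ij}=(-1)^{ij}\widetilde f_{ij}$). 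What your route buys is a self-contained proof; what the paper buys by citing is brevity, at the cost of making the reader chase conventions across two papers.

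One caution on the sign bookkeeping. Your attribution of the double sum $\sum_t q_t\sum_{w>t}(p_w+q_w)$ to ``the Koszul sign produced by moving each $\Psi_{q_t}(g_{p_tq_t})$ past the preceding $Sa_i$'' is loosely stated: since $\Psi_{q_t}(g_{p_tq_t})$ has bidegree $(-p_t,-p_t)$, each such individual commutation sign depends on the bidegrees of the elements and is not by itself expressible in terms of the indices $(p_t,q_t)$ alone. These element-dependent signs only cancel against the element-dependent signs coming from the $\sigma^{\otimes k}$ factors in the $\Psi$-conversions and from the Koszul signs already implicit in the operator tensor $g_{p_1q_1}\otimes\cdots\otimes g_{p_rq_r}$ on the target side of the identity. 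So the three contributions cannot be evaluated independently as you suggest; they must be tracked simultaneously on a test element and shown to combine into the element-independent exponent $\sigma$. This does not break the argument --- your fallback of fixing conventions on small $r$ and then organizing the general computation is exactly how the analogous verification is done in the proof of Theorem~\ref{dA_Atc} --- but the middle paragraph of your plan should be reworded so as not to claim each contribution is separately well defined.
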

\begin{proof} This is a direct consequence of Equations (4) and (5) in \cite[Theorem 2.8]{LRW}.
\end{proof}

A morphism $f=\{f_{ij}\}$ is said to be \textit{strict} if $f_{ij}=0$ for all $i>0$ and all $j>1$.
The identity morphism $1_A:A\to A$ is the strict morphism given by $(1_A)_{01}(x)=x$.

\begin{lem}
A morphism $f=\{f_{ij}\}$ of $dA_\infty$-algebras is an isomorphism if and only if $f_{01}$ is an isomorphism of bigraded $R$-modules.
\end{lem}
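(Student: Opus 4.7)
The ``only if'' direction is immediate from Proposition~\ref{prop:composition}: if $g=\{g_{ij}\}$ is an inverse of $f$, then comparing the bidegree-$(0,1)$ components of $fg = 1_B$ and $gf = 1_A$ yields $f_{01}g_{01} = 1_B$ and $g_{01}f_{01} = 1_A$, so $f_{01}$ is an isomorphism of bigraded $R$-modules.

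For the converse, assume $f_{01}$ is invertible. The strategy is to mimic the standard argument for $A_\infty$-algebras and construct $g = \{g_{ij}\}$ inductively by solving the equations $fg = 1_B$. Order the index pairs $(i,j)$ with $i \geq 0$, $j \geq 1$ lexicographically by $(j, i)$, so that $(0, 1)$ is smallest. Inspecting the composition formula of Proposition~\ref{prop:composition}, one observes that $(fg)_{uk}$ contains exactly one summand of the form $f_{01} \circ g_{uk}$ (namely, the contribution with $r = 1$, $i = 0$ and $(p_1, q_1) = (u, k)$), while every other summand $f_{ir}(g_{p_1q_1} \otimes \cdots \otimes g_{p_rq_r})$ involves only components $g_{p_t, q_t}$ that are strictly smaller than $(u, k)$ in this order: if $r \geq 2$ then each $q_t < k$, and if $r = 1$ with $i \geq 1$ then $p_1 < u$ while $q_1 = k$. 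Setting $g_{01} := f_{01}^{-1}$ and recursively defining $g_{uk}$ by applying $f_{01}^{-1}$ to the sign-adjusted sum of the remaining summands produces a family of bigraded morphisms of the correct bidegree satisfying $fg = 1_B$ as families.

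The main obstacle is to verify that the family $\{g_{ij}\}$ constructed in this way actually satisfies the compatibility relations $(B_{uv})$, so that it defines a morphism of $dA_\infty$-algebras rather than just a family of maps. The cleanest way to handle this is via the operadic/coalgebraic formulation recalled in the introduction: morphisms of $dA_\infty$-algebras correspond bijectively to morphisms of the cofree $(dAs)^{\antishriek}$-coalgebras on $A$ and $B$ that commute with the codifferentials encoding the $m_{ij}$-structures. By the standard cofree-coalgebra lemma (the $(dAs)^{\antishriek}$-analog of the classical result used for $A_\infty$-algebras), such a morphism $\widetilde F$ is an isomorphism if and only if its projection to cogenerators is, which is exactly the condition that $f_{01}$ is invertible. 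Once $\widetilde F$ is known to be an isomorphism of coalgebras, applying its two-sided inverse $\widetilde G$ on both sides of the identity $\widetilde F d_A = d_B \widetilde F$ forces $d_A \widetilde G = \widetilde G d_B$, and unpacking $\widetilde G$ to components recovers precisely the family $\{g_{ij}\}$ above, now as a genuine morphism of $dA_\infty$-algebras. A more hands-on alternative is to verify $(B_{uv})$ for $g$ directly by a second lexicographic induction, using that $f$ satisfies $(B_{uv})$, that $fg = 1_B$, and that $f_{01}$ is invertible; this route works but is considerably more computational.
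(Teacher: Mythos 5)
Your proof is correct, and its core---solving the triangular system $(fg)_{uk}=(1_B)_{uk}$ by lexicographic induction on the index pairs, using that the only summand involving $g_{uk}$ is the top term $f_{01}g_{uk}$---is exactly the paper's argument. The two differ only in how they finish: the paper reruns the same right-inverse construction on $g$ (whose leading term $f_{01}^{-1}$ is again invertible) to conclude that $f$ and $g$ are two-sided inverses, and it does not explicitly address whether the solved-for family satisfies the relations $(B_{uv})$, i.e.\ is itself a morphism of $dA_\infty$-algebras. You flag and close that gap via the correspondence with morphisms of cofree $(\dAs)^{\antishriek}$-coalgebras commuting with the codifferentials, where invertibility of the linear part $f_{01}$ yields a coalgebra inverse that automatically commutes with the codifferentials; this is a genuine point the paper leaves implicit, and the coalgebraic route is the cleanest way to supply it. The only quibble is terminological: the component $f_{01}$ has bidegree $(0,0)$, so ``the $(0,1)$-indexed component'' would be more accurate than ``bidegree-$(0,1)$ component''.
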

\begin{proof}
If $fg=1$, then $f_{01}g_{01}=1$, so $f_{01}$ is an isomorphism and $g_{01}=f_{01}^{-1}$. Then
the equation giving the $(uk)$ component of the composite has a ``top term''
$f_{01}g_{uk}$, with all other summands involving components $g_{ij}$ with $i<u$ or $j<k$. Thus we
can successively solve for each $g_{uk}$ if and only if $f_{01}$ is an isomorphism. This gives a right
inverse $g$ for $f$ if and only if $f_{01}$ is an isomorphism. The same argument shows that $g$ also
has a right inverse, and this must be $f$, so $f$ and $g$ are two-sided inverses.
\end{proof}

Denote by $\dAinf{R}$ the category of $dA_\infty$-algebras over $R$.

\begin{example}[$A_\infty$-algebras] 
The category $\Ainf{R}$ of $A_\infty$-algebras is a full subcategory of $\dAinf{R}$.
Indeed, if a $dA_\infty$-algebra $(A,m_{ij})$ is concentrated in horizontal degree $0$, 
that is, $A_i^j=0$ and $m_{ij}=0$ for all $i>0$, then $(A,m_{0j})$ is an $A_\infty$-algebra.
\end{example}

\begin{example}[Underlying twisted complex]
There is a forgetful functor $U:\dAinf{R}\lra \tc$ defined by sending a $dA_\infty$-algebra
$(A,m_{ij})$ to the twisted complex $(A,m_{i1})$
and a morphism $f=\{f_{ij}\}$ of $dA_\infty$-algebras to the morphism of twisted complexes given by 
$U(f)=\{f_{i1}\}$.
\end{example}

We define $E_r$-quasi-isomorphism for $dA_\infty$-algebras via their underlying twisted complexes.

\begin{defi}
Let $r\geq 0$.
A morphism $f=\{f_{ij}\}$ of $dA_\infty$-algebras is said to be an \textit{$E_r$-quasi-isomorphism} if
the corresponding map $U(f):=\{f_{i1}\}$ of twisted complexes is an $E_r$-quasi-isomorphism.
\end{defi}

Denote by $\Ee_r$ the class of $E_r$-quasi-isomorphisms of $\dAinf{R}$ and by
$\Ho_r(\dAinf{R}):=\dAinf{R}[\Ee_r^{-1}]$ the \textit{$r$-homotopy category} defined by inverting $E_r$-quasi-isomorphisms.
Note that $\Ee_r=U^{-1}(\Ee_r^{\tc})$. The forgetful functor induces a functor $U:\Ho_r(\dAinf{R})\lra \Ho_r(\tc)$.

\subsection{Monoidal categories over a base}
In the following sections, all our notions of enriched category theory follow \cite{Borceux} and \cite{Riehl}.
Our new descriptions of $dA_\infty$-algebras will use certain enriched categories coming
from monoidal categories over a base as defined 
in~\cite{Fresse}.  We recall this notion first.

\begin{defi} 
\label{monoidal_over_base}
Let $(\vcat,\otimes,1)$ be a symmetric monoidal category and let $(\Cc,\otimes,1)$ be a monoidal category.  We say that $\Cc$ is a \emph{monoidal category over $\vcat$} if we have an \emph{external tensor product} 
$\ast:\vcat \times \Cc \to \Cc$ such that we have natural isomorphisms:
\begin{itemize}
\item $1\ast X \cong X$ for all $X\in \Cc$,
\item $(C\otimes D)\ast X \cong C\ast (D\ast X)$ for all $C,D\in\vcat$ and $X\in\Cc$,
\item $C\ast(X\otimes Y) \cong (C\ast X)\otimes Y \cong X\otimes (C\ast Y)$ for all $C\in \vcat$ and $X,Y\in \Cc$.
\end{itemize}
\end{defi}

\begin{rmk}
\label{monoidal_over_enrichment}
If we have, in addition, a bifunctor
$\ccat(-,-):\Cc^{\mathrm{op}}\times \Cc \to \vcat$
such that we have natural bijections
\begin{equation}
\label{adjoint_exteranl_formal}
\Hom_{\Cc}(C\ast X, Y)\cong \Hom_{\vcat}(C,\ccat(X,Y))
\end{equation}
(for example, if $\ast$ preserves colimits on the left and certain smallness conditions hold) 
we get a $\vcat$-enriched category $\ccat$ with the same objects as $\Cc$ and with hom-objects given by $\ccat(-,-)$.
The unit morphism
$u_A: 1 \to \ccat(A,A)$
corresponds to the identity map in $\Cc$ under the adjunction and the composition morphism is given by the adjoint of the composite 
\[(\ccat(B,C)\otimes \ccat(A,B)) \ast A 
\stackrel{\cong}{\longrightarrow}
\ccat(B,C)\ast (\ccat(A,B)\ast A) 
\stackrel{id \ast ev_{AB}}{\xrightarrow{\hspace*{1cm}} }
\ccat(B,C)\ast B
\stackrel{ev_{BC}}{\xrightarrow{\hspace*{1cm}} }
C,\]
where $ev_{AB}$ is the adjoint of the identity $\ccat(A,B)\to \ccat(A,B)$.
Note that by construction, the underlying category of $\ccat$ is $\Cc$.  Furthermore, $\ccat$ is a monoidal $\vcat$-enriched category, namely we have an enriched functor
\[\underline{\otimes}:\ccat\times\ccat\to\ccat\]
where $\ccat\times\ccat$ is the enriched category with objects $\mathrm{Ob}(\ccat)\times \mathrm{Ob}(\ccat)$ and hom-objects
\[\ccat\times\ccat((X, Y), (W,Z)):=\ccat(X,W)\otimes \ccat(Y, Z).\]
In particular we get maps in $\vcat$ 
\[\ccat(X,W)\otimes \ccat(Y,Z)\to \ccat(X\otimes Y, W\otimes Z),\]
given by the adjoint of the composite 
\[(\ccat(X,W)\otimes \ccat(Y,Z))\ast (X\otimes Y)
\stackrel{\cong}{\xrightarrow{\hspace*{0.7cm}} }
(\ccat(X,W)\ast X)\otimes (\ccat(Y,Z)\ast Y)
\stackrel{ev_{XW}\otimes ev_{YZ}}{\xrightarrow{\hspace*{1.5cm}} }
W\otimes Z.
\]
\end{rmk}

We will assume the setup above holds throughout the paper.
It is assumed in \cite{Fresse} and holds in a fairly general setting.
In particular, it holds in all the cases we study here.
One of its useful features is that 
constructions on the level of ordinary categories which respect the external monoidal structure extend to the enriched setting.

\begin{defi}
Let $\Cc$ and $\Dd$ be monoidal categories over $\vcat$.  A \emph{lax functor over $\vcat$} consists of a functor $F:\Cc\to \Dd$ together with a natural transformation 
\[\nu_F:-\ast_\Dd F(-)\Rightarrow F(-\ast_\Cc -)\]
which is associative and unital with respect to the monoidal structures over $\vcat$ of $\Cc$ and $\Dd$. 
(See \cite[Proposition 10.1.5]{Riehl} for explicit diagrams stating the coherence axioms.)
If $\nu_F$ is a natural isomorphism we say $F$ is a \emph{functor over $\vcat$} (or \emph{preserves external tensor products}). 

Let $F,G:\Cc\to \Dd$ be lax functors over $\vcat$.  A \emph{natural transformation over $\vcat$} is a natural transformation $\mu:F \Rightarrow G$ such that for any $C\in\vcat$ and for any $X\in\Cc$ we have 
\[\nu_G\circ (1*_{\Dd} \mu_X)=\mu_{C*_{\Cc} X}\circ \nu_F.\]

A \emph{(lax) monoidal functor over $\vcat$} is a triple $(F,\epsilon,\mu)$, where $F:\Cc\to \Dd$ is a lax functor over $\vcat$, $\epsilon:1_{\Dd} \to F(1_{\Cc})$ is a morphism in $\Dd$ and  
\[\mu: F(-)\otimes F(-) \Rightarrow F(-\otimes -)\] 
is a natural transformation over $\vcat$ satisfying the standard unit and associativity conditions.  If $\nu_F$ and $\mu$ are natural isomorphisms then we say that $F$ is \emph{monoidal over $\vcat$}.
\end{defi}

When restricted to the case of functors over $\vcat$, the first part of the following statement is Proposition 1.1.15 in \cite{Fresse}.  The second part is implicit in the same text.  However, both results and their proofs extend to the case of lax functors over $\vcat$ as we describe below.

\begin{prop}
\label{prop:functor_extends_to_enriched}
Let $F, G:\Cc\to \Dd$ be lax functors over $\vcat$.
Then $F$ and $G$ extend to $\vcat$-enriched functors 
\[\underline{F}, \underline{G}:\ccat \to \ddcat\]
where $\ccat$ and $\ddcat$ denote the $\vcat$-enriched categories corresponding to $\Cc$ and $\Dd$ as described in Remark \ref{monoidal_over_enrichment}.
Moreover, any  natural transformation $\mu:F\Rightarrow G$ over $\vcat$ also extends to a $\vcat$-enriched natural transformation 
\[\underline{\mu}:\underline{F}\Rightarrow \underline{G}.\]  
In particular, if $F$ is (lax) monoidal over $\vcat$, then $\underline{F}$ is (lax) monoidal in the enriched sense, where the monoidal structure of $\ccat\times {\ccat}$ is the one described in Remark \ref{monoidal_over_enrichment}.
\end{prop}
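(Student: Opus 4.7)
The plan is to produce the enriched functor $\underline{F}$ by giving its action on hom-objects via the adjunction of Remark \ref{monoidal_over_enrichment}, and then checking the enriched functoriality axioms using the coherence of $\nu_F$. Concretely, for objects $X,Y\in\Cc$, I want a morphism in $\vcat$
\[
\underline{F}_{X,Y}:\ccat(X,Y)\longrightarrow\ddcat(F(X),F(Y)).
\]
By the adjunction \eqref{adjoint_exteranl_formal} applied in $\Dd$, such a map corresponds to a morphism $\ccat(X,Y)\ast_\Dd F(X)\to F(Y)$ in $\Dd$. I would define this as the composite
\[
\ccat(X,Y)\ast_\Dd F(X)\xrightarrow{\ \nu_F\ } F(\ccat(X,Y)\ast_\Cc X)\xrightarrow{\ F(ev_{X,Y})\ } F(Y),
\]
where $ev_{X,Y}$ is the evaluation map coming from the $\Cc$-side adjunction. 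On underlying morphisms (i.e.\ maps $1\to\ccat(X,Y)$, corresponding via the adjunction to maps $X\to Y$ in $\Cc$) one sees, using the unit axiom $\nu_F\circ(1_1\ast_\Dd -)=\mathrm{id}$, that $\underline{F}$ restricts to the ordinary action of $F$.

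Next, I would verify the two enriched functoriality axioms. For the unit, one chases the morphism $u_X:1\to\ccat(X,X)$ through $\underline{F}_{X,X}$ and shows it equals $u_{F(X)}$, which again uses the unit axiom for $\nu_F$ together with the definition of $u$ in Remark \ref{monoidal_over_enrichment}. For composition, one needs the square
\[
\begin{tikzcd}
\ccat(Y,Z)\otimes\ccat(X,Y)\ar[r]\ar[d,"\underline{F}\otimes\underline{F}"'] & \ccat(X,Z)\ar[d,"\underline{F}"]\\
\ddcat(F(Y),F(Z))\otimes\ddcat(F(X),F(Y))\ar[r] & \ddcat(F(X),F(Z))
\end{tikzcd}
\]
to commute in $\vcat$; by the adjunction this is equivalent to commutativity after $\ast_\Dd F(X)$, which reduces by iterated use of the associativity axiom for $\nu_F$ and naturality of $\nu_F$ (applied to the evaluation $ev_{X,Y}$) to the (definitional) commuting composition diagram for $\ccat$.

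For the second statement, a natural transformation over $\vcat$ has components $\mu_X:F(X)\to G(X)$ in $\Dd$, which I regard as maps $1\to\ddcat(F(X),G(X))$. Enriched naturality of $\underline{\mu}$ asks for the equality of the two composites $\ccat(X,Y)\to\ddcat(F(X),G(Y))$, one via $\underline{F}$ postcomposed with $\mu_Y$, the other via $\underline{G}$ precomposed with $\mu_X$. Transporting across the adjunction this becomes an equality of two maps $\ccat(X,Y)\ast_\Dd F(X)\to G(Y)$; expanding both using the definition of $\underline{F},\underline{G}$ and the compatibility condition $\nu_G\circ(1\ast_\Dd\mu_X)=\mu_{\ccat(X,Y)\ast_\Cc X}\circ\nu_F$ reduces the claim to ordinary naturality of $\mu$ at the morphism $ev_{X,Y}$ in $\Cc$, which is automatic.

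Finally, for the monoidal case, the enriched comparison map $\underline{\mu}_{X,Y}:\underline{F}(X)\otimes\underline{F}(Y)\to\underline{F}(X\otimes Y)$ is induced by the ordinary $\mu_{X,Y}$ regarded as a morphism $1\to\ddcat(F(X)\otimes F(Y),F(X\otimes Y))$, and the enriched hexagon/pentagon/unit diagrams reduce, via the adjunction and the coherence of $\nu_F$, to the diagrams for $(F,\epsilon,\mu)$ being (lax) monoidal over $\vcat$. I expect the main obstacle to be bookkeeping rather than ideas: the proofs are all diagram chases, but each chase requires carefully combining $\nu_F$, the evaluation maps, and either the coherence axiom being verified or the hypothesis on $\mu$, and the enriched monoidal structure on $\ccat\times\ccat$ introduces an extra symmetry constraint that must be checked against the monoidal structure over $\vcat$.
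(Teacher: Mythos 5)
Your proposal is correct and follows essentially the same route as the paper: $\underline{F}_{X,Y}$ is defined as the adjoint of $F(ev_{X,Y})\circ\nu_F$, $\underline{\mu}_X$ as the adjoint of $\mu_X$, with enriched naturality reduced across the adjunction to the compatibility of $\mu$ with $\nu_F,\nu_G$. The only (cosmetic) difference is in the monoidal part, where the paper avoids re-chasing the coherence diagrams by observing that $F(-)\otimes F(-)$ and $F(-\otimes-)$ are themselves lax functors over $\vcat$ with $\mu$ a natural transformation over $\vcat$, so the monoidal claim is a formal instance of the natural-transformation claim already proved.
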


\begin{proof}
For any $X,Y\in\Cc$ the functor $F$ extends to an enriched functor $\underline{F}$ where $\underline{F}(X):=F(X)$ 
and  where the morphism on hom-objects
\[\ccat(X,Y)\to \ddcat(F(X),F(Y))\]
is given by the adjoint of the composite
\[\ccat(X,Y)\ast_\Dd F(X)
\stackrel{\nu_F}{\longrightarrow}
F(\ccat(X,Y)\ast_\Cc X)
\stackrel{F(ev_{XY})}{\xrightarrow{\hspace*{1.5cm}}} 
F(Y).\]
The coherence axioms and the fact that $F$ is the underlying functor of $\underline{F}$ follow formally.  See for example the proof of 
Proposition 10.1.5 in \cite{Riehl}.

To show the second statement recall that a $\vcat$-enriched natural transformation $\underline{\mu}:\underline{F}\Rightarrow \underline{G}$ is given by maps
\[\underline{\mu}_X: 1 \to \ddcat(FX, GX),\]  
such that a naturality conditions holds  
(see \cite[Definition 3.5.8]{Riehl}).  
In our setup, we set $\underline{\mu}_X$ to be the adjoint of $\mu_X:FX\to GX$.  
Since $F$ and $G$ are lax over $\vcat$ and $\mu$ is a natural transformation over $\vcat$,  for any $C\in \vcat$ and $X,Y\in \Cc$, Diagram (\ref{diag_nat_sets}) commutes. Here $\eta_F(f):=F(f)\circ \nu_F$ and $\eta_G$ is defined in the same way.  
\begin{equation}\label{diag_nat_sets}
\begin{tikzpicture}[scale=0.01]
  \node (a) {$\Hom_{\Cc}(C\ast X,Y)$};
  \node[right=1.8cm of a] (b) {$\Hom_{\Dd}(C\ast FX,FY)$};
  \node[below= 1.5cm of a] (c) {$\Hom_{\Dd}(C\ast GX,GY)$};
  \node[below= 1.5cm of b] (d) {$\Hom_{\Dd}(C\ast FX,GY)$};
  \draw[->]
   (a) edge  node[above] {$\eta_F$} (b)
   (a) edge  node[left] {$\eta_G$} (c)
   (c) edge node[below] {$-\circ \mu_X$} (d)
   (b) edge  node[right] {$\mu_Y \circ -$} (d);
\end{tikzpicture}
\end{equation}
Then by adjunction Diagram (\ref{diag_nat_enriched}) commutes, showing that $\underline{\mu}$ is a $\vcat$-enriched natural transformation.
\begin{equation}\label{diag_nat_enriched}
\begin{tikzpicture}
  \node (a) {$\Hom_\vcat(C,\ccat(X,Y))$};
  \node[right=3cm of a] (b) {$\Hom_\vcat(C,\ddcat(FX,FY))$};
  \node[below= 1.5cm of a] (c) {$\Hom_\vcat(C,\ddcat(GX,GY))$};
  \node[below= 1.5cm of b] (d) {$\Hom_\vcat(C,\ddcat(FX,GY))$};
  \draw[->]
   (a) edge  node[above] {$\Hom_\vcat (C,F_{XY})$} (b)
   (a) edge  node[left] {$\Hom_\vcat (C,G_{XY})$}(c)
   (c) edge  node[below] {$\Hom_\vcat (C,\mu^*)$}(d)
   (b) edge  node[right] {$\Hom_\vcat (C,\mu_*)$}(d);
   \end{tikzpicture}
\end{equation}

The last statement of the proposition follows from the above together with two formal facts.  Firstly, if $\Cc$ is monoidal over $\vcat$ then so is $\Cc\times\Cc$.  Secondly, if $(F,\epsilon,\mu)$ is a lax monoidal functor over $\vcat$, then $F(-)\otimes F(-)$ and $F(-\otimes -)$ are lax functors over $\vcat$ and $\mu$ is a natural transformation over $\vcat$.
\end{proof}

The monoidal structures of vertical bicomplexes and twisted complexes are compatible.  More precisely, we can use the tensor product in twisted complexes to define monoidal structures over a base by restriction. 

\begin{lem}
The category $\tc$ is a monoidal category over $\vbic$ and the category $\bimodinf$ is a monoidal category over $\bimod$.  In both cases, the external tensor product is given by restricting the tensor product in twisted complexes.  We use the notation $\otimes$ instead of $\ast$ since the external tensor product coincides with the internal tensor product in $\tc$.  
\end{lem}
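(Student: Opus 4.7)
The plan is to observe that both statements are essentially formal consequences of the existence of full monoidal embeddings $\vbic\hookrightarrow \tc$ and $\bimod\hookrightarrow \bimodinf$ (already recorded in the commutative diagram of categories immediately before the spectral-sequence section), together with the symmetric monoidal structure on $\tc$ already established earlier in this section. I will do $\tc$ over $\vbic$; the case $\bimodinf$ over $\bimod$ is identical after deleting the vertical differential from all formulas.

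First I would make the external tensor product precise. A vertical bicomplex $(V,d^V)$ is identified with the twisted complex $(V,d_m)$ with $d_0=d^V$ and $d_m=0$ for $m\geq 1$; this assignment is a full, monoidal embedding $\iota:\vbic\hookrightarrow\tc$, since the defining relations $(A_{m1})$ reduce to $d_0^2=0$ and the tensor product formula $d^A_m\otimes 1+1\otimes d^B_m$ collapses to $d^V\otimes 1+1\otimes d^W$. I then set
\[
\otimes\;:\;\vbic\times\tc\longrightarrow\tc,\qquad (V,A)\longmapsto \iota(V)\otimes A,
\]
using the internal tensor product of $\tc$ described earlier in this section, and analogously on morphisms. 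Because $\iota$ is fully faithful and monoidal, this justifies the notational convention of writing $\otimes$ rather than $\ast$.

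Next I would verify the three axioms of Definition~\ref{monoidal_over_base} by direct appeal to the symmetric monoidal structure on $\tc$. The unit axiom $R\otimes X\cong X$ is immediate because $\iota(R)=R$ is the monoidal unit of $\tc$. The associativity axiom $(V\otimes W)\otimes X\cong V\otimes (W\otimes X)$ for $V,W\in\vbic$ and $X\in\tc$ follows from the associator of $\tc$, using that $\iota(V\otimes W)=\iota(V)\otimes\iota(W)$. Finally, the two-sided compatibility with the internal tensor product,
\[
V\otimes(X\otimes Y)\;\cong\;(V\otimes X)\otimes Y\;\cong\;X\otimes(V\otimes Y),
\]
amounts to applying the associator in $\tc$ for the first isomorphism and then the associator together with the braiding $\tau^{\tc}_{\iota(V),X}$ for the second; both are natural in all variables and all the required coherence diagrams are instances of the coherence of the symmetric monoidal structure on $\tc$.

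There is no real obstacle: once one checks that $\iota$ sends the unit to the unit and preserves tensor products on the nose (which is visible from the explicit formula for the differential on a tensor product of twisted complexes), every required isomorphism and coherence diagram is inherited from $(\tc,\otimes,R)$. The argument for $\bimodinf$ as a monoidal category over $\bimod$ is literally the same, replacing $\iota$ by the full monoidal embedding $\bimod\hookrightarrow\bimodinf$ which equips a bigraded module with the trivial family $d_m=0$, and replacing the symmetric monoidal structure on $\tc$ by the symmetric monoidal structure on $\bimodinf$ obtained by restriction.
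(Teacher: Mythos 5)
Your proof is correct and follows essentially the same route as the paper: both deduce the axioms of a monoidal category over a base from the symmetric monoidal structure on $\tc$ together with the monoidal embeddings $\vbic\hookrightarrow\tc$ and $\bimod\hookrightarrow\bimodinf$. You simply spell out the verification of the three axioms in more detail than the paper does.
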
 
\begin{proof}
The axioms of Definition~\ref{monoidal_over_base} hold because $\otimes$ is a symmetric monoidal structure on $\tc$ and the vertical arrows in the following diagram are monoidal embeddings.
\begin{equation*}
 \begin{tikzpicture}
   \node (a) {$\otimes:\tc \times \tc $};
   \node[right=2cm of a] (b) {$\tc$};
   \node[below= 1cm of a] (c) {$\otimes:\vbic \times \tc $};
   \node[below= 1cm of b] (d) {$\tc$};
   \node[below=1cm of c] (e) {$\otimes: \bimod \times \bimodinf$};
    \node[below=1cm of d] (f) {$\bimodinf$};
   \draw[->]
     (a) edge  (b)
     (c) edge (d)
     (e) edge (f)
     (c) edge [right hook->]  (a)
     (d) edge [left hook->]  (b)
     (e) edge [right hook->]  (c)
     (f) edge [left hook->]  (d);
 \end{tikzpicture}
\qedhere
\end{equation*}
\end{proof}

We can also endow filtered complexes with a monoidal structure over $\vbic$. We use the following. 

\begin{defi}
The \emph{totalization with compact support} of a vertical bicomplex $A$ is the filtered complex given by
\[\Tot_c(A)^n:=\bigoplus_{i\in\ZZ} A_i^{n+i}\]
with the column filtration and with differential as for the totalization functor.  Given a morphism of vertical bicomplexes $f:A\to B$ we get a morphism of filtered complexes $\Tot_c(f):\Tot_c(A)\to \Tot_c(B)$ constructed analogously to $\Tot(f)$.
\end{defi}

\begin{rmk}
Note $\Tot_c$ is well-defined since vertical bicomplexes have only one differential and the category $\vbic$ has only strict morphisms.  Moreover, for any $A$ we have a natural map $\Tot_c(A)\to \Tot(A)$ which is the identity if $A$ is bounded.
\end{rmk}

\begin{lem}
The category $\fc$ is monoidal over $\vbic$ with external tensor product given by
\[\begin{array}{cccl}
\ast: &\vbic \times \fc &\to      & \fc\\
      &(A,K)            &\mapsto  & A\ast K := \Tot_c(A)\otimes K.
\end{array}
\]
On morphisms it is given by the assignment $(f,g)\mapsto \Tot_c(f)\otimes g$.  This induces by restriction a monoidal structure on $\fm$ over $\bimod$.
\end{lem}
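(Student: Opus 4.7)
The plan is to deduce the statement from the observation that the compactly supported totalization $\Tot_c:\vbic\to\fc$ is strong symmetric monoidal. This fits a general categorical principle: whenever $F:(\vcat,\otimes,1)\to(\Cc,\otimes,1)$ is a strong symmetric monoidal functor into a symmetric monoidal category $\Cc$, the formula $A\ast X:=F(A)\otimes X$ makes $\Cc$ into a monoidal category over $\vcat$, with the required structural isomorphisms assembled from the comparison maps of $F$ together with the associator, unitor and symmetry of $\Cc$.

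First, I would verify that $\Tot_c$ is strong symmetric monoidal, with the same structure maps $\epsilon:R\to \Tot_c(R)$ and $\mu_{A,B}:\Tot_c(A)\otimes \Tot_c(B)\to \Tot_c(A\otimes B)$ written down in Proposition~\ref{prop:lax}. Compared with the argument for $\Tot$, the only point requiring attention is that $\mu_{A,B}$ is now a natural \emph{iso}morphism rather than only a map: since $\Tot_c$ uses direct sums over all $i\in\ZZ$, a straightforward re-indexing of summands identifies both sides of $\mu_{A,B}$ degreewise as the same direct sum of tensors $A_{k_1}^{n_1+k_1}\otimes B_{k_2}^{n_2+k_2}$. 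Compatibility with differentials, with the unit, and with associativity and symmetry is then verbatim the computation carried out for $\Tot$.

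Second, I would construct the three required natural isomorphisms by transporting them across $\Tot_c$:
\begin{align*}
R\ast K &= \Tot_c(R)\otimes K \xrightarrow{\epsilon^{-1}\otimes 1} R\otimes K \cong K,\\
(A\otimes B)\ast K &= \Tot_c(A\otimes B)\otimes K \xrightarrow{\mu^{-1}\otimes 1} \bigl(\Tot_c(A)\otimes \Tot_c(B)\bigr)\otimes K \cong A\ast(B\ast K),\\
A\ast(K\otimes L) &\cong (A\ast K)\otimes L \cong K\otimes(A\ast L),
\end{align*}
where the last line uses associativity and symmetry of $\otimes$ in $\fc$ applied to $\Tot_c(A)\otimes K\otimes L$. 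Naturality is immediate from naturality of $\mu$ together with the coherence isomorphisms of $\fc$, and the coherence diagrams required by Definition~\ref{monoidal_over_base} follow formally from MacLane coherence in $\fc$ combined with the associativity and unitality of the strong monoidal structure on $\Tot_c$.

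Finally, for the restriction statement, I would observe that $\bimod\hookrightarrow \vbic$ is a strong monoidal full embedding, that $\Tot_c$ sends a bigraded module (with trivial differential) to a graded module regarded as a complex with zero differential and hence lands in $\fm\subset\fc$, and that the tensor product in $\fc$ of two objects with zero differential again has zero differential. Hence $\ast$ restricts to a bifunctor $\bimod\times\fm\to\fm$ and the same isomorphisms restrict verbatim. I do not anticipate any serious obstacle: the only subtlety worth emphasizing is the contrast with the ordinary totalization $\Tot$, where the use of products in non-positive horizontal degrees forced $\mu$ to be merely lax. Replacing products by direct sums in $\Tot_c$ is precisely what makes $\mu$ bijective without any boundedness assumption, and this is what allows the monoidal-over-base structure to be defined on all of $\fc$ rather than only on the bounded subcategory.
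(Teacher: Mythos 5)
Your proposal is correct and follows essentially the same route as the paper: the paper's proof likewise reduces everything to the fact that $\Tot_c$ is strong symmetric monoidal (because $\otimes$ distributes over $\oplus$, in contrast to the merely lax $\Tot$) together with the symmetric monoidal structure on $\fc$, and obtains the restriction to $\fm$ over $\bimod$ from the full embeddings. Your write-up simply spells out the transported isomorphisms and coherence in more detail than the paper does.
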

\begin{proof}
The assignments define a bifunctor since $\Tot_c:\vbic \to \fm$ is a functor.  Furthermore, the axioms of Definition~\ref{monoidal_over_base} hold since $\otimes$ is a symmetric monoidal product in $\fc$ and $\Tot_c$ is strong symmetric monoidal since $\otimes$ distributes over $\oplus$.  Finally, since $\fm\hookrightarrow\fc$ and $\bimod \hookrightarrow \vbic$ are full embeddings, this construction induces by restriction a bifunctor $\ast:\bimod \times \fm \to \fm$ which gives a monoidal structure on $\fm$ over $\bimod$.
\end{proof}

\subsection{Enrichments from monoidal structures over a base}
We now explain how to give enrichments to the categories of bigraded modules, filtered modules, twisted complexes and filtered complexes, using their monoidal structure over a base.
This will give the $\vbic$-enriched categories $\wtc$ and $\wfc$ and $\bimod$-enriched categories $\wbimod$ and $\wfm$. We emphasize that $\wbimod$ is different from its standard enrichment coming from its symmetric monoidal closed structure.

Recall from Definition~\ref{composition_defi} that for bigraded modules $A,B,C$, we have already defined a bigraded module
$\wbimod(A,B)$ and a composition
	\[
	c:\wbimod(B,C)\otimes \wbimod(A,B)\to \wbimod(A,C).
	\]

\begin{lem}
\label{associative_comp}The composition morphism respects the identity and is associative.
\end{lem}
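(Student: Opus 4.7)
The plan is to verify both axioms directly from the formula
\[c(f,g)_m := \sum_{i+j=m}(-1)^{i|g|}f_i g_j\]
of Definition~\ref{composition_defi}, by a careful bookkeeping of Koszul signs.

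For the unit axiom, I would propose the identity element $1_A \in \wbimod(A,A)_0^0$ given by $(\mathrm{Id}_A,0,0,\dots)$ (that is, the morphism $R \to \wbimod(A,A)$ in $\bimod$ picking out this element). In $c(1_A,g)_m$ only the index $i=0$ contributes, since $(1_A)_i = 0$ for $i>0$, and this yields $(1_A)_0 g_m = g_m$; in $c(f,1_A)_m$ only the index $j=0$ contributes, yielding $f_m (1_A)_0 = f_m$. The Koszul sign is trivial in both cases, either because $|1_A|=0$ or because the surviving term has $i=0$.

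For associativity, I would expand both composites as triple sums over $(i,j,k)$ with $i+j+k=m$. A direct substitution gives
\begin{align*}
c(c(f,g),h)_m &= \sum_{i+j+k=m} (-1)^{(i+j)|h| + i|g|}\, f_i\, g_j\, h_k,\\
c(f,c(g,h))_m &= \sum_{i+j+k=m} (-1)^{i(|g|+|h|) + j|h|}\, f_i\, g_j\, h_k,
\end{align*}
using that the bidegree of $c(g,h)$ is $|g|+|h|$, so that $c$ preserves total bidegrees. The two exponents agree, since $(i+j)|h| + i|g| = i|g| + i|h| + j|h|$.

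The main obstacle is not conceptual but sign-tracking: one must verify that the sign $(-1)^{i|g|}$ appearing in the definition of $c$ is consistent with the Koszul rule governing the tensor product in the source $\wbimod(B,C)\otimes \wbimod(A,B)$, and in particular that $|c(g,h)|=|g|+|h|$ in the relevant sense. Once these conventions are pinned down, both verifications reduce to elementary index manipulations, and no smallness or further structural hypothesis is required.
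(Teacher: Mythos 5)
Your proof is correct and follows essentially the same route as the paper, whose entire proof is the one-line computation $c(c(f,g),h)_m=\sum_{i+j+k=m}(-1)^{i(|g|+|h|)+j|h|}f_ig_jh_k=c(f,c(g,h))_m$; your sign bookkeeping matches, since $(i+j)|h|+i|g|=i(|g|+|h|)+j|h|$ and $|c(g,h)|=|g|+|h|$. Your explicit verification of the unit axiom with $1_A=(\mathrm{Id}_A,0,0,\dots)$ is a detail the paper leaves implicit, and it is also correct.
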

\begin{proof}
For  $f,g,h$,
\[c(c(f,g),h))_m=\sum_{i+j+k=m} (-1)^{i(|g|+|h|)+j|h|} f_ig_jh_k=c(f,c(g,h)).\qedhere\]
\end{proof}

\begin{defi}
\label{def:delta}
Let $(A,d_i^A), (B,d_i^B)$ be twisted  complexes, $f\in\wbimod(A,B)_u^v$ and consider $d^A:=(d_i^A)_i\in\wbimod(A,A)_0^1$ and $d^B:=(d_i^B)_i\in \wbimod(B,B)_0^1$.
We define
\begin{equation}
\label{differential_wtc_def}
\delta (f):= c(d^B, f) -(-1)^{<f,d^A>} c(f, d^A)\in \wbimod(A,B)_u^{v+1},
\end{equation} 
where $<f,d^A>$ is the scalar product for the bidegrees (as in subsection~\ref{subsection:bgmods}) and $c$ is the composition morphism described in Definition \ref{composition_defi}.
More precisely,
\[ (\delta (f))_m :=
	\sum_{i+j=m} (-1)^{i|f|}d_i^Bf_j- (-1)^{v+i} f_id_j^A.\]
	\end{defi}

\begin{lem}
\label{differential_morphism_tc}
The following equations hold
\begin{align*}
c(d^A,d^A)&=0, \\
\delta^2 &=0,
\end{align*}
\begin{equation}
\label{comp_differential}
\delta(c(f, g))= c(\delta(f), g)+(-1)^v c(f, \delta(g)),
\end{equation}
where the bidegree of $f$ is $(u,v)$. 
Furthermore, $f\in \wbimod(A,B)$ is a map of twisted  complexes if and only if $\delta(f)=0$.  In particular, $f$ is a morphism in $\tc$ if and only if the bidegree of $f$ is 
$(0,0)$ and $\delta(f)=0$. 
 Moreover, for $f,g$ morphisms in $\tc$, we have that $c(f,g)=f\circ g$, where the latter denotes composition in $\tc$. 
\end{lem}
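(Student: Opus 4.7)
The lemma is a bundle of five statements that all unpack via the associativity of $c$ (Lemma \ref{associative_comp}) together with careful bookkeeping of the signs baked into the definitions of $c$ and $\delta$. My plan is to prove them in the order written, since each subsequent fact uses the previous ones.

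\textbf{Step 1: $c(d^A,d^A)=0$.} Unpacking the definition of $c$, and noting that $|d^A|=1$, gives
\[c(d^A,d^A)_m=\sum_{i+j=m}(-1)^{i|d^A|}d_i^A d_j^A=\sum_{i+j=m}(-1)^i d_i^A d_j^A,\]
which vanishes by the twisted complex condition $(A_{m1})$ of Definition \ref{def:tc_objects}.

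\textbf{Step 2: Leibniz rule.} I would expand both sides using only the definition $\delta(h)=c(d^Y,h)-(-1)^{v(h)}c(h,d^X)$ and the associativity of $c$ from Lemma \ref{associative_comp}. For $f\in\wbimod(B,C)_u^v$ and $g\in\wbimod(A,B)$, one writes out $c(\delta f,g)+(-1)^v c(f,\delta g)$ as a sum of four $c(c(\cdot,\cdot),\cdot)$ terms, reassociates each, and observes that the two mixed $c(f,c(d^B,g))$ terms cancel (one carries $-(-1)^v$, the other carries $+(-1)^v$), leaving exactly $c(d^C,c(f,g))-(-1)^{v+v(g)}c(c(f,g),d^A)=\delta(c(f,g))$. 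This is purely formal once associativity is in hand; the only care needed is with the scalar product $\langle c(f,g),d^A\rangle=v+v(g)$.

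\textbf{Step 3: $\delta^2=0$.} I would apply the definition of $\delta$ to $\delta(f)\in\wbimod(A,B)_u^{v+1}$, producing $\delta^2(f)=c(d^B,\delta f)+(-1)^v c(\delta f,d^A)$. Substituting $\delta f=c(d^B,f)-(-1)^v c(f,d^A)$ and reassociating via Lemma \ref{associative_comp} yields
\[c(c(d^B,d^B),f),\qquad c(f,c(d^A,d^A)),\qquad \pm c(c(d^B,f),d^A),\]
where the first two vanish by Step 1 applied to $A$ and $B$, and the two copies of $c(c(d^B,f),d^A)$ appear with opposite signs and cancel. (Alternatively, one can derive $\delta^2=0$ by invoking Leibniz together with $\delta(d^X)=(1+1)c(d^X,d^X)=0$; I would pick whichever path is cleaner to display.)

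\textbf{Step 4 and Step 5: the characterizations.} If $f\in\wbimod(A,B)_u^v$, then $(\delta f)_m$ has the explicit form already displayed in Definition \ref{def:delta}. Specializing to bidegree $(u,v)=(0,0)$ makes $|f|=0$ and $(-1)^v=1$, and the equation $(\delta f)_m=0$ becomes literally $\sum_{i+j=m}d_i^Bf_j=\sum_{i+j=m}(-1)^i f_id_j^A$, which is condition $(B_{m1})$ of Definition \ref{def:tc_mor}. The same calculation with arbitrary $(u,v)$ gives the more general ``map of twisted complexes'' statement. Finally, for $f,g$ morphisms in $\tc$ we have $|g|=0$, so the sign $(-1)^{i|g|}$ in the definition of $c$ is trivial and $c(f,g)_m=\sum_{i+j=m}f_ig_j$, which matches the composition rule $(g\circ f)_m$ (up to the order convention) given right after Definition \ref{def:tc_mor}.

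The only real obstacle is sign-bookkeeping, particularly in Step 2 where four terms must be shown to reduce to two via cancellations involving $(-1)^v$, $(-1)^{v(g)}$, and the scalar product $\langle c(f,g),d^A\rangle$. Every other assertion is a direct unpacking of the relevant definitions, so once the Leibniz sign is settled the rest of the lemma falls out immediately.
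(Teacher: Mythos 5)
Your proposal is correct and follows essentially the same route as the paper: $c(d^A,d^A)=0$ is read off from $(A_{m1})$, and both $\delta^2=0$ and the Leibniz rule are obtained by expanding $\delta$ and reassociating via Lemma~\ref{associative_comp}, with the remaining characterizations being direct unpackings of the definitions. The signs you track ($\langle f,d^A\rangle=v$ and $\langle c(f,g),d^A\rangle=v+v(g)$) match the paper's computation exactly.
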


\begin{proof}
One has 
\[c(d^A,d^A)_m=\sum_{i+j=m} (-1)^id^A_id^A_j=0,\]
so that
\begin{align*}
\delta^2(f)=&c(d^B, \delta (f)) -(-1)^{<\delta (f),d^A>} c(\delta (f), d^A)\\
=&c(d^B,c(d^B,f))-(-1)^{<f,d^A>} c(d^B,c(f,d^A))+(-1)^{<f,d^A>} c(c(d^B,f),d^A)-c(c(f,d^A),d^A)\\
=&0.
\end{align*}
The last equation follows from the associativity of $c$.
\end{proof}

Since $\delta$ is of bidegree $(0,1)$, Lemma \ref{differential_morphism_tc} allows us to make the following definition.
\begin{defi}
\label{weird_tc_defi}
For $A, B$ twisted  complexes, we define $\wtc(A,B)$ to be the vertical bicomplex $\wtc(A,B):=(\wbimod(A,B),\delta)$.
\end{defi}

\begin{prop}
\label{external_hom_tc}
If $B, C$ are twisted  complexes, then the construction of the vertical bicomplex $\wtc(B,C):=(\wbimod(B,C),\delta)$ extends to a bifunctor 
\[\wtc(-,-):\tc^{\mathrm{op}}\times \tc \to \vbic,\]
where for $f:C\to C'$ in $\tc$ we set
\[
\begin{array}{ccccccccc}
\wtc(B,f):\wtc(B,&C) & \to &\wtc(B,C') & \quad\text{and}\quad\quad &\wtc(f,B):\wtc(C',& B) & \to &\wtc(C,B)\\
                 & g & \mapsto & c(f,g) &      &                  & g & \mapsto & c(g,f).
\end{array}
\]
Moreover, the functor $-\otimes B:\vbic\to \tc$ is left adjoint to the functor $\wtc(B, -):\tc \to \vbic$, i.e.,  for all $A\in \vbic,  B,C \in\tc$ we have natural bijections
\begin{equation}
\label{adjoint_external}
\Hom_{\tc}(A\otimes B, C)\cong \Hom_{\vbic}(A,\wtc(B,C)),
\end{equation}
given by $f\mapsto \tilde f$ where for $a\in A_u^v, \tilde f(a)_m$ is given by
$b\mapsto (-1)^{m|a|} f_m(a\otimes b).$
\end{prop}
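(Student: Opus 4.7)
The plan is to establish the two parts of the proposition separately: first the bifunctoriality of $\wtc(-,-)$, and then the naturality and bijectivity of the assignment $f\mapsto\tilde f$. Both parts will reduce to routine sign bookkeeping combined with two key ingredients already established: the associativity of $c$ (Lemma~\ref{associative_comp}) and the Leibniz rule~\eqref{comp_differential} together with the characterization ``$f\in\tc\iff\delta(f)=0$'' from Lemma~\ref{differential_morphism_tc}.

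For bifunctoriality, I would first check that $\wtc(B,f)$ lands in $\wtc(B,C')$: since $f:C\to C'$ has bidegree $(0,0)$, the map $g\mapsto c(f,g)$ preserves bidegrees. To see it commutes with the vertical differential $\delta$, apply equation~\eqref{comp_differential} to $c(f,g)$; the term $c(\delta(f),g)$ vanishes because $f\in\tc$ forces $\delta(f)=0$, leaving $\delta(c(f,g))=c(f,\delta(g))$. An analogous argument, using that $|f|_2=0$, handles $\wtc(f,B)$, so both are morphisms in $\vbic$. Functoriality in each variable then follows from the associativity of $c$ (Lemma~\ref{associative_comp}), together with the observation from Lemma~\ref{differential_morphism_tc} that composition in $\tc$ coincides with $c$ on morphisms of bidegree $(0,0)$.

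For the adjunction, I would first verify that $f\mapsto\tilde f$ produces a well-defined morphism in $\vbic$. A bidegree count using the given formula shows $\tilde f(a)\in\wbimod(B,C)_u^v$ when $a\in A_u^v$, so $\tilde f$ has bidegree $(0,0)$. The condition that $\tilde f$ commutes with the vertical differentials, when evaluated on a test element $a\otimes b$ and compared componentwise in $m$, unwinds to the morphism equation $(B_{m1})$ for $f:A\otimes B\to C$, after using that $A\in\vbic$ forces $d_j^A=0$ for $j\geq 1$ and expanding $d_j^{A\otimes B}=d_j^A\otimes 1+1\otimes d_j^B$ via the Koszul sign rule. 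The inverse assignment is defined by $\hat g_m(a\otimes b):=(-1)^{m|a|}g(a)_m(b)$ and shown to land in $\tc$ by running the same correspondence in reverse. That the two assignments are mutually inverse is immediate from the formulas, since $(-1)^{2m|a|}=1$. Naturality of the bijection in $A$ and $C$ is a direct calculation from the definitions of $\wtc(B,f)$ and of the tensor product in $\tc$.

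The main obstacle is the sign bookkeeping in the second part. In comparing ``$\tilde f$ commutes with the vertical differential'' to condition $(B_{m1})$ for $f$, three independent sign contributions interact: the factor $(-1)^{m|a|}$ from the definition of $\tilde f$, the factor $(-1)^{i|f|}$ in the composition $c$, and the Koszul sign $(-1)^{\langle d_j^B,a\rangle}$ arising from $1\otimes d_j^B$ acting on $a\otimes b$. The cross-terms involving $d_j^B$ cancel pairwise by a mod-$2$ identity of the form $(v-u)m+j(u+v)+i(u-v)\equiv 0\pmod 2$ (valid for $i+j=m$), leaving only the surviving $d^A$-term, which matches $\tilde f(d^A(a))_m(b)$ on the nose. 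The same identity, read in reverse, shows that $\hat g$ satisfies $(B_{m1})$.
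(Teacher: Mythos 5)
Your proposal is correct and follows essentially the same route as the paper's proof: the Leibniz rule~\eqref{comp_differential} together with $\delta(f)=0$ gives bifunctoriality, and the adjunction is verified by expanding $\delta(\tilde f(a))_m(b)$, substituting the morphism relation for $f:A\otimes B\to C$, using $d^A_j=0$ for $j>0$, and observing that the $d^B$-cross-terms cancel by exactly the kind of mod-$2$ sign identity you state. The inverse formula $\hat g_m(a\otimes b)=(-1)^{m|a|}g(a)_m(b)$ is also the one the paper has in mind when it says the inverse is "constructed in a similar fashion."
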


\begin{proof}
If $f$ is a map in $\tc$ then it is of bidegree $(0,0)$ and $\delta(f) = 0$ and thus by (\ref{comp_differential}) 
\[\delta(c(f, g))= c(\delta(f), g)+(-1)^vc(f, \delta(g))= c(f, \delta(g)),\]
showing that $\wtc(B,f)$ is a map of vertical bicomplexes.  A similar argument shows that $\wtc(f,B)$ is a map of vertical bicomplexes. 
Finally, the fact that $\wtc(-,-)$ is a bifunctor follows directly from Lemmas \ref{associative_comp} and \ref{differential_morphism_tc}. 
Now, to see the adjointness property we describe a map $\Hom_{\tc}(A\otimes B, C) \to \Hom_{\vbic}(A,\wtc(B,C))$, which sends a map of twisted complexes $f=(f_m):A\otimes B\rightarrow C$ , to the map
\begin{alignat*}{2}
 \tilde f: A & \to && \wtc(B,C)\\
a & \mapsto && \{\tilde f(a)_m:b\mapsto (-1)^{m|a|} f_m(a\otimes b)\}_{m\geq 0}.
\end{alignat*}
It is clear that $\tilde{f}$ is a bidegree $(0,0)$ map of bimodules.  
To show that it is a map of vertical bicomplexes we will show that $\delta\tilde f=\tilde f d^A$. Let $a\in A_u^v$, then $\tilde f(a)$ has bidegree $(u,v)$
so that
\[\delta(\tilde f(a)))_m=\sum_{i+j=m}(-1)^{i|a|}d_i^C (\tilde f(a))_j-(-1)^{v+i} (\tilde f(a))_id_j^B.\]
Applying this to $b\in B_{u'}^{v'}$ one gets
\begin{align*}
(\delta(\tilde f(a)))_m(b)&=\sum_{i+j=m}(-1)^{i|a|+j|a|}d_i^C f_j(a\otimes b)-(-1)^{v+i+i|a|} f_i(a\otimes d_j^B(b))\\
&=\sum_{i+j=m} (-1)^{m|a|+j}f_j(d_i^A(a)\otimes b)+\sum_{i+j=m} (-1)^{m|a|+j+i|a|+v}f_j(a\otimes d_i^B(b))\\
&\qquad-\sum_{i+j=m}(-1)^{v+j+j|a|} f_j(a\otimes d_i^B(b))).
\end{align*}
Since $A$ is a vertical bicomplex we have that $d^A_i(a)=0$ for $i>0$.  Thus,
\[(\delta(\tilde f(a)))_m=(-1)^{m|a|+m}f_m(d^A(a)\otimes b)=\tilde f(d^A(a))_m(b).\]
The inverse map is constructed in a similar fashion. 
\end{proof}

\begin{defi}
We define $\wbimod(-,-)$ to be the restriction of the bifunctor $\wtc(-,-)$ to the full subcategories $\bimod^{\infty,\mathrm{op}}\times \bimodinf$.
\end{defi}

\begin{prop}\label{external_hom_bimod}
The image of $\wbimod(-,-)$ factors through $\bimod$.  Therefore it defines a bifunctor 
\[\wbimod(-,-):\bimod^{\infty,\mathrm{op}}\times \bimod^\infty \to \bimod\]
and $-\otimes B:\bimod\to \bimodinf$ is left adjoint to the functor $\wbimod(B, -):\bimodinf \to \bimod$, i.e.,  for all $A\in \bimod,  B,C \in\bimodinf$ we have natural bijections
\[\Hom_{\bimodinf}(A\otimes B, C)\cong \Hom_{\bimod}(A,\wbimod(B,C)).\]
\end{prop}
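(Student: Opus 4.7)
The plan is to deduce everything by restricting the results of Proposition~\ref{external_hom_tc} and observing that when the structure maps vanish, the constructions collapse to pure bigraded data.

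First, I would unpack the definition of $\wtc(A,B)$ when $A, B$ lie in $\bimodinf$. By definition these are twisted complexes with $d_i^A = 0$ and $d_i^B = 0$ for all $i \geq 0$, so the tuples $d^A, d^B$ in the composition morphism vanish. Plugging into
\[ \delta(f) = c(d^B, f) - (-1)^{\langle f, d^A\rangle} c(f, d^A) \]
from Definition~\ref{def:delta} gives $\delta \equiv 0$. Hence the vertical bicomplex $\wtc(A,B)$ has trivial vertical differential, i.e.\ it is the image of the bigraded module $\wbimod(A,B) = \prod_{j \geq 0}[A,B]^{v-j}_{u-j}$ under the full embedding $\bimod \hookrightarrow \vbic$. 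This shows the object assignment factors through $\bimod$.

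Next I would verify functoriality. For a morphism $f: C \to C'$ in $\bimodinf$, Proposition~\ref{external_hom_tc} already produces maps $\wtc(B,f)$ and $\wtc(f,B)$ in $\vbic$; since both source and target now have zero vertical differential, these maps are automatically morphisms in $\bimod$ under the full embedding. Bifunctoriality (identity, composition) is inherited from that of $\wtc(-,-)$. This gives the bifunctor $\wbimod(-,-) : \bimod^{\infty,\mathrm{op}} \times \bimodinf \to \bimod$.

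For the adjunction, I would argue by restricting (\ref{adjoint_external}). The key compatibility is that for $A \in \bimod$ and $B \in \bimodinf$, the tensor product $A \otimes B$, computed in $\tc$ via $d_i^{A \otimes B} = d_i^A \otimes 1 + 1 \otimes d_i^B = 0$, still has trivial twisted-complex structure, hence lies in $\bimodinf$. Since both $\bimodinf \hookrightarrow \tc$ and $\bimod \hookrightarrow \vbic$ are full embeddings,
\[ \Hom_{\bimodinf}(A \otimes B, C) = \Hom_{\tc}(A \otimes B, C), \qquad \Hom_{\bimod}(A, \wbimod(B,C)) = \Hom_{\vbic}(A, \wtc(B,C)), \]
and the explicit bijection $f \mapsto \tilde f$ of Proposition~\ref{external_hom_tc} restricts to the required natural bijection. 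Naturality in $A$, $B$ and $C$ is inherited directly. There is no serious obstacle here; the only point requiring care is the observation that $\bimodinf$ is closed under the tensor product with objects of $\bimod$ (via the external tensor product of the monoidal structure over a base), which is what allows the restriction to be well defined on both sides.
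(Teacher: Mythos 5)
Your proposal is correct and takes essentially the same approach as the paper: the published proof also deduces everything from Proposition~\ref{external_hom_tc}, noting that $\wtc(B,C)$ has trivial differential when $B,C\in\bimodinf$ and that the embeddings $\bimod\hookrightarrow\vbic$ and $\bimodinf\hookrightarrow\tc$ are full. Your write-up simply spells out the details (vanishing of $\delta$, closure of $\bimodinf$ under the tensor product) that the paper leaves implicit.
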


\begin{proof}
This follows directly from the facts that if $B,C\in\bimodinf$ then $\wtc(B,C)$ has trivial 
differential and that the functors $\bimod\hookrightarrow \vbic$ and $\bimodinf \hookrightarrow \tc$ are full embeddings.
\end{proof}

This construction gives us our different enrichments of twisted  complexes and bigraded modules, which we describe now.

\begin{defi}\label{def:wtc}
The $\vbic$-enriched category of twisted  complexes $\wtc$ is the enriched category given by the following data.
\begin{enumerate}
\item The objects of $\wtc$ are twisted  complexes.
\item For $A,B$ twisted  complexes the hom-object is the vertical bicomplex $\wtc(A,B)$ 
\item The composition morphism $c:\wtc(B,C)\otimes \wtc(A,B)\to \wtc(A,C)$ is given by Definition \ref{composition_defi}.
\item The unit morphism $R \to \wtc(A,A)$ is given by the morphism of vertical bicomplexes sending $1\in R$ to $1_A:A\to A$, the strict morphism of twisted complexes given by the identity of $A$.
\end{enumerate}
\end{defi}

\begin{defi}
\label{weird_bimod}
We denote by $\wbimod$ the $\bimod$-enriched category of bigraded modules given by the following data.
\begin{enumerate}
\item The objects of $\wbimod$ are bigraded modules.
\item For $A,B$ bigraded modules the hom-object is the bigraded module $\wbimod(A,B)$.
\item The composition morphism $c:\wbimod(B,C)\otimes \wbimod(A,B)\to \wbimod(A,C)$ is given by Definition \ref{composition_defi}.
\item The unit morphism $R \to \wbimod(A,A)$ is given by the morphism of bigraded modules that sends $1\in R$ to $1_A:A\to A$, the strict morphism given by the identity of $A$. 
\end{enumerate}
\end{defi}

\begin{lem}\label{L:wtc}
The enriched categories
$\wtc$ and $\wbimod$ are well-defined and their enrichments are the ones induced by the external tensor products $\otimes:\vbic \times \tc \to \tc$ and $\otimes:\bimod \times \bimodinf \to \bimodinf$. Therefore, these are also  monoidal enriched categories and their underlying categories are $\tc$ and $\bimodinf$ respectively.
\end{lem}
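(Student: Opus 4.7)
The plan is to reduce everything to two inputs we already have: the associativity/Leibniz computations of Lemmas~\ref{associative_comp} and~\ref{differential_morphism_tc}, and the adjunction of Proposition~\ref{external_hom_tc} (resp.\ Proposition~\ref{external_hom_bimod}), which places us exactly in the hypotheses of Remark~\ref{monoidal_over_enrichment}. The strategy is first to verify directly that the data in Definition~\ref{def:wtc} satisfies the axioms of a $\vbic$-enriched category, and then to identify this enrichment with the one abstractly produced from the external tensor product.

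For the direct check, the hom-objects $\wtc(A,B)$ are vertical bicomplexes by Definition~\ref{weird_tc_defi} (using $\delta^2=0$ from Lemma~\ref{differential_morphism_tc}). The composition morphism $c$ of Definition~\ref{composition_defi} is a morphism of vertical bicomplexes: this is precisely the Leibniz identity (\ref{comp_differential}) in Lemma~\ref{differential_morphism_tc}. Associativity of $c$ at the level of bigraded modules is Lemma~\ref{associative_comp}, and compatibility with $\delta$ then upgrades it to an associativity square in $\vbic$. The unit $R\to\wtc(A,A)$ sending $1\mapsto 1_A$ lies in bidegree $(0,0)$ and satisfies $\delta(1_A)=c(d^A,1_A)-c(1_A,d^A)=0$, and the unit axioms for $c$ follow directly from the formula $c(f,g)_m=\sum_{i+j=m}(-1)^{i|g|}f_ig_j$ together with the fact that $1_A$ is concentrated in index $0$ and has bidegree $(0,0)$.

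Next, I would match this structure with the enrichment induced by the external tensor product. Proposition~\ref{external_hom_tc} supplies both the bifunctor $\wtc(-,-):\tc^{\mathrm{op}}\times\tc\to\vbic$ and the natural bijection (\ref{adjoint_external}), which is exactly the adjunction hypothesized in Remark~\ref{monoidal_over_enrichment}. The induced enrichment has hom-objects $\wtc(A,B)$, and its unit morphism is the adjoint of $\mathrm{id}_A$ under (\ref{adjoint_external}), which unwinds to $1\mapsto 1_A$. For the induced composition, one writes out the adjoint of $ev_{BC}\circ(\mathrm{id}\otimes ev_{AB})$ using the explicit formula $\tilde f(a)_m(b)=(-1)^{m|a|}f_m(a\otimes b)$ from Proposition~\ref{external_hom_tc}; a direct calculation then reproduces the sum $\sum_{i+j=m}(-1)^{i|g|}f_ig_j$ of Definition~\ref{composition_defi}. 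This is the only non-trivial calculation in the proof and is where I expect to have to be most careful with signs; I anticipate this as the main obstacle, although it is routine once the evaluation maps are written out.

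The underlying category of $\wtc$ is $\tc$ because an ordinary morphism is an element of $\Hom_{\vbic}(R,\wtc(A,B))$, i.e.\ a bidegree $(0,0)$ cycle of $\wtc(A,B)$, and by Lemma~\ref{differential_morphism_tc} these are exactly morphisms of twisted complexes, with composition in $\wtc$ agreeing with composition in $\tc$ via the final statement of that lemma. Finally, since $\tc$ is monoidal over $\vbic$ (via its internal tensor product) and the adjunction of Proposition~\ref{external_hom_tc} is compatible with this external tensor structure, Remark~\ref{monoidal_over_enrichment} promotes $\wtc$ to a monoidal $\vbic$-enriched category. The analogous statements for $\wbimod$ follow by the same argument using Proposition~\ref{external_hom_bimod} instead of Proposition~\ref{external_hom_tc}, noting that in the trivial-differential setting the Leibniz check collapses and the enrichment lands in $\bimod$ rather than $\vbic$.
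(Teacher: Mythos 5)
Your proposal is correct and follows essentially the same route as the paper: the paper's proof simply cites Propositions~\ref{external_hom_tc} and~\ref{external_hom_bimod} (the adjunctions placing one in the setting of Remark~\ref{monoidal_over_enrichment}) and notes that the composition being a map of vertical bicomplexes is exactly equation~(\ref{comp_differential}) of Lemma~\ref{differential_morphism_tc}. You have merely spelled out in more detail the same verifications the paper leaves implicit.
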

\begin{proof}
This follows directly from Propositions \ref{external_hom_tc} and \ref{external_hom_bimod}.  Notice in particular that in the case of $\wtc$, the fact that the composition morphism $c$ is a map of vertical bicomplexes is equivalent to equation (\ref{comp_differential}) in Lemma~\ref{differential_morphism_tc}. 
\end{proof}

\begin{rmk}
There is an interpretation of $\bimodinf$ and $\wbimod$ via a standard categorical construction, the co-Kleisli
category for a comonad. Recall that $\Dd^\antishriek$ denotes the cofree $\Dd^\antishriek$-coalgebra functor
from $\bimod$ to the category of
$\Dd^{\antishriek}$-coalgebras, with left adjoint the forgetful functor $U$. Then  $\bimodinf$ is the co-Kleisli category of $\bimod$ for the comonad $U\Dd^\antishriek$. And this construction enriches. Namely, recall that
 $\Bimod$ denotes the  category of bigraded modules enriched over itself via its symmetric monoidal closed structure.
Then
$\wbimod$ is the enriched co-Kleisli category of $\Bimod$ for the enriched comonad $\underline{U\Dd^\antishriek}$.  To see this, note that the objects are the same, Proposition \ref{prop:corr} gives the isomorphism on morphisms or hom-objects and this isomorphism respects the composition.
\end{rmk}

We can describe the monoidal structures of $\wtc$ and $\wbimod$ explicitly.

\begin{lem}
\label{lem: mon structure on wtc}
The monoidal structure of $\wtc$ is given by the following map of vertical bicomplexes.

\begin{center}
\def\arraystretch{1.5}
\begin{tabular}{cll}
$\widehat{\otimes}: \wtc(A,B)\otimes \wtc(A',B')$&$\longrightarrow$ &$\wtc(A\otimes A',B\otimes B')$\\
$(f , g)$&$\mapsto$ &$(f\widehat{\otimes} g)_m:=\sum_{i+j=m} (-1)^{ij} f_i\otimes g_j$\\
\end{tabular}
\end{center}

The monoidal structure of $\wbimod$ is given by the restriction of this map.
\end{lem}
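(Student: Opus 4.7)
The plan is to apply the general construction of the enriched monoidal structure from Remark~\ref{monoidal_over_enrichment} to the specific case of $\wtc$ and compute the resulting formula explicitly. By Lemma~\ref{L:wtc}, $\wtc$ is a monoidal $\vbic$-enriched category whose enrichment arises from the external tensor product $\otimes:\vbic\times\tc\to\tc$, so Remark~\ref{monoidal_over_enrichment} tells us that $\widehat{\otimes}$ must be the map in $\vbic$ adjoint (under the adjunction of Proposition~\ref{external_hom_tc}) to the composite
\[
(\wtc(A,B)\otimes\wtc(A',B'))\otimes(A\otimes A')\xrightarrow{\cong}(\wtc(A,B)\otimes A)\otimes(\wtc(A',B')\otimes A')\xrightarrow{ev\otimes ev} B\otimes B',
\]
where the first isomorphism swaps the middle two factors using the symmetry of $\tc$ and the second is the tensor product of the evaluation maps adjoint to the identities of the hom-objects.

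First, I would identify the evaluation morphism $ev_{AB}:\wtc(A,B)\otimes A\to B$ by applying the inverse of the bijection in Proposition~\ref{external_hom_tc} to $\mathrm{id}_{\wtc(A,B)}$; this gives a morphism of twisted complexes whose $m$-th component satisfies $(ev_{AB})_m(\phi\otimes a)=(-1)^{m|\phi|}\phi_m(a)$. Next I would unpack the $m$-th component of the displayed composite above on an element $f\otimes g\otimes a\otimes a'$: the symmetry of $\tc$ introduces a sign $(-1)^{\langle g,a\rangle}$ from the exchange of $g$ and $a$, and the Koszul rule applied to $ev_i\otimes ev_j$ with $i+j=m$ introduces further signs $(-1)^{\langle ev_j,\,f\otimes a\rangle}$, which expand in terms of the bidegrees of $f$, $g$, $a$, $a'$ and the indices $i,j$. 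After collecting terms one arrives at a morphism of twisted complexes whose components are explicit sums over $i+j=m$ of tensor products $f_i\otimes g_j$ with controlled signs. Finally I would apply the adjunction formula $\tilde{F}(x)_m(y)=(-1)^{m|x|}F_m(x\otimes y)$ to extract the map $\widehat{\otimes}$ from this composite; the adjunction introduces an additional sign $(-1)^{m(|f|+|g|)}$ which, combined with the previous signs, simplifies precisely to $(-1)^{ij}$, yielding the stated formula $(f\widehat{\otimes}g)_m=\sum_{i+j=m}(-1)^{ij}f_i\otimes g_j$.

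The main obstacle is purely sign bookkeeping: the computation mixes the Koszul rule applied to bidegree pairs, the bidegree-dependent sign $(-1)^{m|\,\cdot\,|}$ in the adjunction, and the symmetry sign $(-1)^{\langle\,\cdot\,,\,\cdot\,\rangle}$, and several of these cancel only after rewriting scalar products in terms of the total internal degrees of $f$, $g$, $a$, $a'$. Once the sign calculation is carried out, the fact that $\widehat{\otimes}$ lands in $\wtc(A\otimes A',B\otimes B')$ and commutes with $\delta$ is automatic from the adjunction, and the case of $\wbimod$ follows by restriction since $\bimodinf\hookrightarrow\tc$ and $\bimod\hookrightarrow\vbic$ are full monoidal embeddings, so the same composite and the same adjunction yield the same formula on the corresponding hom-objects.
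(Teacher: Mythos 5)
Your proposal is correct and follows essentially the same route as the paper: the paper's own proof is a two-sentence sketch that invokes exactly the two ingredients you use, namely the argument of Proposition~\ref{external_hom_tc} (for the fact that the formula is a map of vertical bicomplexes) and Remark~\ref{monoidal_over_enrichment} (for identifying it as the induced enriched monoidal structure). You simply carry out the sign bookkeeping that the paper leaves to the reader, and your treatment of the $\wbimod$ case by restriction along the full monoidal embeddings matches the paper's.
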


\begin{proof} 
The same argument as in the proof of Proposition \ref{external_hom_tc} shows that this is indeed a map of vertical bicomplexes.  To see that this is indeed the enriched monoidal structure induced by the monoidal structure of $\tc$ over $\vbic$, one can follow closely Remark \ref{monoidal_over_enrichment}.
\end{proof}

Next we introduce enriched structures on filtered modules and filtered complexes.
We enrich filtered modules over bigraded modules in the following way.

\begin{defi}\label{def:wfm}
The \textit{$\bimod$-enriched category of filtered modules} $\wfm$ is the enriched category given by the following data.
\begin{enumerate}
\item The objects of $\wfm$ are filtered modules.
\item For filtered modules $(K,F)$ and $(L,F)$, the bigraded module $\wfm(K,L)$ is given by
\[\wfm(K,L)_u^v:=\left\lbrace 
f:K\to L\,|\, f(F_qK^m)\subset F_{q+u}L^{m+v-u}, \forall m,q\in\ZZ
\right\rbrace. \]
\item The composition morphism is given by $c(f,g)=(-1)^{u|g|} fg$, where $f$ has bidegree $(u,v)$.
\item The unit morphism is given by the map $R\to \wfm(K,K)$ given by $1\to 1_K$.
\end{enumerate}
We denote by $\wsfm$ the full subcategory of $\wfm$ whose objects are split filtered modules.
\end{defi}

\begin{lem}\label{lem: enriched wfm}The above definition gives a 
well-defined $\bimod$-enriched category, $\wfm$.
\end{lem}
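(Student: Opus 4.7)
The plan is to verify the four pieces of data in Definition~\ref{def:wfm} satisfy the axioms of an enriched category over the symmetric monoidal category $(\bimod, \otimes, R)$. This breaks into four checks, which I will carry out in sequence.

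\textbf{Step 1: Well-definedness of the hom-object.} For filtered modules $(K,F)$ and $(L,F)$, the collection $\wfm(K,L)_u^v$ is visibly a submodule of the set of $R$-linear maps $K\to L$, and these submodules are disjoint, so $\wfm(K,L) = \{\wfm(K,L)_u^v\}_{u,v\in\ZZ}$ is a bigraded $R$-module.

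\textbf{Step 2: The composition is a bigraded morphism of bidegree $(0,0)$.} For $f\in\wfm(L,M)_u^v$ and $g\in\wfm(K,L)_p^q$, the inclusion
\[
fg(F_aK^b) \subset f(F_{a+p}L^{b+q-p}) \subset F_{a+u+p}M^{b+q-p+v-u}
\]
shows that $fg$, and hence $c(f,g) = (-1)^{u|g|}fg$, lies in $\wfm(K,M)_{u+p}^{v+q}$. Hence $c$ is $R$-bilinear of bidegree $(0,0)$ with respect to the tensor product grading $\wfm(L,M)\otimes\wfm(K,L)$.

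\textbf{Step 3: Associativity, with signs.} For $f, g, h$ of bidegrees $(u,v), (p,q), (a,b)$, a direct calculation gives
\[
c(c(f,g),h) = (-1)^{(u+p)|h|+u|g|}fgh, \qquad c(f,c(g,h)) = (-1)^{u(|g|+|h|)+p|h|}fgh,
\]
and these signs coincide. This is the only step where the Koszul sign $(-1)^{u|g|}$ in the definition of $c$ plays a role; it exactly compensates for moving the bidegree of $f$ past the bidegree of $g$ under the tensor-product grading convention of Section~\ref{subsection:bgmods}.

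\textbf{Step 4: Unit axioms.} The identity $1_K$ preserves every filtration stage and has bidegree $(0,0)$, so $1\mapsto 1_K$ defines a map $R\to\wfm(K,K)$ of bigraded modules. The sign factor in $c(1_K,g)$ and $c(f,1_K)$ is trivial since one of $u$ or $|g|$ is zero, so both composites equal $g$ and $f$ respectively, as required.

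The only non-cosmetic obstacle is Step 3, the sign verification for associativity; everything else is a direct unpacking of the definitions. Since the two Koszul signs match on the nose, no further adjustment of the definition is needed, and the lemma follows.
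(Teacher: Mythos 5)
Your proposal is correct and takes essentially the same approach as the paper: the paper's proof consists of exactly your Step 2 computation (the filtration inclusion showing $c(f,g)\in\wfm(K,M)_{u+u'}^{v+v'}$), the observation that $1_K\in\wfm(K,K)_0^0$, and then leaves the associativity and unit axioms as ``a short computation.'' You have simply carried out that short computation explicitly, and your sign verification in Step 3 is accurate.
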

\begin{proof}
Let $f\in\wfm(L,M)_u^v$ and $g\in\wfm(K,L)_{u'}^{v'}$, where $(K,F)$, $(L,F)$ and $(M,F)$ are filtered modules.  For all $q\in\ZZ$ one has 
\[c(f,g)(F_qK^m)\subset f(F_{q+u'}L^{m+v'-u'})\subset F_{q+u+u'}M^{m+v-u+v'-u'},\] 
so $c(f,g)\in\wfm(K,M)_{u+u'}^{v+v'}$ showing that the composition morphism is a map in $\bimod$ and $1_K\in\wfm(K,K)_0^0$.
It is a short computation to show that the associativity and unit axiom hold.
\end{proof}

\begin{rmk}
Notice that morphisms $f\in\wfm(K,L)_0^v$ correspond precisely to degree $v$ morphisms of filtered modules which respect the filtration.
\end{rmk}

We will enrich filtered complexes over vertical bicomplexes analogously.

\begin{defi}\label{def: diff in wfm}
Let $(K,d^K,F)$ and $(L,d^L,F)$ be filtered complexes.  We define $\wfc(K,L)$ to be the vertical bicomplex whose underlying bigraded module is $\wfm(K,L)$ with vertical differential
\[\delta(f) := c(d^L, f )-(-1)^{<f,d^K>} c(f,d^K)=d^Lf-(-1)^{v+u} fd^K=d^Lf-(-1)^{|f|} fd^K\]
for $f\in\wfm(K,L)_u^v$.
\end{defi}

Note that $d^K\in\wfm(K,K)_0^1$ and $d^L\in\wfm(L,L)_0^1$. So $\delta(f)$ has bidegree $(u,v+1)$.
Also $\delta^2=0$ and thus $\wfc(K,L)$ is indeed a vertical bicomplex.

\begin{rmk}
Notice that $f\in \wfc(K,L)_u^v $ is a map of complexes if and only if $\delta(f)=0$.  In particular, $f$ is a morphism in $\fc$ if and only if $f\in \wfc(K,L)^0_0$ and $\delta(f)=0$.
\end{rmk}

\begin{defi}\label{def:wfc}
The $\vbic$-enriched category of filtered complexes $\wfc$ is the enriched category given by the following data.
\begin{enumerate}
\item The objects of $\wfc$ are filtered complexes.
\item For $K,L$ filtered complexes the hom-object is the vertical bicomplex $\wfc(K,L)$.
\item The composition morphism is given as in $\wfm$ in Definition \ref{def:wfm}. 
\item The unit morphism is given by the map $R\to \wfc(K,K)$ given by $1\mapsto 1_K$.
\end{enumerate}
We denote by $\wsfc$ the full subcategory of $\wfc$ whose objects are split filtered complexes.
\end{defi}

\begin{lem}
The above definition gives a well-defined $\vbic$-enriched category, $\wfc$.
\end{lem}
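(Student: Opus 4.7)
The plan is to reduce the verification to what has already been established for $\wfm$ together with a single Leibniz-type compatibility with the vertical differential. Since the bigraded modules underlying $\wfc(K,L)$ are the hom-objects $\wfm(K,L)$, and the composition and unit morphisms of $\wfc$ are those of $\wfm$ (viewed now as maps of bigraded modules equipped with differentials), the associativity and unit axioms of the enriched category structure hold automatically by Lemma~\ref{lem: enriched wfm}. What remains to check is that (i) each $\wfc(K,L)$ really is a vertical bicomplex, (ii) the unit $R \to \wfc(K,K)$ is a map in $\vbic$, and (iii) the composition $c\colon \wfc(L,M)\otimes \wfc(K,L) \to \wfc(K,M)$ is a map in $\vbic$.

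For (i), the remark following Definition~\ref{def: diff in wfm} already observes that $\delta$ has bidegree $(0,1)$ and $\delta^2 = 0$; the latter is a direct three-line computation from $\delta(f) = d^L f - (-1)^{|f|} f d^K$ using $(d^K)^2 = (d^L)^2 = 0$. For (ii), the unit $1_K$ has bidegree $(0,0)$ and satisfies $\delta(1_K) = d^K \cdot 1_K - 1_K \cdot d^K = 0$, so the map $1 \mapsto 1_K$ is a morphism of vertical bicomplexes.

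The substantive point is (iii). A morphism of vertical bicomplexes out of $\wfc(L,M)\otimes \wfc(K,L)$ must commute with the tensor-product differential, which by the Koszul sign rule acts on $f\otimes g$ with $f$ of bidegree $(u,v)$ as
\[
\delta_{\otimes}(f \otimes g) = \delta(f) \otimes g + (-1)^{v}\, f \otimes \delta(g).
\]
Thus I must verify the Leibniz identity
\[
\delta\bigl(c(f,g)\bigr) = c\bigl(\delta(f),\, g\bigr) + (-1)^{v}\, c\bigl(f,\, \delta(g)\bigr),
\]
for all $f \in \wfc(L,M)_u^v$ and $g \in \wfc(K,L)_{u'}^{v'}$. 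This is a short direct computation from $c(f,g) = (-1)^{u|g|}fg$ and $\delta(h) = d h - (-1)^{|h|} h d$: expanding both sides, the two ``mixed'' terms involving $f d^L g$ cancel once one uses $|f| = v - u \equiv v+u \pmod 2$, while the extremal terms $dfg$ and $fgd^K$ match after comparing the signs $u|g|$ and $u|g| + |f| + |g|$ on each side. This is exactly the analogue of equation~\eqref{comp_differential} for twisted complexes proved in Lemma~\ref{differential_morphism_tc}.

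The only potentially delicate step is the sign bookkeeping in this Leibniz identity, but once the definitions of $c$ and $\delta$ are unwound the cancellations are forced, so no real obstacle arises.
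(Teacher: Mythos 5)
Your proof is correct and follows essentially the same route as the paper: associativity and the unit axiom are inherited from $\wfm$, and the only substantive check is the Leibniz identity showing that composition (and the unit) are maps of vertical bicomplexes. (Your sign $(-1)^{v}$ in the Leibniz rule is the one forced by the Koszul convention on the tensor differential; the paper's proof writes $(-1)^{|f|}$ there, a harmless slip.)
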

\begin{proof}
To see that the composition and unit maps are maps of vertical bicomplexes note that 
for $(K,d^K,F)$ and $(L,d^L,F)$ filtered complexes, $f\in\wfm(K,L)_u^v$ and $g\in\wfm(L,M)_{u'}^{v'}$
\[\delta(c(f,g))=c(\delta(f),g)+(-1)^{{|f|}} c(f,\delta(g)).\]
The associativity and unit axiom hold, because they hold in $\wfm$.
\end{proof}

\begin{lem}\label{lem:enrich_tc_fm_by_monoidal_over}
The enrichment of filtered complexes and filtered modules is the one induced by the external tensor products $\ast:\vbic \times \fc \to \fc$ and $\ast:\bimod \times \fm \to \fm$. Therefore, the enriched categories $\wfc$ and $\wfm$ are also  monoidal enriched categories and their underlying categories are $\fc$ and $\fm$ respectively.
\end{lem}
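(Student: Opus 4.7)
My plan is to parallel the proof of Proposition \ref{external_hom_tc} for twisted complexes. The key step is to exhibit, for each $A \in \vbic$ and $K, L \in \fc$, a natural bijection
\[
\Hom_{\fc}(A \ast K, L) \cong \Hom_{\vbic}(A, \wfc(K,L)),
\]
and analogously for $\fm$ over $\bimod$, after which Remark \ref{monoidal_over_enrichment} immediately supplies a $\vbic$-enrichment on $\fc$ with hom-objects $\wfc(K,L)$. A short check that the composition and unit induced by the adjunction coincide with those prescribed in Definition \ref{def:wfc} then identifies the two enrichments.

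To construct the bijection, given $f \in \Hom_{\fc}(\Tot_c(A) \otimes K, L)$ and $a \in A_u^v$, I would set $\tilde f(a) : K \to L$ by $\tilde f(a)(k) := (-1)^{(v-u)|k|} f(a \otimes k)$, following the Koszul rule. Since $a$ lies in $F_u\Tot_c(A)$ in total degree $v-u$, for $k \in F_q K^m$ the tensor $a \otimes k$ lies in $F_{u+q}(\Tot_c(A)\otimes K)^{v-u+m}$, so $f(a\otimes k) \in F_{u+q}L^{v-u+m}$; this places $\tilde f(a)$ in $\wfm(K,L)_u^v$. A direct computation analogous to the one in the proof of Proposition \ref{external_hom_tc}, using that $A \in \vbic$ has only the vertical differential of bidegree $(0,1)$, shows that the chain map condition on $f$ translates to $\delta \tilde f = \tilde f d^A$, so $\tilde f \in \Hom_{\vbic}(A, \wfc(K,L))$. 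The inverse assignment sends $g : A \to \wfc(K,L)$ to the map $a \otimes k \mapsto (-1)^{(v-u)|k|} g(a)(k)$, and one checks that these are mutually inverse and natural in $A$, $K$, and $L$.

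Once the adjunction is in place, Remark \ref{monoidal_over_enrichment} produces composition and unit morphisms on $\wfc(-,-)$. Tracing through the evaluation $\wfc(K,L) \ast K \to L$ and the two-fold composite reproduces the composition $c(f,g) = (-1)^{u|g|}fg$ of Definition \ref{def:wfc}, with the sign emerging from the Koszul convention, and the unit $R \to \wfc(K,K)$ matches the identity by construction. The argument for $\wfm$ over $\bimod$ is identical but without the differential bookkeeping. The monoidal enrichment is then supplied by Proposition \ref{prop:functor_extends_to_enriched} applied to the internal tensor product $\otimes : \fc \times \fc \to \fc$, which is strong monoidal as a functor over $\vbic$ by distributivity of $\otimes$ over $\oplus$ together with the monoidality of $\Tot_c$; the same applies to $\wfm$. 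Finally, the underlying category of $\wfc$ has hom-sets $\Hom_{\vbic}(R, \wfc(K,L)) \cong \wfc(K,L)_0^0 \cap \ker \delta$, which unwinds to the set of filtration-preserving chain maps of degree zero, i.e.\ to $\Hom_{\fc}(K,L)$, recovering $\fc$ (and likewise $\fm$ for $\wfm$). The main obstacle, as in the twisted complex case, is keeping the Koszul signs and filtration shifts consistent through the adjunction; once this bookkeeping is carried out, the remaining verifications are formal.
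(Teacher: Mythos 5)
Your overall strategy is exactly the paper's: establish the natural bijection $\Hom_{\fc}(A\ast K,L)\cong\Hom_{\vbic}(A,\wfc(K,L))$ by explicit mutually inverse assignments, then let the formal machinery of Remark~\ref{monoidal_over_enrichment} produce the enrichment, the monoidal structure and the identification of the underlying category. However, the one step you single out as the delicate one --- the sign --- is where your formula goes wrong. You set $\tilde f(a)(k):=(-1)^{(v-u)|k|}f(a\otimes k)=(-1)^{|a||k|}f(a\otimes k)$. With the differential $\delta(\tilde f(a))=d^L\tilde f(a)-(-1)^{|a|}\tilde f(a)d^K$ of Definition~\ref{def: diff in wfm} and the Koszul differential on $\Tot_c(A)\otimes K$, the chain-map condition on $f$ gives
\[
\delta(\tilde f(a))(k)=(-1)^{|a||k|}f(d^Aa\otimes k)+(-1)^{|a|}\bigl[(-1)^{|a||k|}-(-1)^{|a|(|k|+1)}\bigr]f(a\otimes d^Kk),
\]
whereas $\tilde f(d^Aa)(k)=(-1)^{(|a|+1)|k|}f(d^Aa\otimes k)$. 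These agree only when $|k|$ and $|a|$ are even, so with your sign $\tilde f$ is not a morphism of vertical bicomplexes and the map is not even well defined into $\Hom_{\vbic}(A,\wfc(K,L))$. The sign needed here is $\epsilon(a,k)=1$: since $\fc$ carries a single grading and there is no family index playing the role of $m$ from Proposition~\ref{external_hom_tc}, the analogue of the sign $(-1)^{m|a|}$ used there is trivial, and indeed the paper's proof takes $\tilde f(a)(k)=f(a\otimes k)$ with no sign (and likewise no sign in the inverse direction). Once that is corrected, the rest of your argument --- the filtration bookkeeping placing $\tilde f(a)$ in $\wfm(K,L)_u^v$, the identification of the induced composition and unit with those of Definition~\ref{def:wfc}, and the computation $\Hom_{\vbic}(R,\wfc(K,L))\cong\wfc(K,L)_0^0\cap\ker\delta\cong\Hom_{\fc}(K,L)$ --- is sound and matches what the paper does (the paper leaves most of these verifications implicit).
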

\begin{proof}
Since $\wfc$ is a well-defined $\vbic$-enriched category we have a bifunctor 
\[\wfc(-,-):\fc^{\mathrm{op}} \times \fc \lra \vbic.\]
It is left to show that we have natural bijections
$\Hom_{\fc}(A*K, L) {\cong} \Hom_{\vbic}(A,\wfc(K,L)).$
In one direction we have a map
\begin{alignat*}{2}
\Hom_{\fc}(\Tot_c(A)\otimes K,L)& {\longrightarrow} && \Hom_{\vbic}(A,\wfc(K,L))\\
 f                              &\mapsto            &&\tilde f: a\mapsto (k\mapsto f(a\otimes k)).
 \end{alignat*}
In the inverse direction we have a map
\begin{alignat*}{2}
\Hom_{\vbic}(A,\wfc(K,L))& \longrightarrow             & \Hom_{\fc}(\Tot_c(A)\otimes K,L)& \\
\tilde{g}:a\mapsto g_a \quad              &\mapsto         & g: (a_i)\otimes k \mapsto \sum g_{a_i}(k).
 \end{alignat*}
These constructions are inverse to each other and natural. 
\end{proof}

We can define the monoidal structure of $\wfc$ explicitly.
\begin{lem}\label{lem:monidal_wfc} 
The monoidal structure of $\wfm$ is given by the following map of vertical bicomplexes.
\[
\begin{array}{cccc}
\widehat{\otimes}:&\wfc(K,L)\otimes \wfc(K',L') &\to     & \wfc(K\otimes K', L\otimes L'),\\
             &(f,g)                        &\mapsto & f\widehat{\otimes} g:= (-1)^{u|g|} f\otimes g
\end{array}
\]
where $f$ has bidegree $(u,v)$.
\end{lem}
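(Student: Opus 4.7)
The plan is to verify first that $\widehat{\otimes}$ is a well-defined morphism of vertical bicomplexes, and then to show that it coincides with the enriched monoidal structure on $\wfc$ induced by the monoidal structure of $\fc$ over $\vbic$, following the general recipe outlined in Remark~\ref{monoidal_over_enrichment}.

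For the well-definedness, I would first check the bidegree: if $f\in\wfc(K,L)_u^v$ and $g\in\wfc(K',L')_{u'}^{v'}$, then unpacking Definition~\ref{def:fc_obj} of the filtered tensor product, a generator $F_qK^m\otimes F_{q'}K'^{m'}$ of $F_{q+q'}(K\otimes K')^{m+m'}$ is sent by $f\otimes g$ into $F_{q+u}L^{m+v-u}\otimes F_{q'+u'}L'^{m'+v'-u'}$, which lies in $F_{(q+q')+(u+u')}(L\otimes L')^{(m+m')+(v+v')-(u+u')}$. Hence $f\otimes g$ has bidegree $(u+u',v+v')$ in $\wfc(K\otimes K',L\otimes L')$, so $\widehat{\otimes}$ is of bidegree $(0,0)$. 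Compatibility with the vertical differentials then reduces to verifying the Leibniz rule
\begin{equation*}
\delta(f\,\widehat{\otimes}\, g)=\delta(f)\,\widehat{\otimes}\, g+(-1)^{|f|}f\,\widehat{\otimes}\,\delta(g),
\end{equation*}
which follows from the graded Leibniz rule for $d^{K\otimes K'}$ and $d^{L\otimes L'}$ combined with Definition~\ref{def: diff in wfm}; the prefactor $(-1)^{u|g|}$ is exactly what is needed to make the Koszul signs match.

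To identify $\widehat{\otimes}$ with the enriched monoidal structure, I would follow Remark~\ref{monoidal_over_enrichment} applied to the adjunction of Lemma~\ref{lem:enrich_tc_fm_by_monoidal_over}: the induced morphism is the adjoint of the composite
\begin{equation*}
(\wfc(K,L)\otimes\wfc(K',L'))\ast(K\otimes K')\cong(\wfc(K,L)\ast K)\otimes(\wfc(K',L')\ast K')\xrightarrow{\,ev\otimes ev\,}L\otimes L'.
\end{equation*}
Using the fact that $\Tot_c$ is strong symmetric monoidal and that the evaluation map $\wfc(K,L)\ast K\to L$ is given by the construction in the proof of Lemma~\ref{lem:enrich_tc_fm_by_monoidal_over} (namely $(a_i)\otimes k\mapsto \sum a_i(k)$), one computes this adjoint explicitly. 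The sign $(-1)^{u|g|}$ arises from the Koszul swap needed in the isomorphism $(\Tot_c(\wfc(K,L))\otimes\Tot_c(\wfc(K',L')))\otimes(K\otimes K')\cong(\Tot_c(\wfc(K,L))\otimes K)\otimes(\Tot_c(\wfc(K',L'))\otimes K')$, exchanging $\Tot_c(\wfc(K',L'))$ past $K$.

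The main technical obstacle will be sign bookkeeping: the Koszul rule on bigraded modules (used to define the tensor product of morphisms in $\wfc$) interacts with the Koszul rule on singly graded filtered complexes (used in the symmetry isomorphism above), and with the reindexing of bidegree performed by $\Tot_c$. Verifying that the various signs combine to yield precisely $(-1)^{u|g|}$ is the only nontrivial point. Once this is done for $\wfc$, the restriction to $\wfm$ is automatic, since all objects carry trivial vertical differential and the formula of $\widehat{\otimes}$ on morphisms of bidegree $(u,v)$ is already formulated at the bigraded level.
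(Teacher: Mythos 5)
Your plan is sound and, at bottom, takes the same route as the paper: an explicit sign verification followed by an appeal to Remark~\ref{monoidal_over_enrichment} for the identification with the abstractly induced structure. The emphasis is complementary, though. The paper's proof consists of a single computation of the interchange law, $c(f,f')\widehat{\otimes}\, c(g,g') = (-1)^{\langle g,f'\rangle}c(f\widehat{\otimes} g,f'\widehat{\otimes} g')$, i.e.\ compatibility of $\widehat{\otimes}$ with the enriched composition, and then defers the identification with the induced monoidal structure to the remark. You instead check the bidegree and the compatibility with the vertical differentials, and propose to derive the formula (and its sign) from the adjunction of Lemma~\ref{lem:enrich_tc_fm_by_monoidal_over}. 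If that adjoint computation is actually carried out, functoriality comes for free from Proposition~\ref{prop:functor_extends_to_enriched}, so your route would subsume the paper's explicit check; the cost is that the adjoint computation is the delicate part, and the sign $(-1)^{u|g|}$ does not come only from the symmetry swap you name (a swap of $g$ past $k\in K$ contributes $(-1)^{|g||k|}$, which involves $|k|$ rather than $u$) --- the $u$-dependence enters through the $\Tot_c$ reindexing conventions and the composition sign $c(f,g)=(-1)^{u|g|}fg$ of Definition~\ref{def:wfm}.

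One concrete sign issue: the Leibniz identity you state, $\delta(f\widehat{\otimes} g)=\delta(f)\widehat{\otimes} g+(-1)^{|f|}f\widehat{\otimes}\delta(g)$, is off by $(-1)^u$ in the second term. Under the paper's conventions the differential on the tensor product of vertical bicomplexes carries the bigraded Koszul sign $\langle\delta,f\rangle=v$, not the total degree $|f|=v-u$, so the identity expressing that $\widehat{\otimes}$ commutes with the differentials is $\delta(f\widehat{\otimes} g)=\delta(f)\widehat{\otimes} g+(-1)^{v}f\widehat{\otimes}\delta(g)$. Since $f\widehat{\otimes}\delta(g)=(-1)^{u(|g|+1)}f\otimes\delta(g)$ carries an extra $(-1)^u$ relative to $(-1)^{u|g|}f\otimes\delta(g)$, this corrected identity is exactly equivalent to the direct computation $\delta(f\widehat{\otimes} g)=(-1)^{u|g|}\bigl(\delta(f)\otimes g+(-1)^{|f|}f\otimes\delta(g)\bigr)$; so the conclusion you want is true, but not via the identity as written. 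This is worth fixing, since the entire content of the lemma is that these signs close up.
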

\begin{proof}
A direct computation using the Koszul rule gives
\[\begin{array}{ccl}
c(f,f')\widehat{\otimes} c(g,g')& = & (-1)^{u_f|f'|+u_g|g'|+(u_f+u_{f'})(|g|+|g'|)} ff'\otimes gg'\\
									    & = & (-1)^{<g,f'>}c(f\widehat{\otimes} g,f'\widehat{\otimes} g'),
\end{array}\]
which shows that the construction is functorial.
To see that this is indeed the enriched monoidal structure induced by the monoidal structure of $\fc$ over $\vbic$, one can follow closely Remark \ref{monoidal_over_enrichment}.
\end{proof}

\subsection{Enriched totalization}
The totalization functor and its properties extend to the enriched setting.  In this section we describe this structure explicitly.

\begin{lem}\label{lem:tot_lax_over_base}
The totalization functors 
\[
\begin{array}{ccc}
 \Tot:\bimod \to \fm & \qquad\qquad\text{and }\qquad\qquad & \Tot:\tc \to \fc
\end{array}\]
are lax monoidal functors over $\bimod$ and $\vbic$ respectively.  When restricted to the bounded case they are monoidal functors over $\bimod$ and $\vbic$ respectively.
\end{lem}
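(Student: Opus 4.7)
My plan is to leverage Proposition~\ref{prop:lax} as the main input and to supply the additional structure needed to upgrade ``lax symmetric monoidal'' to ``lax monoidal over the base''. I treat $\Tot:\tc\to\fc$; the case of $\Tot:\bimod\to\fm$ is entirely analogous with the vertical differentials suppressed.

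First I will produce the structure map for a lax functor over $\vbic$. For every vertical bicomplex $A$ there is a canonical inclusion of graded modules $\iota_A:\Tot_c(A)\to \Tot(A)$ from the direct sum into the mixed product/direct sum of Definition of $\Tot$. Because a vertical bicomplex has $d_m = 0$ for $m\ge 1$, the only differential acting on either side is the vertical one, and $\iota_A$ is readily seen to be a map of filtered complexes; moreover $\iota_A$ is natural in $A$. When $A\in\bvbic$, the product involved in $\Tot(A)$ already reduces to a direct sum, so $\iota_A$ is an isomorphism. I then define
\[
\nu_{A,X}\colon A\ast_{\fc}\Tot(X) = \Tot_c(A)\otimes \Tot(X) \xrightarrow{\ \iota_A\otimes 1\ } \Tot(A)\otimes \Tot(X) \xrightarrow{\ \mu_{A,X}\ } \Tot(A\otimes X),
\]
where $\mu_{A,X}$ is the structure map of Proposition~\ref{prop:lax} and $A$ is viewed inside $\tc$ via the full monoidal embedding. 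Naturality of $\nu$ in both variables is inherited from naturality of $\iota$ and $\mu$.

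Next I will verify the coherence axioms for $(\Tot,\nu)$ as a lax functor over $\vbic$. Unitality $\nu_{R,X}$ is the canonical identification because $\iota_R = 1_R$ and $\mu$ is unital by Proposition~\ref{prop:lax}. Associativity of $\nu$, for $A,B\in\vbic$ and $X\in\tc$, reduces via the explicit Koszul-signed formulas for $\iota$ and $\mu$ to the associativity of $\mu$ already established in Proposition~\ref{prop:lax}, plus the evident commutation $\iota_{A\otimes B} = \mu^{\vbic}_{A,B}\circ(\iota_A\otimes\iota_B)$ coming from the fact that $\Tot_c$ is strong monoidal.

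For the lax monoidal structure over $\vbic$ I will reuse the data $(\epsilon,\mu)$ of Proposition~\ref{prop:lax}. The unit $\epsilon = 1_R$ is trivially compatible with the base. The remaining content is to check that $\mu$ is a natural transformation over $\vbic$ in the sense recalled just before Proposition~\ref{prop:functor_extends_to_enriched}, i.e.\ that for $C\in\vbic$ and $X,Y\in\tc$ the two ways of going from $C\ast_{\fc}(\Tot(X)\otimes\Tot(Y))$ to $\Tot((C\otimes X)\otimes Y)$ agree; this amounts to a compatibility between $\nu$ and $\mu$ that follows from the same explicit formulas. In the bounded case, $\Tot_c$ coincides with $\Tot$ on $\bvbic$ so $\iota$ is an isomorphism, and Proposition~\ref{prop:lax} already gives that $\mu$ is an isomorphism on $\btc$; hence $\nu$ is an isomorphism, upgrading both ``lax over the base'' and ``lax monoidal'' to their strong counterparts.

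The main obstacle is really bookkeeping: arranging each coherence diagram so that it reduces cleanly to a statement already proved in Proposition~\ref{prop:lax} together with the transparent behaviour of $\iota$. No essentially new mathematical ingredient is required beyond what that proposition supplies.
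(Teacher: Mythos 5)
Your proposal is correct and follows essentially the same route as the paper: both define the structure map as the composite $\mu\circ(\iota\otimes 1)$ where $\iota:\Tot_c\Rightarrow\Tot$ is the inclusion, reduce the coherence conditions to those already established for $\mu$ in Proposition~\ref{prop:lax}, handle $\bimod$ by restriction, and note that in the bounded case $\iota$ and $\mu$ become isomorphisms. Your write-up is somewhat more explicit about which coherence diagrams need checking, but no new idea is involved.
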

\begin{proof}
For the case of $\tc$, we have a natural transformation 
$\iota:\Tot_c(-)\Rightarrow \Tot(-)$
given by the inclusion. 
From Proposition \ref{prop:lax} we also have a natural transformation 
\[\mu:\Tot(-)\otimes \Tot(-)\Rightarrow \Tot(- \otimes -).\]
Thus, we have a natural transformation $\nu_{\Tot}$ given by the composite 
\[\nu_{\Tot}:-\ast \Tot(-):=
\Tot_c(-)\otimes \Tot (-)
\stackrel{\iota\otimes 1}{\Rightarrow}
\Tot(-)\otimes \Tot (-)
\stackrel{\mu}{\Rightarrow}
\Tot(- \otimes -).
\]
The coherence conditions follow from the coherence conditions for $\mu$ together with the fact that $\iota$ is the inclusion.  The case of $\bimod$ follows by restriction.
In the bounded case these are all natural isomorphisms.
\end{proof}

In order to describe the enriched totalization functors we first extend the definition of $\Tot$ to morphisms of any bidegree.

\begin{defi}
\label{def:tot_on_weirdmorphism}
Let $A,B$ be bigraded modules and $f\in\wbimod(A,B)_u^v$ we define 
\[\Tot(f)\in \wfm(\Tot(A),\Tot(B))_u^v\]  
to be given on any $a\in \Tot(A)^n$ by
\[  (\Tot(f)(a))_{j+u}:=\sum_{m\geq 0} (-1)^{(m+u)n} f_m(a_{j+m})\in B_{j+u}^{j+n+v}\subset \Tot(B)^{n+v-u}.\]
Let $K=\Tot(A)$, $L=\Tot(B)$ and $g\in \wfm(K,L)_u^v$ we define 
\[f:=\Tot^{-1}(g) \in\wbimod(A,B)_u^v\] 
to be $f:=(f_0, f_1, \ldots)$ where $f_i$ is given on each $A_j^{m+j}$ by the composite

	\begin{align*}
		f_i: A_j^{m+j} \hookrightarrow 
		\prod_{k\leq j} A_k^{m+k} = F_j(\Tot(A)^m)
		\stackrel{g}{\longrightarrow}
		&F_{j+u}(\Tot(B)^{m+v-u})\\
		&=\prod_{l\leq j+u} B_l^{m+v-u+l}
		\xrightarrowdbl{\times (-1)^{(i+u)m}}
		B_{j+u-i}^{m+j+v-i},
     	\end{align*}
where the last map is a projection and multiplication with the indicated sign.
\end{defi}

\begin{teo}
\label{prop:weirdtot}
Let $A, B$ be twisted complexes.  
The assignments $\wtot(A):=\Tot(A)$ and
\begin{center}
\def\arraystretch{1.5}
\begin{tabular}{cll}
$\wtot_{A,B}:\wtc (A,B)$&$\longrightarrow$ &$\wfc(\Tot(A),\Tot(B))$\\
$f$ &$\mapsto$ &$\Tot(f)$
\end{tabular}
\end{center}
define a $\vbic$-enriched functor $\wtot:\wtc \to \wfc$ which restricts to an isomorphism onto its image $\wsfc$.
Furthermore, this functor restricts to a $\bimod$-enriched functor
\[\wtot:\wbimod \to \wfm\]
which also restricts to an isomorphism onto its image $\wsfm$.
\end{teo}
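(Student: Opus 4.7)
The strategy is to combine the category-level isomorphism of Theorem~\ref{equiv_twisted_split} with the enrichment machinery in Proposition~\ref{prop:functor_extends_to_enriched}. By Lemma~\ref{lem:tot_lax_over_base}, $\Tot\colon\tc\to\fc$ is a lax monoidal functor over $\vbic$, so Proposition~\ref{prop:functor_extends_to_enriched} immediately produces a $\vbic$-enriched functor $\wtot\colon\wtc\to\wfc$. I would first verify that the hom-object map $\wtot_{A,B}$, obtained abstractly as the adjoint of
\[
\wtc(A,B)\ast\Tot(A)\xrightarrow{\nu_\Tot}\Tot(\wtc(A,B)\otimes A)\xrightarrow{\Tot(\mathrm{ev})}\Tot(B),
\]
reduces to the explicit formula in Definition~\ref{def:tot_on_weirdmorphism}; this is sign-bookkeeping using the explicit form of $\nu_\Tot$ together with the adjunction in Proposition~\ref{external_hom_tc}, producing in particular the factor $(-1)^{(m+u)n}$. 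Compatibility with the differentials of Definitions~\ref{def:delta} and~\ref{def: diff in wfm} is then automatic, since $\wtot$ is $\vbic$-enriched.

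The core step, and the one I expect to be the main technical point, is to show that $\wtot_{A,B}$ is a bijection with inverse $\Tot^{-1}$ of Definition~\ref{def:tot_on_weirdmorphism}. The identity $\Tot^{-1}\circ\wtot_{A,B}=\mathrm{id}$ is a direct component-wise computation on homogeneous $a\in A_j^{m+j}$, in which the signs $(-1)^{(i+u)m}$ and $(-1)^{(m+u)n}$ combine to yield $(-1)^{2im}=1$. For the converse $\wtot_{A,B}\circ\Tot^{-1}=\mathrm{id}$, take $g\in\wfc(\Tot(A),\Tot(B))_u^v$ and $a=(a_k)_k\in\Tot(A)^n$; after unwinding both sides, the claim reduces to the equality
\[
g(a)_l=\sum_{k\geq l-u}g(\iota_k(a_k))_l,
\]
where $\iota_k\colon A_k^{n+k}\hookrightarrow\Tot(A)^n$ is the canonical inclusion. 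Decomposing $a$ as $\sum_{k<l-u}\iota_k(a_k)+\sum_{k\geq l-u}\iota_k(a_k)$, the first sum lies in $F_{l-u-1}\Tot(A)^n$, so its $g$-image lies in $F_{l-1}\Tot(B)^{n+v-u}$ by the filtration condition and therefore has zero $l$-th component, while the second sum is finite because only finitely many $a_k$ with $k>0$ are non-zero. This is precisely the bidegree-$(u,v)$ enhancement of the argument implicit in Theorem~\ref{equiv_twisted_split}.

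Combining these steps, $\wtot_{A,B}$ is an isomorphism of vertical bicomplexes for every $A,B$, and the inverses $\Tot^{-1}$ assemble into a $\vbic$-enriched inverse functor to $\wtot$ whose underlying functor is the inverse from Theorem~\ref{equiv_twisted_split}; this yields the desired isomorphism $\wtc\cong\wsfc$. The $\bimod$-enriched statement $\wtot\colon\wbimod\xrightarrow{\sim}\wsfm$ then follows by restriction to the trivial-differential case, since the full embeddings $\bimod\hookrightarrow\vbic$ and $\bimodinf\hookrightarrow\tc$ preserve the monoidal structures, the enrichments, and the totalization functor.
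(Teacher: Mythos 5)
Your proposal is correct and follows essentially the same route as the paper: it invokes Lemma~\ref{lem:tot_lax_over_base} and Proposition~\ref{prop:functor_extends_to_enriched} to obtain the enriched functor, identifies the abstract adjoint with the explicit formula of Definition~\ref{def:tot_on_weirdmorphism}, and then verifies $\Tot^{-1}\circ\wtot_{A,B}=\mathrm{id}$ componentwise and $\wtot_{A,B}\circ\Tot^{-1}=\mathrm{id}$ by splitting $a$ into the piece lying in a lower filtration step (whose image has vanishing relevant component by the filtration condition) and a finite remainder, exactly as in the paper's decomposition $a=\alpha_{q-1}+\sum_{m\geq 0}a_{q+m}$. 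The concluding restriction argument for the $\bimod$-enriched statement also matches.
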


\begin{proof}
We show first that this assignment defines a $\vbic$-enriched functor $\wtot$.
By Lemma \ref{lem:tot_lax_over_base} $\Tot$ is a lax functor over $\vbic$.
Thus, it is enough to show that $\wtot$ arises as the extension of $\Tot$ as described in the proof of Proposition \ref{prop:functor_extends_to_enriched}.
  
Let $A,B$ be twisted chain complexes. Let $K$ denote the vertical bicomplex $\wtc(A,B)$. Let $ev_{AB}$ denote the adjoint of the identity through the bijection (\ref{adjoint_external})
\begin{equation*}
\Hom_{\tc}(K\otimes A, B)\cong \Hom_{\vbic}(K,K),
\end{equation*}
of Proposition \ref{external_hom_tc}. Explicitly
\[(ev_{AB})_m(f\otimes a)=(-1)^{m|f|} f_m(a),\ f\in\wtc(A,B), a\in A.\]
The map
$\wtot_{A,B}:  K\rightarrow \wfc(\Tot(A),\Tot(B))$ is obtained as the adjoint through the bijection
\[\Hom_{\fc}(K\ast \Tot(A), \Tot(B)) {\cong} \Hom_{\vbic}(K,\wfc(\Tot(A),\Tot(B))),\]
of Lemma \ref{lem:enrich_tc_fm_by_monoidal_over} of the composite
\[\xymatrix{K\ast \Tot(A)= \Tot_c(K)\otimes \Tot(A)\ar[r]^>>>>{\mu_{K,A}}&\Tot(K\otimes A)\ar[rr]^{\Tot(ev_{AB})} && \Tot(B)}\]
as in Proposition \ref{prop:functor_extends_to_enriched}.

For $f\in\wtc(A,B)_u^v,  a=(a_k)_k\in\Tot^n(A)$ one has
\[\mu_{K,A}(f\otimes a)_k=(-1) ^{un} f\otimes a_{k-u},\]
\begin{align*}
 (\Tot(ev_{AB})\circ\mu_{K,A}(f\otimes a))_{j+u}&=\sum_{m\geq 0} (-1)^{m(n+v-u)} (ev_{AB})_m(\mu_{K,A}(f\otimes a)_{j+u+m})\\
 &=\sum_{m\geq 0} (-1)^{m(n+v-u)+un} (ev_{AB})_m(f\otimes a_{j+m})\\
& =\sum_{m\geq 0} (-1)^{m(n+v-u)+un+m(v-u)} f_m(a_{j+m}).
 \end{align*}
As a consequence
\[  (\Tot(f)(a))_{j+u}=\sum_{m\geq 0} (-1)^{(m+u)n} f_m(a_{j+m}).\]

To see that $\wtot$ restricts to an isomorphism onto its image we construct a $\vbic$-enriched functor which is inverse to the restriction of $\wtot$ onto its image.  
Let $K, L$ be split filtered complexes, then we define $\wtot^{-1}$ to be given by the assignments $\wtot^{-1}(K):=\Tot^{-1}(K)$ and 
\begin{center}
\def\arraystretch{1.5}
\begin{tabular}{cll}
$\wtot^{-1}_{K,L}:\wfc(K,M)$ &$\longrightarrow$ &$\wtc(\Tot^{-1}(K),\Tot^{-1}(L))$\\
$f$ &$\mapsto$ &$\Tot^{-1}(f)$.
\end{tabular}
\end{center}
A computation shows that this is indeed a map of vertical bicomplexes.  
Furthermore, we have that $\Tot(\Tot^{-1}(K))=K$ and $\Tot^{-1}(\Tot(A))=A$.

Now we check that $\Tot^{-1}(\Tot(f))=f$.  Write $\pi_{l}$ for the projection $\prod_{l\leq j+u} B_l^{m+v-u+l}\twoheadrightarrow B_{l}^{m+v-u+l}$ (without a sign).
For $f$ of bidegree $(u,v)$ and $a_j\in A_j^{m+j}$ one has that
\[
(\Tot^{-1}\circ\Tot(f))_i(a_j)=(-1)^{(i+u)m}\pi_{j+u-i}(\sum_{k\geq 0}(-1)^{(k+u)m}f_k(a_{j-i+k}))=f_i(a_j), \]
showing that $\Tot^{-1}(\Tot(f))=f$.

To see that $\Tot(\Tot^{-1}(g))=g$, let $g:(\Tot(A))^n\rightarrow (\Tot(B))^{n+v-u}$ be a map of bidegree $(u,v)$. For $(a)\in (\Tot(A))^n$ we write
$g(a)=(g_k(a))_{k\in \ZZ}$, where $g_k(a)\in B_k^{n+v-u+k}$.
Recall that for all $q$, we have that $g(F_q((\Tot(A))^n)\subset F_{q+u}(\Tot(B))^{n+v-u}$.
Fix $q\in\ZZ$.
If $(a)\in (\Tot(A))^n$ then  write $a=\alpha_{q-1}+\sum_{m\geq 0} a_{q+m}$
where $\alpha_{q-1}\in \prod_{r\leq q-1} A_r^{n+r}= F_{q-1}(\Tot(A))^n$ and  $\sum_{m\geq 0} a_{q+m}$ is finite.
As a consequence
\[ g_{q+u}(a)=\sum_{m\geq 0} g_{q+u}(a_{q+m}), \]

\[\Tot^{-1}(g)_m(a_{j+m})=(-1)^{(m+u)n} g_{j+u}(a_{j+m}).\]
Hence
\[(\Tot\circ\Tot^{-1}(g))(a)_{j+u}=\sum_{m\geq 0} (-1)^{(m+u)n} \Tot^{-1}(g)_m(a_{j+m})=\sum_{m\geq 0} g_{j+u}(a_{j+m})=g_{j+u}(a),\]
showing that $\Tot(\Tot^{-1}(g))=g$. 

It follows that $\wtot^{-1}$ is associative and unital since it is the inverse to  $\wtot$ and thus it defines a $\vbic$-enriched functor $\wtot^{-1}:\wsfc \lra \wtc$ which is inverse to the restriction of $\wtot$ onto its image.
The statement on $\wbimod$ follows by restriction.
\end{proof}

\begin{prop}\label{prop:lax_enriched}
The enriched functors 
\[\wtot:\wbimod\to \wfm\, ,\quad \quad \quad \wtot:\wtc\to \wfc\] 
are lax symmetric monoidal in the enriched sense and when restricted to the bounded case they are strong symmetric monoidal in the enriched sense.
\end{prop}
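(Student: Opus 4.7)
The strategy is to apply the general extension result of Proposition~\ref{prop:functor_extends_to_enriched} to the lax symmetric monoidal structure of $\Tot$ over the base, and then to invoke Theorem~\ref{prop:weirdtot} to identify the resulting enriched functor with $\wtot$. Specifically, Lemma~\ref{lem:tot_lax_over_base} already produces a lax monoidal functor over $\vbic$ (resp.\ $\bimod$) with structure maps $\epsilon\colon R \to \Tot(R)$ and $\mu_{A,B}\colon \Tot(A)\otimes \Tot(B)\to \Tot(A\otimes B)$ coming from Proposition~\ref{prop:lax}. The symmetry of $\mu$ at the level of ordinary categories was verified in Proposition~\ref{prop:lax} (the square involving $\tau^{\tc}$ and $\tau^{\fc}$ commutes), so the ordinary-level functor is in fact lax \emph{symmetric} monoidal over the base.

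The first step is to recall from the proof of Theorem~\ref{prop:weirdtot} that $\wtot$ is obtained as the enriched extension of $\Tot$ via the construction of Proposition~\ref{prop:functor_extends_to_enriched}, namely the adjunct under (\ref{adjoint_exteranl_formal}) of the composite $\Tot(ev_{AB})\circ\nu_{\Tot}$. Then, by the final sentence of Proposition~\ref{prop:functor_extends_to_enriched}, because $(\Tot,\epsilon,\mu)$ is lax monoidal over the base, its extension $\wtot$ automatically inherits the structure of a lax monoidal functor in the enriched sense, with structure maps
\[
\underline{\epsilon}\colon R \longrightarrow \wtot(R), \qquad
\underline{\mu}_{A,B}\colon \wtot(A)\otimes \wtot(B) \longrightarrow \wtot(A\otimes B),
\]
obtained as the enriched adjuncts of $\epsilon$ and $\mu_{A,B}$. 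The naturality-over-$\vbic$ requirement on $\mu$ is immediate since $\mu$ is a natural transformation between lax functors over $\vbic$, so Proposition~\ref{prop:functor_extends_to_enriched} applies verbatim.

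The second step is to handle symmetry. Since the enriched monoidal structures on $\wtc$ and $\wfc$ come (via Lemmas~\ref{L:wtc} and \ref{lem:enrich_tc_fm_by_monoidal_over}) from the external tensor products over the base, the enriched symmetry isomorphisms are induced by the ordinary ones on $\tc$ and $\fc$. The compatibility of $\underline{\mu}$ with the enriched symmetry therefore reduces, after passing through the adjunction (\ref{adjoint_external}), to the ordinary symmetry square for $\mu$ established in Proposition~\ref{prop:lax}. Thus $\wtot$ is lax symmetric monoidal in the enriched sense.

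Finally, in the bounded case, Lemma~\ref{lem:tot_lax_over_base} upgrades $\Tot$ to a strong (symmetric) monoidal functor over the base, so both $\epsilon$ and $\mu$ are isomorphisms. The enriched adjuncts of isomorphisms in $\fc$ are isomorphisms in $\vbic$ (since $-\ast-$ is a left adjoint and the adjunction (\ref{adjoint_exteranl_formal}) is a bijection), so the extended structure maps $\underline{\epsilon}$ and $\underline{\mu}_{A,B}$ are isomorphisms. Hence $\wtot$ is strong symmetric monoidal enriched in the bounded case, and the $\wbimod\to\wfm$ case follows by restriction. The only point that requires genuine thought is the symmetry compatibility of $\underline{\mu}$, since the general extension proposition was phrased for lax monoidal (not symmetric) functors over the base; but because enriched symmetries are inherited from ordinary ones, this compatibility transfers through the adjunction with no additional work.
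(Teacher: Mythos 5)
Your proposal is correct and follows essentially the same route as the paper, whose proof is simply a citation of Proposition~\ref{prop:lax}, Proposition~\ref{prop:functor_extends_to_enriched} and Lemma~\ref{lem:tot_lax_over_base}; your additional remarks on the symmetry compatibility and the bounded case just make explicit what the paper leaves implicit.
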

\begin{proof}
This follows from Propositions \ref{prop:lax}, \ref{prop:functor_extends_to_enriched} and Lemma \ref{lem:tot_lax_over_base}.
\end{proof}

\subsection{Derived $A_\infty$-algebras as $A_\infty$-algebras in twisted complexes}
We reinterpret the category of derived $A_\infty$-algebras as the category of $A_\infty$-algebras in twisted  complexes.  Generally, given an operad $\Pp$ on a symmetric monoidal category one 
studies $\Pp$-algebra structures on objects of the same category. We can extend this to the case of monoidal categories over a base via the following definition due to~\cite{Fresse}.

\begin{defi}
Let $\Cc$ be a monoidal category over $\vcat$ and let $\Pp$ be an operad in $\vcat$.  A $\Pp$-algebra in $\Cc$ consists of an object $A\in \Cc$, together with maps 
\[\Pp(n)\ast A^{\otimes n} \longrightarrow A\]
for which the unit and associativity axioms hold.
\end{defi}

We can give an equivalent definition by means of an enriched endomorphism operad.

\begin{defi}
Let $\ccat$ be a monoidal $\vcat$-enriched category and $A$ an object of $\ccat$.  We define $\End_A$ to be the collection in $\vcat$ given by 
\[\End_A(n):=\ccat(A^{\otimes n}, A)\quad\quad \text{for } n\geq 1.\]
\end{defi}

\begin{lem}
For any $A\in\ccat$, the collection $\End_A$ defines an operad in $\vcat$ with unit 
\[1\stackrel{u_A}{\longrightarrow} \ccat(A,A)=\End_A(1)\]
and composition
\[\End_A(r)\otimes \End_A(n_1) \otimes \End_A(n_2)\otimes \dots \otimes \End_A(n_r) \to
\End_A(r)\otimes \ccat(A^{\otimes n}, A^{\otimes r}) {\longrightarrow}
\End_A(n),
\]
where $n=n_1+\dots +n_r$. The first morphism is given by the monoidal structure of $\ccat$ and the second is the composition of the symmetry morphism of $\vcat$ with the composition morphism of $\ccat$.
\end{lem}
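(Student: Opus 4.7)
The plan is to verify the operad axioms by reducing them to the associativity and unit axioms of the enriched composition in $\ccat$, together with the fact that the monoidal structure $\widehat{\otimes}$ on $\ccat$ is an enriched bifunctor (so it is compatible with composition).

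First I would set up notation for the iterated enriched monoidal product. Given $\psi_i \in \End_A(n_i)=\ccat(A^{\otimes n_i},A)$ for $i=1,\dots,r$, the monoidal structure of $\ccat$ produces a morphism
\[\widehat{\otimes}_{i=1}^r:\bigotimes_{i=1}^r\ccat(A^{\otimes n_i},A)\longrightarrow \ccat(A^{\otimes n},A^{\otimes r})\]
with $n=n_1+\cdots+n_r$. Composing with the enriched composition $\ccat(A^{\otimes r},A)\otimes\ccat(A^{\otimes n},A^{\otimes r})\to \ccat(A^{\otimes n},A)$ gives the operadic composition map stated in the lemma. Writing the composition morphism of $\ccat$ as $\gamma$, the operadic composition is $\gamma\circ(\mathrm{id}\otimes\widehat{\otimes}_{i=1}^r)$.

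For the unit axioms I would use that the unit $u_A:1\to \ccat(A,A)$ is the enriched identity, so that $\gamma\circ(u_A\otimes \mathrm{id})$ and $\gamma\circ(\mathrm{id}\otimes u_A)$ are the canonical isomorphisms. The left unit axiom (composing $u_A\in \End_A(1)$ on the left with $\phi\in\End_A(r)$) is then immediate. The right unit axiom (composing $\phi\in\End_A(r)$ with $r$ copies of $u_A$) requires the extra observation that $\widehat{\otimes}_{i=1}^r(u_A,\dots,u_A)=u_{A^{\otimes r}}$; this is precisely the statement that the enriched monoidal product preserves identities, which is part of the definition of an enriched bifunctor.

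For associativity I would consider triples $\phi\in\End_A(r)$, $\psi_i\in\End_A(n_i)$ for $i=1,\dots,r$ and $\chi_{ij}\in\End_A(m_{ij})$ for $1\le i\le r$, $1\le j\le n_i$, and expand both iterated composites. Both reduce, after applying the monoidal product, to compositions in $\ccat$ of the form
\[A^{\otimes M}\xrightarrow{\widehat{\otimes}\chi_{ij}} A^{\otimes n}\xrightarrow{\widehat{\otimes}\psi_i} A^{\otimes r}\xrightarrow{\phi} A,\]
where $M=\sum m_{ij}$. The equality of the two iterated operadic composites then follows from two ingredients: the associativity of $\gamma$ (the enriched composition in $\ccat$), and the interchange/bifunctoriality of $\widehat{\otimes}$ applied to the composites $\psi_i\circ(\widehat{\otimes}_j\chi_{ij})$ in each slot. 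This is the main obstacle: the bookkeeping is tedious, since one must track how the iterated $\widehat{\otimes}$ interacts with the composition $\gamma$ through the enriched interchange law, but the only non-formal input needed is the bifunctoriality of $\widehat{\otimes}$, which holds by the assumption that $\ccat$ is a monoidal $\vcat$-enriched category. The verification is entirely diagrammatic and involves no calculations beyond those already implicit in Lemmas~\ref{lem: mon structure on wtc} and~\ref{lem:monidal_wfc}, so I would leave the diagram chase to the reader.
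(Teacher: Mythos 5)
Your proposal is correct and follows essentially the same route as the paper, which simply notes that the appropriate diagrams commute by associativity of composition in an enriched category and refers to Fresse (Definition 3.4.1) for the details. You have merely spelled out the two formal ingredients (associativity/unitality of the enriched composition and bifunctoriality of the enriched monoidal product) that the paper leaves implicit in its citation.
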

\begin{proof}
The appropriate diagrams commute by associativity of composition in an enriched category.  We also refer the reader to \cite[Definition 3.4.1]{Fresse}.
\end{proof}

\begin{example}
For any twisted  complex $A$ and any filtered complex $K$ we have operads in vertical bicomplexes $\End_A$ and $\End_K$.
\end{example}

The following result gives an equivalent interpretation of $\Pp$-algebras in monoidal categories over a base.

\begin{prop}~\cite[Proposition 3.4.3]{Fresse}
\label{fresse_enriched_end}
Let $\Cc$ be a monoidal category over $\vcat$, let $\Pp$ be an operad in $\vcat$ and $A$ an object in $\Cc$.  Then there is a one-to-one correspondence between $\Pp$-algebra structures on $A$ and morphisms of operads $\Pp\to\End_A$.\qed
\end{prop}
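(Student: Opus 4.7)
My plan is to establish the correspondence directly from the adjunction
\[\Hom_{\Cc}(C\ast X, Y)\cong \Hom_{\vcat}(C,\ccat(X,Y))\]
underlying the enrichment (as recalled in Remark~\ref{monoidal_over_enrichment}). Applied with $X=A^{\otimes n}$ and $Y=A$, this already gives, for each arity $n$, a bijection
\[\Hom_{\Cc}\bigl(\Pp(n)\ast A^{\otimes n},A\bigr)\cong \Hom_{\vcat}\bigl(\Pp(n),\ccat(A^{\otimes n},A)\bigr)=\Hom_{\vcat}\bigl(\Pp(n),\End_A(n)\bigr),\]
so at the level of underlying data a family of structure maps on $A$ corresponds to a morphism of collections $\Pp\to\End_A$ in $\vcat$. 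It remains to check that the unit and associativity axioms on the two sides correspond.

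For the unit, the unit of $\Pp$ is a map $1\to\Pp(1)$; under the adjunction its composite with $\Pp(1)\to\End_A(1)=\ccat(A,A)$ must equal the unit $u_A\colon 1\to\ccat(A,A)$ of the enriched category, and by naturality of the adjunction this is equivalent to saying that the composite $1\ast A\to\Pp(1)\ast A\to A$ is the canonical isomorphism, which is precisely the unit axiom for a $\Pp$-algebra.

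The main work, and the step I expect to be the principal obstacle, is matching the operadic composition. On the $\End_A$ side, composition is defined (as in the lemma preceding the proposition) via the enriched monoidal structure of $\ccat$ together with the composition morphism of $\ccat$; concretely, one uses that $\ccat(A^{\otimes n_i},A)$ assembles through $\underline{\otimes}$ into a morphism landing in $\ccat(A^{\otimes n},A^{\otimes r})$ and then composes with $\ccat(A^{\otimes r},A)$. The plan is to transport this description along the adjunction: applying the adjoint of each factor, using that the enriched $\underline{\otimes}$ was defined (Remark~\ref{monoidal_over_enrichment}) via the external tensor $\ast$ and the evaluation maps $ev$, and using that composition in $\ccat$ is itself defined as the adjoint of an iterated evaluation, one checks that the image of $\gamma\colon\Pp(r)\otimes\Pp(n_1)\otimes\cdots\otimes\Pp(n_r)\to\Pp(n)$ in $\End_A$ corresponds to the iterated structure map
\[\Pp(r)\ast\Pp(n_1)\ast\cdots\ast\Pp(n_r)\ast A^{\otimes n}\longrightarrow\Pp(r)\ast A^{\otimes r}\longrightarrow A\]
obtained by first acting with the $\Pp(n_i)$ on the respective blocks of tensor factors of $A$ and then with $\Pp(r)$. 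Thus a morphism of operads $\Pp\to\End_A$ is precisely the compatibility of the structure maps of $A$ with the operadic composition in $\Pp$.

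Putting the three pieces together yields the bijection. The only subtle point is bookkeeping of the symmetry isomorphism used in the definition of composition on $\End_A$, which has to be matched with the shuffling of the $A^{\otimes n_i}$ factors on the algebra side; this is handled exactly by the axioms of a monoidal category over $\vcat$ (Definition~\ref{monoidal_over_base}), which allow one to move $\vcat$-objects past $\Cc$-tensor factors coherently. This verification is routine once the adjunction is set up, but it is the step where the proof really uses that $\ccat$ is monoidal over $\vcat$ rather than merely $\vcat$-enriched.
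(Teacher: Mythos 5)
The paper offers no proof of this proposition; it is quoted directly from Fresse (Proposition 3.4.3) with the proof omitted. Your sketch — passing each structure map $\Pp(n)\ast A^{\otimes n}\to A$ through the adjunction $\Hom_{\Cc}(C\ast X,Y)\cong\Hom_{\vcat}(C,\ccat(X,Y))$ and then checking that the unit and composition axioms correspond via the evaluation maps and the external associativity isomorphisms — is precisely the standard argument that Fresse's cited proof carries out, so your approach matches the one the paper defers to.
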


Operad morphisms can be constructed from functors on ordinary categories which behave well with respect to the monoidal structure.  The result below due to Fresse is originally stated for the monoidal case.  However, all his methods extend to the lax monoidal setting as we describe below.

\begin{prop}~\cite[Proposition 3.4.7]{Fresse}
\label{prop:functor_induces_operad_map}
Let $\Cc$ and $\Dd$ be monoidal categories over $\vcat$.  
Let $F:\Cc\to\Dd$ be a lax monoidal functor over $\vcat$.  
Then for any $X\in\Cc$ there is an operad morphism
\[\End_X{\longrightarrow} \End_{F(X)}.\]
\end{prop}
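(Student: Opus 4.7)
The plan is to build the operad morphism directly out of the enriched extension $\underline{F}:\ccat \to \ddcat$ supplied by Proposition~\ref{prop:functor_extends_to_enriched}, and then check the operadic axioms using the lax monoidal coherences.

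First, since $F$ is lax monoidal over $\vcat$, Proposition~\ref{prop:functor_extends_to_enriched} promotes $F$ to a lax monoidal $\vcat$-enriched functor $\underline{F}:\ccat \to \ddcat$, giving in particular a morphism in $\vcat$
\[
\underline{F}_{X^{\otimes n},X}\colon \ccat(X^{\otimes n},X) \longrightarrow \ddcat(F(X^{\otimes n}),F(X))
\]
and, for each $n\geq 1$, a structure morphism $\mu_n\colon F(X)^{\otimes n}\to F(X^{\otimes n})$ in $\Dd$, i.e.\ a morphism $1\to \ddcat(F(X)^{\otimes n},F(X^{\otimes n}))$ in $\vcat$. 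I would then define the candidate operad map $\phi\colon \End_X\to \End_{F(X)}$ in arity $n$ as the composite
\[
\ccat(X^{\otimes n},X) \xrightarrow{\underline{F}_{X^{\otimes n},X}} \ddcat(F(X^{\otimes n}),F(X)) \xrightarrow{\;-\circ\mu_n\;} \ddcat(F(X)^{\otimes n},F(X)),
\]
where the second map is precomposition with $\mu_n$ obtained from the enriched composition of $\ddcat$.

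Next I would check the unit axiom. The unit of $\End_X$ is the morphism $u_X\colon 1\to \ccat(X,X)$ picking out $\mathrm{id}_X$. Since $\underline{F}$ preserves identities (being enriched) and $\mu_1$ is the identity (by the unit part of the lax monoidal coherence), $\phi\circ u_X$ equals $u_{F(X)}$.

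The main work lies in the associativity/composition axiom, which I expect to be the principal obstacle. I would write out the diagram comparing the two composites
\[
\End_X(r)\otimes \End_X(n_1)\otimes\cdots\otimes\End_X(n_r)\longrightarrow \End_{F(X)}(n),
\]
coming either from first composing in $\End_X$ and then applying $\phi$, or from applying $\phi$ componentwise and then composing in $\End_{F(X)}$. This reduces, after expanding the definitions from Remark~\ref{monoidal_over_enrichment}, to the combination of three ingredients: the fact that $\underline{F}$ is enriched (so it respects composition), the fact that $\underline{F}$ is enriched monoidal (so the maps $\underline{F}_{A,B}\otimes\underline{F}_{C,D}$ factor compatibly through $\underline{F}_{A\otimes C,B\otimes D}$), and the lax monoidal coherence for the structure maps $\mu_k$ (so that nested tensor products of the $\mu_{n_i}$ compose coherently with $\mu_r$ to give $\mu_n$). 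Assembling these three compatibilities produces the desired commutative diagram; the symmetry of the operad composition is inherited from the symmetry of $\vcat$ and the fact that $\underline{F}$ is symmetric lax monoidal. This gives the operad morphism $\End_X\to \End_{F(X)}$.
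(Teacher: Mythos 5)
Your construction is exactly the one the paper uses: apply the enriched lax monoidal extension $\underline{F}$ from Proposition~\ref{prop:functor_extends_to_enriched} on hom-objects and then precompose with the lax monoidal structure map $\mu_n\colon F(X)^{\otimes n}\to F(X^{\otimes n})$, with the operad axioms following from the enriched functoriality and the monoidal coherences. The paper dispatches the verification more tersely (``naturality holds by construction''), but your sketch fills in the same checks along the same route, so it is correct and essentially identical in approach.
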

\begin{proof}
By Proposition~\ref{prop:functor_extends_to_enriched}, 
$F$ induces a $\vcat$-enriched functor $\underline{F}$ which is lax monoidal in the enriched sense.
Using this one can construct the operad map for each arity $n$ as the composite 
\begin{align*}
\End_X(n):=\ccat(X^{\otimes n},X)
\to
\ddcat(F(X^{\otimes n}),F(X)) 
\to
\ddcat(F(X)^{\otimes n},F(X))=\End_{F(X)}(n),
\end{align*}
where the first map is given by the $\vcat$-enriched functor $\underline{F}$ and the second map comes from its lax monoidal structure.
Since naturality holds by construction these assemble into a morphism of operads.
\end{proof}

We reinterpret  $dA_\infty$-algebras as $A_\infty$-algebras in twisted  complexes using the structure of $\tc$ as a monoidal category over $\vbic$.

\begin{prop}\label{dA_and_A_tc}
Let $(A, d^A)$ be a twisted  complex, $A$ its underlying bigraded module and consider $A_\infty$ as an operad in $\vbic$ sitting in horizontal degree zero.  There is a one-to-one correspondence between $A_\infty$-algebra structures on $(A, d^A)$ and $dA_\infty$-algebra structures on $A$ which respect the twisted complex structure of $A$.  More precisely, let $\mathrm{End}_A$ be the operad in $\vbic$ corresponding to the bigraded module $A$.  We have a natural bijection
\[\Hom_{\mathrm{vbOp}}(A_\infty, \End_A)\cong 
\Hom_{\mathrm{vbOp},d{^A}}(dA_\infty, \mathrm{End}_A),
\]
where $\mathrm{vbOp}$ denotes the category of operads in vertical bicomplexes and $\Hom_{\mathrm{vbOp},d{^A}}$ denotes the subset of morphisms which send $\mu_{i1}$ to $d_i^A$, $i\geq 1$.
\end{prop}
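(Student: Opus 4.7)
The plan is to unpack both sides of the claimed bijection as collections of maps $\{m_{ij}\colon A^{\otimes j}\to A\}$ and then verify that the operadic axioms match. The crucial observation is that, by Definition~\ref{composition_defi}, the underlying bigraded module $\wtc(A^{\otimes n}, A)_0^{2-n}$ consists of sequences $(m_{in})_{i\geq 0}$ where $m_{in}\colon A^{\otimes n}\to A$ has bidegree $(-i, 2-i-n)$ --- precisely the bidegrees of the operations of a $dA_\infty$-algebra. Hence the underlying data on the two sides is the same, and what requires work is identifying the axioms.

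First I would apply Proposition~\ref{fresse_enriched_end} to the monoidal structure of $\tc$ over $\vbic$, identifying the left-hand side with operad morphisms in $\vbic$
\[\phi\colon A_\infty \longrightarrow \End_A, \qquad \End_A(n) = \wtc((A, d^A)^{\otimes n}, (A, d^A)),\]
where $\End_A$ is the enriched endomorphism operad arising from the $\vbic$-enrichment of Definition~\ref{def:wtc}. Unpacking as above, such a morphism assigns to each generator $\mu_n$ a sequence $(m_{in})_{i\geq 0}$. Define the candidate correspondence on the right-hand side by $\psi(\mu_{in}) := m_{in}$ for $n\geq 2$ and $\psi(\mu_{i1}) := d_i^A$; the inverse assignment is equally transparent.

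Next I would verify that $\phi$ is an operad morphism in $\vbic$ if and only if the corresponding $\psi$ is. There are two compatibilities to check. Compatibility with operadic composition amounts to comparing the composition in $\End_A$ --- which combines the enriched monoidal structure $\widehat{\otimes}$ of Lemma~\ref{lem: mon structure on wtc} with the composition $c$ of Definition~\ref{composition_defi} --- with the $dA_\infty$-composition formula of Proposition~\ref{prop:composition}. After expanding in the sequences $(m_{in})$, the two formulas should coincide term by term. Compatibility with internal differentials is more subtle: since $A_\infty$ encodes its relations through its operadic differential while $\mathrm{End}_A$ is concentrated vertically in degree zero, the condition $\phi\circ d_{A_\infty} = \delta\circ\phi$ unpacks, via the explicit formula for $\delta$ in Definition~\ref{def:delta}, into exactly the $(A_{uv})$ relations of a $dA_\infty$-algebra for $v\geq 2$. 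The $(A_{u1})$ relations hold automatically from the twisted complex axioms on $(A, d^A)$, which is what makes the choice $\psi(\mu_{i1}) = d_i^A$ consistent.

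The main obstacle is sign bookkeeping. The Koszul signs built into (i) the differential $\delta$ on $\wtc$ (Definition~\ref{def:delta}), (ii) the composition $c$ (Definition~\ref{composition_defi}), (iii) the enriched monoidal structure $\widehat{\otimes}$ (Lemma~\ref{lem: mon structure on wtc}), and (iv) the $dA_\infty$-composition formula together with the $(A_{uv})$ relations, must all align. Although no individual step is conceptually deep --- both sides manifestly package the same data $\{m_{ij}\}$ --- checking that these signs conspire correctly is the real content of the proof. Naturality of the resulting bijection in $A$ then follows by inspection.
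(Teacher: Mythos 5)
Your plan matches the paper's proof: both use that $A_\infty$ is quasi-free, so an operad map $A_\infty\to\End_A$ is determined by elements $M_v=(m_{0v},m_{1v},\dots)\in\wtc(A^{\otimes v},A)_0^{2-v}$ subject only to the differential condition $\delta(M_v)=f(\partial_\infty(\mu_v))$, which after the sign change $m_{ij}=(-1)^{ij}\widetilde m_{ij}$ (and setting $\widetilde m_{i1}=d_i^A$) unpacks to exactly the $(A_{uv})$ relations and hence, by the operadic description from~\cite{LRW}, to an element of $\Hom_{\mathrm{vbOp},d^A}(dA_\infty,\End_A)$. One small correction: compatibility with operadic composition is automatic by freeness on both sides, so your first ``compatibility to check'' is vacuous and Proposition~\ref{prop:composition} (which concerns composition of $dA_\infty$-morphisms, not the structure relations) is not the relevant input; the only real work is the differential/sign bookkeeping you correctly identify.
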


\begin{proof}
Let $f:A_\infty \to \End_A$ be a map of operads in $\vbic$.  Since $A_\infty$ is quasi-free, this is equivalent to maps in $\vbic$
\[(A_\infty(v),\partial_\infty)\to (\End_A(v),\delta)\]
 for each $v\geq 1$, which are determined by elements   $M_v:=f(\mu_v)\in \End_A(v)$ for $v\geq 2$ of bidegree $(0, 2-v)$ such that
\begin{equation}\label{relation_A_tc}
\delta (M_v) = f(\partial_\infty(\mu_v)).
\end{equation}
Moreover, $M_v:=(  m_{0v},   m_{1v}, \ldots)$ where $  m_{uv}:=(M_v)_u: A^{\otimes v} \to A$ is a map of bidegree $(-u, 2-u-v)$.
We first compute the left-hand side of (\ref{relation_A_tc}). Since $\delta(M_v)=c(d^A, M_v)-(-1)^v c(M_v, d^{A^{\otimes v}})$, we have
\begin{alignat*}{2}
(\delta (M_v))_u & = \sum_{{\substack{
u=i+p
}}}  (-1)^{iv} d_i^A (M_v)_p -(-1)^v \sum_{{\substack{
u=i+p\\
v=1+r+t 
}}} (-1)^i (M_v)_i (1^{\widehat{\otimes} r}\widehat{\otimes} d^A_p\widehat{\otimes} 1^{\widehat{\otimes} t})
\end{alignat*}
and 
\begin{alignat*}{2}
f(\partial_\infty(\mu_v))_u &=
-\sum_{ \substack{v=r+q+t\\ j=r+1+t\\ j,q>1}}(-1)^{rq+t} \left( c(M_j, (1^{\widehat{\otimes} r} \widehat{\otimes} M_q \widehat{\otimes} 1^{\widehat{\otimes} t}))\right)_u\\
& =
-\sum_{ \substack{i+p=u\\v=r+q+t\\ j=r+1+t\\ j,q>1}} (-1)^{rq+t+iq} (M_j)_i(1^{\otimes r} \otimes (M_q)_p \otimes 1^{\otimes t})\\
&= 
-\sum_{ \substack{i+p=u\\v=r+q+t\\ j=r+1+t\\ j,q>1}} (-1)^{rq+t+iq} \widetilde m_{ij}(1^{\otimes r} \otimes \widetilde m_{pq} \otimes 1^{\otimes t}).\\
\end{alignat*}
Then by setting the notation $\widetilde m_{i1}=d_i^A$, the relation 
\[\delta (M_n) - f(\partial_\infty(\mu_n))=0\]
gives us the relations
\begin{equation*}
\sum_
{\substack{
u=i+p\\
v=v+q+t\\
j=1+r+t
}}
(-1)^{rq+t+iq}\widetilde m_{ij}({1}^{\otimes r}\otimes \widetilde m_{pq}\otimes {1}^{\otimes t})=0.
\end{equation*}
By setting $m_{ij}=(-1)^{ij} \widetilde m_{ij}$ one obtains the relation
 
\begin{equation}
\sum_
{\substack{
u=i+p\\
v=v+q+t\\
j=1+r+t
}}
(-1)^{rq+t+pj} m_{ij}({1}^{\otimes r}\otimes  m_{pq}\otimes {1}^{\otimes t})=0 \tag{$A_{uv}$}
\end{equation}
which by \cite{LRW} is equivalent to giving a map 
$dA_\infty \to \mathrm{End}_A$
of operads in $\vbic$.
\end{proof}

\begin{rmk}
\label{Ainftwoways}
Note that as in the case of $A_\infty$-algebras in $\cpx$ we have two equivalent descriptions of $A_\infty$-algebras in $\tc$. 
\begin{enumerate}
\item A twisted  complex $(A,d^A)$ together with a morphism $A_\infty \to \End_A$ of operads in $\vbic$, 
which is determined by a family of elements $M_i^A\in \wtc(A^{\otimes i}, A)_0^{2-i}$ for $i\geq 2$ for which the $(A'_{0i})$ relations hold,
where the composition is the one prescribed by the composition morphisms of $\wtc$.
\item A bigraded module $A$ together with a family of elements $M_i\in \wbimod(A^{\otimes i}, A)_0^{2-i}$ for $i\geq 1$ for which the $(A_{0i})$ relations hold, where the composition is the one prescribed by the composition morphisms of $\wbimod$.
\end{enumerate}
Here $(A'_{0i})$ and $(A_{0i})$ are the $A_\infty$ relations for $i\geq 2$ or $i\geq 1$ respectively.  Since the composition morphism in $\wbimod$ is induced from the one in $\wtc$ by forgetting the differential, these two presentations are equivalent.
\end{rmk}

We now consider infinity morphisms and composition.

\begin{defi}\label{def:aintc}
Let $A, B, C$ be twisted  complexes and $(A,M_i^A)$, $(B,M_i^B)$ and $(C,M_i^C)$ be $A_\infty$-algebra structures on them. An \emph{$A_\infty$-morphism in twisted  complexes $F: (B,M_i^B) \to (C,M_i^C)$ } is a family of elements $F:=\{F_j\in \wtc(B^{\otimes j},  C)_0^{1-j}\}_{j\geq 1}$ for which the $A_\infty$ relations hold, i.e., 
\begin{equation}
\sum\limits_{\substack{v=r+q+t \\ j=1+r+t}} (-1)^{rq+t}
c(F_j, (1^{\widehat{\otimes} r} \widehat{\otimes} M^B_{q} \widehat{\otimes} 1^{\widehat{\otimes} t})) = \\
\sum\limits_{\substack{v=q_1+\cdots+q_j }} (-1)^{\sigma} c(M^{C}_{j},(F_{q_1}\widehat{\otimes}\cdots\widehat{\otimes} F_{q_j})),
\tag{$B_{0v}$}\end{equation}
where $\sigma=\sum\limits_{k=1}^{j-1}q_k(j+k)+q_k(\sum\limits_{s=k+1}^j q_s)$ and $c$ is described in Definition \ref{composition_defi}. 

Let $F: (B,M_i^B) \to (C,M_i^C)$ and $G: (A,M_i^A) \to (B,M_i^B)$  be $A_\infty$-morphisms in twisted complexes.  Their composite is the  $A_\infty$-morphism in twisted complexes $F\circ G:(A,M_i^A) \to (C,M_i^C)$ given by 
\begin{equation}
(F\circ G)_v := \\
\sum\limits_{\substack{v=q_1+\cdots+q_j }} (-1)^{\sigma} 
c(F_{j}, (G_{q_1}\widehat{\otimes}\cdots\widehat{\otimes} G_{q_j})).
\tag{$C_{0v}$}\end{equation}

The \emph{category of $A_\infty$-algebras in twisted complexes}, denoted $\Ainfintc$,
is the category with objects $A_\infty$-algebras in twisted complexes 
and whose morphisms are $A_\infty$-morphisms in twisted complexes.
\end{defi}

\begin{teo} \label{dA_Atc} The construction above extends to a functor 
$\Psi: \Ainfintc \to \dAinf{R}$
which is an isomorphism of categories.

\end{teo}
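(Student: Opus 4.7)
The plan is to define $\Psi$ explicitly on objects and morphisms and construct its inverse by running the same recipe backwards. On objects, I apply Proposition~\ref{dA_and_A_tc} together with Remark~\ref{Ainftwoways}: an $A_\infty$-algebra in twisted complexes consists of a twisted complex $(A,d^A)$ together with operations $M_j^A \in \wtc(A^{\otimes j}, A)_0^{2-j}$ for $j \geq 2$ satisfying the enriched $A_\infty$ relations. Unpacking $M_j^A$ into its components $(M_j^A)_i : A^{\otimes j} \to A$ of bidegree $(-i, 2-i-j)$, I set
\[
\Psi(A)_{ij} :=
\begin{cases} d_i^A & \text{if } j=1,\\[2pt] (-1)^{ij} (M_j^A)_i & \text{if } j\geq 2, \end{cases}
\]
which produces a family $\{m_{ij}\}$ with the bidegrees required by Definition~\ref{def:dA_obj}. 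The verification that these satisfy the $dA_\infty$ relations $(A_{uv})$ follows the pattern already used in the proof of Proposition~\ref{dA_and_A_tc}: expand the $A_\infty$ relation for $M^A$ using the explicit formulas for $c$ in Definition~\ref{composition_defi} and $\widehat{\otimes}$ in Lemma~\ref{lem: mon structure on wtc}, project onto each horizontal component, and absorb the composition signs into the normalization $(-1)^{ij}$.

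On morphisms, I take an $A_\infty$-morphism in twisted complexes $F=\{F_j\}_{j\geq 1}$ with $F_j\in\wtc(B^{\otimes j},C)_0^{1-j}$ and set $\Psi(F)_{ij} := (-1)^{ij} (F_j)_i$, giving a family of bidegree $(-i,1-i-j)$. To check the $dA_\infty$-morphism relations $(B_{uv})$, I will expand both sides of the enriched relation $(B_{0v})$ by unpacking $c$ and $\widehat{\otimes}$; the Koszul rule is already baked into $\widehat{\otimes}$ in $\wtc$, and the extra composition signs of $c$ together with the normalization $(-1)^{ij}$ combine so that, reading off each horizontal component, $(B_{0v})$ produces exactly the family of relations $(B_{uv})$. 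The analogous expansion of the composition formula $(C_{0v})$ in Definition~\ref{def:aintc} reproduces the formula of Proposition~\ref{prop:composition}. The identity morphism is preserved tautologically, since the identity of $(A,d^A)$ as an $A_\infty$-algebra in $\wtc$ is $F_1=1_A$, $F_{\geq 2}=0$, which maps to the strict identity of $\Psi(A)$. Hence $\Psi$ is a functor.

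To show that $\Psi$ is an isomorphism of categories, I construct an inverse by the same formulas read in the other direction: given a $dA_\infty$-algebra $(A,m_{ij})$, the maps $d_i^A := m_{i1}$ define a twisted complex structure on $A$ by the $(A_{u1})$ relations, and setting $(M_j^A)_i := (-1)^{ij} m_{ij}$ for $j\geq 2$ assembles elements $M_j^A \in \wtc(A^{\otimes j},A)_0^{2-j}$ satisfying the enriched $A_\infty$ relations; the morphism-level construction is analogous. Since the same sign calculation runs in either direction, the two constructions are mutually inverse on objects and on hom-sets, yielding an isomorphism of categories.

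The main obstacle is the combinatorial sign bookkeeping needed to translate the enriched relations $(B_{0v})$ and $(C_{0v})$ into the unenriched ones $(B_{uv})$ and Proposition~\ref{prop:composition}. Signs arise from three superimposed sources: the composition morphism $c$ (contributing $(-1)^{i|g|}$), the enriched monoidal pairing $\widehat{\otimes}$ (contributing $(-1)^{ij}$), and the Koszul rule on bigraded modules via the scalar product $\langle -,- \rangle$. The proof of Proposition~\ref{dA_and_A_tc} already demonstrates that the normalization $m_{ij} := (-1)^{ij} \widetilde m_{ij}$ resolves the discrepancy for the structure maps; the expectation is that applying this same normalization component-by-component to the $F_j$ absorbs all the leftover signs in the morphism and composition relations as well.
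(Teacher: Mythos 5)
Your proposal follows essentially the same route as the paper: $\Psi$ is defined on objects via Proposition~\ref{dA_and_A_tc}, on morphisms by the component-wise normalization $f_{ij}=(-1)^{ij}\widetilde f_{ij}$, the relations $(B_{0v})$ and $(C_{0v})$ are expanded and matched against $(B_{uv})$ and Proposition~\ref{prop:composition}, and the inverse is obtained by running the same formulas backwards. The only difference is that you assert the sign bookkeeping works out while the paper carries it out explicitly (including an auxiliary global factor $(-1)^{u(v+1)}=(-1)^{(i+p)(j+q)}$ multiplying the whole relation before the substitution), but this is exactly the computation you identify as the remaining obstacle, so the approach is the same.
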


\begin{proof}
On objects $\Psi(A,M_i^A)=(A, m_{ij}^A)$ takes an $A_\infty$-algebra in twisted  complexes and associates to it its corresponding $dA_\infty$-algebra as described in Proposition \ref{dA_and_A_tc}.  

On morphisms, consider $F:(A,M_i^A) \to (B, M_i^B)$, a morphism in $A_\infty \text{-algs in } \tc$ which is given by $F:=\{F_j\in \wtc(A^{\otimes j},  B)_0^{1-j}\}_{j\geq 1}$, where $F_j:=(\widetilde f_{0j}, \widetilde f_{1j}, \widetilde f_{2j}, \ldots)$. 
The relations $(B_{0v})$ translate to
\begin{equation}
\sum\limits_{\substack{ u=i+p\\ v=r+q+t \\ j=1+r+t}} (-1)^{\sigma_l}
\widetilde f_{ij} (1^{\otimes r} \otimes \widetilde m^A_{pq} \otimes 1^{\otimes t}) = \\
\sum\limits_{\substack{u=i+p_1+\cdots+p_j\\ v=q_1+\cdots+q_j }} (-1)^{\sigma_r} \widetilde m^{B}_{ij} (\widetilde f_{p_1q_1}\otimes\cdots\otimes \widetilde f_{p_jq_j}).
\tag{$\widetilde{B}_{uv}$}\end{equation}
Multiplying the equation by $(-1)^{(i+p)(j+q)}=(-1)^{u(v+1)}$ and setting $f_{ij}=(-1)^{ij}\widetilde f_{ij}$ and  $m_{pq}=(-1)^{pq}\widetilde m_{pq}$
one obtains the equation
\begin{equation*}
\sum\limits_{\substack{ u=i+p\\ v=r+q+t \\ j=1+r+t}} (-1)^{\widetilde\sigma_l}
  f_{ij} (1^{\otimes r} \otimes  m^A_{pq} \otimes 1^{\otimes t}) = \\
\sum\limits_{\substack{u=i+p_1+\cdots+p_j\\ v=q_1+\cdots+q_j }} (-1)^{ \widetilde \sigma_r}  m^{B}_{ij} ( f_{p_1q_1}\otimes\cdots\otimes  f_{p_jq_j}).
\end{equation*}

Let us compute the signs modulo 2:
\[\widetilde\sigma_l=rq+t+(i+p)(j+q)+ij+pq+iq=rq+t+pj,\]
\begin{alignat*}{2}
\widetilde\sigma_r=&\sum\limits_{k=1}^{j-1}q_k(j+k)+q_k(\sum\limits_{s=k+1}^j q_s)+u+uv+ij+\sum\limits_{k=1}^j p_kq_k+i(\sum\limits_{k=1}^j (q_k+1))+
\sum\limits_{k=1}^j(1+q_k)(\sum\limits_{s=1}^{k-1}p_s)\\
=&\sum\limits_{k=1}^{j-1}q_k(j+k)+q_k(\sum\limits_{s=k+1}^j q_s)+u+uv+iv+\sum\limits_{k=1}^j q_k(\sum\limits_{s=1}^{k}p_s)+\sum\limits_{s=1}^{j-1}p_s(\sum\limits_{k=s+1}^j1)\\
=&u+\sum\limits_{k=1}^{j-1}(p_k+q_k)(j+k)+\sum\limits_{k=1}^{j-1}q_k(\sum\limits_{s=k+1}^j p_s+q_s).
\end{alignat*}
This gives exactly the relation defining 
morphisms of $dA_\infty$-algebras:
\begin{equation}
\sum\limits_{\substack{ u=i+p, v=r+q+t \\ j=1+r+t}} (-1)^{rq+t+pj}
f_{ij} (1^{\otimes r} \otimes m^A_{pq} \otimes 1^{\otimes t}) = \\
\sum\limits_{\substack{ u=i+p_1+\cdots+p_j,\\ v=q_1+\cdots+q_j }} (-1)^{\sigma} m^{B}_{ij}(f_{p_1q_1}\otimes\cdots\otimes f_{p_jq_j}),
\tag{$B_{uv}$}\end{equation}
where $\sigma=u+\sum\limits_{k=1}^{j-1}(p_k+q_k)(j+k)+q_k(\sum\limits_{s=k+1}^j p_s+q_s)$.
Moreover, any morphism 
between $dA_\infty$-algebras can be constructed in this way.  
Therefore this construction is a bijection on morphisms. 
Finally, this construction is functorial.
The relations $(C_{0v})$ translate to

\begin{equation}
\widetilde{(F\circ G)}_{uv}=
\sum\limits_{\substack{u=i+p_1+\cdots+p_j\\ v=q_1+\cdots+q_j }} (-1)^{\sigma_r} \widetilde{f}_{ij} (\widetilde g_{p_1q_1}\otimes\cdots\otimes \widetilde g_{p_jq_j}).
\tag{$\widetilde{C}_{uv}$}\end{equation}
Setting $f_{ij}=(-1)^{ij}\widetilde f_{ij}$,  $g_{pq}=(-1)^{pq}\widetilde g_{pq}$, and ${(F\circ G)}_{uv}=(-1)^{uv}\widetilde{(F\circ G)}_{uv}$,
one obtains the equation

\begin{equation*}
{(F\circ G)}_{uv}=
\sum\limits_{\substack{u=i+p_1+\cdots+p_j\\ v=q_1+\cdots+q_j }} (-1)^{\widetilde \sigma_r'}  f_{ij} ( g_{p_1q_1}\otimes\cdots\otimes  g_{p_jq_j}),
\end{equation*}
with $\widetilde\sigma_r'=\widetilde\sigma_r+u+uv+uv=\sum\limits_{k=1}^{j-1}(p_k+q_k)(j+k)+\sum\limits_{k=1}^{j-1}q_k(\sum\limits_{s=k+1}^j p_s+q_s)$,
which is the composition of morphisms of $dA_\infty$-algebras.
\end{proof}

\subsection{Derived $A_\infty$-algebras as filtered $A_\infty$-algebras}

From the fact that $\wbtc$ and $\wbsfc$
are isomorphic $\vbic$-enriched monoidal categories, we now reinterpret $dA_\infty$-algebras, in the bounded case,  in terms of split filtered $A_\infty$-algebras. First we recall the definition of
filtered $A_\infty$-algebras and their morphisms. Filtered $A_\infty$-algebras and their associated 
spectral sequences have been previously 
studied in \cite{Lapin2003}, \cite{Lapin2008} and \cite{Her}.

\begin{defi}\label{def:fa_obj}
A \emph{filtered $A_\infty$-algebra} is an $A_\infty$-algebra $(A,m_i)$ together with a filtration
$\{F_pA^i\}_{p\in\ZZ}$ on each $R$-module $A^i$ such that for all $i\geq 1$ and all $p_1,\ldots,p_i\in\ZZ$ and $n_1,\dots, n_i\geq 0$,
\[m_i(F_{p_1}A^{n_1}\otimes \cdots \otimes F_{p_i}A^{n_i})\subseteq F_{p_1+\cdots+p_i}A^{n_1+\cdots+n_i+2-i}.\]
Such a filtered $A_\infty$-algebra is said to be \textit{split} if $A=\Tot(B)$ 
is the total graded module of a bigraded $R$-module $B=\{B_i^j\}$ and $F$ is
the column filtration of $\Tot(B)$.
\end{defi}

\begin{rmk}
\label{filtered_A_infty_operad}
Consider $A_\infty$ as an operad in filtered complexes with the trivial filtration and let $K$ be a filtered complex.
There is a one-to-one correspondence between filtered $A_\infty$-structures on $K$ and morphisms of operads in filtered complexes
$A_\infty \to \mathrm{End}_K$.
To see this, notice that if one forgets the filtrations such a map of operads gives an $A_\infty$ structure on $K$. 
The fact that this is a map of operads in filtered complexes implies that all the $m_i$s respect the filtrations.
\end{rmk} 

\begin{defi}\label{def:fa_mor}
A \emph{morphism of filtered $A_\infty$-algebras} from $(A,m_i,F)$ to $(B,m_i,F)$
is a morphism $f:(A,m_i)\to (B,m_i)$ of $A_\infty$-algebras
such that each map $f_j:A^{\otimes j}\to A$ is compatible with filtrations:
\[f_j(F_{p_1}A^{n_1}\otimes \cdots \otimes F_{p_j}A^{n_j})\subseteq F_{p_1+\cdots+p_j}B^{n_1+\cdots+n_j+1-j},\]
for all $j\geq 1$, $p_1, \ldots p_j\in \ZZ$ and  $n_1, \ldots, n_j\geq 0$.
\end{defi}
Denote by $\fAinf(R)$ the category of filtered $A_\infty$-algebras.
Composition is given as in the unfiltered case (this respects the filtration).
We consider the following full subcategories of $\fAinf(R)$.
\begin{itemize}
\item $\sfAinf(R)$: the subcategory whose objects are split filtered $A_\infty$-algebras.
\item $\bfAinf(R)$: the subcategory whose objects are non-negatively filtered $A_\infty$-algebras.
\item $\bsfAinf(R)$: the subcategory whose objects are split non-negatively filtered $A_\infty$-algebras.
\end{itemize}
That is, we have full embeddings 
\begin{equation}\label{def:sub_cats_fa}
\begin{tikzpicture}
  \node (a) {$\sfAinf(R)$};
  \node[right=2cm of a] (b) {$\fAinf(R)$};
  \node[right=2cm of b] (c) {$\bfAinf(R)$};
   \node[above=1cm of b] (d) {$\bsfAinf(R)$};
  \draw[->]
    (a) edge [right hook->] (b)
    (c) edge [left hook->](b)
    (d) edge [left hook->] (a)
    (d) edge [right hook->] (c);
\end{tikzpicture}
\end{equation}

\begin{lem}
\label{equivalence_enriched_end}
For any twisted complex $A$ there is a morphism of operads
\[\End_A{\longrightarrow} \End_{\Tot(A)},\]
which is an isomorphism of operads if $A$ is bounded.
\end{lem}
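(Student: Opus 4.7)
The plan is to derive the lemma as an almost immediate consequence of the enriched machinery already established. First I would note that by Lemma \ref{lem:tot_lax_over_base}, the totalization functor $\Tot:\tc\to\fc$ is a lax monoidal functor over $\vbic$. Proposition \ref{prop:functor_induces_operad_map}, applied with $F=\Tot$ and $X=A$, then produces directly the required operad morphism $\End_A\to\End_{\Tot(A)}$ in $\vbic$. Unpacking the construction in the proof of that proposition, the arity-$n$ component is the composite
\[
\wtc(A^{\otimes n},A)\xrightarrow{\wtot_{A^{\otimes n},A}}\wfc(\Tot(A^{\otimes n}),\Tot(A))\xrightarrow{\nu^{*}}\wfc(\Tot(A)^{\otimes n},\Tot(A)),
\]
where $\nu^{*}$ is precomposition with the lax monoidal structure map $\nu:\Tot(A)^{\otimes n}\to\Tot(A^{\otimes n})$.

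For the bounded case, I would argue that each of the two factors above is an isomorphism. Since $A$ is bounded, so is $A^{\otimes n}$, hence $\Tot(A^{\otimes n})$ and $\Tot(A)$ both lie in $\wsfc$. By Theorem \ref{prop:weirdtot}, the enriched functor $\wtot$ restricts to an isomorphism of $\vbic$-enriched categories from $\wtc$ onto $\wsfc$, so the first factor $\wtot_{A^{\otimes n},A}$ is an isomorphism of vertical bicomplexes. For the second factor, Lemma \ref{lem:tot_lax_over_base} (which itself uses Proposition \ref{prop:lax}) asserts that $\Tot$ becomes strong monoidal over $\vbic$ in the bounded case, so $\nu$ is a natural isomorphism and hence $\nu^{*}$ is an isomorphism as well. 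Therefore the composite, which is an operad map by the general argument, is an isomorphism at every arity.

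I do not anticipate any real obstacle in this argument; the proof is essentially a bookkeeping exercise that assembles Lemma \ref{lem:tot_lax_over_base}, Theorem \ref{prop:weirdtot}, and Proposition \ref{prop:functor_induces_operad_map}. The one point requiring care is the structure map $\nu_{\Tot}$ of Lemma \ref{lem:tot_lax_over_base}, which is defined as the composite of the inclusion $\iota:\Tot_c\Rightarrow\Tot$ with the lax monoidal transformation $\mu$ of Proposition \ref{prop:lax}; in the bounded setting both of these become natural isomorphisms, so $\nu$ does too, which is what makes the second factor invertible.
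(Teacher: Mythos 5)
Your proposal is correct and follows essentially the same route as the paper: both obtain the operad morphism from Proposition~\ref{prop:functor_induces_operad_map} applied to the lax monoidal functor $\Tot$, identify the same arity-$n$ composite, and in the bounded case invoke Theorem~\ref{prop:weirdtot} for the first factor and the strong monoidality of the (enriched) totalization for the second. The only cosmetic difference is that the paper cites Proposition~\ref{prop:lax_enriched} for the second factor, while you trace it back to Lemma~\ref{lem:tot_lax_over_base} and Proposition~\ref{prop:lax}, which is the same underlying fact.
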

\begin{proof}

The existence of the morphism of operads follows directly from Proposition \ref{prop:functor_induces_operad_map} and in this case it is given in arity $n$ by the composite 
\begin{align*}
\End_A(n):=\wtc(A^{\otimes n},A)
\stackrel{\wtot_{A^{\otimes n},A}}{\xrightarrow{\hspace*{1.5cm}}}\wfc(\Tot(A^{\otimes n}),\Tot(A)) 
\longrightarrow
&\wfc(\Tot(A)^{\otimes n},\Tot(A))\\
&=\End_{\Tot(A)}(n).
\end{align*} 
In the bounded case the first map is an isomorphism by Theorem \ref{prop:weirdtot} and the second is an isomorphism by Proposition \ref{prop:lax_enriched}.
\end{proof}

\begin{prop}\label{dA_and_A_fc}
Let $(A, d^A)\in\tc^{b}$ be an $(\NN, \ZZ)$-graded twisted  complex and $A$ its underlying bigraded module.
There is a one-to-one correspondence between filtered $A_\infty$-algebra structures on $\Tot(A)$ and $dA_\infty$-algebra structures on $A$ which respect the twisted complex structure of $A$.  This bijection is induced by a one-to-one correspondence between filtered $A_\infty$-algebra structures on $\Tot(A)$ and $A_\infty$-algebra structures on $(A, d^A)$.  
More precisely we have natural bijections
	\begin{align*}
		\Hom_{\mathrm{vbOp},d^A}(dA_\infty, \mathrm{End}_A)&\cong
			\Hom_{\mathrm{vbOp}}(A_\infty, \End_A)\\
	&\cong\Hom_{\mathrm{vbOp}}(A_\infty, \End_{\Tot(A)})\\
	&\cong\Hom_{\mathrm{fCOp}}(A_\infty, \mathrm{End}_{\Tot(A)}),
\end{align*}
where $\mathrm{vbOp}$ and $\mathrm{fCOp}$ denote the categories of operads in $\vbic$ and $\fc$ respectively, and $\Hom_{\mathrm{vbOp},d{^A}}$ denotes the subset of morphisms which send $\mu_{i1}$ to $d_i^A$.  We view $A_\infty$ as an operad in $\vbic$ sitting in horizontal degree zero or as an operad in filtered complexes with trivial filtration.  
\end{prop}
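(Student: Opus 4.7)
The plan is to establish each of the three bijections in the chain independently, invoking the earlier results wherever possible and isolating the genuinely new content in the final bijection.

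The first bijection $\Hom_{\mathrm{vbOp},d^A}(dA_\infty, \End_A) \cong \Hom_{\mathrm{vbOp}}(A_\infty, \End_A)$ is essentially the content of Proposition~\ref{dA_and_A_tc}, which matched the defining relations $(A_{uv})$ of $dA_\infty$-structures with the $A_\infty$-relations on $\End_A$ in $\vbic$, subject to the constraint that $\mu_{i1}$ is sent to $d_i^A$. I would simply recall that proof's correspondence and note that the remaining higher structure maps $M_v$ correspond to the $m_{*v}$ with $v \geq 2$.

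The second bijection $\Hom_{\mathrm{vbOp}}(A_\infty, \End_A) \cong \Hom_{\mathrm{vbOp}}(A_\infty, \End_{\Tot(A)})$ follows immediately from Lemma~\ref{equivalence_enriched_end}: because $A$ is bounded, the canonical morphism $\End_A \to \End_{\Tot(A)}$ is an isomorphism of operads in $\vbic$, so post-composition yields a bijection of operad morphism sets.

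For the third bijection $\Hom_{\mathrm{vbOp}}(A_\infty, \End_{\Tot(A)}) \cong \Hom_{\mathrm{fCOp}}(A_\infty, \End_{\Tot(A)})$, the key observation is that both sides are parameterized by the same underlying data. Since $A_\infty$ as an operad in $\vbic$ sits in horizontal degree zero, a morphism of $\vbic$-operads $A_\infty \to \End_{\Tot(A)}$ is determined by a family of bidegree-$(0, 2-n)$ elements $M_n \in \wfc(\Tot(A)^{\otimes n}, \Tot(A))$. Unpacking Definition~\ref{def:wfm}, a bidegree-$(0, v)$ element of $\wfc(K, L)$ is exactly a morphism of filtered modules $K \to L$ of degree $v$; so each $M_n$ is a filtration-preserving map of degree $2-n$. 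On the other hand, a morphism of operads $A_\infty \to \End_{\Tot(A)}$ in $\fc$, with $A_\infty$ equipped with the trivial filtration, must send each generator $\mu_n$ into $F_0\End_{\Tot(A)}(n)^{2-n}$, which by the standard description of the internal hom in $\fc$ is precisely the space of filtration-preserving degree-$(2-n)$ maps. These two descriptions pick out the same elements.

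The main obstacle is verifying that the operad structures agree under this identification, so that the $A_\infty$-relations transcribe identically in both settings. Concretely, one must check that the operadic composition in the $\vbic$-enriched $\End_{\Tot(A)}$ restricts, on the horizontal degree zero part, to the classical composition in the endomorphism operad of $\Tot(A)$ in $\fc$. Inspecting Definition~\ref{def:wfm}, the composition in $\wfm$ carries the sign $(-1)^{u|g|}$ which vanishes whenever the horizontal degree $u$ of the first argument is zero, so the two composition laws coincide on filtration-preserving maps; the unit morphisms visibly pick out the same identity element. Once these compatibilities are in place, the $A_\infty$-relations encoded by the cobar differential of $\As^{\antishriek}$ translate word-for-word under either operadic interpretation, yielding the final bijection and completing the chain.
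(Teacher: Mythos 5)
Your proposal is correct and follows essentially the same route as the paper: the first two bijections are quoted from Proposition~\ref{dA_and_A_tc} and Lemma~\ref{equivalence_enriched_end}, and the third is obtained by observing that the generators land in horizontal degree zero, hence in filtration-preserving maps, and that the enriched and filtered operad structures on $\End_{\Tot(A)}$ agree there. Your explicit check that the sign $(-1)^{u|g|}$ in the enriched composition is trivial when $u=0$ is a detail the paper leaves implicit, but it is the same argument.
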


\begin{proof}

The first isomorphism holds by Proposition \ref{dA_and_A_tc}.  The second isomorphism follows directly from Lemma \ref{equivalence_enriched_end}.  Finally, to see the third isomorphism let $f:A_\infty \to \End_{\Tot(A)}$ be a map of operads in $\vbic^{b}$.  Again, since $A_\infty$ is quasi-free, this is equivalent to maps in $\vbic^{b}$
\[(A_\infty(n),\partial_\infty)\to (\End_{\Tot(A)}(n),\delta)\] 
which are determined by elements $M_n:=f(\mu_n)\in\End_{\Tot(A)}(n)$ of bidegree $(0,2-n)$ such that
\[\delta (M_n) = f(\partial_\infty(\mu_n)).\]
Since $A$ is $(\NN,\ZZ)$-graded, $\Tot$ is symmetric monoidal, and thus we have that
\[\delta M_n =  c(d^{\Tot(A)}, M_n) +(-1)^{2-n} c(M_n, d ^{\Tot(A)^{\otimes n}}).\]
So these maps give the complex $\Tot(A)$ the structure of an $A_\infty$-algebra.
Moreover, all of the $M_n$s respect the filtration since they have horizontal degree zero. 
Therefore, the map $f$ gives $\Tot(A)$ the structure of a split filtered $A_\infty$-algebra and
it is clear that any filtered $A_\infty$-algebra structure on $\Tot(A)$ can be described by such $f$.
\end{proof}

This construction extends to infinity morphisms. 

\begin{teo}
\label{dA-filtered-bounded-tc}
The totalization functor extends to a functor 
\[\Phi: \Ainfintc \to \fAinf(R) \]
which in the bounded case restricts to an isomorphism between the categories of bounded $A_\infty$-algebras in twisted  complexes and split non-negatively filtered $A_\infty$-algebras.\end{teo}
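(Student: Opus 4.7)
The plan is to define $\Phi$ by combining the enriched totalization functor of Theorem~\ref{prop:weirdtot}, the operad morphism of Lemma~\ref{equivalence_enriched_end}, and the lax enriched monoidal structure of Proposition~\ref{prop:lax_enriched}, and then to show that in the bounded case everything becomes an isomorphism. On objects, an $A_\infty$-algebra in twisted complexes is a twisted complex $(A,d^A)$ together with a morphism of operads $A_\infty \to \End_A$ in $\vbic$, i.e., a family $M_i^A \in \wtc(A^{\otimes i}, A)_0^{2-i}$ satisfying the $A_\infty$-relations. I would set $\Phi(A, M_i^A) := (\Tot(A), m_i)$, where the filtered $A_\infty$-structure is obtained by composing the operad map $A_\infty \to \End_A$ with the operad morphism $\End_A \to \End_{\Tot(A)}$ of Lemma~\ref{equivalence_enriched_end}. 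Concretely, $m_i = \wtot(M_i^A) \circ \mu^{(i)}$, where $\mu^{(i)}\colon (\Tot A)^{\otimes i} \to \Tot(A^{\otimes i})$ iterates the lax monoidal structure map of Proposition~\ref{prop:lax}; by Remark~\ref{filtered_A_infty_operad} this produces a filtered $A_\infty$-algebra.

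On morphisms, given an $A_\infty$-morphism $F = \{F_j \in \wtc(B^{\otimes j}, C)_0^{1-j}\}_{j\geq 1}$ in twisted complexes, I would define $\Phi(F)_j := \wtot(F_j) \circ \mu^{(j)}$. Each $\Phi(F)_j$ sits in $\wfc$-bidegree $(0,1-j)$, which is precisely a degree $1-j$ filtration-preserving morphism. The $A_\infty$-morphism relations $(B_{0v})$ for $\Phi(F)$ transfer from the corresponding relations for $F$ because $\wtot$ is a lax monoidal enriched functor, and hence is compatible, via the natural transformations $\mu$, with the enriched composition $c$ and the enriched tensor product $\widehat{\otimes}$ appearing in Definition~\ref{def:aintc}. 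Functoriality $\Phi(F \circ G) = \Phi(F) \circ \Phi(G)$ follows, via the composition formula $(C_{0v})$, from the same compatibility together with the coherence of $\mu$ and associativity of composition in $\wfc$.

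For the bounded case, I would construct an explicit inverse. By Theorem~\ref{prop:weirdtot} and Proposition~\ref{prop:lax_enriched}, both $\wtot\colon \wbtc \to \wbsfc$ and each iterated structure map $\mu^{(i)}$ are isomorphisms. Hence $\Phi$ can be inverted componentwise: given a split non-negatively filtered $A_\infty$-algebra with structure maps $m_i$, set $M_i^A := \wtot^{-1}(m_i \circ (\mu^{(i)})^{-1})$, and apply the analogous procedure to morphisms. A direct check using Proposition~\ref{dA_and_A_fc} (applied objectwise) then shows this assignment lands in bounded $A_\infty$-algebras in twisted complexes and is inverse to $\Phi$. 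The main obstacle I foresee is the sign bookkeeping needed to verify that the lax-monoidal coherence maps thread correctly through the relations $(B_{0v})$ and $(C_{0v})$ on both sides; in the bounded setting this is alleviated by the fact that $\mu^{(i)}$ is an isomorphism, so that no higher corrections intervene and the bijection of structures on a single algebra can be promoted to an isomorphism of categories.
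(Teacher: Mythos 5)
Your proposal follows essentially the same route as the paper: the structure maps are defined by $m_i = \Tot(M_i^A)\circ\mu^{(i)}$ (the paper writes $c(\Tot(M_i),\mu_i)$), the $A_\infty$-relations and compatibility with composition are transferred using the naturality and coherence of the lax monoidal transformation $\mu$, and the bounded case is handled by observing that $\wtot$ and $\mu^{(i)}$ become isomorphisms there, together with Proposition~\ref{dA_and_A_fc}. The only difference is that the paper carries out the element-level sign verification of the relations $(A_{0v})$ explicitly, whereas you defer it to the coherence of $\mu$; this is a matter of presentation, not of substance.
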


Before proving this result we make the following remark. 

\begin{rmk}
Let $\Cc$ and $\Dd$ be monoidal categories over $\vcat$, let $\Pp$ be an operad in $\vcat$ and let $F:\Cc\to\Dd$ be a symmetric monoidal functor over $\vcat$.  
In \cite[Observation 3.2.14]{Fresse}, Fresse shows that $F$ extends to a functor 
\[\widetilde{F}:\Pp\mbox{-}\mathrm{Alg}(\Cc)\to \Pp\mbox{-}\mathrm{Alg}(\Dd)\] 
from the category of $\Pp$-algebras in $\Cc$ with $\Pp$-algebra morphisms to the category of $\Pp$-algebras in $\Dd$ with $\Pp$-algebra morphisms.

His methods extend to the case where $F$ is lax monoidal over $\vcat$.  
Let $\vcat=\vbic$, $\Pp=\mathrm{A}_\infty$, $\Cc=\tc$, $\Dd=\fc$ and $F=\Tot$. 
Then, the totalization functor extends to a functor 
\[\widetilde{\Tot}:\mathrm{A}_\infty\mbox{-}\mathrm{Alg}(\tc)\to \mathrm{A}_\infty\mbox{-}\mathrm{Alg}(\fc)\]
between the categories of $\mathrm{A}_\infty$-algebras in $\tc$ with strict morphisms to the category of $\mathrm{A}_\infty$-algebras in $\fc$ with strict morphisms.
Our result implies that this functor extends to their respective categories with infinity morphisms. 
\end{rmk}

\begin{proof}[Proof of Theorem \ref{dA-filtered-bounded-tc}]
The functor on objects is given as in Proposition \ref{equivalence_enriched_end}.  Here we describe this explicitly on elements.  Let $(A,M_i)$ be an $A_\infty$-algebra in $\wtc$, that is we have $A\in\tc$ and $M_i\in \wtc(A^{\otimes i},A)_0^{2-i}$ satisfying the $A_\infty$-relations
\begin{equation}
\sum_
{\substack{
v=v+q+t\\
j=1+r+t
}}
(-1)^{rq+t}c(M_i,{1}^{\widehat{\otimes} r}\widehat{\otimes}  M_q\widehat{\otimes} {1}^{\widehat{\otimes} t})=0. \tag{$A_{0v}$}
\end{equation}
Following the notation of Proposition \ref{prop:lax_enriched}, let 
\[\mu_i:=\mu_{A,\ldots,A}: \Tot(A)^{\otimes i}\rightarrow \Tot(A^{\otimes i}),\]
\[\mu_{r,q,t}:=\mu_{A,\ldots,A,A^{\otimes q},A,\ldots,A}:\Tot(A)^{\otimes r}\otimes\Tot(A^{\otimes q})\otimes\Tot(A)^{\otimes t}\rightarrow \Tot(A^{\otimes r+q+t})\]
and define 
\[m_i:=c(\Tot(M_i),\mu_i):\Tot(A)^{\otimes i}\rightarrow \Tot(A).\]
Note first that for any $i$ the map $m_i$ has horizontal degree $0$ thus it respects the filtrations.  Now, we compute 
\begin{align*}
\sum_
{\substack{
v=v+q+t\\
j=1+r+t
}}
(-1)^{rq+t}&m_i ({1}^{{\otimes} r}{\otimes}  m_q {\otimes} {1}^{{\otimes} t}) \\
&= 
\sum_
{\substack{
v=v+q+t\\
j=1+r+t
}}
(-1)^{rq+t}c(\Tot(M_i),\mu_i)({1}^{\otimes r}{\otimes}  c(\Tot(M_q),\mu_q) {\otimes} {1}^{{\otimes} t})\\
&=
\sum_
{\substack{
v=v+q+t\\
j=1+r+t
}}
(-1)^{rq+t}\Tot(M_i ({1}^{\widehat{\otimes} r}\widehat{\otimes}  M_q\widehat{\otimes} {1}^{\widehat{\otimes} t}))
\mu_{r,q,t}({1}_{\Tot(A)}^{{\widehat{\otimes}} r}{\widehat{\otimes}}\mu_q{\widehat{\otimes}}
{1}_{\Tot(A)}^{{\widehat{\otimes}} t}) 
= 0.
\end{align*}
Here the first equality holds by definition, the second by naturality of $\mu$ and the third because the $M_i$s satisfy the relations $(A_{0v})$.  Thus $(\Tot(A), m_i)$ is a filtered $A_\infty$-algebra.  The same computation gives the result on morphisms and this is stable under the composition of morphisms giving the functor $\Phi$.  Furthermore, when $A\in \btc$ this functor restricts to an isomorphism by Proposition \ref{dA_and_A_fc}.
\end{proof}

\begin{cor}\label{dA-filtered-bounded-sf}
Let $\Tot$ denote the composite
\[\Tot: dA_\infty^b(R)\stackrel{\Psi^{-1}}{\longrightarrow} \bAinfintc(R) \stackrel{\Phi}{\longrightarrow} \bfAinf(R).\]
The functor $\Tot$  restricts to an isomorphism between the category of $(\NN,\ZZ)$-graded $dA_\infty$-algebras and the category of split non-negatively filtered $A_\infty$-algebras.  Furthermore, this functor fits into a commutative diagram of categories
\begin{center}
\begin{tikzpicture}
  \node (a) {$dA_\infty^b(R)$};
  \node[right=2cm of a] (b) {$\btc$};
  \node[below=2cm of a] (c) {$\bfAinf(R)$};
   \node[below=2cm of b] (d) {$\bfc$};
  \draw[->]
    (a) edge node [above]{$U$} (b)
    (b) edge node [right]{$\Tot$}(d)
    (a) edge node [left]{$\Tot$} (c)
    (c) edge node [below]{$U$} (d);
\end{tikzpicture}
\end{center}
where the horizontal arrows are forgetful functors and the vertical arrows are full embeddings.
\end{cor}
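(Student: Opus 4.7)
The plan is to assemble the statement from the main theorems already proved. First, the isomorphism claim follows from composing two isomorphism results. Theorem~\ref{dA_Atc} gives an isomorphism $\Psi: \Ainfintc \to \dAinf{R}$; since $\Psi$ acts as the identity on underlying bigraded modules (modulo the sign convention $m_{ij} = (-1)^{ij}\widetilde{m}_{ij}$), it preserves the $(\NN,\ZZ)$-grading and restricts to an isomorphism $\bAinfintc(R) \cong dA_\infty^b(R)$, so $\Psi^{-1}$ is well-defined on bounded objects. Theorem~\ref{dA-filtered-bounded-tc} then gives that $\Phi$ restricts to an isomorphism $\bAinfintc(R) \cong \bsfAinf(R)$. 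The composite $\Tot := \Phi \circ \Psi^{-1}$ is therefore an isomorphism onto the full subcategory $\bsfAinf(R) \subseteq \bfAinf(R)$, establishing both the first claim and that the left vertical arrow in the diagram is a full embedding; the right vertical arrow is a full embedding with image $\bsfc$ by Theorem~\ref{equiv_twisted_split}.

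Next, I would verify commutativity of the square by tracing through the constructions on an object $(A, m_{ij}) \in dA_\infty^b(R)$. Under $\Psi^{-1}$ this becomes $(A, M_i) \in \bAinfintc(R)$, where the component $M_1$ encodes the underlying twisted complex structure on $A$. Under $\Phi$ this passes to a filtered $A_\infty$-algebra whose underlying filtered complex has differential $m_1 = c(\Tot(M_1), \mu_1) = \Tot(M_1)$, since $\mu_1$ is the identity. On the other side of the square, first applying $U$ produces the underlying twisted complex $(A, m_{i1})$, and then applying $\Tot$ produces a split filtered complex whose differential is given by the formula in Definition~\ref{def:sfc}. Comparing the two filtered complex structures on $\Tot(A)$ using the explicit formula for the extended totalization in Definition~\ref{def:tot_on_weirdmorphism} together with the sign conventions of $\Psi^{-1}$, one checks that the two paths agree, and the same bookkeeping applied to morphisms gives functoriality of the whole square.

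The main obstacle is the commutativity step, which is essentially an exercise in tracking sign conventions: those in $\Psi$ (the $(-1)^{ij}$ rescaling from Proposition~\ref{dA_and_A_tc}), those built into the totalization of a twisted complex (Definition~\ref{def:sfc}), and those in the extension of totalization to morphisms of arbitrary bidegree (Definition~\ref{def:tot_on_weirdmorphism}). All remaining parts of the argument are formal consequences of the two preceding isomorphism theorems, so no substantial new technical input is required beyond careful unwinding of the definitions.
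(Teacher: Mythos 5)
Your proposal is correct and follows the same route as the paper, whose entire proof is the one-line observation that the corollary follows from Theorems~\ref{dA_Atc} and~\ref{dA-filtered-bounded-tc}; you simply make explicit the routine details (that $\Psi$ preserves boundedness, that the composite lands in the full subcategory $\bsfAinf(R)$, and that the square commutes by unwinding the constructions) that the paper leaves implicit.
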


\begin{proof}
This follows directly from Theorems~\ref{dA_Atc} and~\ref{dA-filtered-bounded-tc}. 
 \end{proof}

\section{Derived $A_\infty$-algebras and $r$-homotopy}
\label{sec:der}
The main goal of this section is to study different but equivalent interpretations of the notion of $r$-homotopy 
for derived $A_\infty$-algebras.  We first define $r$-homotopy by constructing a functorial $r$-path object. 
Then, we study some of the properties of $r$-homotopy. Most notably, we show that $0$-homotopy defines 
an equivalence relation and we study the localized category $\dAinf{R}[\Ss_r^{-1}]$.
Finally, we give an operadic interpretation of $r$-homotopy and show that the two notions are equivalent.

\subsection{Twisted dgas and tensor product}
In general, the tensor product of two
$dA_\infty$-algebras does not inherit a natural $dA_\infty$-algebra structure giving
rise to a monoidal structure on $\dAinf{R}$.
The construction works if one of the components is a twisted differential graded algebra, as we show next.

\begin{defi}
A \textit{twisted dga} is a $dA_\infty$-algebra $(A,\mu_{ij})$ whose only non-zero structure morphisms 
are $\mu_{i1}$ for $i\geq 0$ and $\mu_{02}$.
\end{defi}

\begin{lem}\label{muj}
For $(A,\mu_{ij})$ a twisted dga, the following hold.
\begin{enumerate}
\item $\mu_{02}$ is associative.
\item \label{verify} $\mu_{i1}(\mu_{02})=\mu_{02}(1\otimes \mu_{i1})+\mu_{02}(\mu_{i1}\otimes 1)$ for all $i\geq 0$.
\item Let $\mu_n:A^{\otimes n}\to A$ be defined iteratively by
$\mu_n=\mu_{02}(\mu_{n-1}\otimes 1)$, with $\mu_2=\mu_{02}$. Then
\[\mu_{i1}(\mu_n)=\sum_{r+t+1=n}\mu_n(1^{\otimes r}\otimes \mu_{i1}\otimes 1^{\otimes t}) \quad\text{ for all }i\geq 0.\]
\end{enumerate}
\end{lem}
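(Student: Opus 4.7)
The plan is to deduce all three statements directly from the $dA_\infty$-relations $(A_{uv})$ by specializing to the vanishing pattern of a twisted dga, using induction for the last part.

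\textbf{Part (1).} I would apply relation $(A_{03})$. Expanding the sum over all $(i,j,p,q,r,t)$ with $i=p=0$, $j+q=4$, $j=1+r+t$, $j\geq 1$, $q\geq 1$, the non-zero terms are only those whose inner and outer operations lie in $\{\mu_{i1}, \mu_{02}\}$. The cases $(j,q)=(1,3)$ and $(3,1)$ contribute $\mu_{01}(\mu_{03})$ and terms with $\mu_{03}$ inside, all of which vanish because $\mu_{03}=0$. Only $(j,q)=(2,2)$ survives, giving $\mu_{02}(1\otimes \mu_{02}) - \mu_{02}(\mu_{02}\otimes 1)=0$, which is associativity.

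\textbf{Part (2).} I would apply relation $(A_{i,2})$, collecting only the surviving terms. With $i+p=u=i$ and $j+q=3$, $j=1+r+t$, the pairs $(j,q)\in\{(1,2),(2,1),(3,0)\}$ are available. The case $(3,0)$ is excluded since $q\geq 1$. For $(j,q)=(1,2)$, the inner operation $\mu_{p,2}$ is non-zero only for $p=0$, forcing the outer index to be $\mu_{i1}$, producing the single term $\mu_{i1}(\mu_{02})$. For $(j,q)=(2,1)$, the outer operation $\mu_{i',2}$ is non-zero only for $i'=0$, so the two subcases $(r,t)=(0,1),(1,0)$ produce $-\mu_{02}(\mu_{i1}\otimes 1)$ and $-\mu_{02}(1\otimes \mu_{i1})$ after computing the signs $(-1)^{rq+t+pj}$. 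Setting the sum to zero gives the stated Leibniz rule (here I would double-check that the Koszul sign conventions are compatible with the statement as written).

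\textbf{Part (3).} I would induct on $n\geq 2$. The base case $n=2$ is exactly part (2). For the inductive step, write
\[
\mu_{i1}(\mu_n)=\mu_{i1}\mu_{02}(\mu_{n-1}\otimes 1),
\]
apply part (2) to the outermost composition $\mu_{i1}\mu_{02}$, and then invoke the induction hypothesis to expand $\mu_{i1}\mu_{n-1}$ in the first resulting summand. The first summand becomes
\[
\mu_{02}\bigl(\mu_{i1}\mu_{n-1}\otimes 1\bigr)=\sum_{r+t+1=n-1}\mu_{02}\bigl(\mu_{n-1}(1^{\otimes r}\otimes\mu_{i1}\otimes 1^{\otimes t})\otimes 1\bigr)=\sum_{r+t+1=n-1}\mu_n(1^{\otimes r}\otimes \mu_{i1}\otimes 1^{\otimes t+1}),
\]
using the definition $\mu_n=\mu_{02}(\mu_{n-1}\otimes 1)$. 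The second summand $\mu_{02}(1\otimes \mu_{i1})(\mu_{n-1}\otimes 1)=\mu_n(1^{\otimes n-1}\otimes \mu_{i1})$ supplies the missing $t=0$ term, and together they yield the desired sum over $r+t+1=n$.

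The only real obstacle is bookkeeping of signs: in part (2) the factors $(-1)^{rq+t+pj}$ need to be computed carefully, and one must verify that the Koszul signs arising from interchanging $\mu_{i1}$ with identities in the tensor products are consistent so that the induction in part (3) produces a uniformly positive sum, matching the statement. All other steps are pure bookkeeping using the fact that $\mu_{pq}=0$ outside the set $\{(i,1):i\geq 0\}\cup\{(0,2)\}$.
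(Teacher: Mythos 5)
Your proposal is correct and follows essentially the same route as the paper: the paper's own proof simply observes that it suffices to check part (2) and reads it off from relation $(A_{i2})$, exactly as you do, leaving (1) (via $(A_{03})$) and (3) (induction from (2)) to the reader. Your sign computations do work out as stated, since all $\mu_n$ have bidegree $(0,0)$ so no Koszul signs arise in the induction for part (3).
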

\begin{proof}
It suffices to check $(\ref{verify})$. Relation ($A_{i2}$) reads:
\[
-(\mu_{02}(1\otimes \mu_{i1})+\mu_{02}(\mu_{i1}\otimes 1))+\mu_{i1}(\mu_{02})=0.\qedhere
\]
\end{proof}

\begin{prop}\label{P:tensorproduct}
Let $(\Lambda,\mu_{ij})$ be a twisted dga and let $(A,m_{ij})$ be a $dA_\infty$-algebra. 
The bigraded module $\Lambda\otimes A$ is endowed with a $dA_\infty$-algebra structure given by
\[\widehat m_{i1}=\mu_{i1}\otimes 1_A+1_\Lambda\otimes m_{i1}\qquad 
\text{and}\qquad\widehat m_{ij}= (\mu_j\otimes m_{ij})\tau_j \quad\text{ for all }j\geq 2.\]
Here $\tau_j:(\Lambda\otimes A)^{\otimes j}\to \Lambda^{\otimes j}\otimes A^{\otimes j}$
denotes the standard isomorphism given by the symmetric monoidal structure and $\mu_j$ is defined in Lemma $\ref{muj}$.
\end{prop}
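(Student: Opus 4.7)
The plan is to prove the result via the operadic interpretation of derived $A_\infty$-algebras as $A_\infty$-algebras in twisted complexes (Theorem~\ref{dA_Atc}). Under the isomorphism $\Psi$, a twisted dga $\Lambda$ corresponds to an object $(\Lambda,M_i^\Lambda)$ in $\Ainfintc$ whose only higher operation is $M_2^\Lambda$, encoding $\mu_{02}$. Indeed, Lemma~\ref{muj}(1) gives the associativity of $\mu_{02}$, which is relation $(A'_{03})$ for $M_2^\Lambda$, while Lemma~\ref{muj}(2) is exactly the statement that $M_2^\Lambda\in\wtc(\Lambda^{\otimes 2},\Lambda)$, i.e.\ that $\delta(M_2^\Lambda)=0$, which in turn is relation $(A'_{02})$. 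Thus a twisted dga is precisely a (strictly) associative algebra in the $\vbic$-enriched symmetric monoidal category $\wtc$.

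Next, I would construct the desired structure directly in $\wtc$. Using the iterated multiplications $\mu_j$ from Lemma~\ref{muj}(3), define for every $j\geq 2$
\[
	M_j^{\Lambda\otimes A}:=(\mu_j\,\widehat{\otimes}\,M_j^A)\circ \tau_j \in \wtc((\Lambda\otimes A)^{\otimes j},\Lambda\otimes A),
\]
where $\tau_j:(\Lambda\otimes A)^{\otimes j}\to \Lambda^{\otimes j}\otimes A^{\otimes j}$ is the symmetry isomorphism of $\tc$ and $\widehat{\otimes}$ is the enriched monoidal structure of $\wtc$ from Lemma~\ref{lem: mon structure on wtc}. The claim is then that $(\Lambda\otimes A, M_j^{\Lambda\otimes A})$ satisfies the $A_\infty$-relations $(A'_{0v})$ of Remark~\ref{Ainftwoways} in $\wtc$. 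This is the enriched analogue of the standard fact that in any symmetric monoidal (dg) category, the tensor product of a dga with an $A_\infty$-algebra is an $A_\infty$-algebra: the shuffle $\tau_j$ splits each $A_\infty$-relation into a $\Lambda$-part and an $A$-part, with the $\Lambda$-part collapsing to a single iterated product by associativity of $\mu_2$ (Lemma~\ref{muj}(3) supplies the needed compatibilities between $\mu_j$ and the $\mu_{i1}$'s, so the computation is internal to $\wtc$) and the $A$-part being the $A_\infty$-relation for $M_j^A$. Because the symmetric monoidal structure and composition in $\wtc$ obey the Koszul sign rule, the same classical argument applies verbatim.

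To conclude, I would apply $\Psi$ from Theorem~\ref{dA_Atc} to obtain a $dA_\infty$-structure on $\Lambda\otimes A$, and then verify that the resulting components are the $\widehat m_{ij}$ of the statement. For $j=1$ the definition $M_1^{\Lambda\otimes A}=d^{\Lambda\otimes A}$ is the differential of the tensor product of twisted complexes from Section~\ref{sec:twcx}, whose components are $\mu_{i1}\otimes 1_A+1_\Lambda\otimes m_{i1}$; this gives $\widehat m_{i1}$. For $j\geq 2$ the $i$-th component of $M_j^{\Lambda\otimes A}$ is precisely $(\mu_j\otimes m_{ij})\tau_j$ up to the sign twist $\widetilde m=(-1)^{ij}m$ appearing in the proof of Theorem~\ref{dA_Atc}; reading off that sign recovers $\widehat m_{ij}$ exactly.

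The main obstacle will be the bookkeeping of signs, both those arising from the Koszul rule applied to $\tau_j$ and those from the conversion $\widetilde{(-)}=\pm(-)$ in the proof of Theorem~\ref{dA_Atc}. A direct verification of each $(A_{uv})$ relation is also possible and would serve as a sanity check: it splits naturally into the case $v=1$ (which is the twisted complex relation for $\widehat m_{i1}$ and follows from the $(A_{uv})$-relations of $\Lambda$ and $A$ with $v\leq 2$), the case $v=2$ (which combines Lemma~\ref{muj}(2) with the corresponding relation for $A$), and the case $v\geq 3$, which collapses, after applying $\tau_j$, to Lemma~\ref{muj}(3) on the $\Lambda$-factor and the $(A_{uv})$-relations on the $A$-factor.
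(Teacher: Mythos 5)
Your route is genuinely different from the paper's. The paper proves Proposition~\ref{P:tensorproduct} by brute force: it first checks the $v=1$ relations directly (expanding $\sum(-1)^j\widehat m_{i1}\widehat m_{j1}$ into four tensor terms that cancel), then establishes the key identity $\widehat m_{ij}(1^{\otimes r}\otimes \widehat m_{pq}\otimes 1^{\otimes t})=(\mu_{r+q+t}\otimes m_{ij}(1^{\otimes r}\otimes m_{pq}\otimes 1^{\otimes t}))\tau_{r+q+t}$ for $j,q\geq 2$ and runs the full $(A_{uv})$ computation, using Lemma~\ref{muj} to absorb the $\mu_{i1}$-terms. Your plan instead lifts the classical ``dga tensor $A_\infty$'' construction into the enriched category $\wtc$ via Theorem~\ref{dA_Atc} and transports it back. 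This is conceptually attractive --- it explains where the formulas come from and connects the construction to Grandis's path object --- and there is no circularity, since Theorem~\ref{dA_Atc} precedes this proposition.

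However, the step you dispose of with ``the same classical argument applies verbatim'' is precisely where all the content of the proposition lives, and it is not verbatim. The sign conventions in this setting differ from the singly-graded classical one in several compounding ways: the symmetry $\tau$ uses the scalar product $\langle a,b\rangle=a_1b_1+a_2b_2$ of bidegrees rather than the product of total degrees; the enriched composition $c$ carries the extra factor $(-1)^{i|g|}$; the enriched tensor $\widehat\otimes$ carries $(-1)^{ij}$; and the dictionary of Theorem~\ref{dA_Atc} introduces the twist $m_{ij}=(-1)^{ij}\widetilde m_{ij}$. Whether all of these conspire to give exactly the sign-free identity displayed above (and hence the sign-free formula $\widehat m_{ij}=(\mu_j\otimes m_{ij})\tau_j$ with the signs $(-1)^{rq+t+pj}$ of relation $(A_{uv})$) is exactly what has to be verified, and the paper's proof is that verification. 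You acknowledge the sign bookkeeping as ``the main obstacle'' but defer it entirely, so as written the argument has a gap at its central step; to close it you would either have to redo essentially the paper's computation inside $\wtc$, or justify a general transfer principle for associative-monoid-tensor-$A_\infty$ constructions valid in arbitrary $\vbic$-enriched symmetric monoidal categories with these conventions, which the paper does not provide.
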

\begin{proof}For all $n\geq 0$, we have
\[\sum_{i+j=n} (-1)^j \widehat m_{i1}\widehat m_{j1}=\sum_{i+j=n}(-1)^j 
(\mu_{i1}\mu_{j1}\otimes 1+1\otimes m_{i1}m_{j1}+\mu_{i1}\otimes m_{j1}+(-1)^{ij+(1-i)(1-j)} \mu_{j1}\otimes m_{i1})=0.\]

Note that, for $j,q\geq 2$, we have
\[
\widehat m_{ij}(1^{\otimes r}\otimes \widehat m_{pq}\otimes 1^{\otimes t})
	=(\mu_{r+q+t}\otimes m_{ij}(1^{\otimes r} \otimes m_{pq}\otimes 1^{\otimes t}))\tau_{r+q+t}.
\]
Using this, 
for all $u\geq 0$ and all $v \geq 2$, we have
\begin{align*}
	&\sum_{\substack{u=i+p,\ v=j+q-1\\ j=1+r+t, } } 
		(-1)^{rq+t+pj} \widehat m_{ij}(1^{\otimes^r}_{\Lambda\otimes A}\otimes \widehat m_{pq}\otimes 	1^{\otimes^t}_{\Lambda\otimes A})\\
	&=\sum_{\substack{u=i+p,\ v=j+q-1\\ j=1+r+t,} } 
		(-1)^{rq+t+pj}
		(\mu_v\otimes m_{ij})(1^{\otimes r}\otimes m_{pq}\otimes 1^{\otimes t})\tau_v
+\sum_{\substack{u=i+p} }(-1)^{p}(\mu_{i1}\otimes 1_A)
(\mu_v\otimes m_{pv})\tau_v\\
&\qquad\qquad+\sum_{\substack{u=i+p} }(-1)^{v+1+pv}
(\mu_v\otimes m_{iv})\tau_v(1^{\otimes r}_{\Lambda\otimes A}\otimes(\mu_{p1}\otimes 1_A)\otimes 1^{\otimes t}_{\Lambda\otimes A})
\\
&=\sum_{\substack{u=i+p} }(-1)^{p}(\mu_{i1}\mu_v\otimes m_{pv})\tau_v+
\sum_{\substack{u=i+p\\ r+t+1=v} }(-1)^{v+1+pv+ip+(i+v)(1-p)}
(\mu_v(1^{\otimes r}\otimes\mu_{p1}\otimes 1^{\otimes t})\otimes m_{iv})\tau_v\\
&=\sum_{\substack{u=i+p} }\left((-1)^{p}(\mu_{i1}\mu_v)\otimes m_{pv}+\sum_{\substack{v=r+t+1} }
(-1)^{p+1}
\mu_v(1^{\otimes r}\otimes\mu_{i1}\otimes 1^{\otimes t})\otimes m_{pv}\right)\tau_v=0.\qedhere
\end{align*}
\end{proof}

\begin{prop}\label{tensor_functorial}Let $(\Lambda,\mu_{ij})$ be a twisted dga. The above construction gives rise to a functor
$\Lambda\otimes -:\dAinf{R}\to \dAinf{R}$,
sending a morphism $f:A\to B$ of $dA_\infty$-algebras to the morphism
$\widehat{f}:\Lambda\otimes A\to\Lambda\otimes B$ given by
$\widehat f_{i1}=1_\Lambda\otimes f_{i1}$ and $\widehat f_{ij}= (\mu_j\otimes f_{ij})\tau_j$ for all $j\geq 2.$
Furthermore, for a $dA_\infty$-algebra $(A,m_{ij})$,
the construction above gives rise to a functor $-\otimes A$ sending a
strict morphism $f:\Lambda\rightarrow \Lambda'$ to the strict morphism $f\otimes 1_A: \Lambda\otimes A\rightarrow \Lambda'\otimes A$.
\end{prop}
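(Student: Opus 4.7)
My plan is to verify that $\widehat{f}$ satisfies the morphism relations $(B_{uv})$ of Definition~\ref{def:dA_mor} for all $u \geq 0$ and $v \geq 1$, then check functoriality, and finally handle the strict case $- \otimes A$ separately. The overall strategy closely mirrors the proof of Proposition~\ref{P:tensorproduct}, exploiting the same splitting of the tensor-product structure maps $\widehat m_{ij}$.

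First I would handle the case $v=1$. The relation $(B_{u1})$ for $\widehat f$ reduces to the statement that the family $\{\widehat f_{i1}\}_{i\geq 0} = \{1_\Lambda \otimes f_{i1}\}_{i \geq 0}$ is a morphism of twisted complexes from $(\Lambda\otimes A, \widehat m_{i1})$ to $(\Lambda\otimes B, \widehat m_{i1})$. Since the underlying twisted complex structure on each tensor product is $\mu_{i1}\otimes 1 + 1\otimes m_{i1}$, this follows at once from the fact that $\{f_{i1}\}$ is a morphism of twisted complexes from $(A,m_{i1}^A)$ to $(B,m_{i1}^B)$, i.e.\ from the relation $(B_{u1})$ for $f$ itself.

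Next I would handle the case $v \geq 2$. Expanding both sides of $(B_{uv})$ for $\widehat f$ using the definitions of $\widehat m_{ij}$ and $\widehat f_{ij}$, and using the standard naturality of the symmetry isomorphisms $\tau_j$, one separates the summation according to whether the inner $\widehat m_{pq}$ has $q=1$ (a ``differential'' contribution) or $q\geq 2$ (a ``product'' contribution), and analogously for the right-hand side. In the product contributions, the $\Lambda$-factors collapse to a single iterated product $\mu_v$ by the associativity of $\mu_{02}$ (Lemma~\ref{muj}(1)), while the $A$- or $B$-factors assemble into exactly the corresponding terms of $(B_{uv})$ for $f$. The differential contributions not absorbed this way cancel in pairs by means of Lemma~\ref{muj}(3), which expresses $\mu_{i1}\mu_v$ as a sum of the form $\sum \mu_v(1^{\otimes r}\otimes \mu_{i1}\otimes 1^{\otimes t})$. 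Thus the identity $(B_{uv})$ for $\widehat f$ reduces to $(B_{uv})$ for $f$ combined with the $dA_\infty$-relations $(A_{uv})$ in $\Lambda$ and in $A,B$.

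For functoriality of $\Lambda\otimes -$, the identity morphism is preserved by inspection: $\widehat{1_A}_{01} = 1_\Lambda\otimes 1_A$ and all higher components vanish. To check $\widehat{g\circ f} = \widehat g \circ \widehat f$, I would expand both sides by means of Proposition~\ref{prop:composition}. On the right-hand side, each composite $\widehat g_{ir}(\widehat f_{p_1 q_1}\otimes\cdots\otimes \widehat f_{p_r q_r})$ contributes, after rearrangement via $\tau$, an iterated product $\mu_r \circ (\mu_{q_1}\otimes\cdots\otimes \mu_{q_r})$ on the $\Lambda$-factors, which collapses to $\mu_{q_1+\cdots+q_r}$ by the associativity of $\mu_{02}$; this matches precisely the $\Lambda$-factor appearing in $\widehat{(g\circ f)}$. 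The $A,B,C$-factors coincide because both sides reproduce the formula of Proposition~\ref{prop:composition} for $g\circ f$.

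Finally, for the functor $-\otimes A$, a strict morphism $f:\Lambda\to\Lambda'$ of twisted dgas is determined by $f_{01}$ commuting with all $\mu_{i1}$ and with $\mu_{02}$, so $f_{01}\otimes 1_A$ visibly commutes with $\widehat m_{i1} = \mu_{i1}\otimes 1_A + 1_\Lambda\otimes m_{i1}$, and, by induction from compatibility with $\mu_{02}$, with all iterated products $\mu_j$, hence with $\widehat m_{ij} = (\mu_j\otimes m_{ij})\tau_j$ for $j\geq 2$; functoriality is then immediate.

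The main obstacle will be carefully tracking the Koszul signs arising from the many summation indices together with the symmetry isomorphisms $\tau_j$ in the $v\geq 2$ step, but a systematic accounting paralleling the sign computation in the proof of Proposition~\ref{P:tensorproduct} should make everything match.
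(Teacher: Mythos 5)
Your proposal is correct and takes essentially the same approach as the paper: the paper's proof of this proposition consists of the single remark that it ``follows exactly the same lines of computation as the preceding proof'' (that of Proposition~\ref{P:tensorproduct}), and your outline — splitting $(B_{uv})$ into the $v=1$ and $v\geq 2$ cases, collapsing the $\Lambda$-factors via associativity of $\mu_{02}$ and Lemma~\ref{muj}(3), checking composition via Proposition~\ref{prop:composition}, and treating the strict case $-\otimes A$ directly — is precisely that computation spelled out.
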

\begin{proof}
The proof follows exactly the same lines of computation as the preceding proof.
\end{proof}

\subsection{$r$-homotopies and $r$-homotopy equivalences}
We next define a collection of functorial paths
indexed by an integer $r\geq 0$ on the category of $dA_\infty$-algebras, giving rise to the corresponding
notions of $r$-homotopy.

We will use specific twisted dgas defined in the following proposition whose proof is left to the reader.

\begin{prop}\label{comparison_paths}Let $r\geq 0$ be an integer.
Let $\Lambda_r$ be the bigraded module
generated by $e_-$ and $e_+$ in bidegree $(0,0)$ and $u$ in bidegree $(-r,1-r)$. 
The only non-trivial operations given by
\[\mu_{r1}(e_-)=-u,\  \mu_{r1}(e_+)=u,\ \mu_{02}(e_-,e_-)=e_-,\ \mu_{02}(e_+,e_+)=e_+,\ \mu_{02}(e_-,u)=\mu_{02}(u,e_+)=u,\]
make $\Lambda_r$ into a twisted dga.
The morphisms
\[\xymatrix{R\ar[r]^-{\iota}&\Lambda_r \ar@<1ex>[r]^{\partial^+} \ar@<-1ex>[r]_{\partial^-}&R}\,\,\,;\,\,\, \partial^\pm\circ \iota=1_R\]
given by $\partial^-(e_-)=1_R$, $\partial^+(e_+)=1_R$ and $\iota(1)=e_-+e_+$ and $0$ elsewhere are strict morphisms of twisted dgas. \qed
\end{prop}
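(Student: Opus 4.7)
The plan is a direct verification of the $dA_\infty$-algebra axioms from Definition~\ref{def:dA_obj}, streamlined by Lemma~\ref{muj}. Since only $\mu_{r1}$ and $\mu_{02}$ are nonzero on $\Lambda_r$, inspection of $(A_{uv})$ shows that any summand involving $m_{ij}$ with $j=1$, $i\neq r$, or with $j\geq 2$, $(i,j)\neq(0,2)$, vanishes. The axiom system therefore reduces to three items: (i) $\mu_{r1}^2=0$ (the unique nontrivial instance of $(A_{u1})$, occurring at $u=2r$); (ii) associativity of $\mu_{02}$, i.e.\ $(A_{03})$; and (iii) the Leibniz identity
\[
\mu_{r1}\mu_{02}=\mu_{02}(\mu_{r1}\otimes 1)+\mu_{02}(1\otimes\mu_{r1})
\]
coming from $(A_{r2})$, with the remaining $(A_{i2})$ trivially satisfied because $\mu_{i1}=0$ for $i\neq r$.

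Each of (i)--(iii) reduces to a finite basis check on $e_-,e_+,u$. For (i), one computes $\mu_{r1}^2(e_\pm)=\pm\mu_{r1}(u)=0$ (since $\mu_{r1}(u)$ is not among the declared nonzero operations) and $\mu_{r1}^2(u)=0$; indeed, the underlying twisted complex of $\Lambda_r$ is precisely the $r$-bigraded complex $\Lambda_r$ of Remark~\ref{comparison_paths_twisted}. For (ii), the only nonvanishing bracketed triple products are $\mu_{02}(\mu_{02}(e_\epsilon,e_\epsilon),e_\epsilon)=e_\epsilon$ for $\epsilon\in\{-,+\}$ together with those containing a single factor $u$ with $e_-$'s to its left and $e_+$'s to its right (e.g.\ $\mu_{02}(\mu_{02}(e_-,u),e_+)=u=\mu_{02}(e_-,\mu_{02}(u,e_+))$); conceptually $\Lambda_r$ is the path algebra of the quiver $\bullet\to\bullet$ with $e_\pm$ the vertex idempotents, so associativity is automatic. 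For (iii), using the Koszul sign $(-1)^{\langle\mu_{r1},e_-\rangle}=1$, evaluation on $(e_-,e_-)$ yields $\mathrm{LHS}=-u$ and
\[
\mathrm{RHS}=\mu_{02}(\mu_{r1}(e_-),e_-)+\mu_{02}(e_-,\mu_{r1}(e_-))=-\mu_{02}(u,e_-)+\mu_{02}(e_-,-u)=0-u=-u;
\]
the case $(e_+,e_+)$ is symmetric, and the pairs $(e_-,u)$, $(u,e_+)$ each produce $0=0$ on both sides.

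Finally, viewing $R$ as a twisted dga concentrated in bidegree $(0,0)$ with $\mu_{02}^{R}(1,1)=1$ and all other operations zero, strictness and bidegree-compatibility of $\iota,\partial^+,\partial^-$ are immediate from the defining formulae, and compatibility with $\mu_{r1}$ and $\mu_{02}$ is a direct check on generators: for example $\mu_{r1}\iota(1)=-u+u=0=\iota\mu_{r1}^{R}(1)$ and $\partial^-\mu_{02}(e_-,e_-)=\partial^-(e_-)=1=\mu_{02}^{R}(\partial^-e_-,\partial^-e_-)$, with the mixed terms vanishing on both sides. The only real pitfall in the entire argument is Koszul-sign bookkeeping in item (iii), which I would handle by tabulating the handful of nonzero basis pairs; no conceptual obstacle arises.
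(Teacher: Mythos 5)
The paper leaves this proof to the reader, and your direct verification is exactly the intended argument: the reduction of the $dA_\infty$-relations to $\mu_{r1}^2=0$, associativity of $\mu_{02}$, and the Leibniz identity at $(A_{r2})$ is correct, and your sign computations check out. One small omission: your list of basis pairs for the Leibniz check leaves out $(e_-,e_+)$, which is the one remaining nontrivial case --- there the left-hand side is $\mu_{r1}(\mu_{02}(e_-,e_+))=0$ while the right-hand side vanishes only after the cancellation $\mu_{02}(-u,e_+)+\mu_{02}(e_-,u)=-u+u=0$ --- so it should be added to your table (the pair $(e_+,e_-)$ is trivially $0=0$).
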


\begin{defi}\label{functorial r path}Let $r\geq 0$ be an integer. The \textit{functorial $r$-path}
$P_r:\dAinf{R}\to \dAinf{R}$ is defined as $P_r:=\Lambda_r\otimes-$.
\end{defi}

Note that for a $dA_\infty$-algebra $A$, one has 
$P_r(A)_i^j=(Re_-\otimes A_i^j)\oplus (Ru\otimes A_{i+r}^{j+r-1})\oplus (Re_+ \otimes A_i^j)$. Hence we may identify $P_r(A)$
with the bigraded $R$-module given by
$P_r(A)_i^j=A_i^j\oplus A_{i+r}^{j+r-1}\oplus A_i^j$ as in Definition~\ref{def:rpathtc}. More precisely, the triple $(x,y,z)$ is identified with
$e_-\otimes x+ u\otimes y+e_+\otimes z$.

Given bigraded $R$-modules $A$ and $B$, let
\[t_2:P_r(A)\otimes P_r(B)\lra P_r(A\otimes B)\]
be the map given by
\[t_2((x,y,z)\otimes(x',y',z'))=(x\otimes x',\overline x\otimes y'+y\otimes z',z\otimes z'),\]
where $\overline x:=(-1)^{rx_1+(1-r)x_2} x$ and $(x_1,x_2)$ denotes the bidegree of $x$.
Likewise, for $n\geq 2$ we let \[t_n:P_r(A_1)\otimes\cdots \otimes P_r(A_n)\lra P_r(A_1\otimes\cdots\otimes A_n)\]
be the map given by
\[t_n((x_1,y_1,z_1)\otimes\cdots \otimes (x_n,y_n,z_n))=
(x_1\otimes\cdots\otimes x_n,\sum_{1\leq j\leq n}\overline x_1\otimes\cdots \otimes \overline x_{j-1}\otimes y_{j}
\otimes z_{j+1}\otimes\cdots\otimes z_{n},z_1\otimes\cdots\otimes z_n).\]

Note that under the identification above, the map $t_n$ is obtained as the composite 
\[(\mu_n\otimes 1)\circ \tau_n: (\Lambda_r\otimes A_1)\otimes\cdots\otimes (\Lambda_r\otimes A_n)\rightarrow (\Lambda_r)^{\otimes n}\otimes A_1\otimes\cdots\otimes A_n\rightarrow \Lambda_r\otimes A_1\otimes\cdots\otimes A_n.
\]

As a consequence, combining Propositions \ref{P:tensorproduct} and  \ref{tensor_functorial} one obtains the following.

\begin{prop}\label{rpathdAinf}
The \textit{$r$-path} $(P_r(A), M_{ij})$ of a $dA_\infty$-algebra $(A,m_{ij})$ is 
given by the bigraded module $P_r(A)$
together with the morphisms $M_{ij}:P_r(A)^{\otimes j}\to P_r(A)$ of bidegree $(-i,2-i-j)$ given by
\[M_{r1}:=\left(
\begin{matrix}
m_{r1}&0&0\\
{-1}&-m_{r1}&{1}\\
0&0&m_{r1}
\end{matrix}
\right)\text{ and }
M_{i1}:=\left(
\begin{matrix}
m_{i1}&0&0\\
0&{(-1)^{i+r+1}}m_{i1}&0\\
0&0&m_{i1}
\end{matrix}
\right)\text{ for $i\neq r$}\]
 and the morphisms 

\[M_{ij}:=\left(
\begin{matrix}
m_{ij}&0&0\\
0&(-1)^{rj+i+j}m_{ij}&0\\
0&0&m_{ij}
\end{matrix}
\right)\circ t_j,\text{  for $i\geq 0$ and $j\geq 2$.}\]

The $r$-path of a morphism $f:(A,m_{ij}^A)\to (B,m_{ij}^B)$ of $dA_\infty$-algebras is the morphism of $dA_\infty$-algebras
$P_r(f):(P_r(A),M_{ij}^A)\to (P_r(B),M_{ij}^B)$
given by
$P_r(f)_{ij}=(f_{ij},(-1)^{(r+1)(j-1)+i}f_{ij},f_{ij}).$

The structure morphisms of the $r$-path
\[\xymatrix{A\ar[r]^-{\iota_A}&P_r(A) \ar@<1ex>[r]^{\partial^+_A} \ar@<-1ex>[r]_{\partial^-_A}&A}\,\,\,;\,\,\, \partial^\pm_A\circ \iota_A=1_A\]
are
given by $\partial^-_A(x,y,z)=x$, $\partial^+_A(x,y,z)=z$ and $\iota_A(x)=(x,0,x)$.
\qed
\end{prop}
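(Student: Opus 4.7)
The statement is a direct application of Propositions~\ref{P:tensorproduct}, \ref{tensor_functorial} and~\ref{comparison_paths}. The plan is as follows. By Proposition~\ref{comparison_paths}, $\Lambda_r$ is a twisted dga, so Proposition~\ref{P:tensorproduct} equips $P_r(A) := \Lambda_r\otimes A$ with a $dA_\infty$-structure, and Proposition~\ref{tensor_functorial} gives both the functoriality of $P_r$ on morphisms $f$ and the fact that strict morphisms of twisted dgas tensored with $A$ yield strict morphisms of $dA_\infty$-algebras. What remains is to translate the abstract formulas $\widehat m_{i1} = \mu_{i1}\otimes 1_A + 1_\Lambda\otimes m_{i1}$ and $\widehat m_{ij} = (\mu_j\otimes m_{ij})\tau_j$ (and their analogues for $f$) into the matrix form displayed, using the identification $(x,y,z)\leftrightarrow e_-\otimes x + u\otimes y + e_+\otimes z$.

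For $M_{i1}$, I would evaluate $\widehat m_{i1}$ separately on each of the three summands $Re_-\otimes A$, $Ru\otimes A$, $Re_+\otimes A$. Using $\mu_{i1}=0$ on $\Lambda_r$ except for $\mu_{r1}(e_\pm) = \pm u$ (with no further non-zero values, by Proposition~\ref{comparison_paths}), and the Koszul sign rule
\[(1_\Lambda\otimes m_{i1})(\lambda\otimes a) = (-1)^{\langle m_{i1},\lambda\rangle}\lambda\otimes m_{i1}(a),\]
the matrix entries of the statement fall out directly. In particular the middle diagonal sign is $(-1)^{\langle m_{i1}, u\rangle} = (-1)^{(-i)(-r) + (1-i)(1-r)} \equiv (-1)^{i+r+1}\pmod 2$ (which for $i=r$ simplifies to $-1$, producing the $-m_{r1}$ at the center of $M_{r1}$).

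For $M_{ij}$ with $j\geq 2$, the key observation is that the iterated product $\mu_j$ on $\Lambda_r^{\otimes j}$, computed directly from the multiplication of $\Lambda_r$, vanishes on every basis tensor except $e_-^{\otimes j}\mapsto e_-$, $e_+^{\otimes j}\mapsto e_+$, and $e_-^{\otimes k-1}\otimes u\otimes e_+^{\otimes j-k}\mapsto u$ for $1\leq k\leq j$. Comparing with the definition of $t_j$, the formula $(\mu_j\otimes m_{ij})\tau_j$ factors as the displayed diagonal matrix of $m_{ij}$'s composed with $t_j$, the middle sign being
\[(-1)^{\langle m_{ij}, u\rangle} = (-1)^{ir + (2-i-j)(1-r)} \equiv (-1)^{rj+i+j} \pmod 2.\]
The identical procedure applied to $\widehat f_{ij}= (\mu_j\otimes f_{ij})\tau_j$, with $f_{ij}$ of bidegree $(-i,1-i-j)$, yields the middle sign $(-1)^{\langle f_{ij}, u\rangle} \equiv (-1)^{(r+1)(j-1)+i} \pmod 2$.

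Finally, since $\iota: R\to\Lambda_r$ and $\partial^\pm:\Lambda_r\to R$ are strict morphisms of twisted dgas with $\partial^\pm\circ\iota = 1_R$ (Proposition~\ref{comparison_paths}), the second part of Proposition~\ref{tensor_functorial} applied to $-\otimes A$ (which preserves strict morphisms and composition) yields strict morphisms $\iota_A = \iota\otimes 1_A$ and $\partial^\pm_A = \partial^\pm\otimes 1_A$ satisfying $\partial^\pm_A\circ \iota_A = 1_A$; the explicit formulas of the statement then follow by evaluation on triples. The main obstacle is the Koszul sign bookkeeping for $M_{ij}$ and $P_r(f)_{ij}$, which requires carefully tracking the sign of $\tau_j$ moving $A$-factors past $\Lambda$-factors together with the sign of $m_{ij}$ (or $f_{ij}$) crossing $u$; once this is done, everything else is formal.
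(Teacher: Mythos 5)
Your proposal is correct and follows exactly the route the paper takes: the paper derives this proposition by combining Propositions~\ref{P:tensorproduct} and~\ref{tensor_functorial} with the twisted dga $\Lambda_r$ of Proposition~\ref{comparison_paths}, under the identification $(x,y,z)\leftrightarrow e_-\otimes x+u\otimes y+e_+\otimes z$ and $t_j=(\mu_j\otimes 1)\tau_j$. Your Koszul-sign computations for the middle entries (namely $(-1)^{i+r+1}$, $(-1)^{rj+i+j}$ and $(-1)^{(r+1)(j-1)+i}$) check out and supply the bookkeeping the paper leaves implicit.
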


It follows directly from the above proposition that the $r$-path is compatible with the forgetful functor $U:\dAinf{R}\lra \tc$.
Also, if $(A,m_{0j})$ is a $dA_\infty$-algebra concentrated in horizontal degree 0, then 
its $0$-path $P_0(A)$ coincides with its path object as an $A_\infty$-algebra as defined by Grandis in \cite{Grandis}.
Hence the $0$-path is compatible with the inclusion $\Ainf{R}\hookrightarrow \dAinf{R}$.

\begin{defi} 
\label{def:rhomda}
Let $f,g:A\to B$ be two morphisms of $dA_\infty$-algebras.
An \textit{$r$-homotopy from $f$ to $g$} is given by a morphism of $dA_\infty$-algebras $h:A\to P_r(B)$ such that 
$\partial^-_B\circ h=f$ and $\partial^+_B\circ h=g$.
We use the notation $h:f\simr{r} g$.
\end{defi}
We postpone until later giving an explicit version of $r$-homotopy, in terms of
a collection of morphisms $\widehat h_{ij}:A^{\otimes j}\to B$; see Proposition~\ref{equivalent_htp_dAinf}.

In the category of $A_\infty$-algebras, the notion of homotopy defines an equivalence relation
on the sets of $A_\infty$-morphisms (see \cite{Proute}, see also \cite{Grandis}).
We next prove an analogous result in the context of $dA_\infty$-algebras, for $0$-homotopies.
Our proof is an adaptation of the proof given by Grandis for $A_\infty$-algebras.
\begin{prop}
The notion of $r$-homotopy is reflexive and compatible with the composition.
Furthermore, for $r=0$ it defines an equivalence relation on the set of morphisms of $dA_\infty$-algebras from $A$ to $B$,
provided $A$ and $B$ are $(\NN,\ZZ)$-graded.
\end{prop}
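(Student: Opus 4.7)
\medskip
\noindent\textbf{Plan of proof.}
My plan is to handle reflexivity and composition-compatibility by general functorial-path arguments, then to reduce the equivalence-relation statement for $r=0$ to the corresponding known result for $A_\infty$-algebras via the isomorphism of Corollary~\ref{dA-filtered-bounded-sf}.

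First, for \emph{reflexivity}, given a morphism $f:A\to B$ of $dA_\infty$-algebras I would produce the canonical $r$-homotopy $\iota_B\circ f: A\to P_r(B)$. This is a morphism of $dA_\infty$-algebras because $\iota_B$ is the application of Proposition~\ref{tensor_functorial} to the strict morphism $\iota:R\to \Lambda_r$ of twisted dgas from Proposition~\ref{comparison_paths}, and then $\partial^\pm_B\circ\iota_B\circ f=f$ as desired. For \emph{compatibility with composition}, suppose $h:f\simr{r} g$ is an $r$-homotopy between $f,g:A\to B$, and let $\ell:D\to A$ and $k:B\to C$ be morphisms of $dA_\infty$-algebras. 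Then $h\circ\ell:D\to P_r(B)$ is an $r$-homotopy from $f\circ\ell$ to $g\circ\ell$, using $\partial^\pm_B\circ h\circ\ell=f\circ\ell, g\circ\ell$; and $P_r(k)\circ h:A\to P_r(C)$ is an $r$-homotopy from $k\circ f$ to $k\circ g$, using the naturality of $\partial^\pm$ with respect to the functor $P_r$, which is a consequence of Proposition~\ref{tensor_functorial}.

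The substantive part is the equivalence relation statement for $r=0$ under the $(\NN,\ZZ)$-grading hypothesis. Here I would invoke Corollary~\ref{dA-filtered-bounded-sf}, which gives an isomorphism of categories $\Tot: dA_\infty^b(R) \xrightarrow{\cong} \bsfAinf(R)$. Because $\Tot$ is strong symmetric monoidal in the bounded case (Proposition~\ref{prop:lax}) and the $0$-path is given by $P_0=\Lambda_0\otimes-$, the image $\Tot(P_0(B))$ is the split filtered $A_\infty$-algebra $\Tot(\Lambda_0)\otimes\Tot(B)$, which is precisely the standard path object used by Grandis~\cite{Grandis} (and Prout\'e~\cite{Proute}) to define $A_\infty$-homotopies of $A_\infty$-algebras, now endowed with the tensor-product filtration inherited from $\Tot(\Lambda_0)$. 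Hence a $0$-homotopy $h:f\simr{0} g$ in $dA_\infty^b(R)$ corresponds under $\Tot$ to an $A_\infty$-homotopy $\Tot(h):\Tot(f)\Rightarrow \Tot(g)$ in $\bsfAinf(R)$.

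Finally, I would appeal to the fact that Prout\'e and Grandis proved $A_\infty$-homotopy to be an equivalence relation on $\Hom_{A_\infty}(-,-)$. Their symmetry and concatenation constructions for the path object $\Lambda_0\otimes -$ are natural in the $A_\infty$-algebra $B$ and are built from strict $A_\infty$-morphisms of $\Lambda_0$ (interval reversal and interval subdivision), so they automatically respect the column filtration coming from $\Lambda_0$ when $B$ is split filtered. Consequently the resulting symmetry and transitivity operations live in $\bsfAinf(R)$, and transporting them back through $\Tot^{-1}$ yields the symmetry and transitivity of $0$-homotopy in $dA_\infty^b(R)$. The main obstacle I expect is verifying the filtration-compatibility of the Grandis symmetry/concatenation maps precisely—i.e.\ checking that the maps on $\Lambda_0$ implementing interval reversal and interval subdivision arise from (strict) morphisms of twisted dgas with components of horizontal degree $0$, so that applying $\Tot^{-1}$ produces valid morphisms of $dA_\infty$-algebras. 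Once this bookkeeping is in place, the equivalence-relation property follows.
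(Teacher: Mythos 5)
Your overall strategy coincides with the paper's: reflexivity and compatibility with composition follow from the functorial path, and for $r=0$ the paper also transports the problem through $\Tot$ to split filtered $A_\infty$-algebras, invokes Grandis's constructions, checks filtration compatibility, and pulls back via Corollary~\ref{dA-filtered-bounded-sf}. The first two parts of your argument are fine.

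However, the mechanism you propose for the crucial filtration check is wrong, and it is exactly the step you flag as unverified. The interval reversal does \emph{not} arise from a strict morphism of twisted dgas on $\Lambda_0$: the candidate $e_-\leftrightarrow e_+$, $u\mapsto -u$ fails to respect $\mu_{02}$, since $\mu_{02}(e_-,u)=u=\mu_{02}(u,e_+)$ while $\mu_{02}(e_+,u)=\mu_{02}(u,e_-)=0$ (the path algebra is not symmetric in $e_-$ and $e_+$). This asymmetry is precisely why Grandis's reversion is a genuine infinity-morphism $\zeta:\Tot(P_0(A))\to\Tot(P_0(A))$ with higher components $\partial^0\zeta_j$ defined inductively in terms of the structure maps $\Tot(m_{ij})$ and the three projections $\partial^-,\partial^0,\partial^+$; it does not factor through any endomorphism of $\Lambda_0$. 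Likewise transitivity is not ``interval subdivision on $\Lambda_0$'': the paper builds a morphism $\xi$ out of the pullback $\Qq(A)=P_0(A)\times_A P_0(A)$ using Grandis's composition formula, again with higher components involving $\Tot(m_{ij})$. The correct filtration argument is therefore carried out at the level of the whole path object: every component of $\zeta$ and $\xi$ is a composite of the $\Tot(m_{ij})$ (which preserve the filtration) and the projections $\partial^\epsilon$, $\epsilon\in\{-,0,+\}$, and for $r=0$ each of these projections satisfies $\partial^\epsilon(F_p\Tot(P_0(A)))\subset F_p\Tot(A)$. This is also the point at which the restriction to $r=0$ enters: for $r>0$ the middle projection only satisfies $\partial^0(F_p\Tot(P_r(A)))\subset F_{p+r}\Tot(A)$, so the same argument does not yield symmetry or transitivity of $r$-homotopy. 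Your proof as written would stall when you try to verify the ``bookkeeping'' in the form you state it; replacing it with the projection-level filtration check closes the gap.
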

\begin{proof}
Since the notion of $r$-homotopy is defined via a functorial path, it is reflexive and compatible with the composition.
To show that 0-homotopy is symmetric we will define a natural reversion morphism of the $r$-path 
$\zeta:P_0(A)\to P_0(A)$ of a $dA_\infty$-algebra $(A,m_{ij})$ such that $\partial^\pm\zeta=\partial^{\mp}$.
Then, given a $0$-homotopy $h:f\simr{0}g$ we will have a $0$-homotopy $\zeta h:g\simr{0}f$.

Consider the filtered $A_\infty$-algebra defined by applying $\Tot$ to $P_0(A)$. This is given by:
\[F_p\Tot(P_0(A))^n=F_p\Tot(A)^n\oplus F_p\Tot(A)^{n-1}\oplus F_p\Tot(A)^n,\]
with structure morphisms
\[M_1=\left(
\begin{matrix}
\Tot(m_{i1})&0&0\\
{-1}&-\Tot(m_{i1})&{1}\\
0&0&\Tot(m_{i1})
\end{matrix}
\right)\text{ and }
M_{j}=\left(
\begin{matrix}
\Tot(m_{ij})&0&0\\
0&(-1)^{j}\Tot(m_{ij})&0\\
0&0&\Tot(m_{ij})
\end{matrix}
\right)\circ t_j,\]
for $j\geq 2$.
We will next define a morphism of filtered $A_\infty$-algebras $\zeta:\Tot(P_0(A))\to \Tot(P_0(A))$.
Note that such a map is determined by its composition with the three maps
$\partial^-$, $\partial^0$ and $\partial^+$ defined by projection to each of the direct summands of $\Tot(P_0(A))$.
We let $\partial^-\zeta_{1}=\partial^+$, 
$\partial^0\zeta_{1}=-\partial^0$ and $\partial^-\zeta_{1}=\partial^+$.
For $j>1$, we let $\partial^{\pm}\zeta_{j}=0$ and define $\partial^0\zeta_{j}$ inductively by
	\[
		\partial^0\zeta_{j}=\sum\limits_{\substack{p+q=n+1\\ x+y+z=n+1}}S_{nqxp}\Tot(m_{ij})			((\partial^+)^{\otimes x-1}\otimes \zeta_{q}\otimes (\partial^-)^{\otimes y-1}\otimes \zeta_{1}\otimes 	(\partial^+)^{\otimes z-1},\]
where all indices in the sum are poisitive integers and $S_{nqxp}$ is a sign coefficient (see~\cite[p56]{Grandis}).
By~\cite[Theorem 7.1]{Grandis}, the family $\{\zeta_{j}\}_{j\geq 1}$ is a morphism of $A_\infty$-algebras.
Since for all $p\in\ZZ$, $\partial^\epsilon(F_p\Tot(P_0(A))\subset F_p P_0(A)$ for $\epsilon\in\{-,0,+\}$, the morphism
$\zeta$  is compatible with filtrations. Therefore by Corollary~\ref{dA-filtered-bounded-sf}
it gives the desired reversion of $dA_\infty$-algebras.

We next prove transitivity. Consider the pull-back of $dA_\infty$-algebras
\[
\xymatrix{
\pb\ar[d]_{\pi^+}
\Qq(A)\ar[r]^{\pi^-}&P_0(A)\ar[d]^{\partial^+}\\
P_0(A)\ar[r]^{\partial^-}&A.
}
\]
To prove transitivity it suffices to define a
morphism $\xi:\Qq(A)\lra P_0(A)$ of $dA_\infty$-algebras such that
$\partial^\pm \xi=\partial^{\pm}\pi^{\pm}$ (see for example~\cite[Proposition I.4.5(b)]{KampsPorter}).

Consider the filtered $A_\infty$-algebra given by  applying $\Tot$ to $\Qq(A)$. 
We will denote by $\partial^{\epsilon\eta}:=\partial^\eta\pi^\epsilon$,
with $\epsilon=\pm$ and $\eta\in\{-,0,+\}$,
the five projections $\Tot(\Qq(A))\to \Tot(A)$, noting that 
$\partial^{+-}=\partial^{-+}$.

We next define $\xi:\Tot(\Qq(A))\lra \Tot(P_0(A))$.
Let $\xi_1$ be defined by $\partial^-\xi_1:=\partial^{--}$, $\partial^0\xi_1:=\partial^{0-}+\partial^{0+}$ and 
$\partial^+\xi_1:=\partial^{++}$.
For $j>1$, we let $\partial^{\pm}\xi_j=0$ and define the central components by letting
\[\partial^0\xi_j=(-1)^j\Tot(m_{ij})\sum\limits_{\substack{x+y+z=j+1\\ x,y,z\geq 1}}
(\partial^{--})^{\otimes x-1}\otimes \partial^{0-}\otimes (\partial^{+-})^{\otimes y-1}
\otimes \partial^{0+}\otimes (\partial^{++})^{\otimes z-1}.
\]
By~\cite[Theorem 6.3]{Grandis}, the family $\xi=\{\xi_j\}_{j\geq 1}$ is a morphism of $A_\infty$-algebras. By construction, it is 
compatible with filtrations. 
Therefore by Corollary~\ref{dA-filtered-bounded-sf}
it gives the desired morphism of $dA_\infty$-algebras.
\end{proof}

\begin{rmk}
The proof of symmetry and transitivity of 0-homotopies given above does not extend to $r$-homotopies, due to the fact that
for $r>0$, the projection $\partial^0:\Tot(P_r(A))\to \Tot(A)$ is not necessarily compatible with filtrations. Note that we have
$\partial^0(F_p\Tot(P_r(A)))\subset F_{p+r}(\Tot(A))$. 
\end{rmk}

Denote by $\simre{r}$ the congruence of $\dAinf{R}$ generated by $r$-homotopies:
$f\simre{r}g$ if and only if there is a chain of $r$-homotopies
$f\simr{r}\cdots\simr{r}g$ from $f$ to $g$ or a chain $g\simr{r}\cdots\simr{r}f$ from $g$ to $f$.
\begin{defi}
A morphism of $dA_\infty$-algebras $f:A\to B$ is called an \textit{$r$-homotopy equivalence} if 
there exists a morphism $g:B\to A$ satisfying $f\circ g\simre{r} 1_B$ and $g\circ f\simre{r} 1_A$.
\end{defi}
Denote by $\Ss_r$ the class of $r$-homotopy equivalences of $\dAinf{R}$. 
This class is closed under composition and contains all isomorphisms.
Since the $r$-path commutes with the forgetful functor $U:\dAinf{R}\lra \tc$, we have $\Ss_r=U^{-1}(\Ss_r^{\tc})$.
Note as well that $\Ss_r\subset \Ss_{r+1}$ and $\Ss_r\subset \Ee_r$ for all $r\geq 0$.

\begin{lem}\label{iotahtpAlg}
Let $(A,m_{ij})$ be a $dA_\infty$-algebra. The strict morphism $\iota_A:(A,m_{ij})\lra (P_r(A),M_{ij})$
given by $\iota_A(x)=(x,0,x)$ is an $r$-homotopy equivalence.
\end{lem}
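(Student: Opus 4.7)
The strict morphism $\partial^-_A:P_r(A)\to A$ satisfies $\partial^-_A\circ\iota_A=1_A$ by construction, so $\partial^-_A$ will serve as the homotopy inverse of $\iota_A$ on one side. The task reduces to producing an $r$-homotopy $h:\iota_A\partial^-_A\simr{r}1_{P_r(A)}$; by Definition~\ref{def:rhomda} this is a morphism of $dA_\infty$-algebras $h:P_r(A)\to P_r(P_r(A))$ satisfying $\partial^-_{P_r(A)}\circ h=\iota_A\partial^-_A$ and $\partial^+_{P_r(A)}\circ h=1_{P_r(A)}$.

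Motivated by the twisted complex homotopy of Lemma~\ref{iotahtp}, I would take $h$ to be the almost-strict morphism whose only nonzero component is
\[ h_{01}(x,y,z)=\bigl((x,0,x),(0,0,y),(x,y,z)\bigr), \]
with $h_{ij}=0$ for all other $(i,j)$. The bidegrees match (using $|y|=|x|+(r,r-1)$), and the endpoint conditions are immediate from this formula: $\partial^-_{P_r(A)}h_{01}(x,y,z)=(x,0,x)=\iota_A\partial^-_A(x,y,z)$ and $\partial^+_{P_r(A)}h_{01}(x,y,z)=(x,y,z)$.

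The substantive content is then to check that $h$ satisfies the morphism relations $(B_{uv})$ of Definition~\ref{def:dA_mor}. Because only $h_{01}$ is nonzero, tracking the signs arising in equations $(B_{uv})$ and in the composition formula of Proposition~\ref{prop:composition} collapses each such relation to the single identity
\[ h_{01}\circ M^{P_r(A)}_{u,v}=M^{P_r(P_r(A))}_{u,v}\circ h_{01}^{\otimes v}, \qquad u\geq 0,\ v\geq 1. \]
For $v=1$ this is essentially the twisted complex verification already carried out in Lemma~\ref{iotahtp}, extended to cover both the diagonal case $u\neq r$ and the special matrix form of $M_{r,1}$ from Proposition~\ref{rpathdAinf}. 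For $v\geq 2$, I would verify the identity componentwise using the explicit block formula $M_{ij}=\mathrm{diag}(m_{ij},(-1)^{rj+i+j}m_{ij},m_{ij})\circ t_j$ and the definition of $t_v$. The key mechanism making things work is that the middle entry $h_{01}(p)|_{\mathrm{middle}}=(0,0,y)$ has vanishing first and middle components, so every summand of $t_v$ that traverses the middle slot of $h_{01}(p_j)$ for some $j$ contains a factor of $0$ and drops out; what survives reproduces exactly $h_{01}$ applied to $M^{P_r(A)}_{u,v}(p_1,\dots,p_v)$ in each of the three slots of $P_r(P_r(A))$.

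The main obstacle is the sign bookkeeping in the iterated path structure: the Koszul-type sign $\overline{x}=(-1)^{rx_1+(1-r)x_2}x$ appearing in $t_v$, the block sign $(-1)^{rj+i+j}$ in $M_{ij}$, and the sign $\sigma$ from Proposition~\ref{prop:composition} must be shown to combine compatibly. Once these are shown to telescope (as in the twisted complex case), the componentwise verification becomes routine.
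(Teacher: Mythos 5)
Your candidate homotopy is in fact \emph{identical} to the one in the paper: under the identification $P_r(A)\cong\Lambda_r\otimes A$ of Proposition~\ref{comparison_paths}, the map $h_{01}(x,y,z)=\bigl((x,0,x),(0,0,y),(x,y,z)\bigr)$ is exactly $\Delta\otimes 1_A$, where $\Delta:\Lambda_r\to\Lambda_r\otimes\Lambda_r$ is given by $\Delta(e_-)=e_-\otimes(e_-+e_+)+e_+\otimes e_-$, $\Delta(e_+)=e_+\otimes e_+$, $\Delta(u)=u\otimes e_++e_+\otimes u$. The difference is entirely in how one certifies that this strict map is a morphism of $dA_\infty$-algebras. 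The paper observes that $\Lambda_r\otimes\Lambda_r$ is again a twisted dga, checks on the three generators $e_-,e_+,u$ that $\Delta$ is a strict morphism of twisted dgas (a finite computation involving only $\mu_{r1}$ and $\mu_{02}$), and then invokes functoriality of $-\otimes A$ for strict morphisms of twisted dgas (Proposition~\ref{tensor_functorial}) to conclude that $\Delta\otimes 1_A$ satisfies all the $(B_{uv})$ relations at once. This factorization is precisely the device that makes the sign bookkeeping disappear, since all the signs $\overline{x}$, $(-1)^{rj+i+j}$, and $\sigma$ are already packaged into Propositions~\ref{P:tensorproduct} and~\ref{tensor_functorial}.

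By contrast, your plan verifies $(B_{uv})$ componentwise by hand, and you defer exactly the step that carries the content (``once these are shown to telescope \dots the verification becomes routine''). That deferral is a real gap in the write-up as it stands: the telescoping of the three families of signs across the doubly iterated path is not obviously routine, and it is what the paper's factorization is designed to avoid. A second, smaller inaccuracy: your ``key mechanism'' is not that summands of $t_v$ traversing the middle slot of some $h_{01}(p_j)$ carry a factor of $0$ --- the middle slot $(0,0,y)$ is not zero. What actually happens is that, inside the nested block structure of $M^{P_r(P_r(A))}_{uv}$, the left and middle components of $(0,0,y)$ and the middle component of $(x,0,x)$ vanish, killing most sub-terms, and the surviving right-slot contributions reassemble into the middle slot of $M^{P_r(A)}_{uv}(p_1,\dots,p_v)$. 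Your argument can be completed along these lines, but if you want a clean proof you should either carry out the sign check in full or, better, factor the map through $\Lambda_r$ as the paper does.
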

\begin{proof}
Note first that the category of twisted dgas together with strict morphisms is a monoidal category, hence $\Lambda_r\otimes\Lambda_r$ is
a twisted dga.
Let $\Delta:\Lambda_r\to \Lambda_r\otimes\Lambda_r$ be the map  given by
\[\Delta(e_-)=e_-\otimes (e_-+e_+)+e_+\otimes e_-,\quad  \Delta(e_+)=e_+\otimes e_+,\ \text{ and } \Delta(u)=u\otimes e_++e_+\otimes u.\]
That $\Delta$ is a strict morphism of dgas is a matter of computation. Furthermore one has
\[(\partial^+\otimes 1)\Delta=id \text{ and } (\partial^-\otimes 1)\Delta=\iota\circ \partial^-.\]
Consequently $\Delta\otimes 1_A:P_r(A)\rightarrow P_r(P_r(A))$ is an $r$-homotopy from $\iota_A\circ\partial^-_A$ to the identity.
\end{proof}

\begin{teo}\label{QuotientcatdAinf}
The localized category $\dAinf{R}[\Ss_r^{-1}]$
is canonically isomorphic to the quotient category  $\pi_r(\dAinf{R}):=\dAinf{R}/\simre{r}$.
\end{teo}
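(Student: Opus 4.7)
The approach mirrors the proof of Theorem~\ref{QuotientcatTC} for twisted complexes, substituting the functorial $r$-path object $P_r$ on $\dAinf{R}$ of Definition~\ref{functorial r path} and Lemma~\ref{iotahtpAlg}. Writing $\gamma_r : \dAinf{R} \to \dAinf{R}[\Ss_r^{-1}]$ for the localization functor, the universal property characterization of \cite[Proposition 1.3.3]{GNPR} reduces the theorem to showing that $\gamma_r$ identifies any two morphisms related by a single $r$-homotopy. The induced functor $\overline{\gamma}_r : \pi_r(\dAinf{R}) \to \dAinf{R}[\Ss_r^{-1}]$ will then be constructed together with an explicit inverse.

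First, given an $r$-homotopy $h : f \simr{r} g$ between morphisms $f,g : A \to B$ of $dA_\infty$-algebras, I would assemble the same hammock diagram used in Theorem~\ref{QuotientcatTC}:
\[
\xymatrix{
&B\\
A\ar[ur]^f\ar[r]^h\ar[dr]_g&P_r(B)\ar[u]_{\partial^-_B}\ar[d]^{\partial^+_B}&B\ar[l]_{\iota_B}\ar@{=}[ul]\ar@{=}[dl]\\
&B&
}
\]
By Lemma~\ref{iotahtpAlg}, $\iota_B$ lies in $\Ss_r$, so $\gamma_r(\iota_B)$ is invertible; combined with the relations $\partial^\pm_B\iota_B = 1_B$ and $\partial^-_B h = f$, $\partial^+_B h = g$, the hammock forces $\gamma_r(f) = \gamma_r(g)$. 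Since, by definition, $\simre{r}$ is generated by finite chains of single $r$-homotopies (used in either direction), functoriality of $\gamma_r$ propagates this to $\gamma_r(f) = \gamma_r(g)$ whenever $f \simre{r} g$. Hence $\gamma_r$ descends to a functor $\overline{\gamma}_r : \pi_r(\dAinf{R}) \to \dAinf{R}[\Ss_r^{-1}]$.

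For the inverse, I would verify that the quotient projection $\pi : \dAinf{R} \to \pi_r(\dAinf{R})$ inverts every element of $\Ss_r$: if $f \in \Ss_r$ with partner $g$ satisfying $fg \simre{r} 1_B$ and $gf \simre{r} 1_A$, then by construction of the quotient $\pi(f)\pi(g) = 1_{\pi(B)}$ and $\pi(g)\pi(f) = 1_{\pi(A)}$, so $\pi(f)$ is an isomorphism. The universal property of localization then yields a unique functor $\Gamma : \dAinf{R}[\Ss_r^{-1}] \to \pi_r(\dAinf{R})$ with $\Gamma\gamma_r = \pi$. A direct check on objects (both functors are the identity on objects) and on generating morphisms (they invert $\gamma_r$ and $\pi$ respectively on the common images) shows that $\Gamma$ and $\overline{\gamma}_r$ are mutually inverse.

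The principal subtlety, relative to the twisted-complex case, is that $r$-homotopy is not known to be a symmetric or transitive relation on $dA_\infty$-morphisms for $r>0$ (symmetry and transitivity were only established for $0$-homotopies under boundedness), which is precisely why the theorem is stated in terms of the generated congruence $\simre{r}$. This does not affect the hammock step, which only requires one $r$-homotopy at a time; the passage from $\simr{r}$ to $\simre{r}$ follows purely formally from functoriality of $\gamma_r$ together with the fact that a congruence generated by a reflexive, composition-compatible relation is obtained by symmetrizing and then taking the transitive closure, and both of these operations preserve equality in the image of $\gamma_r$.
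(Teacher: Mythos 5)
Your proof is correct and follows essentially the same route as the paper, which simply states that the argument is analogous to Theorem~\ref{QuotientcatTC} using Lemma~\ref{iotahtpAlg}: the hammock through $P_r(B)$ forces $\gamma_r(f)=\gamma_r(g)$ for a single $r$-homotopy, and the passage to the congruence $\simre{r}$ and the construction of the inverse functor are the formal steps you describe (handled in the paper by the citation of \cite[Proposition 1.3.3]{GNPR}). Your explicit attention to the fact that $\simre{r}$ is only the generated congruence for $r>0$ is a correct and worthwhile clarification, but it does not change the substance of the argument.
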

\begin{proof}
The proof is analogous to that of Proposition $\ref{QuotientcatTC}$, using Lemma $\ref{iotahtpAlg}$.
\end{proof}

\subsection{Operadic approach}
Since $dA_\infty$-algebras are algebras for the operad $(\dAs)_{\infty}$, one expects to be able
to describe homotopies in terms of structure on cofree  $(\dAs)^{\antishriek}$-coalgebras, where
 $(\dAs)^{\antishriek}$ is the Koszul dual cooperad. We carry this out in this section, giving several
equivalent formulations and showing that they agree with the definition via a path object presented above.

\subsubsection{$(\dAs)^{\antishriek}$-coalgebras and coderivations}\label{section:dascoder}
In this setting, a homotopy between 
morphisms $g$ and $f$ should be a \emph{coderivation homotopy}, that is, it satisfies two conditions,
a usual homotopy relation together with
a condition of compatibility with the comultiplication, called a $(g,f)$-coderivation 
condition. (See~\cite{ALRWZ} for the coderivation notion in an operadic context and~\cite{LefHas}
for $(g,f)$-coderivations in the setting of $A_\infty$-algebras.)

First we recall the $(g,f)$-coderivation condition for coassociative coalgebras.

\begin{defi}
Let $(A,\Delta^A)$, $(B, \Delta^B)$ be coassociative $R$-coalgebras and let $f,g:A\to B$ be coalgebra
morphisms. A \emph{$(g,f)$-coderivation} is an  $R$-linear map $h:A\to B$ such that
\[
(g\otimes h + h\otimes f)\Delta^A=\Delta^B h.
\]
\end{defi}

Next we define $(g,f)$-coderivations in a suitable operadic setting.

\begin{defi}
Let $\Cc$ be a non-symmetric cooperad in vertical bicomplexes. For $X,Y,Z$ vertical bicomplexes,
the vertical bicomplex $\Cc(X;Y;Z)$ is given by 
	\[
	\Cc(X;Y;Z) := \bigoplus_{n\geq 1} \Cc(n) \otimes 
	\bigl(\bigoplus_{a+b+1=n} X^{\otimes a} \otimes Y \otimes Z^{\otimes b}\bigr).
	\]
If $f\colon X \to X'$, $h: Y\to Y'$ and $g\colon Z \to Z'$ are maps of vertical bicomplexes, the map 
\[\Cc(f;h;g)\colon \Cc(X;Y;Z) \to \Cc(X';Y';Z')\]  is defined as the direct sum of the maps $ 1\otimes f^{\otimes a} \otimes h \otimes g^{\otimes b}$.
\end{defi}

\begin{defi}
Let $\Cc$ be a non-symmetric cooperad in vertical bicomplexes and let $A$ and $B$ be vertical bicomplexes. Let $g$ and $f$ be maps of $\Cc$-coalgebras $\Cc(A)\to \Cc(B)$. For $r\geq 0$, an \emph{$r$-$(g,f)$-coderivation} is a map of vertical bicomplexes
$h:\Cc(A)\to \Cc(B)$ of bidegree $(r,r-1)$ such that the following diagram commutes.

\[
\xymatrix{
\Cc(A)=\Cc(A;A;A) \ar[r]^-{\Delta_{\Cc}} 
\ar[d]_-{h}
\ar[r] & \Cc \circ\Cc(A;A;A)\cong
		\Cc(\Cc(A);\Cc(A;A;A) ;\Cc(A)) \ar[d]^-{\Cc(g;h;f)}\\
\Cc(B)=\Cc(B;B;B)\ar[r]^-{\Delta_{\Cc}} &
 \Cc \circ\Cc(B;B;B)\cong
		\Cc(\Cc(B);\Cc(B;B;B) ;\Cc(B))
}
\]

\end{defi}

In order to understand very explicitly what such a thing looks like in the $(\dAs)^{\antishriek}$ case, we first recall
Proposition 3.2 of~\cite{ALRWZ} and then extend it to $(g,f)$-coderivations. As there, it is slightly simpler to use 
cooperadic suspension and work with the suspended cooperad  $\Lambda(\dAs)^{\antishriek}$.

Consider triples $(C,\Delta, f)$ where $(C, \Delta)$ is a conilpotent coassociative coalgebra
and  $f:C\to C$ is a linear map of bidegree $(1,1)$ satisfying   $(f\otimes 1)\Delta=(1\otimes f)\Delta=\Delta f$.
A morphism between two such triples is a morphism of coalgebras commuting with the given linear maps.

\begin{prop}\cite[Proposition 3.2]{ALRWZ}
\label{prop:dAscoalg}
Cooperadic suspension gives rise to an isomorphism of categories between the
category of conilpotent coalgebras over the cooperad  $(\dAs)^{\antishriek}$ and the category
of triples $(C,\Delta, f)$ as above.

 An operadic coderivation  of bidegree $(0,1)$ of a  
$(\dAs)^{\antishriek}$-coalgebra $S^{-1}C$ corresponds
on $(C,\Delta, f)$ to a coderivation of bidegree $(0,1)$ of the coalgebra $C$, anti-commuting with the linear map $f$.\qed\end{prop}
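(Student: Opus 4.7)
The plan is to prove both parts of the proposition by exploiting the fact that the Koszul dual cooperad $(\dAs)^{\antishriek}$ admits a concrete description as a semi-direct product of $\As^{\antishriek}$ and $\Dd^{\antishriek}$. Recall that $\dAs$, which encodes bidgas, has a quadratic presentation in $\vbic$ with generators a binary product $\mu$ of bidegree $(0,0)$ and a unary operation $d$ of bidegree $(-1,0)$, subject to associativity of $\mu$, $d^2=0$, and the Leibniz rule making $d$ a derivation of $\mu$. As shown in \cite{LRW}, $\dAs$ is Koszul, and its Koszul dual cooperad can be read off these relations: there is a distributive law between the ``shape'' part coming from $\As^{\antishriek}$ and the ``coefficient'' part coming from $\Dd^{\antishriek}=R[x]$, where $x$ has bidegree $(-1,-1)$.

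Using this, a conilpotent $(\dAs)^{\antishriek}$-coalgebra structure on $S^{-1}C$ decomposes into (i) a conilpotent $\As^{\antishriek}$-coalgebra structure and (ii) a $\Dd^{\antishriek}$-coaction, subject to a compatibility dictated by the distributive law. Via the classical signed cooperadic desuspension, the first piece of data on $S^{-1}C$ is equivalent to a conilpotent coassociative comultiplication $\Delta$ on $C$. The second piece, as recalled in the preceding material on $\Dd^{\antishriek}$-coalgebras, is equivalent to a single linear map $f:C\to C$ of bidegree $(1,1)$. Unpacking the distributive law then yields exactly the stated relations $(f\otimes 1)\Delta=(1\otimes f)\Delta=\Delta f$, which are the dual of the Leibniz rule combined with the fact that $d$ is a single unary operation applied to at most one input of $\mu$.

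For the coderivation statement, any bidegree-$(0,1)$ operadic coderivation $D$ of the cofree coalgebra $S^{-1}C$ is uniquely determined by its projection onto the cogenerators, which produces a map $d_C:C\to C$ of bidegree $(0,1)$. The coderivation condition decomposes along the same semi-direct product: relative to $\As^{\antishriek}$, it requires $d_C$ to be a coderivation of $(C,\Delta)$ in the usual sense; relative to $\Dd^{\antishriek}$, it produces a commutation relation between $d_C$ and the unary cooperation $f$, and the Koszul sign rule applied to the bidegrees $(0,1)$ of $d_C$ and $(1,1)$ of $f$ forces the sign to be $-1$, giving the anti-commutation $d_C f + f d_C=0$.

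The main obstacle is the explicit Koszul-dual computation of $(\dAs)^{\antishriek}$: namely, establishing that $\dAs$ is Koszul as a quadratic operad in $\vbic$, identifying the precise distributive law between $\As^{\antishriek}$ and $\Dd^{\antishriek}$ that expresses the Leibniz relation on the cooperadic side, and tracking the Koszul signs introduced by the cooperadic suspension through both the object-level identification and the compatibility with coderivations. Once these ingredients are in place, both the equivalence of categories and the statement on coderivations follow by a direct, if sign-intensive, unpacking of definitions, as detailed in \cite{ALRWZ}.
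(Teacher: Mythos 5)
The paper itself does not prove this statement: it is quoted verbatim from \cite[Proposition 3.2]{ALRWZ} with a \verb|\qed|, so there is no internal proof to compare against. Your strategy — compute $(\dAs)^{\antishriek}$ via the distributive law exhibiting $\dAs$ as built from $\As$ and $\Dd$, so that $(\dAs)^{\antishriek}\cong \Dd^{\antishriek}\circ\As^{\antishriek}$ as a collection, then unpack coalgebra and coderivation structures componentwise — is indeed the natural route and the one the cited reference follows. But as written the proposal has two concrete problems beyond deferring the sign bookkeeping back to \cite{ALRWZ} (which is circular if the goal is an independent proof).

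First, in the coderivation part you assert that a bidegree-$(0,1)$ coderivation of $S^{-1}C$ "is uniquely determined by its projection onto the cogenerators". That is only true for the \emph{cofree} coalgebra, whereas the statement concerns an arbitrary conilpotent $(\dAs)^{\antishriek}$-coalgebra $S^{-1}C$ corresponding to a triple $(C,\Delta,f)$. No cofreeness is needed or available here: one should simply split the operadic coderivation identity along the $\As^{\antishriek}$-corelations (giving that $d_C$ is a coderivation of $(C,\Delta)$) and along the $\Dd^{\antishriek}$-coaction (giving $d_C f=(-1)^{\langle d_C,f\rangle}fd_C=-fd_C$, exactly as in the paper's earlier discussion of $\Dd^{\antishriek}$-coalgebras). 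Second, the claim that the relations $(f\otimes 1)\Delta=(1\otimes f)\Delta=\Delta f$ "are the dual of the Leibniz rule" glosses over the essential point: a naive dualization of Leibniz would produce the single identity $\Delta f=(f\otimes 1)\Delta+(1\otimes f)\Delta$, which is not what is asserted. The Koszul dual corelations are the \emph{orthogonal complement} of the weight-two relations of $\dAs$; since the Leibniz relation cuts out a codimension-one subspace of the three-dimensional span of $d\mu$, $\mu(d\otimes 1)$, $\mu(1\otimes d)$, the dual side carries a two-dimensional space of corelations, and it is precisely this that forces the two separate equalities among $\Delta f$, $(f\otimes 1)\Delta$ and $(1\otimes f)\Delta$ (after the suspension signs are tracked). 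Without making this step explicit, the proposal does not actually derive the compatibility condition that characterizes the triples $(C,\Delta,f)$.
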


\begin{example} \cite[Example 3.3]{ALRWZ}\label{ex:FreedAsCoalg}
As an example we give the structure corresponding to the cofree
$\Lambda(\dAs)^{\antishriek}$-coalgebra cogenerated by $C$. 
We have $\Lambda(\dAs)^{\antishriek}(C)\cong R[x]\otimes\overline{T}C$, where $\overline{T}C$ denotes the reduced
tensor coalgebra on $C$.

The coalgebra structure is given by
	\[
	\Delta(x^i\otimes a_1\otimes \cdots \otimes a_n)
	=\sum_{k=1}^{n-1}\sum_{r+s=i} (-1)^\epsilon 
		(x^r\otimes  a_1\otimes \cdots \otimes a_k)\otimes (x^s\otimes  a_{k+1}\otimes \cdots \otimes a_n),
	\]
where $\epsilon= rn+ik+(s,s)(|a_1|+\cdots + |a_k|)$.

Let $\pi_0$ denote the projection of $R[x]\otimes\overline{T}C$ onto 
$R x^0\otimes\overline{T}C\cong\overline{T}C$. Then
\[\Delta\pi_0=(\pi_0\otimes\pi_0)\Delta\] where the first $\Delta$ is the usual deconcatenation product defined on $\overline{T}C$.

 The linear map $f$ will be denoted $d_x$ in this cofree case and
	\[
	d_x:R[x]\otimes\overline{T}C\to R[x]\otimes\overline{T}C
	\] 
is  determined by $d_x(x^n\otimes a)=(-1)^{j+1} x^{n-1}\otimes a$, for $a\in C^{\otimes j}$.
\end{example}

\begin{prop}
\label{prop:matchcoders}
Let $g$ and $f$ be maps of $(\dAs)^{\antishriek}$-coalgebras $(\dAs)^{\antishriek}(A)\to (\dAs)^{\antishriek}(B)$
and let $h:(\dAs)^{\antishriek}(A)\to (\dAs)^{\antishriek}(B)$ be an $r$-$(g,f)$-coderivation. If $f$, $g$ correspond under 
 the above isomorphism of categories to coalgebra morphisms $\tilde{f}, \tilde{g}:
R[x]\otimes\overline{T}SA\to R[x]\otimes \overline{T}SB$,
(commuting with $d_x$), then $h$ corresponds to a $(\tilde{g},\tilde{f})$-coderivation of 
coalgebras  $\tilde{h}:R[x]\otimes\overline{T}SA\to R[x]\otimes \overline{T}SB$ of bidegree $(r,r-1)$, 
graded commuting with $d_x$.
\end{prop}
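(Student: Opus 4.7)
The strategy is to extend the argument of~\cite[Proposition 3.2]{ALRWZ} from operadic coderivations to the $(g,f)$-coderivation setting (as done in~\cite{LefHas} in the $A_\infty$ case). First I would apply Proposition~\ref{prop:dAscoalg} and Example~\ref{ex:FreedAsCoalg} to identify the cofree $(\dAs)^{\antishriek}$-coalgebra $(\dAs)^{\antishriek}(A)$ with the cooperadic desuspension of $R[x]\otimes\overline{T}SA$, carrying its explicit coalgebra coproduct $\Delta$ and linear map $d_x$. Under this identification, $g$ and $f$ correspond to coassociative coalgebra morphisms $\tilde g,\tilde f$ commuting with $d_x$.

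Next I would unwind the diagram defining an $r$-$(g,f)$-coderivation at a typical element of $(\dAs)^{\antishriek}(A)$. The cooperadic comultiplication $\Delta_\Cc$ on $\Lambda(\dAs)^{\antishriek}$ decomposes $x^i\otimes\mu_n$ as a sum indexed by pairs (a contiguous sub-corolla of the $n$-corolla, a distribution of the $i$ powers of $x$ among the resulting sub-corollas). Chasing an element $x^i\otimes a_1\otimes\cdots\otimes a_n$ through the diagram yields, on the right-hand side, a sum over all choices of a sub-word $a_{k+1}\otimes\cdots\otimes a_l$ and a splitting $x^i = x^{j_1}x^{j_2}x^{j_3}$, with $\tilde h$ applied to the middle piece and $\tilde g$, $\tilde f$ applied to the left and right pieces respectively; the left-hand side equals $\Delta_\Cc \tilde h(x^i\otimes a_1\otimes\cdots\otimes a_n)$.

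The coalgebra $(\tilde g,\tilde f)$-coderivation identity $\Delta\tilde h=(\tilde g\otimes\tilde h+\tilde h\otimes\tilde f)\Delta$ is then extracted by restricting to the components where no $x$-factor appears outside the middle and using coassociativity of $\Delta$ to reduce the iterated decomposition in Example~\ref{ex:FreedAsCoalg} to its binary version. The graded commutation $\tilde h d_x = -d_x\tilde h$ (the sign $(-1)^{2r-1}=-1$ being forced by the Koszul convention on bidegrees $(r,r-1)$ and $(1,1)$) is extracted from the complementary components, where the sub-corolla equals the entire word and the nontrivial data is the splitting of $x^i$. Running these identifications in reverse shows that a coalgebra $(\tilde g,\tilde f)$-coderivation graded commuting with $d_x$ reassembles to an operadic $r$-$(g,f)$-coderivation, so the two notions are equivalent.

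The main obstacle will be the sign bookkeeping: the cooperadic suspension $\Lambda$ contributes signs depending on arity, the Koszul rule on bigraded tensors contributes signs scaling with the bidegrees of $h$, $d_x$, $g$ and $f$, and the explicit coproduct in Example~\ref{ex:FreedAsCoalg} carries the extra sign $(-1)^{rn+ik+(s,s)(|a_1|+\cdots+|a_k|)}$. The fact that $g$ and $f$ may differ on the two sides of the middle piece (unlike in~\cite[Proposition 3.2]{ALRWZ}) means that one cannot simply appeal to symmetry and must verify that the signs accompanying the terms containing $\tilde g$ and the terms containing $\tilde f$ independently match those produced by $\Delta\tilde h$.
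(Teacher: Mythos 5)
Your proposal is correct and follows essentially the same route as the paper's proof: both pass through the identification of Proposition~\ref{prop:dAscoalg} and Example~\ref{ex:FreedAsCoalg} and then read off the two conditions on $\tilde h$ by projecting the $(g,f)$-coderivation diagram onto the components $\pi_{0,2}$ (yielding the coalgebra $(\tilde g,\tilde f)$-coderivation identity) and $\pi_{1,1}$ (yielding graded commutation with $d_x$), using that the cofree structure map is determined by these two components. The sign bookkeeping you flag as the main obstacle is exactly what the paper's diagram chase handles, so there is no substantive difference in approach.
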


\begin{proof}
After applying cooperadic suspension, the $(g,f)$-coderivation condition gives the following
commutative diagram.
\[
\xymatrix{
R[x]\otimes\overline{T}SA \ar[r]^-{\rho_A} 
\ar[d]_-{\tilde{h}} \ar[r] & R[x]\otimes\overline{T}(R[x]\otimes\overline{T}SA)
 \ar[d]^-{(\dAs)^{\antishriek}(\tilde{g};\tilde{h};\tilde{f})}\\
R[x]\otimes\overline{T}SB\ar[r]^-{\rho_B} &
R[x]\otimes\overline{T}(R[x]\otimes\overline{T}SB)
}
\]
Let $C=SA$ and $D=SB$.

As in~\cite[Example 3.3]{ALRWZ}, the structure map $\rho_C$ of the cofree coalgebra on $C$ is
completely determined by $\Delta=\rho_{0,2}$ and $d_x=\rho_{1,1}$, where
 $\rho_{i,j}: C\to  C^{\otimes j}$ is the following composite
\[
\rho_{i,j} \colon \xymatrix{ C \ar[r]^-{\rho} & R[x]\otimes\overline{T}C \ar@{>>}[r]^{\pi_{i,j}} & 
R x^i \otimes C^{\otimes j} \ar[r]^-{\cong} & C^{\otimes j}}.
\]

Post-composing the horizontal maps with $\pi_{0,2}$ in the commutative diagram above, we find that we obtain a commutative diagram
\[
\xymatrix{
R[x]\otimes\overline{T}C \ar[r]^-{\rho_A} \ar[d]_-{\tilde{h}}  
& R[x]\otimes\overline{T}(R[x]\otimes\overline{T}C)
 \ar[d]^-{(\dAs)^{\antishriek}(\tilde{g};\tilde{h};\tilde{f})}
\ar@{>>}[r]^-{\pi_{0,2}}
& kx^0\otimes (R[x]\otimes\overline{T}C)^{\otimes 2}\cong (R[x]\otimes\overline{T}C)^{\otimes 2}
\ar[d]^{\tilde{g}\otimes \tilde{h}+\tilde{h}\otimes\tilde{f}}\\
R[x]\otimes\overline{T}D\ar[r]^-{\rho_B} &
R[x]\otimes\overline{T}(R[x]\otimes\overline{T}D)\ar@{>>}[r]^-{\pi_{0,2}}
& kx^0\otimes (R[x]\otimes\overline{T}D)^{\otimes 2}\cong (R[x]\otimes\overline{T}D)^{\otimes 2}
}
\]
and the outer commuting square gives the $r$-$(\tilde{g}, \tilde{f})$-coderivation condition for $\tilde{h}$.

Similarly, one may post-compose  the horizontal maps with $\pi_{1,1}$ and check that the resulting condition is 
\[
d_x^D\tilde{h}=(-1)^{|\tilde{h}|}\tilde{h}d_x^C. \qedhere
\]
\phantom\qedhere
\end{proof}

Next we want to pass from morphisms on $R[x]\otimes\overline{T}C$, commuting with $d_x$, to families of morphisms
on $\overline{T}C$.
As in Example 3.3 of~\cite{ALRWZ}, given a morphism of bigraded modules  $f:R[x]\otimes\overline{T}C \to R[x]\otimes\overline{T}C$, write
	\[
	f(x^n\otimes a)=\sum_{i} x^i\otimes f^{n,i}(a),
	\] 
where $f^{n,i}:\overline{T}(C)\to\overline{T}(D)$ and $a\in C^{\otimes j}$. 
Then 
commuting with the map $d_x$ means that $f$ is completely determined by the family of maps $f^{n,0}$.

Define $f_n:\overline{T}C \to\overline{T}C$ 
by $f_n(a)=(-1)^{nj}f^{n,0}(a)=(-1)^{nj}\pi_0f(x^n\otimes a)$,  where $a\in C^{\otimes j}$.

The correspondence gives a bijection between the subset of morphisms in 
$\bimod(R[x]\otimes \overline{T}C, R[x]\otimes \overline{T}D)$ which commute with $d_x$
and $\bimodinf( \overline{T}C, \overline{T}D)$.

Recall from~\cite{ALRWZ} that under this assignment, a square-zero coderivation $\delta$ on
$R[x]\otimes\overline{T}C$ corresponds to a twisted complex structure on $\overline{T}C$.

\begin{prop}\label{codertensor}
Let $\tilde{g},\tilde{f}$ be coalgebra morphisms $R[x]\otimes \overline{T}C\to R[x]\otimes \overline{T}D$, 
commuting with $d_x$.
If  $\tilde{h}$ is an $r$-$(\tilde{g},\tilde{f})$-coderivation of 
coalgebras  $\tilde{h}:R[x]\otimes \overline{T}C\to R[x]\otimes \overline{T}D$, graded commuting with $d_x$,
the corresponding family of morphisms $\tilde{h}_n: \overline{T}C \to  \overline{T}D$
satisfies
\[
	\left(\sum_j (-1)^j \tilde{g}_j\otimes \tilde{h}_{i-j} +\sum \tilde{h}_j\otimes \tilde{f}_{i-j}\right)\Delta=\Delta 	 
	 \tilde{h}_i \quad\text{for all $i\geq 0$.}
\]
\end{prop}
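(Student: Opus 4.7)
The plan is to translate the $(\tilde g, \tilde f)$-coderivation condition
\[
(\tilde g\otimes\tilde h + \tilde h\otimes\tilde f)\Delta=\Delta\tilde h
\]
into the stated identity on families by evaluating both sides on $x^n\otimes a$ for $a=a_1\otimes\cdots\otimes a_l\in C^{\otimes l}$, then projecting onto $x^0\otimes\overline{T}D\otimes x^0\otimes\overline{T}D$ via $\pi_0\otimes\pi_0$. Since $\tilde g$, $\tilde f$ and $\tilde h$ all (anti)commute with $d_x$, working at a single power $x^n$ and projecting to the $x^0$-component recovers the full condition; moreover, by Example~\ref{ex:FreedAsCoalg} one has $\Delta\pi_0=(\pi_0\otimes\pi_0)\Delta$, so the projection interacts nicely with the comultiplication.

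For the right-hand side, write $\tilde h(x^n\otimes a)=\sum_m x^m\otimes h^{n,m}(a)$ and apply $\Delta$ using the explicit formula of Example~\ref{ex:FreedAsCoalg}. The splitting $r+s=m$ together with the projection $\pi_0\otimes\pi_0$ forces $r=s=0$, hence $m=0$; the sign $\epsilon=rj+mk+\langle(s,s),\cdots\rangle$ then vanishes. Thus the projected right-hand side equals the deconcatenation $\Delta(h^{n,0}(a))$, which after applying the normalisation $\tilde h_n(a)=(-1)^{nl}h^{n,0}(a)$ becomes $(-1)^{nl}\Delta(\tilde h_n(a))$.

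For the left-hand side, first apply $\Delta$ to $x^n\otimes a$ to obtain a sum over $k=1,\dots,l-1$ and $r+s=n$ with the explicit sign $\epsilon(r,s,k)=rl+nk+s(|a_1|+\cdots+|a_k|)$. Then apply $\tilde g\otimes\tilde h$ (resp.\ $\tilde h\otimes\tilde f$) to each summand, producing Koszul signs coming from the bidegree $(r,r-1)$ of $\tilde h$ paired with the bidegree of $x^r\otimes a_1\cdots a_k$. Finally project via $\pi_0\otimes\pi_0$: this selects $g^{r,0}(a_1\cdots a_k)$ and $h^{s,0}(a_{k+1}\cdots a_l)$ in the first summand, and analogously in the second. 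Converting back via $g^{r,0}=(-1)^{rk}\tilde g_r$ etc., and collecting, one expresses the projected left-hand side as a sum over $r+s=n$ of terms involving $\tilde g_r\otimes\tilde h_s$ and $\tilde h_r\otimes\tilde f_s$ composed with the deconcatenation $\Delta$ on $\overline{T}C$.

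The main obstacle is the sign bookkeeping: three independent sources of signs must be combined, namely the $\epsilon$ in the cooperadic $\Delta$, the Koszul signs from applying $\tilde g\otimes\tilde h$ and $\tilde h\otimes\tilde f$ to tensors whose first factor has bidegree $(-r,-r)+$(bidegree of $a_1\cdots a_k$), and the normalisation signs $(-1)^{ml}$ relating the components $h^{n,0}$, $g^{n,0}$, $f^{n,0}$ to the families $\tilde h_n$, $\tilde g_n$, $\tilde f_n$. One verifies that after cancellation the overall sign on the $\tilde g_m\otimes\tilde h_{n-m}$ summand reduces to $(-1)^m$ while the sign on $\tilde h_m\otimes\tilde f_{n-m}$ reduces to $+1$, and that the global factor $(-1)^{nl}$ matches the one on the right-hand side, yielding the asserted identity. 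This is the analogue for $(\dAs)^{\antishriek}$ of the classical sign computation for $(g,f)$-coderivations of tensor coalgebras used in the $A_\infty$ setting (cf.~\cite{LefHas}), and our preparation in Section~\ref{section:dascoder} reduces it to exactly that calculation, decorated with the extra variable $x$.
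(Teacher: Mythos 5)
Your proposal is correct and follows essentially the same route as the paper's proof: evaluate the $(\tilde g,\tilde f)$-coderivation condition on $x^n\otimes a$, project with $\pi_0\otimes\pi_0$ using $\Delta\pi_0=(\pi_0\otimes\pi_0)\Delta$, and track the three sign sources ($\epsilon$ from the cooperadic comultiplication, the Koszul signs, and the normalisations $(-1)^{nl}$), arriving at the same final signs $(-1)^j$ on $\tilde g_j\otimes\tilde h_{i-j}$ and $+1$ on $\tilde h_j\otimes\tilde f_{i-j}$. The only difference is that you assert the sign cancellation rather than writing it out, which is exactly the routine computation the paper displays.
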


\begin{proof}
First note that for $\alpha$, $\beta: R[x]\otimes\overline{T}C\to R[x]\otimes \overline{T}D$, 
 $a\in C^{\otimes k}$ and $b\in C^{\otimes l}$,
we have
\[
	(\pi_0\otimes \pi_0)(\alpha\otimes\beta)(x^i\otimes a\otimes x^j\otimes b)
=(-1)^{i(u'+v')+j(a_1+a_2)+ik+jl}(\alpha_i\otimes \beta_j)(a\otimes b),
\]

Then we calculate $\Delta 	\tilde{h}_i(a)$ for $a=a_1\otimes \cdots\otimes a_n\in C^{\otimes n}$.
The $\epsilon$ appearing in the sign in the following calculation is given as in Example~\ref{ex:FreedAsCoalg} above. We also use that $|\tilde{h}|=-1$ and $|\tilde{f}|=0$.

\begin{align*}
	\Delta\tilde{h}_i(a)&=
			(-1)^{in}\Delta\pi_0\tilde{h}(x^i\otimes a)\\
	&=(-1)^{in}(\pi_0\otimes \pi_0)\Delta\tilde{h}(x^i\otimes a)\\
	&=(-1)^{in}(\pi_0\otimes \pi_0)(\tilde{g}\otimes\tilde{h}+\tilde{h}\otimes \tilde{f})\Delta(x^i\otimes a)\\
	&=(-1)^{in}(\pi_0\otimes \pi_0)(\tilde{g}\otimes\tilde{h}+\tilde{h}\otimes \tilde{f})
			\sum_{k=1}^{n-1}\sum_{s+t=i} (-1)^\epsilon 
		(x^s\otimes  a_1\otimes \cdots \otimes a_k)\otimes (x^t\otimes  a_{k+1}\otimes \cdots \otimes a_n)\\
	&=(-1)^{in}\sum_{k=1}^{n-1}\sum_{s+t=i} (-1)^{sn+ik+s+sk+t(n-k)}(\tilde{g_s}\otimes \tilde{h}_t)(
		(a_1\otimes\dots \otimes  a_k)\otimes (a_{k+1}\otimes \dots\otimes a_n))\\
	&\qquad + (-1)^{in}\sum_{k=1}^{n-1}\sum_{s+t=i} (-1)^{sn+ik+sk+t(n-k)}
			(\tilde{h}_s\otimes \tilde{f_t})(
		(a_1\otimes\dots \otimes  a_k)\otimes (a_{k+1}\otimes \dots\otimes a_n))\\
	&=\sum_{s+t=i} \left((-1)^s \tilde{g}_s\otimes \tilde{h}_t+\tilde{h}_s\otimes\tilde{f}_t\right)\Delta(a).
\qedhere
\end{align*}
\phantom\qedhere
\end{proof}

We now consider an $r$-shifted version of the usual homotopy relation and
 explain how $r$-homotopy of twisted complexes appears in this context.

\begin{defi}
For $F:\Lambda(\dAs)^{\antishriek}(C)\to \Lambda(\dAs)^{\antishriek}(D)$ a morphism of bigraded modules, let
$\mathbb{S}F:\Lambda(\dAs)^{\antishriek}(C)\to \Lambda(\dAs)^{\antishriek}(D)$ be given by $\mathbb{S}F:=-Fd_x^C$.
\end{defi}

\begin{prop}\label{prop:dAsshift}
The corresponding sequence of maps $\overline{T}C\to  \overline{T}D$ is given by
$(\mathbb{S}F)_i=F_{i-1}$ if $i\geq 1$ and $(\mathbb{S}F)_0=0$.
\end{prop}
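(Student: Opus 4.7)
The plan is to prove this by a direct computation using the explicit description of $d_x^C$ and the correspondence between $F$ and its components $F_n$.

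First I would recall that by Example~\ref{ex:FreedAsCoalg}, the map $d_x^C$ is determined by $d_x^C(x^n\otimes a)=(-1)^{j+1}x^{n-1}\otimes a$ for $a\in C^{\otimes j}$ and $n\geq 1$, while $d_x^C(x^0\otimes a)=0$ (since $x^{-1}$ is not in $R[x]$). Next, I would recall the formula for extracting the components, namely
\[
F_n(a) = (-1)^{nj}\pi_0 F(x^n\otimes a), \qquad a\in C^{\otimes j},
\]
where $\pi_0$ denotes the projection onto $Rx^0\otimes \overline{T}D \cong \overline{T}D$.

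The case $n=0$ is immediate: $(\mathbb{S}F)_0(a) = \pi_0(\mathbb{S}F)(x^0\otimes a) = -\pi_0 F(d_x^C(x^0\otimes a)) = 0$. For $n\geq 1$, I would compute directly:
\[
(\mathbb{S}F)_n(a) = (-1)^{nj}\pi_0(\mathbb{S}F)(x^n\otimes a) = -(-1)^{nj}\pi_0 F\bigl((-1)^{j+1}x^{n-1}\otimes a\bigr) = (-1)^{(n+1)j}\pi_0 F(x^{n-1}\otimes a).
\]
Since $(n+1)j$ and $(n-1)j$ have the same parity, the right-hand side equals $(-1)^{(n-1)j}\pi_0 F(x^{n-1}\otimes a) = F_{n-1}(a)$, as claimed.

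The main obstacle is purely bookkeeping of signs, in particular making sure the factor $(-1)^{j+1}$ coming from $d_x^C$ combines correctly with the sign $(-1)^{nj}$ in the extraction formula; there is no conceptual difficulty. I would also remark that although the definition of $\mathbb{S}F$ does not assume $F$ commutes with $d_x$, the extraction formula above still makes sense because $(\mathbb{S}F)_n$ is defined as $(-1)^{nj}\pi_0(\mathbb{S}F)(x^n\otimes -)$ regardless.
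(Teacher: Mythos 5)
Your proof is correct and follows essentially the same route as the paper: a direct computation unwinding $\mathbb{S}F=-Fd_x^C$ through the extraction formula $F_n(a)=(-1)^{nj}\pi_0F(x^n\otimes a)$, with the signs cancelling exactly as you describe (the paper writes the $n=0$ case via the convention $F_{-1}=0$, but this is the same observation).
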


\begin{proof}
For $a\in C^{\otimes n}$, and setting $F_{-1}=0$,
\begin{align*}
(\mathbb{S}F)_i(a)&=(-1)^{in}\pi_0(\mathbb{S}F)(x^{i}\otimes a)
				=(-1)^{in+1}\pi_0 Fd_x^C(x^{i}\otimes a)\\
		&=(-1)^{in+n}\pi_0 F(x^{i-1}\otimes a)
			=(-1)^{in+n+(i-1)n}F_{i-1}(a)\\
		&=F_{i-1}(a).\qedhere
\end{align*}
\phantom\qedhere
\end{proof}

We defined a shift $\bbS: \wbimod(A,B)_u^v\to \wbimod(A,B)_{u+1}^{v+1}$ in Definition~\ref{def:shift}.
Proposition~\ref{prop:dAsshift} shows that the one defined here corresponds to that one, hence we use the same notation.

\begin{prop}\label{rhomtensor}
Let $f,g:\Lambda(\dAs)^{\antishriek}(C)\to \Lambda(\dAs)^{\antishriek}(D)$ be morphisms of $\Lambda(\dAs)^{\antishriek}$-coalgebras
and let $h:\Lambda(\dAs)^{\antishriek}(C)\to \Lambda(\dAs)^{\antishriek}(D)$ be a morphism of bigraded modules of bidegree
$(r,r-1)$ satisfying
\[
	(-1)^r \delta^Dh+h\delta^C=\mathbb{S}^r(g-f).
\]
Then the corresponding family of morphisms $\tilde{h}_n:\overline{T}C \to  \overline{T}D$ gives
an $r$-homotopy of twisted complexes.
\end{prop}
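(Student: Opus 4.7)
The approach is to evaluate the coalgebra-level equation $(-1)^r \delta^D h + h\delta^C = \bbS^r(g-f)$ on elements $x^n \otimes a$ of $\Lambda(\dAs)^{\antishriek}(C) = R[x] \otimes \overline{T}C$, and then project the result to $\overline{T}D$ via $\pi_0$, thereby converting it into a relation among the families $\tilde{h}_n$, $\tilde{f}_n$, $\tilde{g}_n$ and the families encoding the structures $\delta^C$ and $\delta^D$. This is the same strategy used in Proposition~\ref{codertensor} to translate the coderivation-compatibility condition into a family-level identity, but applied to the differential-compatibility condition instead.

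First, using Proposition~\ref{prop:dAscoalg}, I would describe $\delta^C$ and $\delta^D$ as square-zero coderivations of bidegree $(0,1)$ anti-commuting with $d_x$; under the correspondence preceding Proposition~\ref{codertensor}, they yield families $\tilde{\delta}^C_k$, $\tilde{\delta}^D_k$, which encode the twisted complex structures on $\overline{T}C$ and $\overline{T}D$. Next, iterating Proposition~\ref{prop:dAsshift}, one obtains $(\bbS^r F)_n = F_{n-r}$ for $n \geq r$ and $0$ otherwise, so that evaluating the right-hand side of the given equation at $x^n \otimes a$ and projecting yields $\tilde{g}_{n-r}(a) - \tilde{f}_{n-r}(a)$ when $n \geq r$ and $0$ when $n < r$. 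Expanding the left-hand side similarly, applying $\pi_0$, and collecting signs from the $(-1)^{nj}$ factor in the family correspondence together with the Koszul signs from $d_x$, the equation becomes
\[
\sum_{i+j=n} \bigl((-1)^{i+r} \tilde{\delta}^D_i \tilde{h}_j + (-1)^i \tilde{h}_i \tilde{\delta}^C_j\bigr) =
\begin{cases}
\tilde{g}_{n-r} - \tilde{f}_{n-r}, & n \geq r, \\
0, & n < r,
\end{cases}
\]
which is exactly condition $(H_{m1})$ of Proposition~\ref{equivalent_notion_htp} for the twisted complex structures carried by $\overline{T}C$ and $\overline{T}D$. This shows that the family $\tilde{h}_n$ furnishes an $r$-homotopy of twisted complexes between the families $\tilde{f}_n$ and $\tilde{g}_n$ associated to $f$ and $g$.

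The main obstacle will be sign management: carefully tracking (i) the $(-1)^{nj}$ factor appearing in the correspondence between coalgebra-level maps and their families, (ii) the Koszul sign $(-1)^{j+1}$ from the action of $d_x$ on $x^n \otimes a \in x^n \otimes C^{\otimes j}$ after cooperadic suspension, and (iii) the sign $(-1)^r$ prefacing $\delta^D h$ that is needed for graded anti-commutativity to produce the correct signs in $(H_{m1})$. These computations are analogous to, and should reuse, the sign bookkeeping already carried out in the proof of Proposition~\ref{codertensor}.
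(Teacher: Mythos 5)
Your proposal is correct and follows essentially the same route as the paper's proof: evaluate the identity on $x^n\otimes a$, project by $\pi_0$, use Proposition~\ref{prop:dAsshift} for the right-hand side, and reduce the left-hand side to the condition $(H_{m1})$ of Proposition~\ref{equivalent_notion_htp} via the anti-commutation with $d_x$. The only detail you leave implicit, which the paper makes explicit, is the decomposition $\delta^{i,j}(a)=\sum_l\delta^{i,j,l}(a)$ by output tensor length and the resulting sign $(-1)^{j(n+l+1)}$, but this is exactly the bookkeeping you flag as the main obstacle.
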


\begin{proof}
We extract the $i$-th map in the families corresponding to the two sides of the given equation.
On the right-hand side, by Propostion~\ref{prop:dAsshift}, we obtain
$g_{i-r}-f_{i-r}$ for $i\geq r$ and $0$ for $i<r$.

Recall from~\cite{ALRWZ}, that one can write  $\delta^{i,j}(a)=\sum_l \delta^{i,j,l}(a)$, with $\delta^{i,j,l}(a)\in D^{\otimes l}$, and similarly for $h^{i,j}(a)$.
Anti-commuting with the map $d_x$ implies that
$\delta^{i,j,l}(a)=(-1)^{j(n+l+1)}\delta^{i-j,0,l}(a)$ for $a\in C^{\otimes n}$, and similarly for $h$.

On the left-hand side we calculate to check the signs.
For $a\in C^{\otimes n}$, 
\begin{align*}
\left((-1)^r \delta^Dh+h\delta^C \right)_i(a)&=
 (-1)^{in}\pi_0\left((-1)^r \delta^Dh+h\delta^C \right)(x^i\otimes a)\\
&=(-1)^{in+r}\pi_0\delta\big(\sum_j x^j\otimes h^{i,j}(a)\big)
+(-1)^{in}\pi_0h\big(\sum_j x^j\otimes \delta^{i,j}(a)\big)\\
&=(-1)^{in+r}\pi_0\big(\sum_{j,k} x^k\otimes \delta^{j,k}h^{i,j}(a)\big)
		+(-1)^{in}\pi_0\big(\sum_{j,k} x^k\otimes h^{j,k}\delta^{i,j}(a)\big)\\
&= (-1)^{in+r}\sum_{j} \delta^{j,0}h^{i,j}(a)+(-1)^{in}\sum_{j}  h^{j,0}\delta^{i,j}(a)\\
&=(-1)^{in+r}\big(\sum_{j,l}(-1)^{j(n+l+1)}  \delta^{j,0}h^{i-j,0,l}(a)\big)\\
	&\qquad	+(-1)^{in}\big(\sum_{j,l} (-1)^{j(n+l+1)}  h^{j,0}\delta^{i-j,0,l}(a)\big),\\
\end{align*}
where $\delta^{i,j}(a)=\sum_l \delta^{i,j,l}(a)$, with $\delta^{i,j,l}(a)\in D^{\otimes l}$, and similarly for $h^{i,j}(a)$.
Continuing the above calculation, we obtain
\begin{align*}
(-1)&^{in+r}\big(\sum_{j,l}(-1)^{j(n+1)}  \delta_{j}h^{i-j,0,l}(a)\big)
		+(-1)^{in}\big(\sum_{j,l} (-1)^{j(n+1)}  h_{j}\delta^{i-j,0,l}(a)\big)\\
&=(-1)^{in+r}\big(\sum_{j}(-1)^{j(n+1)+(i-j)n}  \delta_{j}h_{i-j}(a)\big)
		+(-1)^{in}\big(\sum_{j} (-1)^{j(n+1)+(i-j)n}  h_{j}\delta_{i-j}(a)\big)\\
&=\sum_j (-1)^{j+r} \delta_{j}h_{i-j}(a) + (-1)^j h_{j}\delta_{i-j}(a), 
\end{align*}
as required.
\end{proof}

Thus an $r$-shifted version of the operadic notion of coderivation homotopy 
corresponds to
a sequence of maps of bigraded $R$-modules $h_n:\overline{T}C \to  \overline{T}D$,
where $h_n$ has bidegree $(r-n,r-n-1)$,
satisfying both the condition in Proposition~\ref{codertensor} and the condition in
Proposition~\ref{rhomtensor}. 
We are going to show that this is equivalent to 
 an $r$-homotopy of $dA_\infty$-algebras, as defined via the path construction.

As an intermediate step, we reformulate the conditions using the composition in $\wbimod$.

\subsubsection{Tensor coalgebra viewed in $\wbimod$}
First we make two definitions about families of maps on reduced tensor coalgebras.
The first one is a coalgebra-morphism type condition for a family of maps; see also~\cite[Section 4]{Sag10}.

\begin{defi}
\label{def:coalgfamily}
Let $(\tilde{f}_p)\in \wbimod(\overline TSA,  \overline TSB)_0^0$. Write $\tilde{f}_{pq}^j$ for the map
$(SA)^{\otimes q}\to (SB)^{\otimes j}$ coming from $\tilde{f}_p$. We say that $(\tilde{f}_p)$ is a 
\emph{coalgebra-family of morphisms} if
for all $j, p,q$, we have
	\[
	\tilde{f}_{pq}^j=\sum_{\substack{p_1+\cdots +p_j=p\\ q_1+\cdots +q_j=q}} 
				\tilde{f}^1_{p_1q_1}\otimes \tilde{f}^1_{p_2q_2}\otimes \dots \otimes \tilde{f}^1_{p_jq_j}.
	\]
\end{defi}

Now we consider coderivations between such families.

\begin{defi}
\label{def:coderfamily}
Let  $(\tilde{f}_p), (\tilde{g}_p)\in \wbimod(\overline TSA,  \overline TSB)_0^0$ be two coalgebra-families of morphisms.
Let $(\tilde{h}_{p})\in  \wbimod(\overline TSA,  \overline TSB)_r^{r-1}$. Write $\tilde{h}_{pq}^j$ for the map
$(SA)^{\otimes q}, (SB)^{\otimes j}$ coming from $\tilde{h}_p$. We say that 
$(\tilde{h}_p)$ is an $r$-$((\tilde{g}_p),(\tilde{f}_p))$-\emph{coderivation-family of morphisms} if
for all $i,k,l$, we have

\[
	\tilde{h}_{pk}^l=\sum_{\substack{0\leq s\leq l-1\\
			p_1+\cdots+p_l=p\\
			q_1+\cdots + q_l=k}} (-1)^{p_1+\cdots+ p_s}
		 \tilde{g}_{p_1q_1}^1\otimes \cdots \otimes \tilde{g}_{p_sq_s}^1\otimes 
		 \tilde{h}_{p_{s+1}q_{s+1}}^1\otimes \tilde{f}_{p_{s+2}q_{s+2}}^1\otimes\cdots\otimes \tilde{f}_{p_lq_l}^1 . 
	\]

\end{defi}

\begin{notation}
For $A\in \bimod$, we let $\mathpzc{\overline{T}}SA$ denote the object of  $\wbimod$
given by the  underlying bigraded module of the reduced tensor 
coalgebra on $SA$. We let $\underline{\Delta}\in \wbimod(\mathpzc{\overline{T}}SA, \mathpzc{\overline{T}}SA\otimes \mathpzc{\overline{T}}SA)_0^0$ be given by $\underline{\Delta}=(\Delta_0, \Delta_1, \dots)$ 
with $\Delta_0:=\Delta$, the usual deconcatenation comultiplication and $\Delta_i:=0$ for $i>0$.

Similarly, for $f\in \hom_{\bimod}(A,B)$, write $\mathpzc{\overline{T}}Sf$ for 
 $\mathpzc{\overline{T}}Sf:=(\overline{T}Sf, 0, 0, \dots)\in \wbimod(\mathpzc{\overline{T}}SA, \mathpzc{\overline{T}}SB)$.
\end{notation}

In the light of the results of the previous section, and noting Remark~\ref{Ainftwoways}, 
one expects to be able to describe  $A_\infty$-algebras, their morphisms and so on, in terms of structure
in $\wbimod$ 
 on the pairs $(\mathpzc{\overline{T}}SA, \underline{\Delta})$. We give the details next.

 In the following
proposition, we use the natural notions of coderivations, coalgebra morphisms and $(g,f)$-coderivations in $\wbimod$.
For example, a coalgebra morphism $\mathpzc{\overline{T}}SA\to \mathpzc{\overline{T}}SB$
in $\wbimod$
means
a morphism in $\wbimod(\mathpzc{\overline{T}}SA, \mathpzc{\overline{T}}SB)_0^0$ satisfying
			\[c(f\widehat{\otimes} f, \underline{\Delta})=c(\underline{\Delta}, f).\]

\begin{prop}
\begin{enumerate}
\label{prop:wcoder}
\item Let $\delta\in \wbimod(\mathpzc{\overline{T}}SA, \mathpzc{\overline{T}}SA)_0^1$.
If $\delta$ is a coderivation such that $c(\delta, \delta)=0$ then it
 corresponds to
a collection of coderivations $\delta_i: \overline{T}SA\to \overline{T}SA$ in $\bimod$, $i\geq 0$, together
making $\overline{T}SA$ into a twisted complex. Thus,  a $dA_\infty$-algebra structure on a bigraded module $A$ is equivalent to specifying
such a square-zero coderivation $\delta$.
\item Let $f\in \wbimod(\mathpzc{\overline{T}}SA, \mathpzc{\overline{T}}SB)_0^0$. If $f$ is a coalgebra morphism
 commuting with given square-zero
coderivations $d^A$, $d^B$, it corresponds to
a coalgebra-family of morphisms $f_i: \overline{T}SA\to \overline{T}SB$, $i\geq 0$, together making a morphism
of twisted complexes.  Thus,  a morphism of $dA_\infty$-algebras from $A$ to $B$ is equivalent to specifying such
a coalgebra morphism $f$.
\item Let $f, g\in\wbimod(\mathpzc{\overline{T}}SA, \mathpzc{\overline{T}}SB)_0^0$
be coalgebra morphisms.\\
A $(g,f)$-coderivation $h\in\wbimod(\mathpzc{\overline{T}}SA, \mathpzc{\overline{T}}SB)_r^{r-1}$ 
corresponds to  an
$r$-$((g_i), (f_i))$-coderivation-family of
morphisms $h_i:\overline{T}SA\to \overline{T}SB$.
\item Let $f, g\in\wbimod(\mathpzc{\overline{T}}SA, \mathpzc{\overline{T}}SB)_0^0$
be coalgebra morphisms.\\
A morphism $h\in\wbimod(\mathpzc{\overline{T}}SA, \mathpzc{\overline{T}}SB)_r^{r-1}$  satisfying 
	\[
	(-1)^rc(d^{B},h)+c(h,d^A)=\bbS^r(g-f)
	\]
corresponds to an $r$-homotopy of morphisms of twisted complexes $h_i:\overline{T}SA\to \overline{T}SB$, $i\geq 0$.
\end{enumerate}
\end{prop}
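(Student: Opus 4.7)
The plan is to reduce everything to the correspondence between $\wbimod$-structure on $\mathpzc{\overline{T}}SA$ and ordinary coalgebra-structure on $R[x]\otimes\overline{T}SA$, and then quote the results already proved in Subsection~\ref{section:dascoder}. First I would observe that by the construction of $\wbimod$ (Definition~\ref{weird_bimod}) together with the explicit description following Example~\ref{ex:FreedAsCoalg}, an element $f\in\wbimod(\mathpzc{\overline{T}}SA,\mathpzc{\overline{T}}SB)_u^v$ is exactly a family $(f_p)_{p\geq 0}$ assembling into a bigraded map $\tilde f\colon R[x]\otimes\overline{T}SA\to R[x]\otimes\overline{T}SB$ that (graded-)commutes with $d_x$, via $f_n(a)=(-1)^{nj}\pi_0\tilde f(x^n\otimes a)$ for $a\in(SA)^{\otimes j}$. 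Under this bijection, the composition $c$ of Definition~\ref{composition_defi} matches ordinary composition (extracting the $x^0$-component with the appropriate sign), the shift $\bbS$ matches the operation $-(\,\cdot\,)d_x$ by Proposition~\ref{prop:dAsshift}, and $\underline{\Delta}$ corresponds to the deconcatenation coproduct of Example~\ref{ex:FreedAsCoalg}.

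For Part~(1), the condition that $\delta$ is a coderivation in $\wbimod$ (with respect to $\underline{\Delta}$ via $c$) translates to $\tilde\delta$ being a coderivation of the coassociative coalgebra $R[x]\otimes\overline{T}SA$, and $c(\delta,\delta)=0$ translates to $\tilde\delta^2=0$; expanding componentwise reproduces the twisted complex identity $\sum_{i+j=m}(-1)^i\delta_i\delta_j=0$ and identifies each $\delta_i$ as a coderivation of $\overline{T}SA$. Combining Proposition~\ref{prop:dAscoalg} with the operadic description of $dA_\infty$-algebras from~\cite{LRW} then identifies this data with a $dA_\infty$-algebra structure on $A$. Part~(2) is strictly analogous: a coalgebra morphism in $\wbimod$ of bidegree $(0,0)$ commuting with $d^A,d^B$ translates to a coalgebra morphism $\tilde f$ of cofree $(\dAs)^{\antishriek}$-coalgebras commuting with the twisted complex coderivations, which by the same reference is exactly a $dA_\infty$-morphism encoded as a coalgebra-family of morphisms.

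For Parts~(3) and~(4), I would unpack the $\wbimod$-conditions using Lemma~\ref{lem: mon structure on wtc} and Definition~\ref{composition_defi} and match them to the hypotheses of Propositions~\ref{codertensor} and~\ref{rhomtensor}. Specifically, the $(g,f)$-coderivation equation $c(g\widehat{\otimes} h+h\widehat{\otimes} f,\underline{\Delta})=c(\underline{\Delta},h)$ expands at level $m$ to precisely the tensor identity on the left of Proposition~\ref{codertensor}; that proposition, combined with the explicit formula for $\Delta$ on $\overline{T}SA$, then gives the alternating-sign coderivation-family formula of Definition~\ref{def:coderfamily}. Similarly, the hypothesis $(-1)^r c(d^B,h)+c(h,d^A)=\bbS^r(g-f)$ of Part~(4) corresponds via Proposition~\ref{prop:dAsshift} to the shifted equation of Proposition~\ref{rhomtensor}, whose conclusion is exactly the $r$-homotopy relation $(H_{m1})$ of Proposition~\ref{equivalent_notion_htp}.

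The main obstacle will be the sign-bookkeeping: the translation between the four frameworks (the family $(f_p)$, the lifted map $\tilde f$ commuting with $d_x$, the componentwise maps $f_{pq}^l$ on tensor powers, and the operadic coalgebra on $A$) passes through Koszul signs in Definition~\ref{composition_defi} and Lemma~\ref{lem: mon structure on wtc}, the sign $(-1)^{nj}$ in the bijection $f_n\leftrightarrow\pi_0\tilde f(x^n\otimes-)$, and the signs in the deconcatenation coproduct of Example~\ref{ex:FreedAsCoalg}. Careful matching is required to confirm, in particular, the alternating signs $(-1)^{p_1+\cdots+p_s}$ appearing in Definition~\ref{def:coderfamily}; the remaining content of the proof is a direct transfer via the dictionary described above.
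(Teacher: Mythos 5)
Your proposal is correct in substance but takes a genuinely different route from the paper. The paper proves all four parts by a direct componentwise expansion inside $\wbimod$: since $\underline{\Delta}=(\Delta,0,0,\dots)$ and $1=(1,0,0,\dots)$ are strict, composites such as $c(\underline{\Delta},h)$ and $c(g\widehat{\otimes}h+h\widehat{\otimes}f,\underline{\Delta})$ have $i$-th components simply $\Delta h_i$ and $(g\widehat{\otimes}h+h\widehat{\otimes}f)_i\Delta$, so each equivalence with the corresponding family condition (and with $(H_{i1})$ in part (4)) drops out in two or three lines, the final identification with $dA_\infty$-structures and morphisms being delegated to \cite[4.1, 4.3]{Sag10}. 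You instead route everything through the lift to $R[x]\otimes\overline{T}S(-)$ and quote Propositions~\ref{prop:dAscoalg}, \ref{codertensor}, \ref{prop:dAsshift} and~\ref{rhomtensor}; this is not circular (those are proved independently, earlier), and it makes the match with the operadic $(\dAs)^{\antishriek}$-notions internal to the proof rather than a separate comparison. The cost is that the real work migrates into your ``dictionary'': you must verify that the bijection $f_n(a)=(-1)^{nj}\pi_0\tilde f(x^n\otimes a)$ intertwines $c$ of Definition~\ref{composition_defi} with honest composition (true, but it is a computation with the $d_x$-equivariance signs, not a citation), and --- the subtlest point --- that $\widehat{\otimes}$ corresponds under $\pi_0\otimes\pi_0$ to the tensor square of lifts, which is precisely the sign computation buried in the proof of Proposition~\ref{codertensor}; the $\wbimod$ formulation exists largely to bypass those signs. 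One small imprecision to fix: Proposition~\ref{codertensor} delivers the identity $\bigl(\sum_j(-1)^j\tilde g_j\otimes\tilde h_{i-j}+\sum_j\tilde h_j\otimes\tilde f_{i-j}\bigr)\Delta=\Delta\tilde h_i$, not Definition~\ref{def:coderfamily} itself; the recursive unravelling from the former to the latter is an additional (standard, but separate) step, which the paper also leaves implicit.
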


\begin{proof}
\begin{enumerate}
\item  We have that $c(\delta, \delta)=0$ 
if and only if the $\delta_i$ satisfy the twisted complex relations. And the coderivation condition
on $\delta$ translates into the coderivation condition on the individual $\delta_i$. In more detail,
\begin{align*}
c(\delta\widehat{\otimes} 1 +1\widehat{\otimes} \delta,\underline{\Delta})&=c(\underline{\Delta},\delta)\\
\quad&\Longleftrightarrow \quad\left(c(\delta\widehat{\otimes} 1 +1\widehat{\otimes}\delta,\underline{\Delta})\right)_i=\left(c(\underline{\Delta},\delta)\right)_i
			\quad\text{for all $i\geq 0$}\\
	&\Longleftrightarrow \quad  (\delta\widehat{\otimes} 1 +1\widehat{\otimes} \delta)_i\Delta=\Delta\delta_i
				\quad\text{for all $i\geq 0$, since $\underline{\Delta}=(\Delta, 0, 0, \dots)$}\\		
	&\Longleftrightarrow \quad   (\delta_i\otimes 1 +1\otimes \delta_i)\Delta=\Delta\delta_i
	\qquad\text{for all $i\geq 0$, since $1=(1_A, 0, 0, \ldots)$.} 
\end{align*}
The final part follows from~\cite[4.1]{Sag10}. 

\item  We have that $c(f,d^A)=c(d^B, f)$
if and only if the $f_i$ satisfy the relations to be a morphism of twisted complexes
$(\overline{T}SA, d^A_i)\to (\overline{T}SB, d^{B}_i)$.  Then $f$ is a coalgebra morphism
means
\begin{align*}
c(f\widehat{\otimes} f,\underline{\Delta})=c(\underline{\Delta},f) \quad&\Longleftrightarrow 
	\quad\left(c(f\widehat{\otimes} f,\underline{\Delta})\right)_i=\left(c(\underline{\Delta},f)\right)_i
		\quad\text{for all $i\geq 0$}\\
		&\Longleftrightarrow \quad (f\widehat{\otimes} f)_i\Delta=\Delta f_i
				\quad\text{for all $i\geq 0$, since $\underline{\Delta}=(\Delta, 0, 0, \dots)$}\\		
	&\Longleftrightarrow \quad \sum_{j} (f_j\otimes f_{i-j})\Delta =\Delta f_i \quad\text{for all $i\geq 0$.}
\end{align*}
The last condition can be recursively reduced to the coalgebra-family of morphisms condition. The final part follows from~\cite[4.3]{Sag10}. 

\item 
\begin{align*}
c(g\widehat{\otimes} h+h\widehat{\otimes} f,\underline{\Delta})&=c(\underline{\Delta},h)\\ \quad&\Longleftrightarrow 
\quad \left(c(g\widehat{\otimes} h+h\widehat{\otimes} f,\underline{\Delta})\right)_i=\left((\underline{\Delta},h)\right)_i
			\quad\text{for all $i\geq 0$}\\
		&\Longleftrightarrow \quad 	 (g\widehat{\otimes} h+h\widehat{\otimes} f)_i \Delta=\Delta h_i
			\quad\text{for all $i\geq 0$, since $\underline{\Delta}=(\Delta_0, 0, 0, \dots)$}\\
		&\Longleftrightarrow \quad \left(\sum_j (-1)^j g_j\otimes h_{i-j} +\sum h_j\otimes f_{i-j}\right)\Delta=\Delta h_i \quad\text{for all $i\geq 0$.}
\end{align*}
The last condition recursively reduces to the coderivation-family one.
\item We calculate:
\begin{align*}
	(-1)^rc(d^{B},h)&+c(h,d^A)=\bbS^r(g-f)\\ \quad&\Longleftrightarrow 
\quad \left((-1)^rc(d^{B},h)+c(h,d^A)\right)_i=\left(\bbS^r(g-f)\right)_i \quad\text{for all $i\geq 0$}\\
&\Longleftrightarrow \quad 
	\sum_{j} (-1)^{j+r}d_j^{B}h_{i-j}+(-1)^j h_jd_{i-j}^A=
				\begin{cases} g_{i-r}-f_{i-r} &\text{if $i\geq r$},\\
				0&\text{if $i<r$.}
				\end{cases} 
\tag{$H_{i1}$}
\end{align*}
\end{enumerate}
\end{proof}

\subsubsection{Comparison with path definition}

We will show that the path definition of $r$-homotopy (Definition~\ref{def:rhomda})
corresponds to imposing conditions (3) and (4)
in Proposition~\ref{prop:wcoder}. In section~\ref{section:dascoder} we have checked that these
match up with the corresponding $(\dAs)^{\antishriek}$-coalgebra notions.

Let  $f,g:A\to B$ be two morphisms of $dA_\infty$-algebras and let  $h:A\to P_r(B)$
be an $r$-homotopy from $f$ to $g$. Recall that this is a morphism of $dA_\infty$-algebras
satisfying $\partial^-_B\circ h=f$ and $\partial^+_B\circ h=g$.

 Let $F,G:\mathpzc{\overline{T}}SA\to\mathpzc{\overline{T}}SB$ be the coalgebra morphisms  in $\wbimod$
 corresponding to $f,g$ and let $H:\mathpzc{\overline{T}}SA\to\mathpzc{\overline{T}}SP_r(B)$ be the coalgebra morphism  in $\wbimod$
corresponding to $h$.

We have three projection maps from  $P_r(B)$ to $B$, on to the left, middle and right copies of $B$, denoted
$\partial_B^-$, $\partial_B^0$ and $\partial_B^+$ respectively. We denote
the corresponding maps  $SP_r(B)$ to $SB$
by $\pi_L$, $\pi_M$ and $\pi_R$ respectively.
Then let $\pi\in\wbimod(\mathpzc{\overline{T}S}P_r(B),\mathpzc{\overline{T}S}B)$ be the strict map 
\[
\sum_{s,t} \pi_L^{\otimes s}\otimes\pi_M\otimes \pi_R^{\otimes t}.
\]

\begin{prop}
In the situation above, $c(\pi, H):\mathpzc{\overline{T}S}A\to\mathpzc{\overline{T}S}B$ is a $(G,F)$-coderivation
in  $\wbimod$.
\end{prop}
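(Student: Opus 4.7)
The plan is to reduce the coderivation property of $c(\pi,H)$ to a combination of the coalgebra-morphism property of $H$ in $\wbimod$ and an analogous coderivation-type identity for $\pi$ alone, viewed as a morphism between the tensor coalgebras attached to $P_r(B)$ and $B$.

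First, I would introduce two auxiliary strict coalgebra morphisms $\Pi_L, \Pi_R \colon \mathpzc{\overline{T}S}P_r(B) \to \mathpzc{\overline{T}S}B$ in $\wbimod$, induced arity-wise by the cogenerator maps $\pi_L, \pi_R$: namely $\Pi_L := (\pi_L^{\otimes n})_n$ and $\Pi_R := (\pi_R^{\otimes n})_n$, regarded as elements of $\wbimod$ concentrated in the zeroth horizontal component. These are coalgebra morphisms because they are the cofree extensions of single-cogenerator maps. The identities $\partial^-_B h = f$ and $\partial^+_B h = g$, propagated through suspension and the cofree-coalgebra construction, yield $c(\Pi_L, H) = F$ and $c(\Pi_R, H) = G$ (up to the correct ordering on the outer factors).

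The core lemma would then be to verify
\[
c(\underline{\Delta}, \pi) = c(\Pi_L \widehat{\otimes} \pi + \pi \widehat{\otimes} \Pi_R,\ \underline{\Delta}),
\]
expressing that $\pi$ is itself a $(\Pi_L,\Pi_R)$-coderivation. This is a direct combinatorial check: writing the arity-$n$ piece $\pi^n = \sum_{s+1+t=n}\pi_L^{\otimes s}\otimes \pi_M\otimes \pi_R^{\otimes t}$ and applying deconcatenation, the cuts strictly to the left of the $\pi_M$ factor collect into the $\Pi_L \widehat{\otimes} \pi$ summand, whereas cuts at or to the right of $\pi_M$ assemble into $\pi \widehat{\otimes} \Pi_R$.

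Combining these ingredients by associativity of $c$, the interchange law between $c$ and $\widehat{\otimes}$ provided by the enriched monoidal structure on $\wbimod$ (Lemma~\ref{lem: mon structure on wtc}), and the coalgebra-morphism identity $c(H\widehat{\otimes} H,\underline{\Delta}) = c(\underline{\Delta}, H)$, one obtains
\begin{align*}
c(\underline{\Delta}, c(\pi, H))
&= c\bigl(c(\underline{\Delta}, \pi),\, H\bigr)
 = c\bigl(c(\Pi_L \widehat{\otimes} \pi + \pi \widehat{\otimes} \Pi_R,\, \underline{\Delta}),\, H\bigr) \\
&= c\bigl(\Pi_L \widehat{\otimes} \pi + \pi \widehat{\otimes} \Pi_R,\ c(H \widehat{\otimes} H,\, \underline{\Delta})\bigr) \\
&= c\bigl(c(\Pi_L, H) \widehat{\otimes} c(\pi, H) + c(\pi, H) \widehat{\otimes} c(\Pi_R, H),\ \underline{\Delta}\bigr),
\end{align*}
which is exactly the desired coderivation identity once one substitutes the values of $c(\Pi_L,H)$ and $c(\Pi_R,H)$. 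The main delicate point will be the bookkeeping of Koszul signs introduced by the composition $c$ and by the suspension $S$; in particular one has to check that the sign carried by $\pi_M$ as it crosses the neighbouring $\pi_L$ and $\pi_R$ factors matches the sign appearing in the $r$-coderivation-family formula of Definition~\ref{def:coderfamily}, and that the interchange step produces no extra sign, which is ensured since $H$, $\Pi_L$ and $\Pi_R$ all live in bidegree $(0,0)$.
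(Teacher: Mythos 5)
Your proof is correct and follows essentially the same route as the paper: the paper stacks your two ingredients as a pair of commuting squares in $\wbimod$ --- the top square is the coalgebra-morphism property of $H$, and the bottom square is precisely your core lemma that the strict map $\pi$ intertwines $\underline{\Delta}$ with $\overline{T}S(\pi_R)\widehat{\otimes}\pi+\pi\widehat{\otimes}\overline{T}S(\pi_L)$ --- and then reads the coderivation identity off the outer rectangle using $c(\mathpzc{\overline{T}S}(\partial_B^+),H)=G$ and $c(\mathpzc{\overline{T}S}(\partial_B^-),H)=F$, exactly as in your chain of equalities. The only point you leave open is the one you flag yourself, namely which of $\Pi_L,\Pi_R$ lands on the left outer tensor factor after deconcatenation: this is precisely what distinguishes the $(G,F)$- from the $(F,G)$-coderivation condition, so it should be pinned down explicitly rather than deferred as ``up to the correct ordering.''
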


\begin{proof}
Consider the diagram
\[
\xymatrix{
\mathpzc{\overline{T}S}A\ar[d]_-{H}
\ar[r]^-{\underline{\Delta}}&\mathpzc{\overline{T}S}A\otimes \mathpzc{\overline{T}S}A
\ar[d]_-{H\widehat{\otimes} H}\\
\mathpzc{\overline{T}S}P_r(B)\ar[d]_-{\pi}
\ar[r]^-{\underline{\Delta}}&\mathpzc{\overline{T}S}P_r(B)\otimes \mathpzc{\overline{T}S}P_r(B)
\ar[d]^-{\overline{T}S(\pi_R){\widehat{\otimes}}\pi+\pi{\widehat{\otimes}} \overline{T}S(\pi_L)}\\
\mathpzc{\overline{T}S}B\ar[r]_{\underline{\Delta}} &\mathpzc{\overline{T}S}B\otimes \mathpzc{\overline{T}S}B\\
}
\]
We claim that this is a commutative diagram in the category $\wbimod$. The top square is such
a commutative diagram since $H$ is a coalgebra morphism in $\wbimod$. In the bottom square all the morphisms are strict,
so composition and tensor agree with the usual ones and we just need to see that this commutes in the usual sense,
which can be easily checked.

Finally, one can check that $c(\mathpzc{\overline{T}S}(\partial_B^+), H)=G$ and $c(\mathpzc{\overline{T}S}(\partial_B^-), H)=F$,
so the commuting of the outer square reads $c(G{\widehat{\otimes}} c(\pi, H)+c(\pi, H){\widehat{\otimes}} F),\underline{\Delta})=c(\underline{\Delta},c(\pi, H))$,
which is the $(G,F)$-coderivation condition for $c(\pi, H)$.
\end{proof}

\begin{lem}\label{lemma:homcalc}
\[
\left(c(\pi,d^{P_rB})+(-1)^r c(d^B,\pi)\right)_i =\begin{cases}
(\mathpzc{\overline{T}}(\pi_R) - \mathpzc{\overline{T}}(\pi_L)) &\text{if $i=r$},\\
0&\text{otherwise}.
\end{cases}
\]
\end{lem}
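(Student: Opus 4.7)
The plan is to compute both sides directly, exploiting the strictness of $\pi$ and the explicit matrix formulas for the $M_{ij}$ on $P_r(B)$ given in Proposition \ref{rpathdAinf}. Since $\pi$ is a strict morphism (only its $0$-th component is non-zero), the composition formulas in $\wbimod$ from Definition \ref{composition_defi} collapse to
\[
\left(c(\pi, d^{P_rB})\right)_i = \pi \circ (d^{P_rB})_i \quad \text{and} \quad \left(c(d^B, \pi)\right)_i = (d^B)_i \circ \pi,
\]
where $(d^{P_rB})_i$ and $(d^B)_i$ are the coderivations on $\mathpzc{\overline{T}S}P_r(B)$ and $\mathpzc{\overline{T}S}B$ extending the operations $M_{ij}$ (resp.\ $m_{ij}$) for varying $j \geq 1$. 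Decompose $\pi = \sum_{s+1+t=n} \pi^{(s,t)}$ on each $n$-fold tensor summand, with $\pi^{(s,t)}:=\pi_L^{\otimes s}\otimes \pi_M \otimes \pi_R^{\otimes t}$, and decompose the coderivations by arity as $(d^{P_rB})_i=\sum_j \hat{M}_{ij}$ and $(d^B)_i=\sum_j \hat{m}_{ij}$.

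For $i \neq r$, all matrices $M_{ij}$ are diagonal with respect to the decomposition $P_r(B) = B \oplus B \oplus B$: for $j=1$ they read $\mathrm{diag}(m_{i1},(-1)^{i+r+1}m_{i1},m_{i1})$, and for $j \geq 2$ they read $\mathrm{diag}(m_{ij},(-1)^{rj+i+j}m_{ij},m_{ij})\circ t_j$. Since the projections $\pi_L,\pi_M,\pi_R$ respect the direct-sum decomposition, $\hat{M}_{ij}$ commutes with $\pi^{(s,t)}$ up to the appropriate diagonal sign when acting at a given tensor position: trivially at positions $k \neq s+1$, and with the middle-entry sign at $k=s+1$. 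Combining these signs with the $(-1)^r$ of the statement and the Koszul factors that arise in the coderivation extension (through the $\Psi_j$ suspensions and the $t_j$ permutation) yields full cancellation; thus the expression vanishes for $i \neq r$.

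For $i=r$, the diagonal contributions cancel by the same mechanism (with middle-entry sign $(-1)^{r+r+1}=-1$ for $j=1$), and only the off-diagonal entries of $M_{r1}$ survive. These are the maps $-1\colon B\to B$ and $+1\colon B\to B$ sending the first, respectively third, component of $P_r(B)$ into the middle. Acting by $\hat{M}_{r1}$ via these off-diagonal entries at position $k=s+1$, followed by the projection $\pi^{(s,t)}$ (which applies $\pi_M$ at position $s+1$), replaces the $(s+1)$-st slot by $-x_{s+1}+z_{s+1}$ while all remaining positions carry $\pi_L$ or $\pi_R$. Summing over $s=0,\dots,n-1$, the result telescopes:
\[
\sum_{s=0}^{n-1}\Bigl(x_1{\otimes}\cdots{\otimes} x_s{\otimes} z_{s+1}{\otimes} z_{s+2}{\otimes}\cdots{\otimes} z_n - x_1{\otimes}\cdots{\otimes} x_s{\otimes} x_{s+1}{\otimes} z_{s+2}{\otimes}\cdots{\otimes} z_n\Bigr) = z_1\otimes\cdots\otimes z_n - x_1\otimes\cdots\otimes x_n,
\]
which is precisely $\bigl(\mathpzc{\overline{T}}(\pi_R) - \mathpzc{\overline{T}}(\pi_L)\bigr)(Sb_1\otimes\cdots\otimes Sb_n)$.

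The main obstacle will be the delicate sign bookkeeping for the case $i\neq r$: combining the signs $(-1)^{i+r+1}$ (from the middle entry of $M_{i1}$) and $(-1)^{rj+i+j}$ (from the middle entry of $M_{ij}$ for $j\geq 2$) with the Koszul signs from the $\Psi_j$-shifted coderivation extension on $\mathpzc{\overline{T}S}P_r(B)$, with the $(-1)^r$ factor in the statement, and with the signs produced by $t_j$, in such a way that all contributions cancel. Once the sign ledger is checked, the geometric content of the lemma reduces to the telescoping identity above.
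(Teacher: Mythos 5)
Your proposal is correct and follows essentially the same route as the paper's proof: both reduce to plain composition using strictness of $\pi$, cancel the terms where the $M_{ij}$ interact with the projections against $(-1)^{r+i}d_i^B\pi$, and identify the surviving off-diagonal contributions of $M_{r1}$ as the telescoping sum $\sum_{s,t}\pi_L^{\otimes s}\otimes(\pi_R-\pi_L)\otimes\pi_R^{\otimes t}=\mathpzc{\overline{T}}(\pi_R)-\mathpzc{\overline{T}}(\pi_L)$. The sign ledger you defer is exactly what the paper packages into the displayed identities $\pi_L\widetilde{M}_{ij}=\widetilde{m}_{ij}\pi_L^{\otimes j}$, $\pi_R\widetilde{M}_{ij}=\widetilde{m}_{ij}\pi_R^{\otimes j}$, $(-1)^{r+i+1}\pi_M\widetilde{M}_{ij}=\sum_{a+b+1=j}\widetilde{m}_{ij}(\pi_L^{\otimes a}\otimes\pi_M\otimes\pi_R^{\otimes b})$ and $\pi_M\widetilde{M}_{r1}=-\widetilde{m}_{r1}\pi_M+\pi_R-\pi_L$.
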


\begin{proof}
Since $\pi$ is a strict map, 
\[
\left(c(\pi,d^{P_rB})+(-1)^r c(d^B,\pi)\right)_i =\pi d^{P_rB}_i+(-1)^{r+i}d_i^B\pi.
\]
Now 
\[
d_i^B\pi=\sum_{j,s,t}(1_B^{\otimes s}\otimes \widetilde{m}_{ij}\otimes 1_B^{\otimes t})
			(\sum_{u,v}\pi_L^{\otimes u}\otimes \pi_M\otimes \pi_R^{\otimes v}),
\]
where, following Sagave's conventions, $\widetilde{m}_{ij}=\Psi_j(m_{ij}):SB^{\otimes j}\to SB$.

Thus we have
\[ \pi_{L}=S\partial_B^{-1}S^{-1},\ \quad  \pi_{R}=S\partial_B^{-1}S^{-1},\ \quad \pi_M=S\partial_B^0S^{-1}(-1)^{r-1},
\]
with $\pi_{L}$ and $\pi_R$ of bidegree $(0,0)$ and $\pi_M$ of bidegree $(r,1-r)$
and
\[ \widetilde{m}_{ij}=Sm_{ij}(S^{-1})^{\otimes j}(-1)^{1+i},\ \quad  \widetilde{M}_{ij}=SM_{ij}(S^{-1})^{\otimes j}(-1)^{1+i},
\]
both of bidegree $(-i,1-i)$.

When we expand out the composition, there are three types of terms appearing, according to
whether the input to $\widetilde{m}_{ij}$ is of the form $\pi_L^{\otimes j}$,
$\pi_L^{\otimes a}\otimes \pi_M\otimes \pi_R^{\otimes b}$ or $\pi_R^{\otimes j}$.

And 
\[
\pi d^{P_rB}_i=
(\sum_{u,v}\pi_L^{\otimes u}\otimes \pi_M\otimes \pi_R^{\otimes v})
	(\sum_{j,s,t}(1_{P_r(B)}^{\otimes s}\otimes \widetilde{M}_{ij}\otimes 1_{P_r(B)}^{\otimes t}),
\]
where  $\widetilde{M}_{ij}\Psi_j(M_{ij}):SP_r(B)^{\otimes j}\to SP_r(B)$.
Again there are three sorts of terms,  according to
whether they involve $\pi_L\widetilde{M}_{ij}$, $\pi_M\widetilde{M}_{ij}$ or $\pi_R\widetilde{M}_{ij}$.

From the definition of the $M_{ij}$ for $P_r(B)$, we have, for all $(i,j)$,
\[
\partial_B^- M_{ij}=m_{ij}(\partial_B^-)^{\otimes j},\qquad \partial_B^+ M_{ij}=m_{ij}(\partial_B^+)^{\otimes j}.  
\]
And
\begin{align*}
(-1)^{rj+i+j}\partial_B^0 M_{ij}&=\sum_{a+b+1=j}m_{ij}((\partial_B^-)^{\otimes a}\otimes \partial_B^0\otimes (\partial_B^+)^{\otimes b}),
\quad\text{for $(i,j)\neq (r,1)$,}    \\
\partial_B^0 M_{r1}&=-m_{r1}\partial_B^0+\partial_B^+-\partial_B^-.\\
\end{align*}
Sign calculations with the suspension show that these convert to
\[
\pi_L\widetilde{M}_{ij}=\widetilde{m}_{ij}\pi_L^{\otimes j},\qquad \pi_R\widetilde{M}_{ij}=\widetilde{m}_{ij}\pi_R^{\otimes j}, \qquad\text{for all $(i,j)$}  
\]
and 
\begin{align*}
(-1)^{r+i+1}\pi_M \widetilde{M}_{ij}&=\sum_{a+b+1=j}\widetilde{m}_{ij}(\pi_L^{\otimes a}\otimes \pi_M\otimes \pi_R^{\otimes b}),  \\
\pi_M \widetilde{M}_{r1}&=-\widetilde{m}_{r1}\pi_M+\pi_R-\pi_L.
\end{align*}

Using the above, one may now check that in $\left(c(\pi,d^{P_rB})+(-1)^r c(d^B,\pi)\right)_i $
almost
all terms cancel pairwise, with the exception, when $i=r$, of the extra terms in $c(\pi,d^{P_rB})$ coming
from the special form of $M_{r1}$. These contribute
\[
\sum_{s,t} 1^{\otimes s}\otimes (\pi_R-\pi_L)\otimes 1^{\otimes t}
=\sum_j \pi_R^{\otimes j}-\pi_L^{\otimes j}=(\mathpzc{\overline{T}}(\pi_R)-\mathpzc{\overline{T}}(\pi_L)).
\qedhere
\]
\phantom\qedhere
\end{proof}

\begin{prop}
The map $c(\pi, H):\mathpzc{\overline{T}}SA\to\mathpzc{\overline{T}}SB$
satisfies  the $r$-homotopy condition in $\wbimod$
of part (4) of Proposition~\ref{prop:wcoder}.
\end{prop}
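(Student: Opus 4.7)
The plan is to slide the twisted differentials outside of $c(\pi,H)$ using associativity of $c$ together with the morphism property $\delta(H)=0$, thereby reducing the identity to a direct application of Lemma~\ref{lemma:homcalc}.

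First, associativity of $c$ in $\wbimod$ (Lemma~\ref{associative_comp}) gives
\[
(-1)^r c(d^B,\, c(\pi, H)) = (-1)^r c\bigl(c(d^B, \pi),\, H\bigr),\qquad c(c(\pi,H),\, d^A) = c\bigl(\pi,\, c(H, d^A)\bigr).
\]
Since $H$ corresponds under Proposition~\ref{prop:wcoder}(2) to the $dA_\infty$-morphism $h$, it satisfies $\delta(H)=0$. Because $|H|=0$ and so $\langle H, d^A\rangle=0$, Definition~\ref{def:delta} unpacks this to $c(H, d^A)=c(d^{P_rB}, H)$; a further application of associativity rewrites the second term as $c(c(\pi, d^{P_rB}),H)$. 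Adding the two contributions, the left-hand side of the desired identity becomes
\[
c\!\left(\,c(\pi, d^{P_rB}) + (-1)^r c(d^B, \pi),\; H\,\right).
\]

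By Lemma~\ref{lemma:homcalc}, the first slot is concentrated in index $r$ with value $\mathpzc{\overline{T}}(\pi_R)-\mathpzc{\overline{T}}(\pi_L)$. Expanding via Definition~\ref{composition_defi}, and noting that $|H|=0$ trivializes the Koszul sign, the $m$-th component of the displayed expression equals $\bigl(\mathpzc{\overline{T}}(\pi_R) - \mathpzc{\overline{T}}(\pi_L)\bigr) H_{m-r}$ for $m\geq r$ and $0$ otherwise. The endpoint conditions $\partial^+_B h = g$ and $\partial^-_B h = f$, recorded at the end of the proof of the preceding proposition as $c(\mathpzc{\overline{T}S}(\partial^+_B), H)=G$ and $c(\mathpzc{\overline{T}S}(\partial^-_B), H)=F$, unpack componentwise (using strictness of $\mathpzc{\overline{T}S}(\partial^\pm_B)$) to $\mathpzc{\overline{T}}(\pi_R)H_k=G_k$ and $\mathpzc{\overline{T}}(\pi_L)H_k=F_k$ for all $k$. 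Substituting, the $m$-th component becomes $G_{m-r}-F_{m-r}$ for $m\geq r$ and $0$ otherwise, which is precisely $\bbS^r(G-F)$ by Definition~\ref{def:shift}.

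The substantive work has already been carried out in Lemma~\ref{lemma:homcalc}, whose cancellations depend on the explicit recipe for the twisted differentials on $P_r(B)$ and in particular on the inhomogeneous term in $M_{r1}$ that produces the $\pi_R-\pi_L$ correction. Granted that lemma, the present proposition becomes a short bookkeeping exercise in associativity of $c$, the morphism condition for $H$, and the boundary conditions on $h$; the only care needed is with bidegrees and Koszul signs, and these are all tame because $|H|=0$.
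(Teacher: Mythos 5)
Your proposal is correct and follows essentially the same route as the paper: both use associativity of $c$ together with the morphism relation $c(H,d^A)=c(d^{P_r(B)},H)$ to collect the differentials into the first slot, then apply Lemma~\ref{lemma:homcalc} and the endpoint identities $c(\mathpzc{\overline{T}S}(\partial_B^\pm),H)=F,G$ to identify the result with $\bbS^r(G-F)$. Your version merely spells out the intermediate bookkeeping that the paper's three-line computation leaves implicit.
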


\begin{proof}
Using associativity of composition (Lemma~\ref{associative_comp}), Lemma~\ref{lemma:homcalc}
and the relation $c(H,d^A)=c(d^{P_r(B)},H)$ , we have
\begin{align*}
\left(c(c(\pi, H),d^A)+(-1)^rc(d^B,c(\pi, H))\right)_i&= 
\left( c(c(\pi,d^{P_rB})+(-1)^r c(d^B,\pi), H) \right)_i \\
&=
(\mathpzc{\overline{T}}(\pi_R) - \mathpzc{\overline{T}}(\pi_L))H_{i-r}\\
&=G_{i-r}-F_{i-r}.
\end{align*}

So 
\[
c(c(\pi, H),d^A)+(-1)^r c(d^B,c(\pi, H))=
\mathbb{S}^r (G-F),
\]
as required.
\end{proof}

We now collect everything together.

\begin{teo}\label{r-homotopy-agree}
Let  $f,g:A\to B$ be two morphisms of $dA_\infty$-algebras.
Then $h:A\to P_r(B)$
being an $r$-homotopy from $f$ to $g$ in the sense of Definition~\ref{def:rhomda} is equivalent to
 conditions (3) and (4) of Proposition~\ref{prop:wcoder} on $c(\pi, H): \mathpzc{\overline{T}}SA\to \mathpzc{\overline{T}}SB$, where 
$H:\mathpzc{\overline{T}}SA\to\mathpzc{\overline{T}S}P_r(B)$ is the coalgebra morphism  in $\wbimod$
corresponding to $h$.
\end{teo}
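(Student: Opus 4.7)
The plan is to prove the equivalence in both directions, drawing on the machinery already built in this section.

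For the $\Rightarrow$ implication, that an $r$-homotopy $h$ gives rise to $c(\pi,H)$ satisfying (3) and (4), I would simply invoke the two propositions immediately preceding the theorem: the first constructs the $(G,F)$-coderivation relation for $c(\pi,H)$ in $\wbimod$, and the second, using Lemma~\ref{lemma:homcalc} together with the equality $c(H,d^A)=c(d^{P_r(B)},H)$ (valid because $h$ is a $dA_\infty$-morphism), yields the $r$-homotopy equation $(-1)^r c(d^B,c(\pi,H))+c(c(\pi,H),d^A)=\bbS^r(G-F)$.

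For the $\Leftarrow$ implication, I would start from a coalgebra morphism $H$ in $\wbimod$ whose associated $\phi:=c(\pi,H)$ satisfies conditions (3) and (4), and verify that the corresponding $h$ is an $r$-homotopy. The coalgebra morphism $H$ is determined by its cogenerator component $H_1:\overline{T}SA\to SP_r(B)\cong SB\oplus S^{1-r}B\oplus SB$, which decomposes as a triple $(F^{\flat}_1, H^{0}_1, G^{\flat}_1)$ via the three projections $\pi_L,\pi_M,\pi_R$. The $(G,F)$-coderivation condition~(3), when compared with the explicit coderivation-family formula of Definition~\ref{def:coderfamily}, fixes the left and right cogenerator components to be $F^{\flat}_1=F_1$ and $G^{\flat}_1=G_1$. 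This is exactly the boundary condition $\partial^-_B\circ h=f$ and $\partial^+_B\circ h=g$ of Definition~\ref{def:rhomda}.

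To finish, I would verify that $H$ commutes with the differentials, so that by Proposition~\ref{prop:wcoder}(2) applied to $P_r(B)$ the family $h$ is a $dA_\infty$-morphism. The equation $c(H,d^A)=c(d^{P_r(B)},H)$ may be projected onto the three summands of $SP_r(B)$ via $\pi_L,\pi_M,\pi_R$, using the block form of $d^{P_r(B)}$ from Proposition~\ref{rpathdAinf}. The $\pi_L$- and $\pi_R$-projections become the differential-compatibility conditions for $F$ and $G$, which hold by hypothesis. The $\pi_M$-projection splits into a diagonal piece, giving $c(c(\pi,H),d^A)+(-1)^r c(d^B,c(\pi,H))$, and an off-diagonal $r$-row correction, which by Lemma~\ref{lemma:homcalc} equals $(\mathpzc{\overline{T}}(\pi_R)-\mathpzc{\overline{T}}(\pi_L))H_1=\bbS^r(G-F)$ through the identifications $c(\mathpzc{\overline{T}S}(\partial_B^+),H)=G$ and $c(\mathpzc{\overline{T}S}(\partial_B^-),H)=F$ established above; setting these equal reproduces precisely condition~(4) on $\phi$, so the $\pi_M$-projection is automatic.

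The main obstacle is the bookkeeping of suspensions and sign conventions required to match the three projections of the differential-commutation equation with conditions (3) and (4); however, this has been essentially automated by Propositions~\ref{codertensor}, \ref{rhomtensor} and Lemma~\ref{lemma:homcalc}, reducing the verification to a direct comparison with no further algebraic input.
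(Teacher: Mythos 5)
Your forward implication is the paper's: it is precisely the content of the two propositions preceding the theorem, and needs no further work. Your converse, however, rests on a step that fails. You claim that condition (3) --- the $(G,F)$-coderivation identity for $c(\pi,H)$ --- forces the outer cogenerator components of $H$ to be $F^1$ and $G^1$, i.e.\ that it yields the boundary conditions $\partial_B^-\circ h=f$ and $\partial_B^+\circ h=g$. It does not. The map $\pi=\sum_{s,t}\pi_L^{\otimes s}\otimes\pi_M\otimes\pi_R^{\otimes t}$ records the components $\pi_LH^1$ and $\pi_RH^1$ only in tensor factors flanking a factor $\pi_MH^1$, so condition (3) constrains them only where they multiply nonzero middle components. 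In the extreme case $\pi_MH^1=0$ one gets $c(\pi,H)=0$, which satisfies (3) for any $F,G$ and satisfies (4) whenever $f=g$, while $\pi_LH^1$ and $\pi_RH^1$ remain completely arbitrary; such an $H$ need not give an $r$-homotopy from $f$ to $f$. Since you later invoke the identifications $c(\mathpzc{\overline{T}S}(\partial_B^{+}),H)=G$ and $c(\mathpzc{\overline{T}S}(\partial_B^{-}),H)=F$ --- which \emph{are} these boundary conditions --- to produce the term $\bbS^r(G-F)$ in the $\pi_M$-projection, the gap is load-bearing.

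The repair is the one the paper makes implicitly: the boundary conditions are part of the data, not a consequence of (3). One reconstructs $H^1$ from the triple $(F,\,G,\,c(\pi_M,H^1))$ by decreeing $\pi_LH^1=F^1$ and $\pi_RH^1=G^1$, and observes that $c(\pi,H)$ both determines and is determined by this triple; this is the "recoverability" step in the paper's (terse) proof. Granting that, the remainder of your converse is sound and in fact more explicit than the paper's: the $\pi_L$- and $\pi_R$-projections of $c(H,d^A)=c(d^{P_rB},H)$ hold because $f$ and $g$ are $dA_\infty$-morphisms, and applying $c(\pi,-)$ to the defect $c(H,d^A)-c(d^{P_rB},H)$ yields, via Lemma~\ref{lemma:homcalc}, exactly the defect of condition (4); one then checks that these three families of projections jointly detect the cogenerator component of the defect, hence the defect itself.
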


\begin{proof}
We have already seen that $c(\pi, H)$ does satisfy conditions (3) and (4) of Proposition~\ref{prop:wcoder}. 
It is also straightforward to see that we can recover $h$ from $c(\pi, H)$.
Indeed, from $c(\pi, H)$ we can extract $G, F$ and $c(\pi_M, H^1)$,
 where $H^1: \mathpzc{\overline{T}}SA\to SP_r(B)$
is the composite of $H$ with the strict projection $\mathpzc{\overline{T}}SP_r(B)\to SP_r(B)$
and
$c(\pi, H)$ is uniquely determined by this data.
Now $H^1$ is also uniquely determined by  $G, F$ and $c(\pi_M, H^1)$ and we have $\tilde{h}_{ij}=H^1_{ij}$.
\end{proof}

Thus the notion of  $r$-homotopy defined via the
path construction  coincides with the operadic one.

\subsubsection{Explicit $r$-homotopy}

\begin{prop}\label{equivalent_htp_dAinf}
Giving an $r$-homotopy $h:A\to P_r(B)$ between morphisms of $dA_\infty$-algebras $f,g:A\to B$ 
is equivalent to giving a collection of morphisms  $h_{ik}:A^{\otimes k}\to B$ of
bidegree $(r-i,r-i-k)$, satisfying, for
all $m$ and $k$,
	\begin{align*}
		(-1)^{m-r}\sum_{i+p=m}\!
		\Big(
 \sum_l  m^B_{il}\!\!\!\sum_{\substack{0\leq s\leq l-1\\
			p_1+\cdots+p_l=p\\
			q_1+\cdots + q_l=k}}
		\!\!\!
(-1)^{p+\alpha+\sum_{u=1}^s p_u}
		  &g_{p_1q_1}\otimes \cdots \otimes g_{p_sq_s}\otimes h_{p_{s+1}q_{s+1}}\otimes 
	f_{p_{s+2}q_{s+2}}\otimes\cdots\otimes f_{p_lq_l}\\
			\qquad\quad+ \sum_l  h_{il}\sum_{\substack{q+s+t=k\\ q+1+t=l}}
	(-1)^{\beta}
			 1^{\otimes s}\otimes m^A_{pq}\otimes 1^{\otimes t}
						\Big)&\\
				&=  \left\{ 			\begin{array}{ll}
								0&\text{ if }m<r,\\
								g_{m-r,k}-f_{m-r,k}&\text{ if }m\geq r.
								\end{array}\right.  
	\tag{$H_{mk}$}
	\end{align*}

Here the signs are given by 
\begin{align*}
\alpha &=\sum_{u=1}^l(p_u+q_u)(l+u)+\sum_{u=1}^lq_u(\sum_{v=u+1}^l p_v+q_v)+(r-1)(l+1+s+\sum_{u=1}^s q_u),
\\
\beta &=sq+t+pl+r.
\end{align*}
\end{prop}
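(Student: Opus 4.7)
The plan is to unpack the definition of $r$-homotopy directly, working componentwise in the three-fold decomposition of $P_r(B)$. Given a morphism of $dA_\infty$-algebras $h\colon A\to P_r(B)$, each $h_{ij}\colon A^{\otimes j}\to P_r(B)$ of bidegree $(-i,1-i-j)$ splits uniquely as
\[
h_{ij}(x) \;=\; \bigl(f^h_{ij}(x),\;\widehat h_{ij}(x),\;g^h_{ij}(x)\bigr),
\]
where inspection of the bidegrees of the three summands of $P_r(B)_i^j$ shows that $\widehat h_{ij}\colon A^{\otimes j}\to B$ has bidegree $(r-i,r-i-j)$. The conditions $\partial^-_B\circ h=f$ and $\partial^+_B\circ h=g$ then force $f^h_{ij}=f_{ij}$ and $g^h_{ij}=g_{ij}$, so the remaining data is exactly the collection $(h_{ik}:=\widehat h_{ik})$ named in the statement.

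The next step is to expand the morphism equations $(B_{uv})$ for $h\colon A\to P_r(B)$ using the $r$-path structure maps $M^B_{ij}$ from Proposition~\ref{rpathdAinf} together with the composition formula of Proposition~\ref{prop:composition}. Each $M^B_{ij}$ acts diagonally on the three-fold decomposition, except for $M^B_{r,1}$, whose off-diagonal entries $-1$ and $+1$ connect the outer and middle summands. Projecting $(B_{uv})$ onto the two outer summands recovers the $(B_{uv})$ relations for $f$ and $g$, which hold by hypothesis. Projecting onto the middle summand, the diagonal contributions assemble into the left-hand side of $(H_{mk})$, with the mixing map $t_j$ routing the unique middle-component tensor factor through the composite, while the off-diagonal entries of $M^B_{r,1}$ produce the right-hand side $g_{m-r,k}-f_{m-r,k}$; the cut-off at $m<r$ arises because these off-diagonal entries sit specifically in arity $(r,1)$. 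Relabeling $(u,v)=(m,k)$ yields $(H_{mk})$, and the reverse direction is immediate from the same decomposition, since from a family $(h_{ik})$ satisfying $(H_{mk})$ one defines $h_{ij}(x):=(f_{ij}(x),h_{ij}(x),g_{ij}(x))$ and retraces the computation.

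The main obstacle is sign bookkeeping, as three sources contribute to $\alpha$ and $\beta$: the signs $(-1)^{i+r+1}$ and $(-1)^{rj+i+j}$ on the middle diagonal of $M^B_{ij}$; the signs picked up by the mixing map $t_j$, which moves the $u$-component of bidegree $(r,r-1)$ past preceding tensor factors via $\overline x=(-1)^{rx_1+(1-r)x_2}x$; and the standard composition sign $\sigma$ of Proposition~\ref{prop:composition}. Tracking these, the classical portion $\sum_{u}(p_u+q_u)(l+u)+q_u\sum_{v>u}(p_v+q_v)$ of $\alpha$ is exactly $\sigma$, the cumulative shift $\sum_{u=1}^s p_u$ records passing the middle factor past the first $s$ entries via Koszul signs on horizontal degrees, and the term $(r-1)(l+1+s+\sum_{u=1}^s q_u)$ records the corresponding motion for the vertical $(1-r)$ part of the middle factor's bidegree. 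The contribution $\beta=sq+t+pl+r$ combines the $(B_{uv})$ composition sign $(-1)^{rq+t+pj}$ (with $j$ read as $l$) on the left-hand side with an overall $(-1)^r$ coming from the shift built into the middle summand. Verifying that these contributions assemble exactly as in $\alpha$ and $\beta$ is a direct but laborious computation; once done, the equivalence follows. As a cross-check, one may alternatively deduce the statement from Theorem~\ref{r-homotopy-agree}: writing out conditions (3) and (4) of Proposition~\ref{prop:wcoder} for the induced map $c(\pi,H)$ in $\wbimod$ reproduces $(H_{mk})$ with the same signs once the coderivation-family condition of Definition~\ref{def:coderfamily} is combined with the shifted derivation equation.
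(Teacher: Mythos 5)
Your route is genuinely different from the paper's. The paper does not unpack the path object at this point: it invokes Theorem~\ref{r-homotopy-agree} to replace the path definition by conditions (3) and (4) of Proposition~\ref{prop:wcoder} on families of maps $\overline{T}SA\to\overline{T}SB$, extracts the component $(SA)^{\otimes k}\to SB$ of the twisted-complex homotopy relation, uses the coderivation, coalgebra-family and coderivation-family conditions to reduce everything to those components, and finally desuspends via $\Psi_k^{-1}$ (which is where the $(r-1)(\cdots)$ part of $\alpha$ and the $+r$ in $\beta$ come from). You instead project the morphism equations $(B_{uv})$ for $h\colon A\to P_r(B)$ onto the middle summand of $P_r(B)$ directly. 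That is a legitimate, more self-contained strategy which needs none of the $\wbimod$/coalgebra machinery, at the price of redoing from scratch the bookkeeping the paper has packaged into Definition~\ref{def:coderfamily} and Proposition~\ref{prop:wcoder}; your closing ``cross-check'' is in fact the paper's actual proof.

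There is, however, one concrete point your sketch passes over which is not routine. With $h_{ij}=(f_{ij},\widehat h_{ij},g_{ij})$ (first slot the $\partial^-$-component, hence $f$; last slot the $\partial^+$-component, hence $g$), the middle component of $t_l\bigl(h_{p_1q_1}\otimes\cdots\otimes h_{p_lq_l}\bigr)$ is, by the definition of $t_l$, a sum of terms of the shape $\overline{f}_{p_1q_1}\otimes\cdots\otimes\overline{f}_{p_sq_s}\otimes\widehat h_{p_{s+1}q_{s+1}}\otimes g_{p_{s+2}q_{s+2}}\otimes\cdots\otimes g_{p_lq_l}$, i.e.\ $f$'s \emph{before} the homotopy slot and $g$'s after. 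The target formula $(H_{mk})$ has the opposite pattern, and the two families of terms are not reindexings of one another (already for $l=2$ one set is $\{h\otimes f,\ g\otimes h\}$ and the other $\{h\otimes g,\ f\otimes h\}$); no Koszul sign can permute tensor factors. So the step ``the diagonal contributions assemble into the left-hand side of $(H_{mk})$'' does not go through as written: you must either account for this reordering or accept that the direct computation yields the formula with the roles of $f$ and $g$ in the tensor factors exchanged relative to the statement. This needs to be resolved before the identities for $\alpha$ and $\beta$ --- which you also leave entirely unverified, and which are essentially the whole content of the proposition --- can even be set up.
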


\begin{proof}
We have seen that the path definition of $r$-homotopy for $dA_\infty$-algebras agrees with the 
$r$-coderivation-family and $r$-homotopy of twisted complex conditions on the corresponding families of maps
 $\overline TSA\to  \overline TSB$.

Suppose that the
$dA_\infty$-algebra structures of $A$ and $B$ are encoded in families of coderivations $d_i^A$
and $d_i^B$ making $\overline TSA$ and $\overline TSB$ respectively into twisted chain complexes.
Suppose also that $f$ corresponds to the coalgebra-family of maps $\tilde{f}_i: \overline TSA\to  \overline TSB$, giving
a morphism of twisted complexes; similarly $g$ corresponds to  a coalgebra-family of maps $\tilde{g}_i: \overline TSA\to  \overline TSB$.

The $r$-homotopy condition on $(h_i)$ between maps of twisted complexes is equivalent to, for all $m\geq 0$,
	\begin{equation}\label{homotopy}
	\sum_{i+j=m} (-1)^{i+r} d_i^B\tilde{h}_j+(-1)^i \tilde{h}_i\delta_j^A=\left\{ 
								\begin{array}{ll}
								0&\text{ if }m<r,\\
								\tilde{g}_{m-r}-\tilde{f}_{m-r}&\text{ if }m\geq r.
								\end{array}\right.
								\tag{$H_{m1}$}
	\end{equation}
This is an equality of maps $\overline TSA\to  \overline TSB$. 
Here the bidegree of $\tilde{h}_i$ is $(r-i, r-i-1)$, that of $d^A_i$ and of $d^B_i$ is $(-i, -i+1)$ and that of $\tilde{f}_i$ and of $\tilde{g}_i$ is
$(-i,-i)$.

For each $m\geq 0$ and for $k\geq 1$, we consider the ``component of equation~(\ref{homotopy}) from 
$(SA)^{\otimes k}\to SB$''. That is, we pre-compose with
the inclusion $(SA)^{\otimes k}\to  \overline TSA$ and post-compose with the projection to $SB$. We will show that
this gives, after shifting, the required statement. 

The various conditions on the morphisms mean that everything is determined by components. 

Firstly, the coderivation
condition on each $d^A_i$ means that they are determined by components $(SA)^{\otimes k}\to SB$,
corresponding to $m_{ik}^A:A^{\otimes k}\to A$ and
similarly for $d^B_i$.
Secondly, since the $\tilde{f}_i :\overline TSA\to  \overline TSB$ form a coalgebra-family of morphisms, it is determined by components  
$\tilde{f}_{ik}=\tilde{f}_{ik}^1:(SA)^{\otimes k} \to  SB$ and similarly for $\tilde{g}_i$.  
And finally, recall that the $r$-$((\tilde{g}_i), (\tilde{f}_i))$-coderivation-family condition means that
$\tilde{h}_i$ is determined by components $\tilde{h}_{ik}=\tilde{h}_{ik}^1:(SA)^{\otimes k}\to SB$, where 
$\tilde{h}_{ik}^l:(SA)^{\otimes k}\to (SB)^{\otimes l}$ is given by
	\[
	\tilde{h}_{ik}^l=\sum_{\substack{s+1+t=l\\
			i_1+\cdots+i_s+p+l_1+\cdots+l_t=i\\
			k_1+\cdots + k_r+q+j_1+\cdots +j_t=k}} (-1)^{i_1+\cdots+ i_s}
		 \tilde{g}_{i_1k_1}\otimes \cdots \otimes \tilde{g}_{i_sk_s}\otimes \tilde{h}_{pq}\otimes \tilde{f}_{l_1j_1}\otimes\cdots\otimes \tilde{f}_{l_tj_t} . 
	\]

Let $f_{ik}=\Psi_k^{-1}(\tilde{f}_{ik}):A^{\otimes k} \to B$,  $g_{ik}=\Psi_k^{-1}(\tilde{g}_{ik}):A^{\otimes k} \to B$ and
 $h_{ik}=\Psi_k^{-1}(\tilde{h}_{ik}):A^{\otimes k} \to B$ and note that the bidegree of $h_{ik}$ is $(r-i, r-i-k)$.
We have  $\tilde{m}_{ik}=\Psi_k(m_{ik})=(-1)^{i+1}\sigma^{-1}m_{ik}\sigma^{\otimes k}$,
 $\tilde{f}_{ik}=\Psi_k(f_{ik})=(-1)^{i}\sigma^{-1}f_{ik}\sigma^{\otimes k}$ and similarly for $\tilde{g}_{ik}$ and
$\tilde{h}_{ik}=\Psi_k(h_{ik})=(-1)^{r+i+1}\sigma^{-1}h_{ik}\sigma^{\otimes k}$.

Then it is a matter of direct calculation that the extraction of the relevant component maps from equation~(\ref{homotopy}),
together with removing the shifts using the isomorphism $\Psi_k$,
gives the required result.
\end{proof}

\begin{rmk}
Sagave~\cite[Definition 4.9]{Sag10} defined homotopy of morphisms from $A$ to $B$ of 
$dA_\infty$-algebras only in the special case  where $A$ is minimal and $B$ is a bidga. One may check that
his definition is equivalent to our $0$-homotopy in that case.
\end{rmk}

\appendix
\section{List of notation for categories}

For easy reference, we provide a list of the notation for the main categories appearing  in this paper.

\begin{itemize}
\item $(\cpx,\otimes,R)$ is the category with objects cochain complexes and morphisms degree $0$ chain maps.   The unit $R$ is the cochain complex concentrated in degree zero.
\item $(\bimod,\otimes,R)$ is the category with objects bigraded modules and morphisms degree $(0,0)$ maps of bigraded modules.  The unit $R$ is as above.
\item $(\vbic,\otimes,R)$ is the category with objects vertical bicomplexes and morphisms degree $(0,0)$ maps of vertical bicomplexes.  The unit $R$ is as above.  See Definition \ref{def:vbicx}.
\item $(\tc,\otimes,R)$ is the category with objects twisted  complexes and morphisms degree $(0,0)$ maps of twisted  complexes i.e., infinity morphisms.  The unit $R$ is the twisted  complex concentrated in degree $(0,0)$.  See Definitions \ref{def:tc_objects} and \ref{def:tc_mor}.
\item $(\bimodinf,\otimes,R)$ is the full subcategory of $\tc$ whose objects are twisted complexes with trivial structure i.e., zero differentials.  
\item $(\fm, \otimes, R)$ is the symmetric monoidal category with objects filtered graded modules and morphisms degree $0$ morphisms which respect the filtration.  The unit is the base ring $R$ sitting in degree $0$ with trivial filtration.   See Definition \ref{def:fm_obj}.
\item $\sfm$ is the full subcategory of $(\fm, \otimes, R)$ whose objects are split filtered modules. See Definition \ref{def:sfc}.
\item $(\fc,\otimes, R)$ is the category with objects filtered complexes and morphisms degree $0$ chain maps that respect the filtration, the unit as above. See Definition \ref{def:fc_obj}.
\item $\sfc$ the full subcategory of $(\fc,\otimes, R)$ whose objects are split filtered complexes.  See Definition \ref{def:sfc}.
\item $\btc$, $\bvbic$, $\bbimod$ are the full subcategories  whose objects are $(\NN,\ZZ)$-graded twisted complexes, vertical bicomplexes and bigraded modules respectively. See Definition~\ref{bddcats}.
\item $\bfm$, $\bsfm$, $\bfc$, $\bsfc$ are the full subcategories  whose objects are (split) non-negatively filtered modules respectively complexes,
i.e.~the full subcategories with objects $(K, F)$ such that $F_p K^n=0$ for all $p<0$. See Definition~\ref{bddcats}.
\item $\Ainf{R}$ is the category of $A_\infty$-algebras over $R$.
\item $\dAinf{R}$ is the category of derived $A_\infty$-algebras over $R$.  See Definitions \ref{def:dA_obj} and \ref{def:dA_mor}.
\item $\Ainfintc$ is the category of derived $A_\infty$-algebras over $R$ in the category of twisted chain complexes.  See Definition \ref{def:aintc}.
\item $\fAinf(R)$ is the category of filtered $A_\infty$-algebras over $R$.  See Definitions \ref{def:fa_obj} and \ref{def:fa_mor}.
\item $\sfAinf(R)$, $\bfAinf(R)$ and $\bsfAinf(R)$ are the full subcategories whose objects are
split filtered $A_\infty$-algebras,  non-negatively filtered $A_\infty$-algebras and
split non-negatively filtered $A_\infty$-algebras respectively.  See Diagram (\ref{def:sub_cats_fa}).
\item  $\wbimod$ is the $\bimod$-enriched category of bigraded modules.  See Definition \ref{weird_bimod}.
\item  $\wtc$ is the $\vbic$-enriched category of twisted  complexes.  See Definition \ref{def:wtc}.
\item $\wbtc$  is the full subcategory of	$\wtc$ whose objects are $(\NN,\ZZ)$-graded twisted complexes.
\item $\wfm$ is the $\bimod$-enriched category of filtered graded modules.  See Definition \ref{def:wfm}.
\item  $\wfc$ is the $\vbic$-enriched category of filtered complexes.  See Definition \ref{def:wfc}.
\item $\wsfm$ and $\wbsfm$ are  the full subcategories of $\wfm$ whose objects are  split filtered modules
and split non-negatively filtered modules respectively.
\item $\wsfc$ and $\wbsfc$ are  the full subcategories of $\wfc$ whose objects are  split filtered complexes
and split non-negatively filtered complexes respectively.
\item $\widehat{\otimes}$ denotes the enriched monoidal structure on $\wtc$, $\wbimod$, $\wfm$ and $\wfc$.  See Lemmas \ref{lem: mon structure on wtc} and \ref{lem:monidal_wfc}.
\end{itemize}

\linespread{1}
\bibliographystyle{amsalpha}
\bibliography{bibliografia}

\end{document}